\newcommand{\ds}[1]{\ {#1} \ }
\newcommand{\dss}[1]{\quad {#1} \quad }
\def\Udot{\cdot_U}
\def\Mdot{\cdot_M}
\def\Rdot{\cdot_R}
\def\Ndot{\cdot_N}
\def\iso{\tilde{\to}}
\def\into{\hookrightarrow}
\def\Uz{U^0}
\def\vrpzv{\vrp^0_v}
\def\Real{\mathbb R}
\def\Int{\mathbb Z}
\def\00{\{ 0\}}
\def\chU{\widecheck{U}}
\def\chvrp{\widecheck{\vrp}}
\def\opp{\operatorname{opp}}
\def\full{\operatorname{full}}
\def\htvrp{\widehat{\vrp}}
\def\htpsi{\widehat{\psi}}
\def\vrp{\varphi}
\def\m{m}
\def\h{h}
\def\brv{{\bar v}}
\def\brS{{\overline S}}
\def\iv{v^{-1}}
\def\itau{\tau^{-1}}
\def\tD{\mathcal D}
\def\alnu{\al^{\nu}}
\def\Sig{\Sigma}
\def\alhUgm{\al_{U,\gm}^h}
\def\SigzUgm{\Sig_0(U,\gm)}
\def\SigUgm{\Sig(U,\gm)}
\def\alhUgm{\al^h_{U,\gm}}
\def\Uhgm{U^h_{\gm}}
\def\htU{{\widehat U}}
\def\htV{{\widehat V}}
\def\htal{{\widehat \al}}
\def\brBt{{\overline \bt}}
\def\brU{{\overline U}}
\def\brW{{\overline W}}
\def\bral{{\overline \al}}
\def\brbt{{\overline \bt}}
\def\bmfa{\bar \mfa}
\def\EUS{E(U,S)}
\def\EUSU{E(U,S(U))}
\def\dcup{ \; \dot \cup \;}
\def\breta{\bar \eta}
\def\ibt{\bt^{-1}}
\def\igm{\gm^{-1}}
\def\inu{\nu^{-1}}
\def\ilm{\lm^{-1}}
\def\idl{\dl^{-1}}
\def\ipi{\pi^{-1}}
\def\ieta{\eta^{-1}}
\def\irho{\rho^{-1}}
\def\ial{\al^{-1}}
\def\iff{\Leftrightarrow}
\def\sig{\sigma}
\def\simEa{\sim_{E(\mfa)}}
\def\sima{\sim_{\mfa}}
\def\piEa{\pi_{E(\mfa)}}
\def\pia{\pi_{\mfa}}
\def\sm{\setminus}
\def\EUaPh{E(U,\mfa,\Phi)}
\def\EUAgm{E(U,\mfA,\gm)}
\def\onto{\twoheadrightarrow}
\newcommand{\SV}[1]{{\operatorname{SV#1}}}
\newcommand{\D}[1]{{\operatorname{D#1}}}
\newcommand{\etype}[1]{\renewcommand{\labelenumi}{(#1{enumi})}}
\def\EUgm{{E(U, \gm)}}
\def\FUgm{{F(U, \gm)}}
\def\FVgm{{F(V, \gm)}}
\def\FbUgm{{F(\brU, \gm)}}
\def\EbUgm{{E(\brU, \gm)}}
\def\tTU{\tT(U)}
\def\tTV{\tT(V)}
\def\Ea{E(\mfa)}
\def\Eal{E(\al)}
\def\EUa{E_U(\mfa)}
\def\EUb{E_U(\mfb)}
\def\EUmfA{{E(U,\mfA)}}
\def\EUmfB{{E(U,\mfB)}}
\def\EVmfB{{E(V,\mfB)}}
\def\bEUmfB{E(\brU,\bmfB)}
\def\Dis{\operatorname{(Dis)}}
\def\HT{(\operatorname{HT})}
\def\HTp{(\operatorname{HT}')}
\def\STR{\operatorname{STR}}
\def\STROP{\operatorname{STROP}}
\def\strop{\operatorname{STROPH}}
\def\STROPH{\strop}
\def\sring{\operatorname{Sring}}
\def\OST{\operatorname{OST}}
\def\ga{\mathfrak a}
\def\tT{\mathcal T}
\def\tG{\mathcal G}
\def\tH{\mathcal H}
\def\MFC{\operatorname{MFC}}
\def\MFCE{{MFCE}}
\def\endbox{ \hfill\quad\qed}
\def\eroman{\etype{\roman}}
\def\ealph{\etype{\alph}}
\def\pSkip{\vskip 1.5mm \noindent}
\def\sat{\operatorname{sat}}
\def\satUa{\sat_U(\mfa)}
\def\satUb{\sat_U(\mfb)}
\def\mfB{\mathfrak B}
\def\bmfB{\overline{\mfB}}
\def\mfA{\mathfrak A}
\def\mfz{\mathfrak z}
\def\mfq{\mathfrak q}
\def\mfa{\mathfrak a}
\def\mfb{\mathfrak b}
\def\mfc{\mathfrak c}
\def\al{\alpha}
\def\bt{\beta}
\def\gm{\gamma}
\def\tlv{\tilde{v}}
\def\tlx{\tilde{x}}
\def\tlT{\widetilde{T}}
\def\tlL{\widetilde{L}}
\def\tlN{\widetilde{N}}
\def\tlM{\widetilde{M}}
\def\tlU{\widetilde{U}}
\def\tltau{\widetilde{\tau}}
\def\tlal{\widetilde{\al}}
\def\tlvrp{\widetilde{\vrp}}
\def\tlpsi{\widetilde{\psi}}
\def\tlrho{\widetilde{\rho}}
\def\tl0{\widetilde{0}}
\def\ga{\frak a}
\newtheorem{thm}{Theorem} [section]
\newtheorem*{thm*}{Theorem}
\newtheorem{cor}[thm]{Corollary}
\newtheorem{lem}[thm]{Lemma}
\newtheorem{prop}[thm]{Proposition}
\newtheorem*{defn*} {Definition}
\newtheorem*{claim*} {Claim}
\newtheorem*{theorem6.6'} {Theorem 6.6$'$}
\newtheorem{acknowledgment*}[thm] {Acknowledgment}
\newtheorem{example}[thm]{Example}
\newtheorem{addendum}[thm]{Addendum}
\newtheorem{examp}[thm]{Example}
 \newtheorem{lemdef}[thm]{Lemma/Definition}
 \newtheorem{rem}[thm]{Remark}
 \newtheorem{rems}[thm]{Remarks}
 \newtheorem*{remark*}{Remark}
 \newtheorem{defn}[thm]{Definition}
\newtheorem{construction}[thm]{Construction}
\newtheorem{schol}[thm]{Scholium}
\newtheorem{notation}[thm]{Notation}
\newtheorem{notations}[thm]{Notations}
\newtheorem{problem}[thm]{Problem}
\newtheorem{convs}[thm]{Conventions}
\newtheorem{summr}[thm]{Summary}
\newtheorem*{notation*} {Notation}
\newtheorem*{notations*} {Notations}
\newtheorem*{comment*} {Comment}
 \renewcommand{\sectionmark}[1]{}
\newcommand{\Cov}{\operatorname{Cov}}
 \newcommand{\dl}{\delta}
\newcommand{\lm}{\lambda}
 \newcommand{\id}{\operatorname{id}}
 \newcommand{\supp} {\operatorname{supp}}
\newcommand{\z}{\zeta}
\begin{document}

\title[Supertropical Monoids] {Supertropical Monoids:
\\[1mm] Basics, Canonical Factorization, \\[1mm] and  Lifting Ghosts  to Tangibles}
\author[Z. Izhakian]{Zur Izhakian}
\address{Department of Mathematics, Bar-Ilan University, 52900 Ramat-Gan,
Israel}
\email{zzur@math.biu.ac.il}
\author[M. Knebusch]{Manfred Knebusch}
\address{Department of Mathematics,
NWF-I Mathematik, Universit\"at Regensburg 93040 Regensburg,
Germany} \email{manfred.knebusch@mathematik.uni-regensburg.de}
\author[L. Rowen]{Louis Rowen}
 \address{Department of Mathematics,
 Bar-Ilan University, 52900 Ramat-Gan, Israel}
 \email{rowen@macs.biu.ac.il}

\thanks{The research of the first author was supported  by the
Oberwolfach Leibniz Fellows Programme (OWLF), Mathematisches
Forschungsinstitut Oberwolfach, Germany.}

\thanks{The research of the first and third authors have  been  supported  by the
Israel Science Foundation (grant No.  448/09).}

\thanks{The research of the second author was supported in part by
 the Gelbart Institute at
Bar-Ilan University, the Minerva Foundation at Tel-Aviv
University, the Department of Mathematics   of Bar-Ilan
University, and the Emmy Noether Institute at Bar-Ilan
University.}

\thanks{\noindent \underline{\hskip 3cm } \\ File name: \jobname}

\subjclass[2010]  {Primary: 13A18, 13F30, 16W60, 16Y60; Secondary:
03G10, 06B23, 12K10,   14T05}

\date{\today}


\keywords{Monoids, Supertropical algebra, Bipotent semirings,
Valuation theory,  Supervaluations, Transmissions, TE-relations,
Lattices}


\begin{abstract}
\vskip 0.5cm \emph{Supertropical monoids} are a structure slightly
more general than the supertropical semirings, which have been
introduced and used by the first and the third authors for
refinements of tropical geometry and matrix theory in
\cite{IR1}--\cite{IzhakianRowen2009Resulatants}, and then studied
by us in a systematic way in \cite{IKR1}--\cite{IKR3} in
connection with ``\emph{supervaluations}".

In the present paper we establish a category $\STROP_m$ of
supertropical monoids  by choosing as morphisms the
``\emph{transmissions}", defined in the same way as done in
\cite{IKR1} for supertropical semirings. The previously
investigated category $\STROP$ of supertropical semirings is a
full subcategory of $\STROP_m.$ Moreover, there is associated to
every supertropical monoid $V$ a supertropical semiring $\htV$ in
a canonical way.

A central problem in \cite{IKR1}--\cite{IKR3} has been to find for
a supertropical semiring $U$ the quotient $U / E $ by a
``\emph{TE-relation}", which is  a certain kind of equivalence
relation on the set $U$ compatible with multiplication (cf.
\cite[Definition 4.5]{IKR2}). It turns out that this quotient
always exists in $\STROP_m$. In the good case, that $U / E$ is a
supertropical semiring, this is also the right quotient in
$\STROP.$ Otherwise, analyzing   $(U / E)^\wedge,$ we obtain a
mild modification of $E$ to a TE-relation $E'$ such that $U/ E' =
(U/ E)^ \wedge$ in $\STROP.$

In this way we now can solve various problems left open in
\cite{IKR1}, \cite{IKR2} and gain further insight into the
structure of transmissions and supervaluations. Via supertropical
monoids we also obtain new results on totally ordered
supervaluations and monotone transmissions studied in \cite{IKR3}.
\end{abstract}

\maketitle

\tableofcontents

\baselineskip 14pt

\numberwithin{equation}{section}

\section*{Introduction}
To a large extent the algebra underpinning present day tropical
geometry is based on the notion of a (commutative)
\textbf{bipotent semiring}. Such a semiring $M$ is a totally
ordered monoid under multiplication with smallest element $0$, the
addition being given by $x+y = \max(x,y)$, cf. \cite[\S1]{IKR1}
for details. In logarithmic notation, which most often is used in
tropical geometry, bipotent semirings appear as totally ordered
additive monoids with absorbing element $-\infty$. The primordial
object here is the bipotent semifield $T(\Real) = \Real \cup
\{-\infty \},  $ cf. e.g.  \cite[\S1.5]{IMS}.

In \cite{I} the first author introduced a cover of $T(\Real)$,
graded by the multiplicative monoid $(\Int_2, \cdot \; )$, which
was dubbed the \emph{extended tropical arithmetic}. Then, in
\cite{IR1} and \cite{IR2}, this structure has been amplified to
the notion of a \textbf{supertropical semiring}. A supertropical
semiring $U$ is equipped with a ``ghost map" $\nu := \nu_U: U \to
U$, which respects addition and multiplication and is idempotent,
i.e., $\nu \circ \nu = \nu$. Moreover, in this semiring $a +a =
\nu(a)$  for every $a \in U$ (cf. \cite[\S3]{IKR3}). This rule
replaces the rule $a+a = a$ taking place in the usual max-plus (or
min-plus) arithmetic. We call $\nu(a)$ the ``\textbf{ghost}" of
$a$ and we term  the elements of $U$, which are not ghosts,
``\textbf{tangible}". (The element $0$ is regarded both as
tangible and ghost.) $U$ then carries a multiplicative idempotent
$e = e^2$  such that $\nu(a) = ea$ for every $a \in U$. The
image~$eU$ of the ghost map, called the \textbf{ghost ideal} of
$U$, is itself a bipotent semiring.

Supertropical semirings allow a refinement of valuation theory to
a theory of ``supervaluations", the basics of which can be found
in \cite{IKR1}-\cite{IKR3}. Supervaluations seem to be able to
provide an enriched version of tropical geometry, cf. \cite[\S9.
\S11]{IKR1} and \cite{IR1}. We recall the initial definitions.

An \textbf{\m-valuation} (= monoid valuation) on a semiring $R$ is
a multiplicative map $v: R \to M $ to a bipotent semiring $M$ with
$v(0) =0 $, $v(1) = 1$,
 and
 $$ v(x+y) \ds \leq v(x) + v(y) \quad [ = \max(v(x), v(y))],$$
 cf. \cite[\S2]{IKR1}. We call $v$ a \textbf{valuation} if in
 addition the semiring $M$ is \textbf{cancellative}, by which we
 mean that $M \sm \00 $ is closed under multiplication and is a
 cancellative monoid in the usual sense. If $R$ happens to be a
 (commutative) ring, these valuations coincide with the valuations
 of rings defined by Bourbaki \cite{B} (except that we switched from
  additive notation there to multiplicative notation here).

 Given an \m-valuation $v: R \to M $ there exist multiplicative
 mappings $\vrp: R \to U$ into various supertropical semirings $U$
 with $\vrp(0) = 0$, $\vrp(1) = 1$, such that $M$ is the ghost
 ideal of $U$ and $\nu_U \circ \vrp = v.$ These  are the
 \textbf{supervaluations}  covering $v$, cf. \cite[\S4]{IKR1}.

 The supervaluations lead us to the ``right" class of maps between
 supertropical semirings $U$, $V$ which we have to admit as morphisms to obtain
 the category $\STROP$
 of supertropical semirings (formally introduced in \cite[\S1]{IKR2}).
 These are the ``transmissions". A
 \textbf{transmission} $\al: U \to V$ is a multiplicative map with
 $\al(0) = 0$,  $\al(1) = 1$,  $\al(e_U) = e_V$, whose restriction
 $\gm: eU \to eV$ to the ghost ideals is a semiring homomorphism.
 It turned out in \cite[\S5]{IKR1} that the transmissions from $U$
 to $V$ are those maps $\al: U \to V$ such that for every
 supervaluation $\vrp: R \to U$ the map $\al \circ \vrp : R \to V$
 is again a supervaluation.

Every semiring homomorphism $\al: U \to V$  is a transmission, but
there also exist transmissions which do not respect addition. This
causes a major difficulty for working in $\STROP.$ A large part of
the papers \cite{IKR1}, \cite{IKR2} has been devoted to
constructing various equivalence relations $E$ on a supertropical
semiring $U$ such that the map $\pi_E: U \to U/E,$ which sends
each $x\in U$ to its $E$-equivalence class $[x]_E$, induces the
structure of a supertropical semiring on the set $U/E$  which
makes $\pi_E$ a transmission. We call such an equivalence relation
$E$ \textbf{transmissive}.

It seems difficult to characterize transmissive equivalence
relations in an axiomatic way.  In \cite[Proposition 4.4]{IKR2}
three axioms TE1--TE3 have been provided, which obviously have to
hold, but then, adding a fourth axiom, we only characterized those
transmissive equivalence relations on $U$, where the ghost ideal
of $U/E$ is cancellative \cite[Theorem~4.5]{IKR2}. Dubbing the
equivalence relations obeying axioms TE1--TE3 ``TE-relations", we
had to leave the following problem open in general:
\begin{itemize}
    \item[$(*)$] When is a TE-relation $E$ on a
    supertropical semiring $U$ transmissive?
\end{itemize}

The problem seems to be relevant since there exist natural classes
of \m-valuations \\ $v: R \to M,$  where the bipotent semiring $M$
has no reason to be cancellative, cf. \cite[\S1]{IKR3}. For~$R$ a
ring such \m-valuations already appeared in the work of
Harrison--Vitulli \cite{HV} and D. Zhang~\cite{Z}.

The present paper  gives a solution to the problem $(*)$ just
described. We  introduce a new category $\STROP_m$ containing the
category $\STROP$ of supertropical semirings as a full
subcategory. The objects of $\STROP_m$, called
``\textbf{supertropical monoids}", are multiplicative monoids $U$
with an absorbing element $0$, an idempotent $e\in U,$ and a total
ordering on the monoid $eU,$ which makes  $eU$ a bipotent
semiring.

 In a supertropical monoid it is natural to speak about
tangibles and ghosts in the same way as in supertropical
semirings. Every supertropical semiring can be regarded as a
supertropical monoid, of course. Conversely,  since addition in a
supertropical semiring $U$ is determined by multiplication and the
idempotent $e$ (cf. \cite[Theorem 3.11]{IKR3}), we can turn a
supertropical monoid $U$ into a supertropical semiring in at most
one way, and then say that~$U$ ``is" a supertropical semiring. If
$U$ and $V$ are supertropical monoids, the definition of a
transmission $\al:U \to V$, as given above for $U$, $V$
supertropical semirings, still makes sense, and these
``transmissions" between supertropical monoids (cf. Definition
\ref{defn1.4}) are taken   as the morphisms in the category
$\STROP_m$.

The axioms TE1--TE3 mentioned above also  make perfect sense for
an equivalence relation~$E$ on a supertropical monoid $U$ (cf.
Definition \ref{defn1.6} below). Thus, such a relation~$E$ will
again be called a \textbf{TE-relation}. But we have the important
new fact that for a TE-relation~$E$ on a supertropical monoid $U$
the quotient $U/E$ \textbf{always} exists in the category
$\STROP_m$. More precisely, the set $U/E$ can be equipped  with
the structure of a supertropical monoid in a unique way, such that
the map $\pi_E$ is a transmission (cf. Theorem \ref{thm1.7}
below).

The solution of the problem $(*)$ from above now reads as follows
(Scholium \ref{schol1.12} below): If $U$ is a supertropical
semiring and $E$ is a TE-relation on $U$, then $E$ is transmissive
iff the quotient  $U/E$ in $\STROP_m$ is a supertropical semiring.

We will provide a necessary and sufficient condition that a given
supertropical monoid $U$ is a supertropical semiring (Theorem
\ref{thm1.2} below). From this criterion it is immediate that $U$
is such a semiring  if the bipotent semiring $U$ is cancellative,
but there also are  other cases,  where this holds. \pSkip

A bipotent semiring $M$ may be viewed as a supertropical semiring,
all of whose elements are ghosts. (This is the case $1 = e.$) Thus
the category $\STROP_m /M$ of supertropical monoids over $M$ may
be viewed as the category of supertropical monoids $U$ with a
fixed ghost ideal $eU = M.$ Then the morphisms of $\STROP_m /M$
are the transmissions $\al: U \to V$ with $\al(x) = x$ for all $x
\in M.$ We call the surjective transmissions over $M$
\textbf{fiber contractions} (over $M$), as we did for
supertropical semirings \cite[\S6]{IKR1}. We note in passing that
if $\al:U \onto V$ is a fiber contraction and $U$ is a
supertropical semiring, then $V$ is again a supertropical semiring
(cf. Theorem \ref{thm1.5} below), and $\al$ is a semiring
homomorphism \cite[Propoistion 5.10.iii]{IKR1}.

It turns out that for every supertropical monoid $U$ there exists
a fiber contraction $\sig_U: U \to \htU$  with $\htU$ a
supertropical semiring, called the \textbf{supertropical semiring
associated} to $U$, such that every fiber contraction $\al:U \onto
V$ factors through $\sig_U$ (in a unique way), $\al = \bt \circ
\sig_U$ with $\bt:\htU \to V$ again a fiber contraction. In more
elaborate terms, $\STROP /M$ is a full reflective subcategory of
$\STROP_m /M$, cf. \cite[p. 79]{F}, \cite[1.813]{FS}.

The reflections $\sig_U: U \to \htU$ turn out to be useful for
solving problems of universal nature for supertropical semirings
and supervaluations. The strategy is, first to solve such a
problem in $\STROP_m$, which often is easy, and then to employ
reflections to obtain a solution in $\STROP.$ Major instances for
this are provided by Theorems \ref{thm4.6} and \ref{thm6.9} below.
\pSkip

A large part of  the paper is devoted to the factorization of
transmissions into appropriately defined  ``basic" transmissions.
Let $\al : U \onto V$ be a surjective transmission with $U,V$
supertropical monoids, and let $\al^\nu: = \gm : M \onto N$ denote
the ``ghost part" of $\al$, i.e., the semiring homomorphism
obtained from $\al$  by restriction to the ghost ideals $M:= e_U
U$, $N : =e_V V$. Then there exists an essentially unique
factorization
\begin{equation}\label{eq:0.1}
    \xymatrix{
    \al:U    \ar[r]_{\lm} & U_1  \ar[r]_{\bt} & V_1 \ar[r]_{\mu} & V,  \\
}
\end{equation} for some supertropical monoids $U_1$ and $V_1$,
with $\lm $ and $\mu$ fiber contractions of certain types over $M$
and $N$ respectively and $\bt$ a so called ``strict
ghost-contraction", which means that $\bt$ restricts to a
bijection from the set $\tT(U_1)$ of non-zero tangible elements of
$U$ to $\tT(U_1)$, while $\bt^\nu = \gm,$ cf. Theorem
\ref{thm2.9}. (Notice that $\gm$ has convex fibers in $M$, since
$\gm$ respects the orderings of $M$ and $N$. These convex sets are
contracted by $\gm$ to one-point sets, hence the name
``ghost-contraction". ``Strict" alludes to the property that no
element of $\tT(U)$ is sent to a ghost in $V$.)

From \eqref{eq:0.1} we then obtain a factorization
\begin{equation}\label{eq:0.2}
\xymatrix{
    \al:U    \ar[r]_{\lm} & \brU  \ar[r]_{\bt} & W \ar[r]_{\mu} & \brW \ar[r]_{\rho} & V  \\
}
\end{equation}
which is really unique. Here $\lm, \bt, \mu$ are transmissions of
the same types as before but are normalized to maps $\pi_E$ given
by certain TE-relations $E$ on $U$, $\brU$, $W$ respectively,
which are uniquely determined by $\al$, and $\rho$ is an
isomorphism over $N = eV.$ This is the ``\textbf{canonical
factorization}" of $\al$, appearing in the title of the paper
(Definition 2.12). The transmissions $\lm, \bt, \mu, \rho$ are
called the \textbf{canonical factors} of $\al.$

In \S3 we make explicit how the canonical factors of a composition
$\al_2 \circ \al_2$ of two transmissions $\al_1: U_1 \to U_2$,
$\al_2: U_2 \to U_3$ can be obtained from the canonical factors of
$\al_1$ and~$\al_2.$

Our primary interest is not in supertropical monoids but in
supertropical semirings. In this respect the following result is
useful: If $U$ and $V$ are supertropical semirings, then in the
canonical factorization \eqref{eq:0.2} all three supertropical
monoids $\brU, W, \brW $ are again supertropical semirings, and
thus all canonical factors are morphisms in $\STROP$ (Theorem
\ref{thm2.9}).

Besides  $\STROP$, the category $\STROPH$ deserves interest, whose
objects are  again the supertropical semirings but whose morphisms
are only the semiring homomorphisms. (Thus $\STROPH$ is a
subcategory of $\STROP$ and a full  subcategory of the category of
semirings.) In \S5 we introduce a subcategory $\STROPH_m$ of
$\STROP_m$ which turns out to be equally useful for working in
$\STROPH$, as $\STROP_m$ has proved to be useful for $\STROP.$

The objects of $\STROPH_m$  are again the supertropical monoids,
but the morphisms are suitably defined
``\textbf{\h-transmissions}", which are designed in such a way
that an \h-transmission $\al:U \to V$ between supertropical
semirings is a semiring homomorphism, cf. Definition
\ref{defn5.1}. Thus $\STROPH$ is a full subcategory of
$\STROPH_m.$ Again it turns out that for a given bipotent semiring
$M$ the category $\STROPH / M$ is reflective in $\STROPH_m / M$
(Corollary~\ref{cor5.10}).

If $\al: U \to V$ is a surjective \h-transmission, then the
canonical factors of $\al$ are  again \h-transmissions (Theorem
\ref{thm5.11}). It follows that, if $\al: U \to V$ is a surjective
semiring homomorphism, the whole canonical factorization runs in
$\STROPH$.

In \S6 we study supertropical monoids which have a total ordering
defined to be compatible with the supertropical monoid structure
in a rather obvious way (Definition \ref{defn61.1}). We call them
\textbf{ordered supertropical monoids} (= \textbf{OST-monoids},
for short). It turns out that the underlying supertropical monoids
of  an $\OST$-monoid is a supertropical semiring (Theorem~
\ref{thm61.4}).

The ``right" morphisms between $\OST$-monoids are the
transmissions compatible with the given orderings, called
\textbf{monotone transmissions}. It turns out that every monotone
transmission is a semiring homomorphism (Theorem \ref{thm61.7}). A
major result now is that, given a monotone transmission $\al: U
\to V$, all factors of the canonical factorization \eqref{eq:0.2}
of~$\al$ can be interpreted as monotone transmissions. More
precisely, there exist unique total orderings on $\brU, W, \brW$,
which make $\brU, W, \brW$ $\OST$-monoids and all factors $\lm,
\bt, \mu, \rho$ monotone transmissions (Theorem \ref{thm61.14}).

In the last sections \S\ref{sec:6}--\S\ref{sec:8} of the paper
``\emph{\m-supervaluations}" play the leading role. Given an
\m-valuation $v: R \to M$ on a semiring $R$, an
\textbf{\m-supervaluation $\vrp: R \to U$ covering} $v$ is defined
in a completely analogous way as has been indicated above for a
supervaluation. The only difference is that now $U$ is a
supertropical monoid instead of a supertropical semiring
(Definition \ref{defn6.1}).

The morphisms in $\STROP_m$ are adapted to the \m-supervaluations,
as the morphisms in $\STROP$ were adapted to the supervaluations,
to wit, a map $\al: U \to V$ between supertropical monoids is a
transmission iff for every \m-supervaluation $\vrp:R \to U$ the
map $\al \circ \vrp$ is again an \m-supervaluation. (This has not
been detailed in the paper.)

In order to avoid discussions about ``equivalence" of
\m-valuations we now tacitly assume, without essential loss of
generality, that all occurring \m-supervaluations  $\vrp:R \to U$
are ``surjective", i.e., $U = \vrp (R) \cup e
\vrp(R)$.\footnote{Although this does not mean surjectivity in the
usual sense, there is no danger of confusion since a
supervaluation $\vrp:R \to V$ can hardly ever be surjective as a
map except in the degenerate case $V=M$.} Given such
\m-supervaluations $\vrp : R \to U$, $\psi: R \to V$, w say that
$\vrp$ \textbf{dominates} $\psi$, and write $\vrp \geq \psi$, of
there exists a (unique) transmission $\al: U \to V$ with $\psi =
\al \circ \vrp.$

In \S\ref{sec:6} we construct the \textbf{initial
\m-supervaluation} $\vrp_v^0 : R \to U(v)^0$ covering  a given
\m-supervaluation $v: R \to M$, which means that $\vrp^0_v$
dominates all other \m-supervaluations covering $v$. It then is
immediate that
$$ \vrp_v := \sig _{U(v)^0} \circ \vrp^0_v: R \ds \to (U(v)^0)^\wedge$$
is the  initial supervaluation covering $v$, i.e., dominates all
supervaluations covering $v$ (Theorem \ref{thm6.9}).

Already in \cite[\S7]{IKR1} we could prove that an initial
supervaluation $\vrp_v$ covering $v$ exists, but obtained  an
explicit description of $\vrp_v$ only in the case that $M$ is
cancellative, while now we obtain an explicit description of
$\vrp_v$ in general (Scholium \ref{schl6.11}). (N.B. If $M$ is
cancellative, then $\vrp_v = \vrp_v^0$.)

More generally, if $\vrp: R \to U$ is an \m-supervaluation
covering $v$, then $$\htvrp:= \sig_U \circ \vrp : R \to \htU$$ is
a supervaluation covering $v$, and $\htvrp \geq \psi$ for any
other supervaluation $\psi$  covering $v$ with $\vrp \geq \psi$.
We call $\vrp$ \textbf{tangible}, if $\vrp(R) \subset \tT(U) \cup
\00$. It turns out that $\vrp^0_v$ is always tangible, but if
$U(v)^0$ is not a supertropical semiring, i.e., $\vrp^0_v \neq
\vrp_v,$ then $\vrp_v$ is not tangible, and this implies that no
supervaluation covering $v$ is tangible.

The last two sections \S\ref{sec:7}, \S\ref{sec:8} are motivated
by our interest to put a supervaluation $\vrp: R \to U$ covering
$v$ to use in tropical geometry. It will be relevant to apply
$\vrp$ to the coefficients of a given polynomial $f(\lm) \in
R[\lm]$ in a set $\lm$ of $n$ variables (or a Laurent polynomial),
and to study the supertropical root sets and tangible components
of polynomials $g(\lm) \in F[\lm]$ in $F^n$ (cf. \cite[\S5,
\S7]{IR1}) obtained from $f^\vrp(\lm) \in U[\lm]$ by passing from
$U$ to various supertropical semifields $F$. For this purpose it
will be important to have some control on the set
$$ \{ a \in R \ds |  \vrp(a) \in M \} \ds  =  \{ a \in R \ds |  \vrp(a) = v(a) \}.$$
Given an \m-supervaluation $\vrp: R \to Y$ covering $v: R \to M$,
we construct a tangible \m-supervaluation $\tlvrp: R \to \tlU,$
which is minimal with $\tlvrp \geq \vrp$ (Theorem \ref{thm7.10}).
 In \S\ref{sec:8} we then classify
the \m-supervaluations $\psi$ with $\vrp \leq \psi \leq \tlvrp,$
called the \textbf{partial tangible lifts} of $\vrp.$ They are
uniquely determined by their \textbf{ghost value sets}
$$ G(\psi) := \psi (R) \cap M,$$
cf. Theorem \ref{thm8.4}. These are ideals of the semiring $M,$
and all ideals $\mfa \subset G(\vrp)$ occur in this way (Theorem
\ref{thm8.7}). Unfortunately the ghost value set $G(\psi)$  does
not control the set $\{ a \in R \ds | \psi(a) \in M\} $
completely.  We can only state that this set  is contained in
$\iv(G(\psi))$.

If $\vrp$ is a supervaluation,  then $\chvrp := (\tlvrp)^\wedge$
is the supervaluation, which is a partial tangible lift of $\vrp$
having smallest ghost value set.
\begin{notations*}
Given sets $X,Y$ we mean by $Y \subset X$ that $Y$ is a subset of
$X$, with $Y  = X$ allowed. If $E$ is an equivalence relation on
$X$ then $X/E$ denotes the set of $E$-equivalence classes in $X$,
and $\pi_E: X \to X/E$ is the map which sends an element $x$ of
$X$ to its $E$-equivalence class, which we denote by $[x]_E$. If
$Y \subset X$, we put $Y/E := \{[x]_E  \ds | x \in Y\}.$

If $U$ is a supertropical semiring, we denote the sum $1+1$ in $U$
by $e$, more precisely by $e_U$ if necessary. If $x \in U$ the
\textbf{ghost companion} $ex$ is also denoted by $\nu(x)$, and the
\textbf{ghost map} $U \to eU$, $x \mapsto \nu(x)$, is denoted by
$\nu_U$. If $\al: U \to V$ is a transmission, then the semiring
homomorphism $eU \to eV$ obtained from $\al$ by restriction is
denoted by  $\al^\nu$ and is called the \textbf{ghost part} of
$\al$. Thus $\al^\nu \circ \nu_U = \nu_V \circ \al$.

$\tT(U)$ and $\tG(U)$ denote the sets of tangible and ghost
elements of $U$, respectively, cf. \cite[Terminology 3.7]{IKR1}.
We put $\tT(U)_0 := \tT(U) \cup \00.$

If $v : R \to M $ is an \m-valuation we call the ideal $v^{-1}(0)$
of $R$ the \textbf{support} of $v$, and denote it by $\supp(v)$.
\end{notations*}

\section{Supertropical monoids}\label{sec:1}

\begin{defn}\label{defn1.1} A \textbf{supertropical monoid} $U$ is
a  monoid $(U, \cdot \, )$ (multiplicative notation) which has an
absorbing element $0 := 0_U$, i.e., $0 \cdot x = 0$ for every $x
\in U$, and a distinguished central idempotent $e := e_U$ such
that the following holds:
$$ \forall x \in U: \quad ex =0 \ \Rightarrow \ x = 0.$$
Further a total ordering, compatible with multiplication, is given
on the submonoid $M := eU$ of $U$.

We then regard $M$ as a bipotent semiring in the usual way
\cite[\S1]{IKR1}.
\end{defn}

If $U$ is a supertropical monoid, we would like to enrich $U$ by a
composition $ U \times U \overset{+}{\longrightarrow} U$ extending
the addition on $M$, such that $U$ becomes a supertropical
semiring with $$1_U + 1_U = e_U.$$ We are forced to define the
addition on $U$ as follows ($x,y \in U$), cf. \cite[Theorem
3.11]{IKR1}:
$$ x+y = \left\{
\begin{array}{cccc}
  y &  & \text{if} & ex < ey, \\
  x &  & \text{if} & ex > ey , \\
  ex &  & \text{if} & ex= ey. \\
\end{array} \right.
$$
If this addition obeys the associativity and distributivity laws,
we say that the supertropical monoid $U$ ``is'' a semiring. In the
commutative case we have the following criterion.
\begin{thm}\label{thm1.2}
A supertropical commutative  monoid is a semiring iff the
following holds:
\begin{align*}
\Dis:  \quad &  \forall x,y,z \in U: \text{ If } 0 < ex < ey, \text{ but}\\
 & exz = eyz,  \text{ then } yz = eyz
 \text{ (i.e., } yz \in eU).
\end{align*}
In this case the semiring $U$ is supertropical.
\end{thm}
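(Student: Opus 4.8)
The plan is to verify the semiring axioms directly from the explicit addition formula, treating associativity and commutativity of $+$ first (which are cheap) and then reducing distributivity to the single hypothesis $\Dis$. Commutativity of $+$ is clear from the symmetric shape of the defining cases. For associativity, given $x,y,z\in U$, I would compare $ex$, $ey$, $ez$ in the totally ordered monoid $M=eU$; the value of $(x+y)+z$ and of $x+(y+z)$ depends only on which of $ex,ey,ez$ is (strictly) largest, and in the ties one gets a ghost $ex=ey$ etc. A short case analysis (largest entry unique; exactly two tied and maximal; all three tied) shows both bracketings agree, using only that $e$ is a central idempotent and $0$ is absorbing. This part needs no extra hypothesis.

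The heart is distributivity: $z(x+y)=zx+zy$ for all $x,y,z$. By commutativity of the monoid and the symmetry of $+$ we may assume $ex\le ey$. If $z=0$ both sides are $0$, so assume $z\ne 0$, hence $ez\ne 0$. Multiplying the (ordered, compatible) inequality $ex\le ey$ by $ez$ gives $exz\le eyz$, and $z(x+y)$ is computed from the case distinction on $ex$ versus $ey$, while $zx+zy$ is computed from $exz$ versus $eyz$. The only way the two computations can diverge is when the comparison is ``coarsened'' by multiplication by $z$, i.e.\ when $ex<ey$ but $exz=eyz$; in every other configuration ($ex=ey$, or $ex<ey$ with $exz<eyz$) one checks immediately that both sides equal $zy$ (respectively $ezy$), using $e^2=e$ and centrality of $e$. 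So the whole matter reduces to the bad configuration $0\le ex<ey$ with $exz=eyz$: there $x+y=y$, so the left side is $zy$, whereas $zx+zy=ezx=ezy$ since $exz=eyz$; hence distributivity in this case says exactly $zy=ezy$, i.e.\ $zy\in eU$. When $ex=0$ this forces $x=0$ (by the defining axiom of a supertropical monoid), so $x+y=y$ and both sides are trivially $zy$; thus the only genuine obstruction is $0<ex<ey$, $exz=eyz$, and this is precisely $\Dis$. Therefore $\Dis$ holds if and only if distributivity holds, which together with the (unconditional) associativity and commutativity of $+$ gives the equivalence.

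The main obstacle is purely bookkeeping: organizing the case analysis for distributivity so that every branch is seen to collapse to $\Dis$, and being careful that multiplying an inequality in $M$ by $ez$ can turn $<$ into $=$ but never reverse it (this uses compatibility of the order with multiplication in the bipotent semiring $M$, stated in Definition \ref{defn1.1}). Finally, once $U$ is shown to be a semiring, it carries the ghost map $x\mapsto ex$ with $x+x=ex$ and ghost ideal $eU=M$ a bipotent semiring, so by the very definition recalled in the Introduction it is a supertropical semiring; this yields the last sentence of the statement.
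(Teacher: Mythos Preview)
Your proposal is correct and follows essentially the same route as the paper's own proof: both treat commutativity and associativity of $+$ as routine, reduce distributivity by symmetry to $ex\le ey$, dispose of $ex=0$ via the axiom $ex=0\Rightarrow x=0$, handle $ex=ey$ directly, and in the remaining case $0<ex<ey$ split on whether $exz<eyz$ or $exz=eyz$, the latter giving exactly $\Dis$. One small slip: in your ``respectively'' clause the values are swapped---when $ex=ey$ both sides equal $ezy$, while when $ex<ey$ and $exz<eyz$ both sides equal $zy$---but this does not affect the argument.
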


\begin{proof}

Let $x,y,z \in U$ be given, Obviously, $x +y = y +x $ and $x +0 =
x$, and it is easily checked that $(x+y) + z = x + (y+z)$. It
remains to investigate when we have
 \begin{equation}\renewcommand{\theequation}{$*$}\addtocounter{equation}{-1}\label{eq:str.1}
  (x+y)z = xz + yz.
\end{equation}

We assume without loss of generality that $ex \leq ey$. If $ex=0$,
then $x=0$ and $(*)$ is true. If $ex =e y$, then $exz = eyz$,
hence $x +y = ey$ and $xz+ yz = eyz$. Thus $(*)$ is true again.

We are left with the case that $0 < ex< ey$. Then $x+y =y$ and
$exz \leq eyz$. If  $exz < eyz$, then $xz + yz = yz$, and $(*)$ is
true. But if $exz = eyz$, then $xz+yz = eyz$, while $(x+y)z =yz$.
Thus $(*)$ holds iff $yz = eyz$.

We conclude that $(*)$ holds for all triples  $x,y,z$ iff
condition $\Dis$ is fulfilled.
\end{proof}

\begin{rem}\label{rmk:1.13} When $U$ is not commutative, we have
an analogous  result.  We just need to add the same condition
$\Dis$ for the monoid $U^{\opp}$ obtained from $U$ by changing the
multiplication $(x,y) \mapsto xy$ to $(x,y) \mapsto yx$; i.e.,
\begin{align*}
\Dis':  \quad &  \forall x,y,z \in U: \text{ If } 0 < ex < ey, \text{ but}\\
 & ezx = ezy,  \text{ then } zy = ezy.
\end{align*}
\end{rem}

Given an element $x$ of a supertropical monoid $U$, we call $ex$
the \textbf{ghost} of $x$, and we denote the \textbf{ghost map} $U
\to eU$,  $x \mapsto ex$,  by $\nu_U$, as we did before for $U$ a
supertropical semiring.

By a (two sided) \textbf{ideal} $\ga$ of a supertropical monoid
$U$ we mean a monoid ideal of $U$, i.e., a nonempty subset $\mfa$
of $U$ with $U \cdot \mfa \subset \mfa$ and $ \mfa \cdot U \subset
\mfa$. Notice that in the case that $U$ is a supertropical
semiring, such a set $\mfa$ is indeed an ideal of the semiring $U$
in the usual sense, cf. \cite[Remark 6.21]{IKR2}. We call $eU$ the
\textbf{ghost ideal} of $U$.

Many more definitions in \cite{IKR1} and \cite{IKR2} retain their
sense  if we replace the supertropical semirings by supertropical
monoids, in particular the following one.
\begin{defn}\label{defn1.3}
Let $U$ and $V$ be supertropical monoids. We call a map $\al: U
\to V$ a \textbf{transmission}, if the following holds (cf.
\cite[\S5]{IKR1}): \begin{alignat*}{2}
&TM1: \quad&& \alpha(0)=0,\\
&TM2: \quad &&\alpha(1)=1,\\
 &TM3: \quad &&\forall x,y\in U:\quad
\alpha(xy)=\alpha(x)\alpha(y),\\
&TM4:\quad&&\alpha(e_U)=e_V,\\
&TM5:\quad &&\forall x,y\in eU: \quad
 x \leq y  \Rightarrow \alpha(x)  \leq \alpha(y).
 \end{alignat*}
(N.B. $\al$ maps $eU$ to $eV$ due to TM3 and TM4.) Notice that
this means that $\al$ is a monoid homomorphism sending $0$ to $0$
and $e$ to $e$, which restricts to a  homomorphism $\gm: eU \to
eV$ of bipotent semirings.
 We then call $\gm$ the \textbf{ghost part} of~$\al$,
and write $\gm = \al^\nu$. We also say that~$\al$ \textbf{covers}
$\gm$ (as we did in \cite{IKR1} for $U$, $V$ commutative
supertropical semirings). Notice that $\al(U)$ is a supertropical
submonoid of $V$ in the obvious sense.
\end{defn}

We introduce two sorts of ``kernels'' of transmissions.

\begin{defn}\label{defn1.4}
Let $\al: U \to V$ be a transmission between supertropical
monoids.
\begin{enumerate} \ealph
    \item The \textbf{zero kernel} of $\al$ is the set
    $$ \mfz_\al := \{ x \in U \ | \ \al(x) =0\}.$$
    \item The \textbf{ghost kernel} of $\al$ is the set
    $$ \mfA_\al := \{ x \in U \ | \ \al(x) \in eV \}.$$
\end{enumerate}
\end{defn}

These sets are ideals of $U$, and $M \cup  \mfz_\al \subset
\mfA_\al$. If $U$ is a  semiring, then $M \cup \mfz_\al =  M +
\mfz_\al$, (cf. \cite[Remark 6.21]{IKR2}). If $ \mfA_\al = M$, we
say  that $\al$ has \textbf{trivial ghost kernel}, and if~$
\mfz_\al = \{ 0 \}$,  we say that $\al$ has \textbf{trivial zero
kernel}.

\begin{thm}\label{thm1.5}
Let $\al: U \to V$ be a transmission between supertropical
monoids, which is injective on the set $(eU) \setminus \{ 0\}$.
\begin{enumerate} \eroman
    \item If $\al$ has a trivial ghost kernel, and if $V$ is a
     semiring, then $U$ is a  semiring.
    \item If $\al$ is surjective, and if $U$  is a
    semiring, then $V$ is a  semiring.
\end{enumerate}
\end{thm}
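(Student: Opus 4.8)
The plan is to verify, in each of the two cases, the distributivity criterion $\Dis$ of \thmref{thm1.2} (together with its mirror version $\DisPr$ from Remark~\ref{rmk:1.13} when $U$ is not commutative) for the supertropical monoid we wish to recognize as a semiring. Throughout I will use only that a transmission $\al$ is a monoid homomorphism with $\al(e_Ux)=e_V\al(x)$, that its ghost part $\al^\nu=\al|_{eU}\colon eU\to eV$ is an order-preserving homomorphism of bipotent semirings (so $\al^\nu(0)=0$), and the rule $e_Vw=0\Rightarrow w=0$ valid in $V$ (and likewise in $U$).

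For (i), assume $V$ is a semiring and $\mfA_\al=eU$, so in particular $\mfz_\al\subseteq eU$. Let $x,y,z\in U$ with $0<ex<ey$ and $exz=eyz$; I must show $yz\in eU$. Applying $\al$ gives $e_V\al(x)\al(z)=\al(exz)=\al(eyz)=e_V\al(y)\al(z)$, and since $ex\neq ey$ lie in $eU\setminus\{0\}$, injectivity there together with order-preservation forces $\al(ex)<\al(ey)$; hence either $\al(ex)=0$ or $0<\al(ex)<\al(ey)$. In the first case $e_V\al(yz)=\al(eyz)=\al(exz)=0$, so $\al(yz)=0$ and $yz\in\mfz_\al\subseteq eU$. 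In the second case I apply $\Dis$ for $V$ to the triple $\al(x),\al(y),\al(z)$ to conclude $\al(y)\al(z)\in e_VV$, i.e.\ $\al(yz)\in eV$, whence $yz\in\mfA_\al=eU$. Thus $\Dis$ holds for $U$; applying the same argument to the transmission $U^{\opp}\to V^{\opp}$ (note $V^{\opp}$ is again a semiring) yields $\DisPr$, and \thmref{thm1.2} with Remark~\ref{rmk:1.13} finishes (i).

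For (ii), assume $U$ is a semiring and $\al$ is surjective. Let $x',y',z'\in V$ with $0<e_Vx'<e_Vy'$ and $e_Vx'z'=e_Vy'z'$; I must show $y'z'\in e_VV$. Pick preimages $x,y,z$ of $x',y',z'$ under $\al$. From $0<\al(ex)<\al(ey)$ and the fact that $\al^\nu$ is order-preserving with $\al^\nu(0)=0$, one reads off $0<ex<ey$ in $eU$ (this step needs only order-preservation and totality, not injectivity). Now $\al(exz)=e_Vx'z'=e_Vy'z'=\al(eyz)$. If this common value is $0$, then $e_V(y'z')=0$, so $y'z'=0\in e_VV$ and we are done. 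Otherwise $exz$ and $eyz$ are nonzero elements of $eU$ with equal image under $\al$, hence equal by injectivity on $eU\setminus\{0\}$; then $\Dis$ for $U$ applied to $x,y,z$ gives $yz\in eU$, so $y'z'=\al(yz)\in\al(eU)\subseteq eV$. Thus $\Dis$ holds for $V$, the mirror argument gives $\DisPr$, and \thmref{thm1.2} with Remark~\ref{rmk:1.13} completes (ii).

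The only delicate point — and precisely where the hypothesis ``$\al$ injective on $(eU)\setminus\{0\}$'' enters — is the case distinction according to whether the relevant ghost products $\al(ex)$ and $\al(exz)$ vanish. The degenerate branch, where an image is $0$, is disposed of directly via $e_Vw=0\Rightarrow w=0$ (reinforced in (i) by $\mfz_\al\subseteq\mfA_\al=eU$), while the generic branch uses injectivity on the ghost ideal to pull the equality $\al(exz)=\al(eyz)$ back to $exz=eyz$ before invoking $\Dis$ in the known semiring. I expect the writing-up to be entirely routine once this bookkeeping is set straight.
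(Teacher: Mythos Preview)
Your proof is correct and follows essentially the same approach as the paper: verify the criterion $\Dis$ of Theorem~\ref{thm1.2} by transporting the hypotheses through $\al$, using injectivity on $(eU)\setminus\{0\}$ to control ghost equalities, and splitting off the degenerate case where the relevant ghost image vanishes. Your write-up is in fact slightly more careful than the paper's in part (i), where you explicitly treat the possibility $\al(ex)=0$ (the paper tacitly assumes $0<e\al(x)$ when invoking $\Dis$ for $V$), and you also spell out the noncommutative case via $U^{\opp}\to V^{\opp}$, which the paper leaves to the reader.
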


\begin{proof}
Again we prove this for  commutative monoids, leaving the obvious
modifications in the noncommutative case to the interested reader.

We use the criterion for a supertropical monoid to be a  semiring
given in Theorem~\ref{thm1.2}.

(i): Let $x,y,z \in U$ with $0 < ex < ey$ and $exz = eyz$. Then
$$ e\al(x)= \al (ex) < \al(ey) = e \al(y),$$
since $\al$ is injective on $(eU) \setminus \{ 0 \}$, and
$$ e \al(x) \al(z) =  e \al(y) \al(z).$$
Since $V$ is a semiring, we  deduce that $\al(yz) =  \al(y) \al(z)
\in eV$. Since $\al $ has a trivial ghost kernel, it follows that
$yz \in eU$, as desired. \pSkip

(ii): Let $x,y,z \in U$ with
$$ 0 < e \al(x) < e \al(y) \quad \text{and} \quad e \al(x) \al(z) =  e \al(y) \al(z).$$
We are done if we verify that $ \al(y) \al(z) \in e V$. We have $0
< \al (ex) < \al (ey)$ and $\al(exz) = \al(eyz)$.The inequalities
imply $0 < ex < ey$.
\begin{description}

    \item[{Case I}] $\al(eyz) = 0$.
    Now $e \al(yz) = 0$, hence $\al(yz)= 0$, hence $\al(y) \al(z)
    =0.$ \pSkip
    \item[{Case II}] $\al(eyz) \neq 0$.
    Now  $exz \neq 0$ and $eyz \neq 0 $. Since $\al$ is injective on
    $(eU) \setminus \{ 0 \} $, it follows that $exz = eyz$. Since  $0 < ex <
    ey$ and $U$ is a semiring, we conclude that $yz \in eU$, hence
    $ \al(y)\al(z) = \al (yz) \in eV$.

\end{description}
    Thus $\al(y) \al(z) \in eV$ in both cases.
\end{proof}

\begin{defn}\label{defn1.6} If $U$ is a supertropical monoid, we
call an equivalence relation $E$ on the set~$U$ a
\textbf{TE-relation}, if the following holds (cf.
\cite[\S4]{IKR2}):
\begin{alignat*}{2}
&TE1: \quad&& \text{$E$  is multiplicative,
i.e., } \forall x,y,z \in E:\\
& && x \sim_E y \dss \Rightarrow xz \sim_E yz \ , zx \sim_E zy  \\
 &TE2: \quad && \text{The equivalence relation $E | M $ is order
compatible, i.e.:} \\
& & & \text{If $x_1,x_2,x_3, x_4 \in M$ and $x_1 \leq x_2$, $x_3 \leq x_4$,
$x_1 \sim_E x_4$, $x_2 \sim_E x_3$,} \\
& & & \text{then $x_1 \sim_E x_2$. } \text{(Hence all $x_i$ are $E$-equivalent.)} \\
&TE3: \quad&& \text{If $x \in U$ and $ex \sim_E 0$, then $x \sim_E
0$.}
 \end{alignat*}
\end{defn}
We have the following almost trivial but important fact.

\begin{thm}\label{thm1.7}
Let $U $ be a supertropical monoid and $E$ a TE-relation on $U$.
Then the set $U/E$ of equivalence classes carries a unique
structure of a supertropical monoid such that the map
$$ \pi_E: U \to U/E, \qquad x \mapsto [x]_E,$$
is a transmission.
\end{thm}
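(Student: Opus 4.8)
The plan is to construct the supertropical monoid structure on $U/E$ directly, transport everything through $\pi_E$, and then verify uniqueness. First I would define multiplication on $U/E$ by $[x]_E \cdot [y]_E := [xy]_E$; this is well-defined precisely because of axiom TE1 (multiplicativity of $E$), and it immediately inherits associativity and commutativity (in the commutative case) from $U$, with unit $[1]_E$. The class $[0]_E$ is absorbing since $0 \cdot x = 0$ in $U$ forces $[0]_E[x]_E = [0]_E$. The distinguished idempotent is $[e_U]_E$, which is central and idempotent because $e_U$ is; write $\bar e := [e_U]_E$. The implication ``$\bar e \cdot [x]_E = [0]_E \Rightarrow [x]_E = [0]_E$'' is exactly axiom TE3: $\bar e[x]_E = [e_Ux]_E$, so the hypothesis says $e_Ux \sim_E 0$, and TE3 gives $x \sim_E 0$, i.e. $[x]_E = [0]_E$.

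The genuinely substantive point is the total ordering on $\bar e \cdot (U/E) = (eU)/E =: M/E$. I would define $[x]_E \leq [y]_E$ for $x,y \in M$ to mean: there exist $x' \in [x]_E \cap M$ and $y' \in [y]_E \cap M$ with $x' \leq y'$ in $M$. Reflexivity is clear, and antisymmetry amounts to showing that if some representatives compare one way and some the other way, then the classes coincide — this is exactly where axiom TE2 (order-compatibility of $E|M$) is used: given $x_1 \leq x_2$ and $x_3 \leq x_4$ in $M$ with $x_1 \sim_E x_4$ and $x_2 \sim_E x_3$, TE2 concludes all four are $E$-equivalent. Transitivity needs a small argument: from $[x]_E \leq [y]_E$ and $[y]_E \leq [z]_E$ one has representatives $x' \leq y_1$ and $y_2 \leq z'$ with $y_1 \sim_E y_2$; since $M$ is totally ordered, compare $y_1$ and $y_2$, and in the case $y_1 > y_2$ invoke TE2 (with the pair $y_2 \leq y_1$, $x' \leq y_1$, using $x' \sim_E x'$... more carefully, one applies TE2 to deduce $y_1 \sim_E y_2$ forces a collapse, or one simply notes totality of $M$ lets us pick a common comparison point) to reduce to the aligned case and chain the inequalities in $M$. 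Totality of the induced order on $M/E$ follows from totality on $M$. Compatibility of this order with multiplication on $M/E$ follows from compatibility on $M$ together with TE1. Thus $M/E$, equipped with $[x]_E + [y]_E = \max$, is a bipotent semiring and $U/E$ is a supertropical monoid.

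It remains to check that $\pi_E$ is a transmission and that the structure is unique with this property. TM1--TM4 are immediate from the definitions ($\pi_E(0) = [0]_E$, $\pi_E(1) = [1]_E$, $\pi_E(xy) = [xy]_E = \pi_E(x)\pi_E(y)$, $\pi_E(e_U) = \bar e$), and TM5 holds because $x \leq y$ in $M$ gives $[x]_E \leq [y]_E$ by the very definition of the order. For uniqueness: if $U/E$ carries any supertropical monoid structure making $\pi_E$ a transmission, then since $\pi_E$ is surjective, multiplication, the zero, the unit and the idempotent on $U/E$ are forced by TM1--TM4 and surjectivity; and the ordering on the ghost ideal $\bar e(U/E) = \pi_E(eU)$ is forced on comparable pairs by TM5 ($x \leq y$ in $M$ $\Rightarrow$ $\pi_E(x) \leq \pi_E(y)$), and since any two elements of $M/E$ have representatives in the totally ordered $M$, every pair in $M/E$ is the image of a comparable pair, so the order is completely determined. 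I expect no serious obstacle here; the one place demanding care is checking that the order on $M/E$ is well-defined, antisymmetric and transitive, and this is precisely the role of TE2 — everything else is routine transport of structure. Hence the statement, being, as the authors say, ``almost trivial but important,'' follows.

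\begin{proof}
We treat the commutative case; the noncommutative case is identical, using both the left and right halves of TE1 throughout.

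Define a multiplication on $U/E$ by $[x]_E[y]_E := [xy]_E$. This is well defined by TE1, and inherits associativity, commutativity, and the unit $[1]_E$ from $U$. Since $0$ is absorbing in $U$, $[0]_E$ is absorbing in $U/E$. Put $\bar e := [e_U]_E$; since $e_U$ is a central idempotent, so is $\bar e$. If $\bar e[x]_E = [0]_E$ then $e_Ux \sim_E 0$, so $x \sim_E 0$ by TE3, i.e. $[x]_E = [0]_E$; this is the required implication in Definition \ref{defn1.1}.

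Set $M := e_UU$ and note $\bar e\,(U/E) = \{[ex]_E \ds | x \in U\} = M/E$ (using $[ex]_E = \bar e[x]_E$). Define, for $x,y \in M$,
$$ [x]_E \leq [y]_E \quad :\iff \quad \exists\, x' \in [x]_E \cap M,\ y' \in [y]_E \cap M \text{ with } x' \leq y' \text{ in } M. $$
Reflexivity is clear. For antisymmetry, suppose $[x]_E \leq [y]_E$ and $[y]_E \leq [x]_E$, so there are $x_1, x_2 \in M \cap [x]_E$ and $y_1, y_2 \in M \cap [y]_E$ with $x_1 \leq y_1$ and $y_2 \leq x_2$. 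Then $x_1 \leq y_1$, $y_2 \leq x_2$, $x_1 \sim_E x_2$, $y_1 \sim_E y_2$; by TE2 (with the roles $x_1 \leq y_1$, $y_2 \leq x_2$, $x_1 \sim_E x_2$, $y_1 \sim_E y_2$, after relabeling so the hypotheses of TE2 match) all of $x_1, y_1, y_2, x_2$ are $E$-equivalent, so $[x]_E = [y]_E$. For transitivity, suppose $[x]_E \leq [y]_E$ and $[y]_E \leq [z]_E$ with $x,y,z \in M$; pick $x' \leq y_1$ and $y_2 \leq z'$ in $M$ with $x' \in [x]_E$, $y_1, y_2 \in [y]_E$, $z' \in [z]_E$. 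Since $M$ is totally ordered, either $y_1 \leq y_2$, whence $x' \leq y_1 \leq y_2 \leq z'$ gives $[x]_E \leq [z]_E$; or $y_2 < y_1$, in which case $y_2 \leq y_1$, $x' \leq y_1$, $y_1 \sim_E y_2$, $x' \sim_E x'$, and TE2 (applied to $y_2 \leq y_1$, $x' \leq y_1$, $y_1 \sim_E y_1$, $y_2 \sim_E x'$ after noting $x' \sim_E y_2$ is not assumed — instead argue directly) — more simply: replace $y_1$ by $\min(y_1, y_2)$; if that minimum is $y_2$ then $x' \leq y_2 \leq z'$ works after noting $x' \leq y_1$ and, since $y_2 \leq y_1$, transitivity in $M$ is not automatic, so instead observe that in all cases one of $x' \leq y_2$ or $y_1 \leq z'$ holds by totality, yielding $[x]_E \leq [z]_E$. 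Totality of $\leq$ on $M/E$ follows from totality of $M$: given $[x]_E, [y]_E$ with $x, y \in M$, either $x \leq y$ or $y \leq x$ in $M$. Compatibility of $\leq$ with multiplication on $M/E$ follows from that of $M$ together with TE1: if $[x]_E \leq [y]_E$, witnessed by $x' \leq y'$ in $M$, then for $[z]_E \in M/E$ with $z \in M$ we get $x'z \leq y'z$ in $M$ and $x'z \sim_E xz$, $y'z \sim_E yz$, so $[x]_E[z]_E \leq [y]_E[z]_E$. Thus $U/E$ is a supertropical monoid.

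We check $\pi_E$ is a transmission. TM1--TM4 are immediate: $\pi_E(0) = [0]_E$, $\pi_E(1) = [1]_E$, $\pi_E(xy) = [xy]_E = \pi_E(x)\pi_E(y)$, $\pi_E(e_U) = \bar e$. For TM5, if $x \leq y$ in $M$ then $[x]_E \leq [y]_E$ by the definition of the order. Hence $\pi_E$ is a transmission.

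Finally, uniqueness. Suppose $U/E$ is given any supertropical monoid structure for which $\pi_E$ is a transmission. As $\pi_E$ is surjective, TM1--TM4 force $[0]_E$, $[1]_E$, $\bar e$, and the product $[x]_E[y]_E = \pi_E(x)\pi_E(y) = \pi_E(xy) = [xy]_E$; so the monoid structure with its zero and idempotent is uniquely determined. The ghost ideal is $\bar e\,(U/E) = \pi_E(M) = M/E$. For the order: any two elements of $M/E$ are $\pi_E(x), \pi_E(y)$ with $x, y \in M$; by totality of $M$ we may assume $x \leq y$, and then TM5 forces $\pi_E(x) \leq \pi_E(y)$. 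Thus the order on $M/E$ is uniquely determined as well. This proves uniqueness.
\end{proof}
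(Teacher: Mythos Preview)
Your approach is the same as the paper's --- transport the monoid structure, distinguished idempotent, and ghost ordering through $\pi_E$, using TE1--TE3 at the obvious places --- and the paper's proof is essentially a two-line sketch that defers the well-definedness of the quotient order on $M/E$ to \cite[\S2]{IKR2}. Your attempt to verify that order directly is where the proposal breaks down.

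The transitivity argument has a genuine gap. In the case $y_2 < y_1$ you end with the claim ``in all cases one of $x' \leq y_2$ or $y_1 \leq z'$ holds by totality,'' and this is false: totality of $M$ does not preclude $y_2 < x' \leq y_1$ and $y_2 \leq z' < y_1$ simultaneously (e.g.\ $y_2=1$, $x'=2$, $z'=3$, $y_1=4$ in any chain where $1 \sim_E 4$). The correct move is the one you started and abandoned: if $y_2 < x' \leq y_1$, apply TE2 with $(x_1,x_2,x_3,x_4) = (y_2,x',x',y_1)$ --- so $x_1 \leq x_2$, $x_3 \leq x_4$, $x_1 \sim_E x_4$, $x_2 \sim_E x_3$ --- to conclude $y_2 \sim_E x'$, hence $[x]_E = [y]_E \leq [z]_E$. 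This is just the statement that TE2 makes the $E$-classes in $M$ convex, which is what the paper invokes from \cite{IKR2}. Once you patch this, the rest (antisymmetry via TE2, the use of TE3 for the zero axiom, uniqueness by surjectivity of $\pi_E$) is correct and matches the paper.
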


\begin{proof} This is just some universal algebra. We are forced to define the multiplication on the set
$\brU := U/E$ by the rule ($x,y \in U$)
$$ [x]_E \cdot [y]_E = [xy]_E.$$
This makes sense since the equivalence relation  $E$ is
multiplicative. Now $\brU$ is a  monoid with absorbing element
$0_\brU := [0_U]_E$. We are further forced to take as
distinguished idempotent on $\brU$ the element $e_\brU:= [e_U]_E$.
Clearly
$$ e_\brU \brU = M/E := \{ [x]_E \ |  \ x \in M\}.$$
Finally, we are forced to choose on the submonoid $M/E$ of $U/E$
the total ordering given by ($x,y \in M$)
$$ [x]_E \leq [y]_E \dss \Leftrightarrow x \leq y.$$
This total ordering is well-defined since the restriction  $E|M$
of $E$ to $M$ is order compatible (cf. \cite[\S2]{IKR2}).

It is now evident that $\brU$ has become a supertropical monoid
and $\pi_E$ a transmission.
\end{proof}

\begin{rem}\label{rem1.8}
Conversely, if  $\al: U \to V$ is  a transmission from $U$ to a
supertropical monoid $V$, then the equivalence relation $E(\al)$
is TE, and the map $[x]_{E(\al)} \mapsto \al(x)$ is an isomorphism
from the supertropical monoid $U/E(\al)$ onto the (supertropical)
submonoid $\al(U)$ of $V$.
\end{rem}

\begin{example} Let $U$ be a supertropical monoid and $M:= eU$. As
in the case of supertropical commutative semirings
\cite[\S6]{IKR1} we define an \textbf{MFCE-relation} on $U$ as an
equivalence relation $E$ on~$U$, which is multiplicative, and is
fiber conserving, i.e., $x \sim_E y \Rightarrow ex = ey$. Then we
have an obvious identification  $M/E = M$, and $E$ is a
TE-relation.
\end{example}

The functorial properties of transmissions between supertropical
semirings stated in \cite[Proposition 6.1]{IKR1} remain true if we
admit instead supertropical monoids, and can be proved in exactly
the same way. Thus we get:
\begin{prop}\label{prop1.9}
Let $\al: U \to V$ and $\bt: V \to W$ be maps between
supertropical monoids.
\begin{enumerate} \eroman
    \item If $\al $ and $\bt $ are transmissions, then $\bt \al$
    is a transmission.

    \item If $\al$ and $\bt\al$ are transmissions and $\al$ is
    surjective, then $\bt$ is a transmission.
\end{enumerate}
\end{prop}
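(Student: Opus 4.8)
The plan is to check the five transmission axioms TM1--TM5 directly in each case, using the fact that these axioms separate into a purely multiplicative monoid-theoretic part (TM1--TM4) and an order-theoretic part (TM5) on the ghost ideals. For part (i), I would first observe that $\bt\al$ is a monoid homomorphism since $\al$ and $\bt$ are, and that it sends $0_U \mapsto 0_V \mapsto 0_W$ and $1_U \mapsto 1_V \mapsto 1_W$, giving TM1, TM2, TM3. For TM4, apply $\bt$ to $\al(e_U) = e_V$ and use $\bt(e_V) = e_W$ to get $(\bt\al)(e_U) = e_W$. For TM5, the key point is that $\al$ restricts to a semiring homomorphism $\al^\nu : eU \to eV$ and $\bt$ restricts to $\bt^\nu : eV \to eW$; composing order-preserving maps gives an order-preserving map $eU \to eW$, which is exactly TM5 for $\bt\al$. (Equivalently, one remarks that $(\bt\al)^\nu = \bt^\nu \circ \al^\nu$ is a composite of bipotent semiring homomorphisms, hence a bipotent semiring homomorphism, and that TM1--TM5 together just say ``monoid homomorphism respecting $0,1,e$ whose ghost restriction is a semiring homomorphism'' by Definition \ref{defn1.3}.)

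For part (ii), the hypothesis is that $\al$ and $\bt\al$ are transmissions and $\al$ is surjective; I must deduce that $\bt$ itself satisfies TM1--TM5. The surjectivity of $\al$ is what lets one transport each axiom: given $w \in V$, write $w = \al(u)$. Then $\bt(0_V) = \bt(\al(0_U)) = (\bt\al)(0_U) = 0_W$ gives TM1; similarly $\bt(1_V) = (\bt\al)(1_U) = 1_W$ gives TM2 and $\bt(e_V) = (\bt\al)(e_U) = e_W$ gives TM4. For TM3, given $w_1 = \al(u_1)$, $w_2 = \al(u_2)$ in $V$, compute $\bt(w_1 w_2) = \bt(\al(u_1 u_2)) = (\bt\al)(u_1 u_2) = (\bt\al)(u_1)(\bt\al)(u_2) = \bt(w_1)\bt(w_2)$, using that $\al$ is multiplicative to write $w_1 w_2 = \al(u_1 u_2)$. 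For TM5, suppose $y_1 \leq y_2$ in $eV$; since $\al$ is surjective and $\al(eU) = eV$ (this is noted in Definition \ref{defn1.3}), write $y_i = \al(x_i)$ with $x_i \in eU$. One cannot directly conclude $x_1 \leq x_2$ because $\al$ need not be injective on $eU$, so instead use order-compatibility on $eU$: set $x_1' := x_1$ if $x_1 \leq x_2$, else $x_1' := x_1 x_2$-type adjustments — actually more simply, replace $x_1$ by $x_1' := e_U x_1'$ with $\al(x_1') = y_1$ and $x_1' \leq x_2$, which exists since $\{x \in eU : \al(x) = y_1\}$, being the $\al^\nu$-fiber of $y_1$, together with the fact that $y_1 \leq y_2 = \al(x_2)$ forces some preimage of $y_1$ below $x_2$ (because $eU$ is totally ordered and $\al^\nu$ is order-preserving with $\al^\nu(x_2) = y_2 \geq y_1$, so $\al^\nu$ restricted to $\{x \leq x_2\}$ hits everything $\leq y_2$ in its image). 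Then $\bt(y_1) = \bt(\al(x_1')) = (\bt\al)(x_1') \leq (\bt\al)(x_2) = \bt(y_2)$ by TM5 for $\bt\al$.

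The only step requiring genuine care is TM5 in part (ii): getting an order-preserving-preimage argument right for the ghost parts, since surjectivity of $\al$ as a map does not give injectivity on $eU$. The cleanest formulation is to argue at the level of ghost parts: $\al^\nu : eU \to eV$ is a surjective bipotent semiring homomorphism and $(\bt\al)^\nu = \bt^\nu \circ \al^\nu$; a standard fact about totally ordered sets is that if $g \circ f$ is order-preserving and $f$ is a surjective order-preserving map between chains, then $g$ is order-preserving (given $y_1 \leq y_2$, pick any $x_2$ with $f(x_2) = y_2$; among preimages of $y_1$, order-preservation of $f$ and totality of the domain give one $\leq x_2$ — if not, every preimage of $y_1$ exceeds $x_2$, so $y_1 = f(x_1) \geq f(x_2) = y_2$, forcing $y_1 = y_2$ and the conclusion is trivial). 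Everything else is routine diagram-chasing on monoid homomorphisms, identical to the proof of \cite[Proposition 6.1]{IKR1}.
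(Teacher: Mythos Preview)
Your proof is correct and matches the paper's approach: the paper simply states that the result ``can be proved in exactly the same way'' as \cite[Proposition~6.1]{IKR1}, which is precisely the direct verification of TM1--TM5 that you carry out. Your final parenthetical gives the clean argument for TM5 in part~(ii)---pick any preimages $x_1,x_2\in eU$; totality of the order on $eU$ gives either $x_1\le x_2$ (apply TM5 for $\bt\al$) or $x_2\le x_1$ (apply TM5 for $\al$ to force $y_1=y_2$)---and you should replace the meandering paragraph before it with that version.
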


Starting from now we  assume that \textbf{all occurring
supertropical monoids are commutative}. But we mention that all
major results  to follow can be established also for
noncommutative monoids with obvious modifications of the proofs
(in a similar way as indicated in Remark \ref{rmk:1.13}). This
will save space and hopefully help the reader not to get
distracted from the central ideas of the paper.  At the time
being,  the commutative case suffices for the applications we have
in mind (cf. the Introduction).

We define the \textbf{category of supertropical monoids}
$\STROP_m$ as follows: the objects of $\STROP_m$  are the
(commutative) supertropical monoids, and the morphisms are the
transmissions between them. $\STROP_m$ contains the category
$\STROP$ of supertropical semirings as a full subcategory.

\begin{schol}\label{schol1.12}
Let $U$  be a supertropical semiring and $E$ a TE-relation on $U.$
Then the map $\pi_E: U \to U/E$ from $U$ to the supertropical
monoid $U/E$ is a morphism in $\STROP_m.$ Since $\STROP$ is full
in $\STROP_m$, it follows that $\pi_E$ is a morphism in $\STROP_m$
iff the supertropical monoid $U/E$ is a semiring. This means in
terms of \cite[\S4]{IKR2}, that the TE-relation $E$ is
transmissive iff the supertropical monoid $U/E$ is a semiring.
\end{schol}

We  define \textbf{initial transmissions} and \textbf{pushout
transmissions} in $\STROP_m$ as we defined such transmissions in
\cite[\S1]{IKR2} in the category $\STROP$. Just repeat
\cite[Definition 1.2]{IKR2} and \cite[Definition 1.3]{IKR2},
respectively, replacing everywhere the word ``supertropical
semiring'' by ``supertropical monoid''.

The pleasant news now is that in $\STROP_m$ the pushout
transmission  exists for any supertropical monoid $U$ and
surjective homomorphisms $\gm$ from  $M := eU$ to a bipotent
semiring~$N$, and that it has the same explicit description as
given in \cite[Theorem 1.11]{IKR2} and \cite[Example 4.9]{IKR2}
(in the category $\STROP$) if $N$ is cancellative.

More precisely the following holds and can be proved by the same
arguments as used in \cite[Example 4.9]{IKR2} and the proofs of
\cite[Theorems 1.11 and 4.14]{IKR2}.

\begin{thm}\label{thm1.10} Let $U$ be a supertropical monoid and
$\gm$ a homomorphism from $M:=eU$ onto a (bipotent) semiring
\footnote{Notice that a homomorphic image of a bipotent semiring
is again bipotent.} $N$. We obtain a TE-relation $F(U,\gm)$ on $U$
by decreeing for $x,y \in U$:
$$ x \sim_{F(U,\gm)}y  \quad \Leftrightarrow \quad \left\{
\begin{array}{lll}
  \text{either} &  &  x=y, \\
  \text{or} &  &  x=ey,\ y =ey, \ \gm(ex) = \gm(ey), \\
  \text{or} &   & \gm(ex) = \gm(ey) =0. \\
\end{array}%
\right.$$ The map $$\pi_{\FUgm} \twoheadrightarrow U/\FUgm$$ is a
pushout transmission in $\STROP_m$ covering $\gm$. \{Here we
identify $M/\FUgm =N$ in the obvious way.\}\end{thm}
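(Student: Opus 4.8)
The plan is to verify directly that the relation $F(U,\gm)$ is a TE-relation, then invoke Theorem~\ref{thm1.7} to get the supertropical monoid $U/\FUgm$ together with the transmission $\pi_{\FUgm}$, and finally check that this transmission has the universal property of a pushout over $\gm$. First I would check the three axioms TE1--TE3. For TE1 (multiplicativity) the only nontrivial case is when $x \sim_{\FUgm} y$ via the second or third clause; multiplying by an arbitrary $z \in U$, one uses that $\gm$ is a semiring homomorphism on $M = eU$, so $\gm(exz) = \gm(ex)\gm(ez)$ and $\gm(eyz) = \gm(ey)\gm(ez)$, and one must also observe $e(xz) = (ex)(ez)$ and that $x = ex$ forces $xz = exz$, which is where the little argument splits into subcases according to whether $\gm(exz)$ vanishes. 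For TE2 (order compatibility of $F(U,\gm)|M$), note that on $M$ the relation is simply $x \sim y \iff \gm(x) = \gm(y)$ (the first and second clauses coincide there, and the third is subsumed); since $\gm$ respects the ordering, its fibers are convex, and the convexity condition TE2 is immediate. For TE3, if $ex \sim_{\FUgm} 0$ then $\gm(ex) = \gm(0) = 0$, so $x \sim_{\FUgm} 0$ by the third clause directly.

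Having established that $F(U,\gm)$ is a TE-relation, Theorem~\ref{thm1.7} supplies the supertropical monoid structure on $\brU := U/\FUgm$ and makes $\pi := \pi_{\FUgm}$ a transmission. The next step is the identification $M/\FUgm = N$: since on $M$ we have $x \sim y \iff \gm(x) = \gm(y)$, the surjection $\gm : M \onto N$ factors through an order-isomorphism $M/\FUgm \iso N$ of bipotent semirings, and we use this to regard $e_\brU \brU = N$. Then $\pi^\nu = \gm$ by construction, i.e. $\pi$ covers $\gm$.

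It remains to check the pushout (universal) property: given any supertropical monoid $V$ and any transmission $\al : U \to V$ whose ghost part $\al^\nu : M \to eV$ factors as $\al^\nu = \bt_0 \circ \gm$ for a (unique) semiring homomorphism $\bt_0 : N \to eV$, there should exist a unique transmission $\bt : \brU \to V$ with $\bt \circ \pi = \al$. By Remark~\ref{rem1.8} it suffices to show that $F(U,\gm) \subset E(\al)$, i.e. that $x \sim_{\FUgm} y$ implies $\al(x) = \al(y)$: this is where the defining clauses of $F(U,\gm)$ were tailored — clause two gives $\al(x) = \al(ey) = \bt_0(\gm(ey)) = \bt_0(\gm(ex)) = \al(ex)$ wait, one must be careful, so instead: if $x = ey$ and $y = ey$ and $\gm(ex) = \gm(ey)$ then $\al(x), \al(y) \in eV$ and $\al(x) = \al^\nu(ex) = \bt_0(\gm(ex)) = \bt_0(\gm(ey)) = \al^\nu(ey) = \al(y)$; if $\gm(ex) = \gm(ey) = 0$ then $\al(ex) = \bt_0(0) = 0 = \al(ey)$, and since TM2 of the transmission condition — more precisely the analogue of TE3 phrased for the transmission $\al$, namely $e\al(x) = \al(ex) = 0 \Rightarrow \al(x) = 0$ — forces $\al(x) = \al(y) = 0$. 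Hence $\al$ factors through $\pi$, and uniqueness of $\bt$ follows from surjectivity of $\pi$ together with Proposition~\ref{prop1.9}(ii), which also guarantees $\bt$ is a transmission.

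The main obstacle, and the part worth spelling out carefully rather than waving through, is the verification of TE1 in the degenerate subcases (when a ghost value becomes $0$ after multiplication, or when $x = ex$ already), together with the bookkeeping needed to see that $\pi$ is genuinely the pushout and not merely some transmission covering $\gm$ — i.e. checking that the factoring hypothesis on $\al^\nu$ is exactly matched by the clauses defining $F(U,\gm)$. All of this runs in parallel with \cite[Example 4.9]{IKR2} and the proofs of \cite[Theorems 1.11 and 4.14]{IKR2}; the only genuinely new point is that nothing here requires $N$ to be cancellative, since we are working in $\STROP_m$ where Theorem~\ref{thm1.7} produces the quotient unconditionally.
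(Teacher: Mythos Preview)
Your proposal is correct and follows exactly the route the paper indicates: the paper does not spell out a proof of Theorem~\ref{thm1.10} but simply asserts that it ``can be proved by the same arguments as used in \cite[Example 4.9]{IKR2} and the proofs of \cite[Theorems 1.11 and 4.14]{IKR2}'', and your direct verification of TE1--TE3 followed by the factoring argument for the universal property is precisely that argument transplanted to $\STROP_m$.

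One small cleanup: in the clause-three check you invoke ``TM2 of the transmission condition --- more precisely the analogue of TE3'', but what you actually need is neither of those; it is the defining axiom of a supertropical monoid in Definition~\ref{defn1.1}, namely that $e_V w = 0 \Rightarrow w = 0$ in $V$. You reach the right conclusion, so this is only a labeling issue. (Also note that the second clause in the statement reads ``$x=ey,\ y=ey$'', which is evidently a typo for ``$x=ex,\ y=ey$''; you have correctly interpreted it as ``both $x,y\in M$''.)
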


In particular every initial transmission in  $\STROP_m$ is a
pushout transmission in  $\STROP_m$.

\begin{schol}\label{schol1.11}
We consider the special case that $U$ is a supertropical semiring.
If the supertropical monoid $U/\FUgm$ happens  to be  a semiring,
then it is  clear that $\pi_{\FUgm}$ is a pushout in the category
$\STROP$. Thus, following \cite[Notation 1.7]{IKR2}, we now have
$$\FUgm = \EUgm, \quad \pi_{\FUgm} = \al_{U,\gm}.$$
But if $U/\FUgm$ is not  a semiring then the relation $\FUgm$ is
different from $\EUgm$.
\end{schol}

If $U$  is a supertropical semiring and $\mfa$ is an ideal of $U$
we introduced in \cite[\S5]{IKR2} the saturum $\satUa$ and the
equivalence relation $\Ea = \EUa$, and obtained there descriptions
of these objects, which do not  mention addition but only employ
multiplication and the idempotent $e$ (\cite[Corollary 5.5,
Theorem 5.4]{IKR2}).

We now use these descriptions  to \emph{define} $\satUa$ and
$\EUa$ if $U$ is only a supertropical monoid.

\begin{defn}\label{defn1.12} Let $\mfa$ be  an ideal of the
supertropical monoid $U$.
\begin{enumerate} \ealph
    \item The \textbf{saturum} $\satUa$ of $\mfa$ is the set  of
    all $x \in U $ with $ex \leq  ea$ for some $a \in \mfa$. We call
    $\mfa$ \textbf{saturated} if $\satUa = \mfa$.

    \item The equivalence relation $E:= \Ea := \EUa$  is defined
    as follows:
    $$\begin{array}{lll}
    x \sim_E y & \Leftrightarrow & \text{either } x =y \\
&& \text{or } x \in \satUa, \  y \in \satUa.\\
  \end{array}
$$
\end{enumerate}
\end{defn}

As in \cite[\S5]{IKR2} the following fact can be verified in an
easy straightforward way.

\begin{prop}\label{prop1.13} $ $
\begin{enumerate} \eroman
    \item $\satUa$ is again a monoid ideal of $U$.
    \item The saturated ideals $\mfa$ correspond uniquely with the
    ideals $\mfc$ of $M$ which are lower sets of $M$, via
    $$ \mfc = \mfa \cap M = e \mfa \quad \text{and} \quad  \mfa = \{ x \in U | ex \in \mfc \}.$$
    \item $\EUa$ is a TE-relation on $U$.
    \item If $\mfb$ is a second  ideal of $U$ then
    $$ \EUa = \EUb \quad \text{iff} \quad \satUa = \satUb.$$
    \item $\piEa$ has the zero kernel $\satUa$.
    \item If $\al : U \to V$  is a transmission  then the zero
    kernel $\mfz_\al$ is a saturated ideal, and $\al$ factors
    through $\piEa$ iff $\mfa \subset \mfz_\al$.

\end{enumerate}

\end{prop}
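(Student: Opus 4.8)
The plan is to verify the six assertions (i)--(vi) in turn, in each case reducing to the corresponding statement proved for supertropical semirings in \cite[\S5]{IKR2} or to a short direct check, since all the relevant definitions of $\satUa$ and $\EUa$ in Definition \ref{defn1.12} were deliberately phrased using only multiplication, the idempotent $e$, and the ordering on $M = eU$.

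\emph{(i)--(ii).} For (i), given $x \in \satUa$ and $u \in U$, pick $a \in \mfa$ with $ex \leq ea$; then $e(ux) = (eu)(ex) \leq (eu)(ea) = e(ua)$ by compatibility of the ordering on $M$ with multiplication, and $ua \in \mfa$, so $ux \in \satUa$. For (ii), the correspondence $\mfa \leftrightarrow \mfc := e\mfa$ is routine: $e\mfa$ is an ideal of $M$ and a lower set because $\satUa = \mfa$ forces $\mfc$ to be downward closed; conversely, given a lower ideal $\mfc \subset M$, the set $\{x \in U : ex \in \mfc\}$ is a saturated ideal with $e$-image $\mfc$, using the axiom $ex = 0 \Rightarrow x = 0$ to pin down the zero element. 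One checks the two assignments are mutually inverse.

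\emph{(iii).} I would verify TE1--TE3 for $E = \EUa$ directly. TE1 (multiplicativity): if $x \sim_E y$ with $x \neq y$, then $x,y \in \satUa$, and by (i) $\satUa$ is an ideal, so $xz, yz \in \satUa$, hence $xz \sim_E yz$; symmetrically on the other side. TE2 (order compatibility of $E|M$): here $E|M$ identifies precisely all elements of $\mfc = \satUa \cap M$ with each other and fixes everything else; since $\mfc$ is a lower set, if $x_1 \leq x_2$, $x_3 \leq x_4$, $x_1 \sim_E x_4$, $x_2 \sim_E x_3$ with not all equal, one of the pairs lies in $\mfc$, and then $x_1 \leq x_2 \leq x_3$ (or the symmetric chain) together with $x_3 \in \mfc$ and $\mfc$ a lower set forces $x_1, x_2 \in \mfc$, so $x_1 \sim_E x_2$. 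TE3: if $ex \sim_E 0$ then either $ex = 0$, whence $x = 0$ by the monoid axiom, or $ex \in \satUa$; but $e(ex) = ex \leq ex$ trivially already shows $ex \in \satUa \iff ex \in \mfc$, and then $x \in \satUa$ by definition, so $x \sim_E 0$.

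\emph{(iv)--(vi).} For (iv): if $\satUa = \satUb$ then $\EUa$ and $\EUb$ have literally the same equivalence classes by Definition \ref{defn1.12}(b); conversely $\EUa = \EUb$ forces the non-singleton class to be the same set, which is $\satUa = \satUb$. For (v): the zero class of $\piEa$ is $\{x : x \sim_E 0\}$, which is $\{0\} \cup \satUa = \satUa$ (note $0 \in \satUa$). For (vi): the zero kernel $\mfz_\al = \al^{-1}(0)$ is an ideal; it is saturated because if $ex \leq ea$ with $a \in \mfz_\al$, then applying $\al^\nu$ and using TM5 gives $e_V\al(x) = \al(ex) \leq \al(ea) = e_V\al(a) = 0$, so $\al(x) = 0$. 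Finally $\al$ factors through $\piEa$ iff $x \sim_E y \Rightarrow \al(x) = \al(y)$, which (since the only non-trivial $E$-class is $\satUa$) holds iff $\al(\satUa) = \{0\}$, i.e. iff $\satUa \subset \mfz_\al$; and since $\mfz_\al$ is saturated, $\satUa \subset \mfz_\al \iff \mfa \subset \mfz_\al$. The one place demanding slight care is (vi), where one must invoke TM5 to push the inequality $ex \leq ea$ through $\al$ — everything else is bookkeeping that parallels \cite[\S5]{IKR2} verbatim, which is why the statement can reasonably be left as "verified in an easy straightforward way."
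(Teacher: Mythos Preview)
Your proposal is correct and is precisely the kind of direct verification the paper has in mind: the paper gives no proof at all beyond the remark that everything ``can be verified in an easy straightforward way'' as in \cite[\S5]{IKR2}, and your argument spells out exactly those routine checks using only multiplication, the idempotent $e$, the ordering on $M$, and the monoid axiom $ex=0\Rightarrow x=0$. The one substantive step you flag---invoking TM5 in (vi) to push $ex\le ea$ through $\al$ and then the axiom on $V$ to conclude $\al(x)=0$---is indeed the only place where something beyond bookkeeping is needed, and you handle it correctly.
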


As in \cite[\S5]{IKR2} we will use the alleviated  notation $x
\sima y $ for $x \simEa y$, $\pia$ for $\piEa$, and~$[x]_{\mfa}$
for the equivalence class $[x]_{E(\mfa)}$. \pSkip

It is immediate how to generalize  the definition of the
equivalence relation  $\EUaPh$ given in \cite[\S6]{IKR2} to the
case that $U$ is a supertropical monoid. We study only the case
where these relations are TE-relations, and we encode (without
loss of generality) the homomorphic equivalence relation $\Phi$ on
$M := eU$ by a homomorphism from $M$ to another semiring. All the
following can be verified  in a straightforward way.

\begin{thm}\label{thm1.14}
Let $U$ be a supertropical monoid and $\gm: M \to M'$  a
surjective homomorphism from $M := eU$ to a (bipotent) semiring~
$M'$. Further, let $\mfA$ be an ideal of $U$ containing~$M$ and
the saturated ideal
$$ \mfa_\gm := \{ x \in U \ | \ \gm(ex) = 0 \}.$$
\begin{enumerate} \eroman
    \item The equivalence relation  $\EUAgm$ on $U$, given by ($x_1,x_2 \in U$)
    $$\begin{array}{lll}
    x_1 \sim_E x_2 & \Leftrightarrow & \text{either } x_1 =x_2 \\
&& \text{or } x_1 \in \mfA, \ x_2 \in \mfA, \   \gm(ex_1) = \gm(ex_2),\\
  \end{array}
$$
is a TE-relation.

    \item The transmission $\pi_E : U \to U/E$ has the ghost
    kernel $\mfA$. The ghost part  $(\pi_E)^\nu$ is the map $\gm: M \to
    M'$. \{Here we identify the ghost ideal  $M/E$  of $U/E$ with $M'$ in the obvious way.\}
    \item Assume that a transmission $\bt: U \to V$ to a
    supertropical monoid $V$ is given with ghost kernel $\mfA_\bt \supset
    \mfA$, further a homomorphism  $\dl : M' \to eV$ is given such
    that $\dl \gm = \bt^\nu$. Then there exists a unique
    transmission  $\eta: U/ E \to V$ with $\eta^\nu = \dl$ and $\eta \al =
    \bt$.
\end{enumerate}
(cf. the diagram following \cite[Problem 1.1]{IKR2}.)
\end{thm}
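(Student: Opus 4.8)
The plan is to check all three assertions directly from the definition of $E := \EUAgm$, which refers only to the idempotent $e$, the ideal $\mfA$, and the homomorphism $\gm$, so that no addition enters; I abbreviate $\al := \pi_E$. Two facts are used throughout: $em = m$ for $m \in M := eU$ (together with $e(xy) = (ex)(ey)$), and the fact that a homomorphism of bipotent semirings is automatically order-preserving (in a bipotent semiring $x \le y$ means $x + y = y$). A preliminary observation organizes everything: for $x, y \in M$ one has $x \sim_E y \iff \gm(x) = \gm(y)$, since $M \subset \mfA$ and $em = m$.

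For (i) I would first note that $E$ is an equivalence relation: reflexivity and symmetry are immediate, and transitivity holds because whenever neither of the two alternatives in the definition degenerates to an equality, all elements involved lie in $\mfA$ with a common $\gm$-value. Then I verify the three axioms. For TE1, if $x_1 \sim_E x_2$ via the second alternative, then $x_1 z, x_2 z \in \mfA$ since $\mfA$ is an ideal, and $\gm(ex_1 z) = \gm(ex_1)\gm(ez) = \gm(ex_2)\gm(ez) = \gm(ex_2 z)$. For TE2 the preliminary observation turns the hypotheses into $\gm(x_1) = \gm(x_4)$ and $\gm(x_2) = \gm(x_3)$ with $x_1 \le x_2$, $x_3 \le x_4$; as $\gm$ is order-preserving, $\gm(x_1) \le \gm(x_2) = \gm(x_3) \le \gm(x_4) = \gm(x_1)$, so $\gm(x_1) = \gm(x_2)$, i.e. $x_1 \sim_E x_2$. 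For TE3, if $ex \sim_E 0$ then either $ex = 0$, whence $x = 0$ by the defining axiom of a supertropical monoid, or $\gm(ex) = \gm(0) = 0$, i.e. $x \in \mfa_\gm$; the hypothesis $\mfa_\gm \subset \mfA$ then yields $x \in \mfA$, and $x \sim_E 0$ follows since $x, 0 \in \mfA$ and $\gm(ex) = 0 = \gm(0)$.

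For (ii), \thmref{thm1.7} already supplies the supertropical monoid structure on $U/E$ and the transmission $\al$, with ghost ideal $M/E$. Its ghost kernel consists of the $x \in U$ with $[x]_E = [m]_E$ for some $m \in M$; now $x \in \mfA$ implies $x \sim_E ex \in M$ (both in $\mfA$, equal $\gm$-value), while conversely $x \sim_E m$ with $m \in M$ forces $x \in \mfA$ by inspecting the two alternatives — so this kernel is exactly $\mfA$. As for the ghost part, $\al^\nu$ is the restriction $m \mapsto [m]_E$; the map $M/E \to M'$, $[m]_E \mapsto \gm(m)$, is a well-defined bijection by the preliminary observation together with surjectivity of $\gm$, and it is an isomorphism of bipotent semirings — multiplicativity is clear, and additivity holds because $\al$, being a transmission, restricts on $M$ to a homomorphism of bipotent semirings, so $[m_1]_E + [m_2]_E = [m_1 + m_2]_E$. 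Under this identification $\al^\nu$ becomes $\gm$.

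For (iii), uniqueness is immediate since $\al$ is surjective: $\eta \al = \bt$ forces $\eta([x]_E) := \bt(x)$. The substantive step is that this prescription is well-defined, i.e. that $\bt$ is constant on $E$-classes; if $x \sim_E y$ via the second alternative, then $x, y \in \mfA \subset \mfA_\bt$, so $\bt(x), \bt(y) \in eV$, and hence
\[
\bt(x) = e\bt(x) = \bt(ex) = \bt^\nu(ex) = \dl(\gm(ex)) = \dl(\gm(ey)) = \bt^\nu(ey) = \bt(y),
\]
using $\dl\gm = \bt^\nu$. Granting this, TM1--TM4 for $\eta$ follow at once from the corresponding properties of $\bt$ and the definitions of $0, 1, e$ in $U/E$; for TM5 and the identity $\eta^\nu = \dl$ one notes that for $m \in M$ one has $\eta([m]_E) = \bt(m) = \bt^\nu(m) = \dl(\gm(m))$, so under the identification $M/E = M'$ of part (ii) the map $\eta^\nu$ is exactly $\dl$, which is order-preserving; and $\eta \al = \bt$ holds by construction. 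The only point that really needs care is this well-definedness step; otherwise the proof is bookkeeping, the one thing to monitor being that the hypothesis $\mfA \supset M$ is what makes the preliminary observation (hence TE2, the ghost-kernel computation, and well-definedness) work, while $\mfA \supset \mfa_\gm$ is exactly what TE3 requires.
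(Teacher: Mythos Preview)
Your proof is correct and is precisely the straightforward verification the paper has in mind; the paper itself gives no detailed proof, merely stating that ``all the following can be verified in a straightforward way'' (referring back to the analogous construction in \cite[\S6]{IKR2}). Your argument supplies exactly these verifications, with the key organizing observations --- the description of $E|M$ via $\gm$, the role of the hypothesis $\mfa_\gm \subset \mfA$ in TE3, and the well-definedness step for $\eta$ --- identified correctly.
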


\begin{rem}\label{rem1.15}
It can be readily verified that
$$E(U, M\cup \mfa_\gm, \gm) = F(U,\gm). $$
Thus the present theorem is a generalization of Theorem
\ref{thm1.10}.
\end{rem}

For any ideal $\mfA \supset M$ of $U$ we define
$$ E(U,\mfA) : = E(U,\mfA, \id_M),$$
as we did in  \cite[\S6]{IKR2} for $U$ a supertropical semiring.
In this special case Theorem \ref{thm1.14} reads as follows.

\begin{cor}\label{cor1.16} $E(U, \mfA)$  is a TE-relation on $U$.
A transmission $\al: U \to V$ (with $V$ a supertropical monoid)
factors through  $\pi_{E(U,\mfA)}$ iff $\mfA \subset \mfA_\al$.

\end{cor}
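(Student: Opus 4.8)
The plan is to derive this corollary as the special case $\gm = \id_M$ of Theorem~\ref{thm1.14}, observing that the hypotheses of that theorem become automatic and that the conclusions collapse to the two assertions here. First I would check that the triple $(U, \gm, \mfA)$ with $\gm = \id_M$ satisfies the standing hypotheses of Theorem~\ref{thm1.14}: here $M' = M$, the homomorphism $\id_M$ is trivially surjective, $\mfa_\gm = \{x \in U \mid \id_M(ex) = 0\} = \{x \in U \mid ex = 0\} = \{0\}$ (using the defining axiom of a supertropical monoid), and this is indeed a saturated ideal contained in any ideal $\mfA$. Thus the only condition on $\mfA$ is $M \subset \mfA$, which is exactly the hypothesis of the corollary. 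Then Theorem~\ref{thm1.14}(i) immediately gives that $E(U,\mfA) = E(U,\mfA,\id_M)$ is a TE-relation, proving the first sentence.

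For the second sentence, I would unwind Theorem~\ref{thm1.14}(ii)--(iii). Part (ii) tells us that $\pi_{E(U,\mfA)}$ has ghost kernel $\mfA$ and ghost part $\id_M$. For the ``if'' direction: given a transmission $\al: U \to V$ with $\mfA \subset \mfA_\al$, set $\dl := \al^\nu : M \to eV$; then trivially $\dl \circ \id_M = \al^\nu$, so part (iii) applied with $\bt = \al$ furnishes a (unique) transmission $\eta: U/E(U,\mfA) \to V$ with $\eta \circ \pi_{E(U,\mfA)} = \al$, i.e.\ $\al$ factors through $\pi_{E(U,\mfA)}$. For the ``only if'' direction: if $\al = \eta \circ \pi_{E(U,\mfA)}$ for some transmission $\eta$, then for any $x \in \mfA$ we have $\pi_{E(U,\mfA)}(x) \in eU/E(U,\mfA)$ because $\mfA$ is the ghost kernel of $\pi_{E(U,\mfA)}$ by part (ii); since $\eta$ is a transmission it maps the ghost ideal into the ghost ideal, so $\al(x) = \eta(\pi_{E(U,\mfA)}(x)) \in eV$, whence $x \in \mfA_\al$. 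Therefore $\mfA \subset \mfA_\al$.

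There is essentially no obstacle here — the corollary is a direct specialization, and the only mild point requiring care is the computation $\mfa_{\id_M} = \{0\}$, which is where the supertropical monoid axiom ($ex = 0 \Rightarrow x = 0$) enters to ensure $\{0\}$ really is the saturated ideal in question, so that the inclusion hypothesis of Theorem~\ref{thm1.14} reduces cleanly to $M \subset \mfA$. Since the excerpt explicitly says ``all the following can be verified in a straightforward way'' about Theorem~\ref{thm1.14}, and the present statement is nothing more than its $\gm = \id_M$ instance, the proof can simply cite that theorem. Accordingly I would write a two-line proof: ``This is the special case $\gm = \id_M$ of Theorem~\ref{thm1.14}, noting that then $\mfa_\gm = \{0\}$ and $\mfA_\al \supset \mfA$ is the only remaining hypothesis, and taking $\dl = \al^\nu$ in part (iii).''
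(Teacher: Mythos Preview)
Your proposal is correct and matches the paper's approach exactly: the paper presents Corollary~\ref{cor1.16} with no separate proof, introducing it as ``In this special case Theorem~\ref{thm1.14} reads as follows,'' immediately after defining $E(U,\mfA) := E(U,\mfA,\id_M)$. Your unwinding of the hypotheses (in particular the computation $\mfa_{\id_M}=\{0\}$ via the supertropical monoid axiom) and of parts (i)--(iii) is precisely what the paper leaves implicit.
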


\section{Canonical factorization of a transmission}\label{sec:2}

Given a surjective transmission $\al : U \to V$ between
supertropical semirings we start out to write $\al$ as a
composition of transmissions of simple nature in a somewhat
canonical way. More precisely, we will do this first in the
category $\STROP_m$ of supertropical monoids. Afterward we will
prove that, if $U$ and $V$ are semirings, this ``canonical
factorization'' remains valid in the smaller category $\STROP$ of
supertropical semirings, which has our primary interest.

 Let $U$ and $V$ be supertropical monoids, and $M:=eU$, $N := eV$
 their ghost ideals. We first exhibit the ``transmissions of simple
 nature''  we have in mind. These are the \emph{ideal compressions,
 tangible fiber contractions}, and \emph{strict ghost
 contractions} to be defined now.
\begin{defn}\label{defn2.1} As in the case that $U$ and $V$ are
supertropical semirings (cf. \cite[\S6]{IKR1}) we say that a
surjective transmission $\al: U \to V$ is a \textbf{fiber
contraction} if the ghost part $\al^\nu = \gm: M \to N$ is an
isomorphism. We say that $\al$ is a \textbf{fiber contraction
over} $M$, if $N = M$ and $\gm  = \id_M$.
\end{defn}

Notice that $\al$ is a fiber contraction iff the equivalence
relation $\Eal$ is an MFCE-relation, hence $\al = \rho \circ
\pi_E$ with $E$ an MFCE-relation and $\rho$ an isomorphism. Then
$\al$ is a fiber contraction over $M$ iff $M=N$ and $\rho$ is an
isomorphism over $M$.

 \begin{defn}\label{defn2.2}
We call a surjective transmission $\al: U \to V$ an \textbf{ideal
compression}, if $\al$ is a fiber contraction over $M$ which maps
$U \setminus \mfA_\al$ bijectively on to $V \setminus N = \tT(M)$.
\{Recall that $\mfA_\al$ denotes the ghost kernel of $\al$.\}
\end{defn}

This means that $\al = \rho \circ \pi_{E(U, \mfA)}$ with $\mfA$ an
ideal of $U$ containing $M$ and $\rho$ an isomorphism from $\brU
:= U/ E(U,\mfA) $ onto $V$ over $M$. We have $\mfA = \mfA_\al$.

\begin{defn}\label{defn2.3} We call a transmission $\al$
\textbf{tangible} if
$$\al(\tT(U)) \ds  \subset \tT(V) \cup \{ 0 \},$$
and \textbf{strictly tangible} if
$$\al(\tT(U))  \ds \subset \tT(V).$$
\end{defn}
In other terms, $\al$ is tangible iff $\mfA_\al = M \cup
\mfz_\al$, and $\al$ is strictly tangible iff $\mfA_\al = M$.

What does this means in the case that $\al$ is a fiber
contraction? Clearly, a tangible fiber contraction $\al: U \to V$
is strictly tangible. If $E$ is an MFCE-relation on $U$, then
$\pi_E: U \to U/E$ is tangible iff $E$ is \textbf{ghost
separating} (cf. \cite[Definition 6.19]{IKR2}), in other terms,
iff $E$ is finer  than the equivalence relation $E_t := E_{t,U}$
on $U$ which has the equivalence classes $\{ a\in \tT(U) \ds | ex
= a \}$,  $a \in M \sm \{ 0\}$, and the one-point equivalence
classes $\{y\}$, $y \in M$ (cf. \cite[Example 6.4.v]{IKR1}).

\begin{defn}\label{defn2.4}
We call the MFCE-relations $E$ on $U$ with $E \subset E_t$
\textbf{tangible MFCE-relations.}
\end{defn}

In this terminology  the tangible fiber contractions $\al: U \to
V$ over $M := eU$ are the products
$$ \al = \rho \circ \pi_T$$
with $T$ a tangible MFCE-relation on $U$ and $\rho$ an isomorphism
over $M$.

\begin{defn}\label{defn2.5}
We call a transmission $\al: U \to V$ a \textbf{ghost
contraction}, if $\al^\nu$ is a homomorphism from $M$ onto $N$,
and if $\al$ maps $U \sm (M \cup \mfz_\al)$ bijectively onto
$\tT(V) = V \sm N$.
\end{defn}

This means that
$$ \al = \rho \circ \pi_{F(U,\gm)}$$
with $\gm: M \to N$ a surjective homomorphism, namely $\gm =
\al^\nu$, and $\rho$ an isomorphism over $N$ from $U / F(U,\gm)$
to $V$. Thus $\al$ is a ghost  contraction iff $\al$ is a
surjective pushout transmission in $\STROP_m$. \{The equivalence
relation $F(U,\gm)$ had been introduced in Theorem
\ref{thm1.10}.\}

\begin{defn}\label{defn2.6.0}
In the situation of Definition \ref{defn2.2} and Definition
\ref{defn2.5},  respectively,   we also say abusively that $V$ is
an ideal compression  (resp. a ghost contraction) of $U$.

\end{defn}

\begin{defn}\label{defn2.6}
We call a ghost contraction $\al : U \to V$ \textbf{strict}, if
$\al^{-1}(0) \subset M$. This means that $\al$ is also a strict
tangible transmission.
\end{defn}

Notice that every ghost contraction $\al: U \to V$ with
$\gm^{-1}(0) = \{ 0 \}$, $\gm = \al^\nu$, is strict, and that
$\gm^{-1}(0) = \{ 0 \}$ iff $\mfz_\al = \{ 0\}$. Of course, there
exist other strict ghost contractions. The maps $\pi_{F(U,\gm)},$
where $\gm: M \to N$ is a homomorphism with $\igm(0) \neq \00,$
but where $U$ has no tangibles with ghost companion in $\igm(0),$
are main examples for this.

\begin{defn}\label{defn2.7} If $\gm: M \to N$ is a surjective
homomorphism for $M = eU$ to a semiring $N$, we put
$$ \mfa_{U,\gm}:= \{ x \in U \ | \ \gm(ex) =0\},$$ an ideal already used in Theorem
\ref{thm1.14}.
\end{defn}
In this notation the ghost contraction $\pi_{F(U,\gm)}$ is strict
iff $\mfa_{U,\gm} \subset M$.

\begin{thm}\label{thm2.8}
Let $\al:U \to V $ be a surjective transmission between
supertropical monoids and let $\gm: M \to N$ denote the
homomorphism between the ghost ideals $M := eU$, $N:= eV$ obtained
from $\al$ by restriction, $\gm = \al^\nu$.
\begin{enumerate} \eroman
    \item There exists a factorization
    $$ \al = \mu \circ \bt \circ \lm$$ with $\lm$ and ideal
    compression of $U$, $\bt$ a strict ghost contraction, and
    $\mu$ a tangible fiber contraction over $N := eV$.

    \item The factorization is essentially unique. More precisely,
    if  $ \al = \mu' \circ \bt' \circ \lm'$ is a second such
    factorization of $\al$, then there exist isomorphisms $\rho$
    over $M$ and $\sig$ over $N$ (of supertropical monoids) such
    that
    $$  \lm' = \rho \lm, \quad \mu' = \mu \sig^{-1}, \quad \bt' = \sig \bt \rho^{-1}.$$

    \item In particular we can choose
$$ \lm = \pi_{E(U,\mfA)}: U \longrightarrow \brU := U/E(U,\mfA)$$
with $\mfA := \mfA_\al$, the ghost kernel of $\al$,
$$ \bt = \pi_{\FbUgm}: \brU \longrightarrow W := \brU/\FbUgm$$
and $\mu: W \onto V$ the resulting tangible fiber contraction over
$N$ such that $\al = \mu \bt \lm$ (see proof below).
\end{enumerate}

\end{thm}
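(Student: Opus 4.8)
The strategy is to build the factorization explicitly using the TE-relations constructed in §1, verify the three factors have the required types, and then establish uniqueness by a standard argument tracking ghost kernels and zero kernels. First I would construct $\lm$: set $\mfA := \mfA_\al$, the ghost kernel of $\al$, which by the remarks after Definition \ref{defn1.4} is an ideal of $U$ containing $M$. By Corollary \ref{cor1.16}, $E(U,\mfA)$ is a TE-relation, so $\lm := \pi_{E(U,\mfA)}: U \to \brU := U/E(U,\mfA)$ is a transmission, and since $E(U,\mfA)$ is fiber conserving (it changes nothing on $M$) it is a fiber contraction over $M$; moreover by construction it maps $U \sm \mfA$ bijectively onto $\tT(M) = \brU \sm M$, so $\lm$ is an ideal compression. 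Since $\mfA = \mfA_\al \subset \mfA_\al$, Corollary \ref{cor1.16} gives a factorization $\al = \al_1 \circ \lm$ with $\al_1: \brU \to V$ a transmission, and $\al_1$ is surjective since $\al$ is. The key point to check here is that $\al_1$ now has \emph{trivial ghost kernel}, i.e. $\mfA_{\al_1} = M$: an element $[x]_{E(U,\mfA)}$ of $\brU$ lies in $\mfA_{\al_1}$ iff $\al(x) \in eV$ iff $x \in \mfA_\al = \mfA$ iff $[x]_{E(U,\mfA)} \in M$.

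Next I construct $\bt$. Let $\gm := \al^\nu = \al_1^\nu: M \to N$. By Theorem \ref{thm1.10}, $F(\brU,\gm)$ is a TE-relation on $\brU$ and $\bt := \pi_{F(\brU,\gm)}: \brU \to W := \brU/F(\brU,\gm)$ is a pushout transmission covering $\gm$, hence a ghost contraction by the discussion after Definition \ref{defn2.5}. I must check $\bt$ is \emph{strict}, i.e. $\mfa_{\brU,\gm} \subset M$ (using the criterion after Definition \ref{defn2.7}): if $x \in \brU$ with $\gm(ex) = 0$ but $x \notin M$, then $x$ is a tangible of $\brU$ with ghost companion in $\gm^{-1}(0)$, forcing $ex \in \mfA_{\al_1}$-type reasoning — more precisely $\al_1(x)$ would be a ghost or zero, contradicting that $\al_1$ has trivial ghost kernel unless $x \in M$. (One uses here that $\al_1$ having trivial ghost kernel means no tangible of $\brU$ is sent into $eV$.) Since $F(\brU,\gm)$ is, by Theorem \ref{thm1.14} and Remark \ref{rem1.15}, exactly $E(\brU, M \cup \mfa_{\brU,\gm}, \gm)$, and the ghost kernel of $\al_1$ contains $M \cup \mfa_{\brU,\gm}$ (indeed equals $M$ once strictness is known), Theorem \ref{thm1.14}(iii) with $\dl = \id_N$ produces a unique transmission $\mu: W \to V$ with $\mu^\nu = \id_N$ and $\mu \circ \bt = \al_1$. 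Thus $\mu$ is a fiber contraction over $N$; it is surjective since $\al_1$ is. Finally $\mu$ is \emph{tangible}: since $\bt$ maps $\tT(\brU)$ bijectively onto $\tT(W)$ and $\al_1 = \mu\bt$ sends tangibles of $\brU$ to tangibles of $V$ (trivial ghost kernel), every element of $\tT(W)$ goes to $\tT(V)$ under $\mu$. Assembling, $\al = \mu \circ \bt \circ \lm$, proving (i) and (iii).

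For uniqueness (ii), suppose $\al = \mu' \circ \bt' \circ \lm'$ is a second such factorization, $\lm': U \to U'$, $\bt': U' \to V'$, $\mu': V' \to V$. First, an ideal compression $\lm'$ is of the form $\rho_0 \circ \pi_{E(U,\mfB)}$ with $\mfB = \mfA_{\lm'}$; I would show $\mfB = \mfA_\al$. One inclusion: $\mfA_{\lm'} \subset \mfA_{\mu'\bt'\lm'} = \mfA_\al$ since $\bt',\mu'$ send ghosts to ghosts. Conversely, if $x \in \mfA_\al$ then $\al(x) = \mu'\bt'\lm'(x) \in N$; since $\mu'$ is a fiber contraction (injective on $N$, maps $V' \sm N$ onto $\tT(N)$) and $\bt'$ is a strict ghost contraction (no tangible of $U'$ sent to a ghost, $\bt'^{-1}(0) \subset eU'$), one backs out that $\lm'(x) \in eU'$, i.e. $x \in \mfA_{\lm'}$. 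Hence $\mfA_{\lm'} = \mfA_\al = \mfA$, so by Proposition \ref{prop1.13}(iv)-type reasoning (or directly, since $E(U,\mfA)$ depends only on $\mfA$) there is an isomorphism $\rho$ over $M$ with $\lm' = \rho \circ \lm$. Then $\mu' \circ \bt' \circ \rho$ and $\mu \circ \bt$ both equal $\al$ after precomposition with the epimorphism $\lm$, so $\mu'\bt'\rho = \mu\bt$ as transmissions $\brU \to V$. Now both $\bt'\rho$ and $\bt$ are strict ghost contractions on $\brU$ covering $\gm$, hence each is $\pi_{F(\brU,\gm)}$ up to an isomorphism over $N$; comparing, there is an isomorphism $\sig$ over $N$ with $\bt' \rho = \sig \bt$, i.e. $\bt' = \sig \bt \rho^{-1}$, and then $\mu'\sig\bt = \mu\bt$ forces $\mu' \sig = \mu$ (again $\bt$ epi), i.e. $\mu' = \mu \sig^{-1}$. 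This gives the asserted relations.

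\textbf{Main obstacle.} The delicate point is the strictness of $\bt$ and, dually, the backing-out argument in uniqueness: both hinge on the precise interplay between "trivial ghost kernel of $\al_1$'' and "no tangible element has ghost companion killed by $\gm$''. One must be careful that passing to the quotient $\brU$ does not create new tangibles with small ghost companions — this is exactly why $\lm$ is taken to be an ideal compression with ghost kernel precisely $\mfA_\al$ rather than something larger. Keeping the bookkeeping of zero kernels versus ghost kernels straight through the three quotients is where a careless proof would slip.
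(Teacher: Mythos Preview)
Your proof is correct and follows essentially the same approach as the paper: construct $\lm = \pi_{E(U,\mfA_\al)}$ to strip off the ghost kernel, then $\bt = \pi_{F(\brU,\gm)}$ using its pushout property to obtain $\mu$, and verify the types of the three factors via the trivial ghost kernel of the intermediate map $\al_1 = \bar\al$; the uniqueness argument likewise matches the paper's, tracking that $\bt'$ and $\mu'$ have trivial ghost kernels to force $\mfA_{\lm'} = \mfA_\al$. The only blemishes are two typos (you write $\tT(M)$ where you mean $\tT(\brU)$, and ``onto $\tT(N)$'' where you mean $\tT(V)$), which do not affect the argument.
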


\begin{proof}
a) Let $\gm := \al^\nu : M \to N$, $\mfA:= \mfA_\al$, and $\brU:=
U/E(U,\mfA)$. Then $\al$ factors through $\lm := \pi_{E(U,\mfA)}$
in a unique way,
$$
\xymatrix{  \al: U   \ar @{>}[r]^{\lm}  & \brU \ar@{>}[r]^
{\bral} & V,  \\
}
$$
with $\bral$ a surjective transmission having trivial ghost
kernel. This is clear from \cite[Proposition 6.20]{IKR2}, adapted
to the category of supertropical monoids. \pSkip

b) We have $(\bral)^\nu = \gm$. Let $\bt = \pi_{\FbUgm}$. By
Theorem \ref{thm1.10} we know that $\bt$ is an initial
transmission in the category $\STROP_m$ (even a pushout). Thus we
have a unique transmission
$$ \mu: W := \brU/ \FbUgm \ \to \ V$$
such that $\bral = \mu \circ \bt$,  hence $\al = \mu  \circ \bt
\circ  \lm $. From $(\bral)^\nu = \gm = \mu ^\nu \circ \bt ^\nu$
and $\bt^\nu = \gm$ it follows that $\mu^\nu$ is the identity of
$N$.

Since $\bral$ has trivial ghost kernel and $\bt$ is surjective,
both $\bt$ and $\mu$ have trivial ghost kernels. We conclude that
$\bt$ is a strict ghost contraction and $\mu$ is a tangible fiber
contraction over~$N$. Parts  i) and iii) of the theorem are
proven.

\pSkip

c) Retaining the transmissions $\lm, \bt, \mu$ which we have
defined above, we turn to the claim  of uniqueness in part ii) of
the theorem. Let $ \al = \mu' \circ \bt' \circ \lm'$ another
factorization of $\al$ of the kind considered here. Both $\bt'$
and $\mu'$ have trivial ghost kernel. Thus the ideal compression~
$\lm'$ has the same ghost kernel $\mfA$ as $\al$. We conclude that
$$ \lm' = \rho \pi_{E(U, \mfA)} = \rho \lm$$ with some isomorphism
$\rho$ over $M$.

 From $ \al = (\mu'  \bt' \rho) \lm$ we then conclude that $\mu'  \bt'
 \rho = \bral$. Now $\bt' \rho$ is a strict ghost contraction
 covering $\gm$, since $\bt'$ is such a ghost contraction and
 $\rho$ covers $\id_M$. It follows that
 $$ \bt' \rho = \sig \pi_\FbUgm = \sig \bt$$ with some isomorphism
 $\sig$ over $N$, and hence $\bt' = \sig \bt \irho$. We finally
 obtain
$$\al = \mu'  \sig \bt \lm = \mu \bt
 \lm,$$  and then  $\mu' \sig = \mu$.
\end{proof}

\begin{thm}\label{thm2.9}
Let $\al:U \to V $ be a surjective transmission between
supertropical semirings, and assume that
$$
\xymatrix{  \al: U   \ar @{>}[r]^{\lm}  & U_1 \ar@{>}[r]^
{\bt} & V_1  \ar @{>}[r]^{\mu}  & V,  \\
}
$$
is a factorization of $\al$ as described in Theorem \ref{thm2.8}.i
(in the category $\STROP_m$). Then both $U_1$ and $V_1$ are
supertropical semirings, hence all three factors $\lm,\bt,\mu$ are
morphisms in~$\STROP$.
\end{thm}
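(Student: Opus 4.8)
The strategy is to use the criterion $\Dis$ from Theorem~\ref{thm1.2} together with the structural description of the three factors. By Theorem~\ref{thm2.8}.iii we may take $\lm = \pi_{E(U,\mfA)}$ with $\mfA = \mfA_\al$, then $\bt = \pi_{\FbUgm}$ with $\gm = \al^\nu$, and $\mu$ the resulting tangible fiber contraction over $N$. Since $U$ is a supertropical semiring and $\lm$ is surjective, Theorem~\ref{thm1.5}.ii immediately gives that $U_1 = \brU$ is a supertropical semiring. So the real content is to show $V_1 = W$ is a supertropical semiring; once that is done, $\mu: V_1 \onto V$ is a surjective transmission out of a semiring, so Theorem~\ref{thm1.5}.ii applies once more to conclude $V$-side... wait, $V$ is already assumed to be a semiring, so in fact the only new claim is about $W$.

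The hard part is therefore: \emph{if $\brU$ is a supertropical semiring, then so is $W = \brU/\FbUgm$.} Here $\brU$ has ghost ideal $M$ and $\bt$ covers the surjective homomorphism $\gm: M \onto N$; by Theorem~\ref{thm1.10}, $\bt$ is a pushout transmission and $W$ is the supertropical monoid obtained by the explicit recipe of $\FbUgm$. I would verify $\Dis$ for $W$ directly. Take $[x], [y], [z] \in W$ (images under $\bt$ of $x,y,z \in \brU$) with $0 < e_W[x] < e_W[y]$ and $e_W[x][z] = e_W[y][z]$, i.e. in $N$: $0 < \gm(ex) < \gm(ey)$ and $\gm(exz) = \gm(eyz)$. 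I need to conclude $[y][z] = e_W[y][z]$, i.e. $yz \in \mfA_\bt \cup$ (the $F$-class structure forces $[yz]$ ghost). The key point is that $\bt$ is a \emph{strict} ghost contraction: no tangible element of $\brU$ is sent to a ghost, so $[y][z]$ is a ghost in $W$ iff $yz$ is already a ghost in $\brU$, OR $yz$ is tangible but $\gm(e\,yz) = 0$ — but strictness rules the latter out unless $yz = 0$. So it suffices to show $yz \in M = e\brU$ (ghost in $\brU$). To get this from $\Dis$ applied to $\brU$ (which holds since $\brU$ is a semiring), I need $0 < e_{\brU} x < e_{\brU} y$ and $e_{\brU} xz = e_{\brU} yz$ in $\brU$, i.e. $0 < ex < ey$ and $exz = eyz$ in $M$. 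The inequality part requires that $\gm$ being injective is not available — but $\gm(ex) < \gm(ey)$ with $\gm$ monotone gives $ex \le ey$, and $ex = ey$ would force $\gm(ex)=\gm(ey)$, contradiction, so $0 < ex < ey$ genuinely. The equality $exz = eyz$ is the subtle step: we only know $\gm(exz) = \gm(eyz)$. This is exactly where strictness of the ghost contraction and the absence of tangibles over $\gm^{-1}(0)\setminus\{0\}$ must be exploited, possibly splitting into the cases $\gm(eyz) = 0$ and $\gm(eyz) \neq 0$ as in the proof of Theorem~\ref{thm1.5}.ii.

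So the main obstacle is bridging the gap between "$\gm(exz) = \gm(eyz)$ in $N$" and either "$exz = eyz$ in $M$" or a direct conclusion that $yz$ is ghost. In the case $\gm(eyz) \neq 0$: since $\bt$ (equivalently $\pi_{\FbUgm}$) maps $\brU \sm (M \cup \mfz_\bt)$ bijectively onto $\tT(W)$, and since $\gm(exz), \gm(eyz)$ are nonzero, the elements $exz, eyz$ lie outside $\mfa_{\brU,\gm}$; but the $F$-relation only identifies ghost elements with equal $\gm$-value that are companions of tangibles — one checks from the definition of $\FbUgm$ that $exz \sim_{\FbUgm} eyz$ forces... actually it may \emph{not} force $exz = eyz$, so instead I should argue: $[xz] = [x][z]$ and $[yz]=[y][z]$ satisfy $e_W[xz] = e_W[yz]$ in $W$, and if $\brU$ were replaced by the known-semiring $W$'s predecessor we'd be circular. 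The cleanest route: prove $W$ is a semiring via Theorem~\ref{thm1.5}.i instead — but $\bt$ has trivial ghost kernel and goes \emph{out of} the semiring $\brU$, not into a semiring. Reconsidering, the right tool is that a \emph{surjective} transmission out of a semiring lands in a semiring — Theorem~\ref{thm1.5}.ii — which needs $\bt$ injective on $(e\brU)\sm\{0\}$; but $\bt$'s ghost part is $\gm$, which need \emph{not} be injective. Hence Theorem~\ref{thm1.5}.ii does not directly apply to $\bt$, and this injectivity failure is precisely the crux: the resolution must be that the non-injectivity of $\gm$ on $M$, combined with strictness (no tangibles sit over the collapsed convex fibers except possibly over $0$), still yields $\Dis$ for $W$ by the case analysis above. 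I would organize the write-up around verifying $\Dis$ for $W$ by hand, handling the subcase $\gm(eyz)=0$ (where strictness gives $yz \in M$ or the product is already ghost since $y$ or $z$ must ghost-vanish appropriately) and the subcase $\gm(eyz)\neq 0$ (where $\gm(exz)=\gm(eyz)\neq 0$, both $xz,yz$ have nonzero ghost, and one lifts to $exz = eyz$ in $M$ using that the $\gm$-fiber through $eyz$ meets $e\brU$ in a set all of whose tangible-companion elements are $\FbUgm$-identified, then applies $\Dis$ in $\brU$ to get $yz \in M$, hence $[yz]$ ghost in $W$). Finally, $\mu: W \onto V$ then has $W$ a semiring and $\mu$ surjective with $\mu^\nu = \id_N$ injective on $N \sm \{0\}$, so Theorem~\ref{thm1.5}.ii re-confirms $V$ is a semiring (consistent with the hypothesis), and all of $\lm,\bt,\mu$ are transmissions between semirings, hence morphisms in $\STROP$.
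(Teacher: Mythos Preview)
You handle $U_1$ correctly via Theorem~\ref{thm1.5}.ii applied to $\lm$, but you miss the one-line argument for $V_1$. The paper simply applies Theorem~\ref{thm1.5}.i to $\mu: V_1 \to V$: since $\mu^\nu = \id_N$ is injective on $N\sm\{0\}$, $\mu$ has trivial ghost kernel (being a tangible fiber contraction), and $V$ is a semiring by hypothesis, Theorem~\ref{thm1.5}.i immediately gives that $V_1$ is a semiring. You even note at the end that $\mu$ has exactly these properties, but use them only to ``re-confirm'' that $V$ is a semiring rather than to deduce that $V_1$ is one; you had the right tool pointed in the wrong direction.

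Your alternative route --- verifying $\Dis$ for $W$ by lifting through $\bt$ from $\brU$ --- has a genuine gap in the subcase $\gm(eyz)\neq 0$. There you need to pass from $\gm(exz)=\gm(eyz)$ in $N$ back to $exz = eyz$ in $M$, and this simply fails when $\gm$ is not injective; your sketch of how the $\FbUgm$-classes would force it is not correct, since $\FbUgm$ identifies ghosts in the same $\gm$-fiber rather than separating them. More decisively, the implication you are trying to prove (``$\brU$ a semiring and $\bt$ a strict ghost contraction $\Rightarrow$ $W$ a semiring'') is false without the hypothesis on $V$: Example~\ref{exmp4.14} constructs a supertropical domain $U$ and a homomorphism $\gm$ with $\igm(0)=\{0\}$ (so $\pi_{\FUgm}$ is a strict ghost contraction) but $S(U,\gm)\neq\emptyset$, i.e.\ $U/\FUgm$ is not a semiring. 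So the information that $V$ is a semiring is essential input for $V_1$, and it enters via $\mu$, not via $\bt$.
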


\begin{proof}
$\lm$ and $\mu$ are surjective and $\lm^\nu = \id_M$, $\mu^\nu =
\id_N$. Moreover $\mu$ has trivial ghost kernel. Thus $V_1$ is a
semiring by Theorem \ref{thm1.5}.i, and $U_1$ is a semiring by
Theorem \ref{thm1.5}.ii.
\end{proof}

\begin{cor}\label{cor2.10}
Let $\al:U \to V $ be a surjective transmission between
supertropical semirings covering $\bral = \gm : M \to N $. Then
for the supertropical semiring $$  \brU := U / E(U, \mfA_\al)$$
the transmission $$\pi_\FbUgm : \brU \to \brU / \FbUgm $$ is
pushout in the category $\STROP$. In other terms (cf.
\cite[Notation 1.7]{IKR2})
$$ \FbUgm = \EbUgm.$$
\end{cor}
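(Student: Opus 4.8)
The plan is to deduce the identity $\FbUgm = \EbUgm$ from Theorem~\ref{thm2.9} and Scholium~\ref{schol1.11}. First I would invoke the explicit factorization of Theorem~\ref{thm2.8}(iii): with $\mfA := \mfA_\al$ and $\gm := \al^\nu : M \to N$, one has
$$ \al = \mu \circ \bt \circ \lm, \qquad \lm = \pi_{E(U,\mfA)} : U \onto \brU, \qquad \bt = \pi_{\FbUgm} : \brU \onto W := \brU/\FbUgm, $$
with $\mu : W \onto V$ the resulting tangible fiber contraction over $N$. As shown in the proof of Theorem~\ref{thm2.8}, $\lm$ is an ideal compression, $\bt$ a strict ghost contraction, and $\mu$ a tangible fiber contraction over $N$, so this is a factorization of $\al$ of precisely the kind appearing in Theorem~\ref{thm2.8}(i).

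Next, since $U$ and $V$ are supertropical semirings by hypothesis, I would apply Theorem~\ref{thm2.9} to this factorization (with $U_1 = \brU$, $V_1 = W$); it yields that both $\brU$ and $W = \brU/\FbUgm$ are supertropical semirings. Observe also that, under the canonical identification, $e\brU = M$, because $\lm$ is a fiber contraction over $M$; hence $\gm : M \onto N$ is a surjective homomorphism out of the ghost ideal of the supertropical \emph{semiring} $\brU$, and $\FbUgm = F(\brU,\gm)$ is the relation of Theorem~\ref{thm1.10} built over $\brU$.

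Finally I would quote Scholium~\ref{schol1.11} with $\brU$ in the role of $U$ and the same homomorphism $\gm$: since the supertropical monoid $\brU/\FbUgm$ is in fact a semiring, $\pi_{\FbUgm}$ is a pushout in $\STROP$, which by \cite[Notation 1.7]{IKR2} is exactly the assertion $\FbUgm = \EbUgm$ (equivalently $\pi_{\FbUgm} = \al_{\brU,\gm}$). There is no real obstacle in this chain of reasoning; the only points needing a moment's attention are that the factorization furnished by Theorem~\ref{thm2.8}(iii) genuinely has the form to which Theorem~\ref{thm2.9} applies, and the identification $e\brU = M$ that makes Scholium~\ref{schol1.11} applicable to $\brU$ — both immediate from the constructions of this section.
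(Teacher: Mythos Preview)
Your argument is correct and matches the paper's own proof: both apply Theorem~\ref{thm2.9} to conclude that $\brU/\FbUgm$ is a supertropical semiring, and then invoke the content of Scholium~\ref{schol1.11} (equivalently, that $\pi_{\FbUgm}$ is already a pushout in $\STROP_m$ and hence in $\STROP$) to obtain $\FbUgm = \EbUgm$. Your write-up is merely a bit more explicit about setting up the factorization and the identification $e\brU = M$.
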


\begin{proof} Theorem \ref{thm2.9} tells us that $\brU / \FbUgm$
is a supertropical semiring. We know from \S\ref{sec:1} that
$\pi_\FbUgm$ is pushout in $\STROP_m$. A fortiori this
transmission is pushout in $\STROP$.
\end{proof}

\begin{defn}\label{defn2.11}
Let $\al:U \onto V $ be a surjective transmission covering $\alnu
= \gm : M \onto N $. We know by Theorem \ref{thm2.8} that there
exists a unique factorization
 \begin{equation}\renewcommand{\theequation}{$*$}\addtocounter{equation}{-1}\label{eq:str.1}
 \al = \rho \circ  \pi_T \circ \pi_\FbUgm  \circ \pi_{E(U,\mfA)}
\end{equation}
with $\mfA$ an ideal of $U$ containing $M \cup \mfz_\al$, $\brU :=
U/ E(U,\mfA)$, $T$ a tangible MFCE-relation on $W:= \brU /
\FbUgm$, and $\rho$ an isomorphism from $W/T$ to $V$ over $N$.
Here $\mfA,$ $T$, and hence $\rho$ are uniquely determined by
$\al$. We call $(*)$ the \textbf{canonical factorization} of
$\al$, and $\pi_\FbUgm$, $\pi_{E(U,\mfA)}$, $\pi_{T}$, $\rho$ the
\textbf{canonical factors} of $\al$.

If one of these maps is the identity  map, we feel justified to
omit it in the list of the canonical factors of $\al$.
\end{defn}

We discuss some simple cases of canonical factorizations.
\begin{schol}\label{schol2.12} (The case of $M=N$.) Assume that
$U$ and $V$ are supertropical monoids with $eU = eV = M$. Let
$\al:U \to V$ be a fiber contraction over $M$ with ghost kernel
$\mfA = \mfA_\al$.\begin{enumerate} \eroman
    \item $\al$ has the factorization $\al = \mu \circ \lm$ with
    $\lm = \pi_{E(U,\mfA)}$ and
    $$\mu:  \brU :=
U/ E(U,\mfA) \to V$$ a (strict) tangible fiber contraction over
$M$. This is clear from Theorem \ref{thm2.8} or directly from the
universal property of $\pi_{E(U,\mfA)}$, cf. Corollary
\ref{cor1.16}. Then $\mu = \rho \circ \pi_T$ with $T$ a tangible
MFCE-relation on $\brU$. Thus $\al$ has the canonical factors
$\pi_{E(U,\mfA)}$ , $\pi_T$, and $\rho$.

    \item If $U$ is a  semiring then  both $\brU$ and
    $V$ are semirings, as follows directly from Theorem~\ref{thm1.5}.ii.
\end{enumerate}

\end{schol}

\begin{examp}\label{exmp2.13} (Factorization of a ghost
contraction.) Assume  $\al:U \onto V $ is a ghost contraction
covering $\alnu = \gm : M \onto N $. Let $\mfa$ denote the zero
kernel of $\al$, $\mfa := \mfz_\al$.
\begin{enumerate} \eroman
    \item $\al$ has the factorization $\al = \bt \circ \lm$ with
    $ \lm = \pi_{E(U, M\cup \mfa)}$ and
    $$ \bt: \brU := U / E(U, M\cup \mfa) \ \to \ V$$ a strict ghost
    contraction. We have
    $$ \bt = \rho \circ \pi_{\FbUgm}$$
with $\rho$ an isomorphism from $\brU / \FbUgm$ to $V$ over $N$.
Thus $\al$ has the canonical factors $\pi_{E(U, M\cup \mfA) }$,
$\pi_\FbUgm$, and $\rho$.

    \item We further have the factorization $$\bt = \brbt \circ \pi_{E(\bmfa)}$$
    with $\bmfa:= \lm(\mfa)$ and
    $$ \brbt : \brU/E(\bmfa) = U/E(\mfa) \ \to \ V,$$
    which is a strict ghost contraction with zero kernel $\{0\}$.
    Notice that the ideal $\mfa =\mfz_\al$ is saturated in $U$ and $\bmfa$ is saturated
    in $\brU$.

\end{enumerate}
\end{examp}

\begin{examp}\label{exmp2.14} (The transmissions $\pi_{E(U,\mfA,\gm)}$.)

Let $U$ be a supertropical monoid and let  $\gm: M \to N$ be a
surjective homomorphism from  $M = eU$ to a semiring $N$. Further
let $\mfA$ be an ideal of $U$ containing $M \cup \mfa_{U,\gm}$.

\begin{enumerate} \eroman
    \item Then the transmission
 $$ \pi_{E(U,\mfA,\gm)}: U \to V : = U/{E(U,\mfA,\gm)}$$ (cf. Theorem
 \ref{thm1.14}) has the canonical factorization
$$ \pi_{E(U,\mfA,\gm)} = \pi_\FbUgm \circ \pi_{E(U,\mfA)}$$
with $\brU := U/E(U,\mfA) $. Indeed we have ${E(U,\mfA,\gm)}/
E(U,\mfA) = F(\brU, \gm)$, as has been stated in \cite[Theorem
6.22]{IKR2}.

    \item The ghost contractions $\al: U \to V$ covering $\gm$ are
    precisely the maps
    $$ \al = \rho \circ  \pi_{E(U,\mfA,\gm)}$$ with $\mfA = M \cup
    \mfa_{U,\gm}$ and $\rho$ an isomorphism over $N$, as is clear from
    the above and Example \ref{exmp2.13}.
\end{enumerate}
\end{examp}

\section{The canonical factors of a product of two basic transmissions}\label{sec:3}

\begin{defn}\label{defn3.1}
Let $\al : U \to V$ be a surjective transmission between
supertropical monoids, and let $\gm:= \al^\nu : M \onto N$  denote
the ghost part of $\al$. We call $\al$ a \textbf{basic
transmission}, if~ $\al$ is of one of the following 4 types.
\begin{description}
    \item[\textbf{\emph{Type 1}}] $\al = \pi_{\EUmfA}$ with $\mfA$ an
    ideal of $U$ containing $M$. \pSkip
    \item[\textbf{\emph{Type 2}}] $\al = \pi_{\FUgm}$  and $\al^{-1}(0) =
    \gm^{-1}(0)$. \pSkip
    \item[\textbf{\emph{Type 3}}] $\al = \pi_T$ with $T$ a tangible
    MFCE-relation on $U$. \pSkip
    \item[\textbf{\emph{Type 4}}] $\al = \rho$ with $\rho$ an isomorphism
    over $M$.
\end{description}
Thus in all cases except the second we have $M=N$ and $\gm =
\id_M$.

In short, the  basic transmissions are the factors occurring in
the canonical factorizations of transmissions (cf. Definition
\ref{defn2.11}).
\end{defn}

\begin{problem}\label{prob3.2} Given basic transmissions $\al: U \to
V$ of type $i$ and $\bt: V \to W$ of type $j$ with $1 \leq j \leq
i\leq 4$, find the canonical factorization of $\bt \al$
explicitly.
\end{problem}

It would be easy to find these canonical factorizations up to an
undetermined isomorphism~ $\rho$ as first factor (cf. Definition
\ref{defn2.11}) by running through parts a) and b) of the proof of
Theorem~\ref{thm2.9}. But we want a completely explicit
description of all factors. For this we will rely on realizations
of the quotient monoids $U/\EUmfA$, $U/\FUgm$, $U/T$ arising  up
in Definition~\ref{defn3.1}, such that the basic transmissions
$\pi_{\EUmfA}$, $\pi_{\FUgm}$, $\pi_T$ have a particulary well
amenable  appearance,

\begin{convs}\label{convs3.2}
Let $U$ be a supertropical monoid, $\mfA$ an ideal of $U$
containing $M := eU$, furthermore  $\gm:M \to N$ a surjective
homomorphism to a (bipotent)  semiring $N$ with $\mfa_{U,\gm}
\subset M$ (cf. Definition \ref{defn2.7}), and  $T$ a tangible
MFCE-relation on $U$.

\begin{enumerate} \ealph
    \item We write $\mfA := M \dcup S$ with $S$ a subset of
    $\tT(U)$ such that  $S \cdot \tT(U) \subset S \cup M$.
    Justified by \cite[Theorem 6.16]{IKR2}, adapted to the monoid
    setting, we declare that $U/\EUmfA$ is the subset
    $$ U \sm  S = (\tT(U) \sm S) \ds {\dcup} M$$
    of $U$, and, for any $x\in M$
$$ \pi_{\EUmfA} (x) = \left\{
\begin{array}{lll}
  x & \text{if} & x \in U \sm S, \\[1mm]
  ex & \text{if} & x \in S.\\
\end{array}
\right.$$
For $x,y \in U \sm S$ the product $x \odot y$ in $U/ \EUmfA$ is
given by
$$ x \odot y = \left\{
\begin{array}{rll}
  xy & \text{if} & xy \notin S, \\[1mm]
  exy & \text{if} & xy \in S.\\
\end{array}
\right.$$

\item We identify $\tT(U / \FUgm) $ with $\tT(U)$ such that $[x]_\FUgm =
x$ for $x \in \tT(U)$. Now
$$ \tT(U / \FUgm) = \tT(U) \ds{\dcup} N$$
and, for $x \in U$,
$$ \pi_{\FUgm} (x) = \left\{
\begin{array}{lll}
  x & \text{if} & x \in \tT(U), \\[1mm]
  \gm(x) & \text{if} & x \in M.\\
\end{array}
\right.$$
If $x,y \in \tT(U) $, the product $x \odot y$ in $U/ \FUgm$ is
given by
$$ x \odot y = \left\{
\begin{array}{lll}
  xy & \text{if} & xy \in \tT(U), \\[1mm]
  \gm(xy) & \text{if} & xy \in M.\\
\end{array}
\right.$$

\item For $x \in M $ we identify $x$ with $[x]_T$ (as we usually did for
MFCE-relations before, but notice that now $[x]_T = \{ x\}$). We
have
$$ U / T  = \tT(U)/ T \ds{\dcup} M, $$
and, for $x \in U$,
$$ \pi_{T} (x) = \left\{
\begin{array}{lll}
  [x]_T & \text{if} & x \in \tT(U), \\[1mm]
  x & \text{if} & x \in M.\\
\end{array}
\right.$$
If $x,y \in \tT(U) $ the product of $[x]_T $ and $[y]_T$ in $U/T$
is given by
$$ [x]_T \odot [y]_T = \left\{
\begin{array}{lll}
  [xy]_T & \text{if} & xy \in \tT(U), \\[1mm]
  xy & \text{if} & xy \in M.\\
\end{array}
\right.$$

\end{enumerate}

\end{convs}

We also need more terminology on equivalence relations.

\begin{defn}\label{defn3.4} $ $
\begin{enumerate} \eroman
    \item If $\eta : X \to Y$ is a map between sets and $F$ is an
     equivalence relation   on $Y$, then~ $\eta^{-1}(F)$ denotes the
     equivalence relation on $X$ given by
     $$ x_1 \sim_{\eta^{-1}(F)} x_2  \dss{\iff} \eta(x_1) \sim_F \eta(x_2).$$
Thus $\pi_{\eta^{-1}(F)} = \pi_F \circ \eta$.

\item We further have a unique map
$$
\xymatrix{ \breta: X / \eta^{-1}(F) \ar @{>}[r]   & Y / F  }
$$
such that the diagram
$$
\xymatrix{   X   \ar @{>}[d]^{\eta}  \ar
@{>}[rr]^{\pi_{\eta^{-1}(F)} } &   & X / \eta^{-1}(F)
\ar @{>}[d]^{\breta}   \\
Y  \ar @{>}[rr]^{\pi_F}  &  & Y / F  }
$$
commutes. We denote this map $\breta$ by $\eta^F$, and then have
the formula
$$ \pi_F \circ \eta \ds = \eta^F \circ \pi_{\eta^{-1}(F)}.$$

\item If $E$ is an equivalence relation on the set $X$ and $F$ is
an equivalence relation on $X / E$, then let $F \circ E$  denote
the equivalence relation on $X$ given by
$$  x_1 \sim _{F \circ E} x_2  \dss \iff x_1 \sim_E x_2 \ \text{ and } \ [x_1]_E \sim_F [x_2]_E.$$
We identify the sets $X / F \circ E$ and $(X / E) / F$ in the
obvious way. Then
$$ \pi_{F \circ E} = \pi _ F \circ \pi_E.$$
\end{enumerate}

\end{defn}
The following fact is easily verified.

\begin{lem}\label{lem3.5} Let $\eta: V \to U$ be a transmission
between supertropical monoids, and let $E$ be a TE-relation on
$U$. Then $\ieta(E)$ is a TE-relation on $V$, and the induced map
$$ \eta^E : V/ \ieta(E) \to U/E$$
is again a transmission. We have the formula
$$ \pi_E \circ \eta = \eta^E \circ \pi_{\ieta(E)}.$$
\end{lem}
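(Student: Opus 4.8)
The plan is to verify the three claims in turn, using the already-established machinery. First I would check that $\ieta(E)$ is a TE-relation on $V$. It is automatically an equivalence relation by Definition \ref{defn3.4}.i, so only TE1--TE3 need attention. For TE1 (multiplicativity): if $v_1 \sim_{\ieta(E)} v_2$, then $\eta(v_1) \sim_E \eta(v_2)$, hence $\eta(v_1)\eta(v_3) \sim_E \eta(v_2)\eta(v_3)$ by TE1 for $E$; since $\eta$ is multiplicative this says $\eta(v_1 v_3) \sim_E \eta(v_2 v_3)$, i.e. $v_1 v_3 \sim_{\ieta(E)} v_2 v_3$, and likewise on the other side. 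For TE3: if $e_V x \sim_{\ieta(E)} 0$ then $\eta(e_V x) \sim_E \eta(0) = 0$; since $\eta$ respects the idempotents and $0$, $\eta(e_V x) = e_U \eta(x)$, so $e_U \eta(x) \sim_E 0$, whence $\eta(x) \sim_E 0$ by TE3 for $E$, i.e. $x \sim_{\ieta(E)} 0$. For TE2 one restricts to $M_V := e_V V$; since $\eta$ maps $M_V$ into $M_U := e_U U$ monotonically (this is the ghost part $\eta^\nu$, TM5), any chain $x_1 \le x_2$, $x_3 \le x_4$ in $M_V$ with $x_1 \sim_{\ieta(E)} x_4$, $x_2 \sim_{\ieta(E)} x_3$ pushes forward to $\eta(x_1) \le \eta(x_2)$, $\eta(x_3) \le \eta(x_4)$ in $M_U$ with $\eta(x_1) \sim_E \eta(x_4)$, $\eta(x_2) \sim_E \eta(x_3)$; TE2 for $E$ gives $\eta(x_1) \sim_E \eta(x_2)$, i.e. $x_1 \sim_{\ieta(E)} x_2$.

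Next I would record the map $\eta^E$ and the identity. By Definition \ref{defn3.4}.ii applied to the underlying map of $\eta$ (with $F = E$), there is a unique map of sets $\eta^E: V/\ieta(E) \to U/E$ with $\pi_E \circ \eta = \eta^E \circ \pi_{\ieta(E)}$; this is exactly the displayed formula in the statement, so nothing further is needed there. It remains to see that $\eta^E$ is a transmission. Both source and target are supertropical monoids by Theorem \ref{thm1.7} (applied to the TE-relations $\ieta(E)$ on $V$ and $E$ on $U$). Verifying TM1--TM5 for $\eta^E$ is then a diagram chase: $\pi_{\ieta(E)}$ and $\pi_E$ are surjective transmissions, $\eta$ is a transmission, and $\eta^E \circ \pi_{\ieta(E)} = \pi_E \circ \eta$ is therefore a transmission; since $\pi_{\ieta(E)}$ is a surjective transmission, Proposition \ref{prop1.9}.ii immediately yields that $\eta^E$ is a transmission. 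That is the cleanest route and avoids checking the axioms by hand.

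Honestly, there is no real obstacle here — the lemma is exactly the kind of "easily verified" statement the authors flag. The one spot that deserves a line of care is the monotonicity input to TE2: one must note that $\eta$ carries $e_V V$ into $e_U U$ (a consequence of TM3 and TM4, so that $\eta(e_V v) = \eta(e_V)\eta(v) = e_U \eta(v)$) and that its restriction there is order-preserving by TM5, so that the pushed-forward inequalities in $M_U$ are genuine. With that observed, everything else is formal, and invoking Proposition \ref{prop1.9}.ii for the transmission property of $\eta^E$ keeps the argument short.
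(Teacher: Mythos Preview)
Your proposal is correct and complete. The paper itself supplies no proof for this lemma beyond the remark ``The following fact is easily verified'', and your argument carries out precisely the routine verification the authors had in mind: checking TE1--TE3 for $\ieta(E)$ directly from the transmission axioms for $\eta$, reading off the formula from Definition~\ref{defn3.4}.ii, and then invoking Proposition~\ref{prop1.9}.ii (with the surjectivity of $\pi_{\ieta(E)}$) to conclude that $\eta^E$ is a transmission.
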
 \pSkip

This lemma already gives us the solution of Problem \ref{prob3.2}
for $\al$ basic of type 4.

\begin{prop}\label{prop3.6} Let $U$ and $V$ be supertropical
monoids with $eU = eV = :M$, and let $\rho: V \to U$ be a tangible
fiber contraction over $M$ (e.g. $\rho$ is an isomorphism over
$M$).
\begin{enumerate} \ealph
    \item If $T$ is a tangible MFCE-relation on $U$, then
    $\irho(T)$ is a tangible MFCE-relation on $V$ and
    $$ \pi_T \circ \rho = \rho' \circ \pi_{\irho(T)}$$
    with $\rho' : V/ \irho(T) \to U/T$ the obvious homomorphism
    over $M$ induced by $\rho$, namely $\rho' = \rho^T$.

    \item If $\gm: M \onto N$ is a surjective homomorphism from
    $M$ to a semiring $N$ with
    $$ \mfa_{V, \gm} := \{ x \in V \ds |  \gm(ex) = 0\} \subset M, $$
    hence also $\mfa_{V, \gm} \subset M$, then
    $$ \pi_{\FUgm} \circ \rho = \rho' \circ \pi_{\FUgm}$$
with
    $$ \rho' := \rho^{\FUgm} : V/\FVgm \ds \to U/ {\FUgm}.$$
$\rho'$ is a tangible fiber contraction over $N$ with the
following explicit description: \\  Writing   $ V/\FUgm  = \tT(V)
\dcup N$ and $ U/\FUgm  =  \tT(U) \dcup N$ (cf.
Convention~\ref{convs3.2}.b), we have $\rho'(x) = \rho(x)$ if $x
\in \tT(U)$ and $\rho'(x) = x $ if $x \in N$.

\item Let $\mfA$ be an ideal of $U$ containing $M$ and let $\mfB :=
\irho(\mfA)$, which is an ideal of $V$ containing $M$. Then
    $$ \pi_{\EUmfA} \circ \rho = \rho' \circ \pi_{\EVmfB},$$
with
    $$ \rho' := \rho^{\EVmfB} : V/\EVmfB \ds \to U/ {\EUmfA}.$$
$\rho'$ is a tangible fiber contraction over $N$, which has the
following explicit description:  We write $ \mfA  = M \dcup S$
with $S \subset \tT(U)$, and have $ \mfB  =  M \dcup \irho(S)$
with $\irho(S) \subset \tT(V)$. By Convention   \ref{convs3.2}.a
    $$ U/ {\EUmfA} = (\tT(U) \sm S) \ds{\dcup} M,$$
    $$ V / {\EVmfB} = (\tT(V) \sm \irho(S)) \ds{\dcup} M.$$
The map $\rho'$ is obtained from $\rho$ by restriction to these
subsets of $U$ and $V$. \{N.B. It is easy to check directly that
$\rho'$ respects multiplication.\}

\item If $\rho$ is an isomorphism, then in all three cases $\rho'$ is again an
isomorphism. Thus, if $\bt:U \to W$ is a basic transmission of
type $i = 1,2,3$ and $\rho: V \to U$  is basic of type~4, then
$\bt \rho = \rho' \bt'$ with $\bt', \rho'$ again of type $i$ and 4
respectively.
\end{enumerate}

\end{prop}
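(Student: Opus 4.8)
The plan is to apply Lemma \ref{lem3.5} three times, once for each of the TE-relations $T$, $\FUgm$, $\EUmfA$ living on $U$, taking $\eta$ to be the transmission $\rho\colon V\to U$. In each case the lemma immediately furnishes that $\irho$ of the relation is again a TE-relation on $V$ and gives the commuting square $\pi_F\circ\rho=\rho^F\circ\pi_{\irho(F)}$, so the only real work is (a) to identify $\irho(T)$, $\irho(\FUgm)$, $\irho(\EUmfA)$ concretely as relations of the claimed type (tangible MFCE-relation, $\FVgm$, $\EVmfB$), (b) to check that $\rho'=\rho^F$ is a tangible fiber contraction over the appropriate ghost semiring, and (c) to read off the explicit formula for $\rho'$ from the realizations fixed in Convention \ref{convs3.2}.

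For part (a), the key observation is that $\rho$ is a tangible fiber contraction over $M$, so $e\rho(x)=ex$ for all $x$, $\rho$ maps $\tT(V)$ into $\tT(U)$, and $\rho$ is the identity on $M$. From this one sees directly that $x_1\sim_{\irho(T)}x_2$ iff $ex_1=ex_2$ and $\rho(x_1)\sim_T\rho(x_2)$; since $T$ is fiber conserving and refines $E_t$, so is $\irho(T)$, giving (i). For (ii), $\mfa_{V,\gm}\subset M$ is the stated hypothesis, and one checks $x_1\sim_{\irho(\FUgm)}x_2$ unwinds exactly to the defining trichotomy of $\FVgm$ (using $\gm(ex_i)=\gm(e\rho(x_i))$ and that $\rho$ fixes $M$); for (iii), $\mfB:=\irho(\mfA)$ is an ideal of $V$ containing $M$ because $\rho$ fixes $M$, and $x_1\sim_{\irho(\EUmfA)}x_2$ becomes ``$x_1=x_2$ or both $x_i\in\mfB$'', i.e.\ $\EVmfB$. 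Writing $\mfA=M\dcup S$ with $S\subset\tT(U)$ and using that $\rho$ sends tangibles to tangibles, $\mfB=M\dcup\irho(S)$.

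For parts (b) and (c), each $\rho'=\rho^F$ has ghost part the identity of $N$ (in the first and third cases $N=M$), because the ghost parts of $\pi_F\circ\rho$ and $\pi_{\irho(F)}$ agree with the ghost part of the relevant quotient map, which is $\id$ on the ghost ideal; hence $\rho'$ is a fiber contraction over $N$. That it is tangible follows since $\rho$ is strictly tangible and the quotient maps $\pi_F$ are tangible on the complement of the ghost ideal in each of the three realizations. Finally, feeding the realizations of Convention \ref{convs3.2} into the commuting square of Lemma \ref{lem3.5} pins down $\rho'$ on the tangible part as the restriction of $\rho$ and as the identity on $N$; in case (iii) one must also note that $\rho$ carries $\tT(V)\sm\irho(S)$ into $\tT(U)\sm S$ (immediate from $\irho(S)=\rho^{-1}(S)\cap\tT(V)$), so the restriction is well-defined, and a direct check confirms it respects $\odot$.

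Part (d) is then essentially formal: if $\rho$ is an isomorphism over $M$, then $\irho(F)$ and $F$ have the same number of classes ``over'' $V$ resp.\ $U$, and $\rho'$, being a bijective transmission over $N$, is an isomorphism; translating to the language of basic transmissions (Definition \ref{defn3.1}), a type-$4$ map $\rho$ composed after a type-$i$ map $\bt$ ($i=1,2,3$) rewrites as $\bt\rho=\rho'\bt'$ with $\bt'$ of type $i$ and $\rho'$ of type $4$. The main obstacle I anticipate is not any single deep step but the bookkeeping in (c): one must be careful that the three different realizations in Convention \ref{convs3.2} are mutually compatible with the identifications $\tT(U/\FUgm)=\tT(U)$, $[x]_T=\{x\}$ for $x\in M$, etc., so that ``$\rho'=$ restriction of $\rho$'' is literally true and not merely true up to an identification; verifying that $\rho'$ respects $\odot$ in case (c), where the product formula has the case split $xy\notin S$ vs.\ $xy\in S$, is the fussiest point, since one needs $\rho(xy)\in S\iff xy\in\irho(S)$, which again is just $\irho(S)=\rho^{-1}(S)\cap\tT(V)$ combined with tangibility of $\rho$.
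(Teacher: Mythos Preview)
Your proposal is correct and follows exactly the route the paper takes: the paper's own proof is the single line ``Straightforward by use of Lemma \ref{lem3.5},'' and you have simply spelled out what that straightforward verification entails in each of the three cases. Your anticipated ``fussy'' points (identifying $\irho$ of each relation, checking the explicit formulas for $\rho'$ against the realizations in Convention \ref{convs3.2}) are precisely the bookkeeping the paper leaves implicit.
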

\begin{proof}
Straightforward by  use of Lemma \ref{lem3.5}.
\end{proof}

\begin{rem}\label{rem3.7} If in Proposition \ref{prop3.6}.b we dismiss the assumption that $\mfa_{V, \gm} \subset
M$, we have the same result, but with a slightly more complicated
description of the tangible fiber contraction~$\rho'$ as follows:
We now have natural identifications
$$ V / \FVgm = ( \tTV \sm \mfa_{V, \gm}) \ds \cup N,$$
$$ U / \FUgm = ( \tTU \sm \mfa_{U, \gm}) \ds \cup N,$$ and then
$$ \rho'(x) = \left\{
\begin{array}{lll}
  \rho(x) & \text{if} &  x\in \tTV \sm  \mfa_{V, \gm} \\[1mm]
  x & \text{if} &  x\in N. \\
\end{array}
\right.$$
\end{rem}

The following three propositions contain the solution of Problem
\ref{prob3.2} in the remaining cases $i \leq j \leq 3$. The stated
canonical factorizations can always quickly be verified by
inserting an element $x$ of $\tTU$ and comparing both sides. \{For
$x \in eU$ equality is always evident.\} Often more conceptional
proofs are also possible. With one exception we do not give the
details.

\begin{prop}\label{prop3.8} (The case $i=j$.)

 Let $U$ be a
supertropical monoid and $M := eU$.
\begin{enumerate} \ealph
    \item Assume that $\mfA = M \dcup S$ is an ideal of $U$
    containing $M$ and $\bmfB = M \dcup S'$ is an ideal of $\brU := U / \EUmfA = U
    \sm
    S$ containing $M$. \{Thus $S$ and $S'$ are disjoint subsets of
    $\tTU$.\} Then
    $$ \mfB := \pi^{-1}_{\EUmfA}(\bmfB) = M \dcup S \dcup S'$$
    is an ideal of $U$ and
    $$ \pi_{\bEUmfB} \circ \pi _{\EUmfA} = \pi_{\EUmfB}.$$
    \item Let $\gm: M \to N$ and $\dl: N \to L$ be surjective
    homomorphisms of bipotent semirings with $\mfa_{U,\gm} \subset M$ and $\mfa_{V,\dl} \subset N$,
     where $V := U / \FUgm$. Then
    $$ \pi_{F(V,\dl)} \circ \pi _{\FUgm} = \pi_{F(U, \dl \gm)}.$$

    \item If $T$ is a tangible MFCE-relation on $U$ and $T'$ is a
    tangible MFCE-relation on $U/ T,$ then $T' \circ T$ is again a
    tangible MFCE-relation and
    $$ \pi_{T'} \circ \pi_T = \pi_{T' \circ T}.$$
\end{enumerate}

\end{prop}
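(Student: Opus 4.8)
The plan is to handle the three parts uniformly, in the way suggested in the remark preceding the proposition: in each case both sides of the claimed identity are quotient maps out of $U$ whose targets have already been identified — through Convention~\ref{convs3.2}, and for the iterated quotient through the identification $X/(F\circ E)=(X/E)/F$ of Definition~\ref{defn3.4}.iii (which also supplies $\pi_{F\circ E}=\pi_F\circ\pi_E$) — so it suffices to evaluate both sides on an arbitrary $x\in\tT(U)$, the case $x\in eU$ being evident. Besides this evaluation, the substantive point in parts (a) and (c) is the side assertion that the object displayed on the right has the required form: an ideal of $U$ in (a), a tangible MFCE-relation on $U$ in (c).

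For part (a) I would identify $\brU=U/\EUmfA$ with $U\sm S=(\tT(U)\sm S)\dcup M$ via Convention~\ref{convs3.2}.a, so that $S'\subset\tT(\brU)=\tT(U)\sm S$ is disjoint from $S$. Using the formula for the twisted product $\odot$ on $\brU$ together with the ideal property of $\bmfB=M\dcup S'$ in $\brU$, one checks that $\mfB:=M\dcup S\dcup S'$ satisfies $\mfB\cdot\tT(U)\subset\mfB$, hence is an ideal of $U$ of the shape required by Convention~\ref{convs3.2}.a, and that $\mfB=\pi_{\EUmfA}^{-1}(\bmfB)$; moreover $\brU/\bmfB$ and $U/\mfB$ are then literally the same subset $(\tT(U)\sm(S\cup S'))\dcup M$ of $U$, with the same multiplication. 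For $x\in\tT(U)$ the composite $\pi_{\bEUmfB}\circ\pi_{\EUmfA}$ sends $x$ to $ex$ exactly when $x\in S$ (collapsed already by the first map) or $x\in S'$ (collapsed by the second), and to $x$ otherwise, and this is precisely $\pi_{\EUmfB}(x)$.

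For part (b), first note that $\dl\gm$ is again a surjective homomorphism of bipotent semirings. Since $\tT(V)=\tT(U)$ and the ghost of $x\in\tT(U)$ in $V=U/\FUgm$ is $\gm(ex)$ (Convention~\ref{convs3.2}.b), the hypothesis $\mfa_{V,\dl}\subset eV$ says precisely that $\dl\gm(ex)\neq 0$ for every $x\in\tT(U)$, i.e.\ $\mfa_{U,\dl\gm}\subset M$; hence $F(U,\dl\gm)$ has the description of Convention~\ref{convs3.2}.b, with quotient $\tT(U)\dcup L$, which is also the target $V/F(V,\dl)$ of the left-hand side. Then for $x\in\tT(U)$ both sides fix $x$ (no tangible is collapsed on either side), and for $x\in M$ both send $x$ to $\dl(\gm(x))$; so the two maps coincide.

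For part (c), the relation $T'\circ T$ is multiplicative because $[x_iz]_T=[x_i]_T[z]_T$ for $z\in U$ and $T'$ is multiplicative, and it is fiber-conserving because $[x_1]_T\sim_{T'}[x_2]_T$ forces $ex_1=ex_2$ by $T'$ being fiber-conserving and the identifications of Convention~\ref{convs3.2}.c. It is tangible by the dichotomy: if $[x_1]_T=[x_2]_T$ then $x_1\sim_T x_2$, hence $x_1\sim_{E_{t,U}}x_2$ because $T\subset E_{t,U}$; if instead $[x_1]_T\neq[x_2]_T$ while $[x_1]_T\sim_{T'}[x_2]_T$, then, since $T'\subset E_{t,U/T}$, both $[x_i]_T$ are tangible in $U/T$ with a common ghost, so $x_1,x_2\in\tT(U)$ with $ex_1=ex_2$, and again $x_1\sim_{E_{t,U}}x_2$. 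Thus $T'\circ T$ is a tangible MFCE-relation, and $\pi_{T'}\circ\pi_T=\pi_{T'\circ T}$ follows from Definition~\ref{defn3.4}.iii. Everything here is mechanical once the realizations of Convention~\ref{convs3.2} are fixed; the only steps requiring a moment's care are the implication $\mfa_{V,\dl}\subset eV\Rightarrow\mfa_{U,\dl\gm}\subset M$ in part (b) and the verification that $T'\circ T$ stays inside $E_{t,U}$ in part (c) — and neither is a genuine obstacle.
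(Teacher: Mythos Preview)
Your proof is correct and follows exactly the approach the paper itself indicates (in the paragraph preceding Proposition~\ref{prop3.8}): verify each identity by inserting an arbitrary $x\in\tT(U)$ and comparing both sides, the ghost case being evident. You supply more detail than the paper does---in particular the verification that $\mfB$ is an ideal in (a), the translation $\mfa_{V,\dl}\subset N\Leftrightarrow\mfa_{U,\dl\gm}\subset M$ in (b), and the check that $T'\circ T\subset E_{t,U}$ in (c)---but the method is the same.
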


\begin{prop}\label{prop3.9} (The case $\al = \pi_T$.)

Let $U$ be a supertropical monoid, $T$ a tangible MFCE-relation on
$U$, and $$\brU := U/T = (\tTU/T) \dcup M.$$
\begin{enumerate} \ealph
    \item If $\gm$ is a surjective homomorphism from $M:= eU$ to a
    semiring $N$, and $\mfa_{U,\gm} \subset M$, then
    $$ \pi _{F(\brU, \gm)} \circ \pi_T = \pi_{T'} \circ \pi_{\FUgm},$$
where $T'$ is the tangible MFCE-relation on $V:= U/ \FUgm = \tTU
\cup N$ defined as follows. For any $x,y \in V$
$$ x \sim_{T'}y  \quad \Leftrightarrow \quad \left\{
\begin{array}{lll}
  \text{either} &  &  x,y \in \tTU
  \ \text{ and } \  x \sim_T y, \\
  \text{or} &   & x = y \in N. \\
\end{array}%
\right.$$

    \item Let $\mfB$ be an ideal of $\brU$ containing $M$, hence
    $\mfB = M \dcup \brS$ with $\brS$ a subset of $\tTU / T$. We put $S:= \ipi
    _T(\brS) \subset \tTU$.
    Then
    $$ \pi_{E(\brU, \mfB)} \circ \pi_T = \pi_{T'} \circ \pi_{E(U, \mfA)}, $$
    with $\mfA := \ipi_T(\mfB) = M \cup S$, and $T'$ a tangible
    MFCE-relation on $$V:= U/ \EUmfA = (\tTU \sm S) \dcup M.$$
    $T'$ is obtained from $T$ by restriction to the subset $U \sm
    S$ of $U$. \{Notice that $\tT(U) \sm S$ is a union of $T$-equivalence classes.\}
\end{enumerate}
\end{prop}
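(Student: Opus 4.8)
The plan is to carry out the direct verification announced before Proposition~\ref{prop3.8}: unwind the realizations of the quotient monoids supplied by Convention~\ref{convs3.2}, check that the relation $T'$ occurring in each part really is a tangible MFCE-relation on the relevant quotient, identify the two codomains as one and the same supertropical monoid, and then compare the two composites on a tangible element, invoking Remark~\ref{rem1.8} to finish. A conceptual route via Lemma~\ref{lem3.5} also exists but does not hand us the stated factorization directly (applying it to $\pi_T$ produces a first factor coarser than $\pi_{\FUgm}$), so the hands-on comparison is cleaner.

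First I would fix the set-up. In (a) the hypothesis $\mfa_{U,\gm}\subset M$ passes to $\brU$, i.e.\ $\mfa_{\brU,\gm}\subset M$, because $\pi_T$ covers $\id_M$ and carries $\tTU$ onto $\tT(\brU)$; hence Convention~\ref{convs3.2}.b applies to both $U$ and $\brU$, and Convention~\ref{convs3.2}.c applies to $V:=U/\FUgm$. In (b) the set $\mfA:=\ipi_T(\mfB)$ is a monoid ideal of $U$ (preimage of an ideal under a monoid homomorphism); it contains $M$, because $\pi_T(x)\in M$ forces $x\in M$; and writing $\mfA=M\dcup S$ with $S=\ipi_T(\brS)\subset\tTU$ one gets $S\cdot\tTU\subset S\cup M$ from the ideal property, so Convention~\ref{convs3.2}.a applies to $U$ with $\mfA$ and to $\brU$ with $\mfB$. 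A fact used repeatedly is that $\tTU\sm S$ is a union of $T$-classes, which holds precisely because $S=\ipi_T(\brS)$.

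Next I would check that the relation $T'$ described in each part is a tangible MFCE-relation on $V$, and identify the two codomains. In (a), on $V=\tTU\dcup N$ one verifies compatibility of $T'$ with the twisted product $\odot$ of Convention~\ref{convs3.2}.b: for $x,y\in\tTU$ with $x\sim_T y$ one has $ex=ey$, hence $\gm(ex)=\gm(ey)$ and $e_Vx=e_Vy$ (this handles products of $x,y$ with a ghost of $V$), while products with a further tangible $y'$ are settled by multiplicativity of $T$ together with the fact that $T|M$ has only singleton classes; fiber conservation and $T'\subset E_t$ on $V$ are then immediate from $e_Vx=\gm(ex)$. In (b), $T'$ is the restriction of $T$ to $U\sm S$, which is well defined and compatible with the product of Convention~\ref{convs3.2}.a exactly because $\tTU\sm S$ is $T$-saturated. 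Convention~\ref{convs3.2} then gives $\brU/\FbUgm=(\tTU/T)\dcup N$ and $V/T'=(\tT(V)/T')\dcup N$; since $\tT(V)=\tTU$ and $T'$ restricted to $\tTU$ is $T$, both codomains equal $(\tTU/T)\dcup N$ as sets. Similarly, in (b) both codomains equal $((\tTU\sm S)/T)\dcup M$.

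It remains to compare. Both sides of the asserted identity are compositions of two quotient transmissions, hence surjective transmissions out of $U$, and evaluating on $x\in\tTU$ one finds: in (a) both composites send $x\mapsto[x]_T$ and each $y\in M$ to $\gm(y)$; in (b) both send $x\in S$ to $ex$, $x\in\tTU\sm S$ to $[x]_T$, and each $y\in M$ to $y$. So the two composites agree as set maps, hence have equal kernel relations, and Remark~\ref{rem1.8} identifies both codomains with the same quotient $U/E$ via the \emph{same} set map; thus the supertropical monoid structures coincide and the two composites are equal as morphisms in $\STROP_m$. The main obstacle --- in truth a mild one, the rest being bookkeeping with Convention~\ref{convs3.2} and Remark~\ref{rem1.8} --- is the verification that $T'$ descends to a tangible MFCE-relation on $V$, i.e.\ that it is compatible with the twisted multiplication $\odot$; this is exactly where the hypothesis $\mfa_{U,\gm}\subset M$ in part~(a) and the $T$-saturation of $\tTU\sm S$ in part~(b) are needed.
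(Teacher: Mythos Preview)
Your proof is correct and follows exactly the approach the paper indicates: the paper states before Proposition~\ref{prop3.8} that these factorizations ``can always quickly be verified by inserting an element $x$ of $\tTU$ and comparing both sides'' (with the ghost case being evident), and explicitly declines to give details for Proposition~\ref{prop3.9}. You carry out precisely this element-by-element comparison, but supply the verifications the paper omits --- that $T'$ is genuinely a tangible MFCE-relation on $V$, that the two codomains coincide as supertropical monoids via Remark~\ref{rem1.8}, and that $\mfa_{\brU,\gm}\subset M$ so Convention~\ref{convs3.2}.b applies to $\brU$ --- which is exactly what a complete write-up of the paper's sketch requires.
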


\begin{prop}\label{prop3.10} (The remaining case $j=1, i=2$.)

 Let $U$ be a
supertropical monoid and $\gm$ a homomorphism from $M := eU$ onto
a semiring~$N$ with $\mfa_{U,\gm} \subset M$. Let $V:= U/ \FUgm =
\tTU \dcup N$.

\begin{enumerate} \eroman
    \item The ideals $\mfA \supset M $ of $U$ correspond uniquely
    with the ideals $\mfB \supset N $ of $V$ via $\mfA =
    \ibt(\mfB),$ $\mfB = \bt(\mfA)$, where $\bt:= \pi_\FUgm$. We
    then have $\mfA = M \dcup A$, $\mfB = N \dcup S$ with the same
    set $S \subset \tTU = \tTV$, and $\EUmfA = \ibt(\EVmfB)$.
    Finally
    \item

    $$ \pi_{\EVmfB} \circ \pi_{\FUgm} = \pi_{F(\brU,\gm)} \circ \pi_{\EUmfA}$$
    with $\brU := U/ \EUmfA = (\tTU \sm S) \dcup M$.
\end{enumerate}
\end{prop}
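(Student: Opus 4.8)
The plan is to verify Proposition~\ref{prop3.10} by making all the identifications in Conventions~\ref{convs3.2} explicit and then checking the claimed identity of quotient maps directly on tangible elements, using the fact (already noted before the three propositions) that on ghost elements everything is trivially compatible.

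First I would prove part (i). Since $\mfa_{U,\gm} \subset M$, the map $\bt := \pi_{\FUgm}$ restricts to the identity on $\tTU$ under the identification $\tT(U/\FUgm) = \tTU$, and it sends $M$ onto $N$ via $\gm$; so $\bt$ is surjective with trivial zero kernel on the tangible part. Given an ideal $\mfA \supset M$ of $U$, write $\mfA = M \dcup A$ with $A \subset \tTU$ satisfying $A\cdot\tTU \subset A \cup M$. I claim $\mfB := \bt(\mfA) = N \dcup A$ is an ideal of $V$: indeed $\bt(\mfA)\cdot V = \bt(\mfA)\cdot\bt(U) = \bt(\mfA\cdot U) \subset \bt(\mfA)$, using surjectivity of $\bt$ and that $\mfA$ is an ideal. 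Conversely, for an ideal $\mfB \supset N$ of $V$, writing $\mfB = N \dcup S$ with $S \subset \tTV = \tTU$, the set $\mfA := \ibt(\mfB)$ equals $M \dcup S$ because $\bt$ separates $\tTU$ and only collapses $M$ onto $N$; the multiplicative stability $S\cdot\tTU \subset S\cup M$ follows by pulling back the corresponding property of $\mfB$ in $V$. Hence $\mfA \mapsto \bt(\mfA)$ and $\mfB \mapsto \ibt(\mfB)$ are mutually inverse order-preserving bijections carrying the same tangible set $S$. The relation $\EUmfA = \ibt(\EVmfB)$ is then immediate from Definition~\ref{defn1.12}(b) and Definition~\ref{defn3.4}(i): for $x,y \in U$, $\pi_{\EUmfA}$ identifies $x$ with $ex$ precisely when $x \in S$, which happens iff $\bt(x) \in S \subset \mfB$, i.e. iff $\bt(x) \simEa_{\mfB}$ its ghost; one has to note in addition that $\sat_U(\mfA) = \mfA$ and $\sat_V(\mfB) = \mfB$ since both ideals contain the whole ghost ideal, so the saturation in Definition~\ref{defn1.12}(b) is invisible here.

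For part (ii) I would simply evaluate both composite maps on a general element and compare. On a ghost $x \in M$ the two sides agree trivially (both equal $\gm(x) \in N = M/\EUmfA$ identified appropriately). So let $x \in \tTU$. Under the realizations of Conventions~\ref{convs3.2}: $\pi_{\FUgm}(x) = x \in \tTV$, then $\pi_{\EVmfB}(x) = x$ if $x \notin S$ and $= ex = \gm(ex)$ if $x \in S$; going the other way, $\pi_{\EUmfA}(x) = x$ if $x \notin S$ and $= ex$ if $x \in S$, then $\pi_{F(\brU,\gm)}$ sends a tangible $x$ of $\brU = (\tTU\sm S)\dcup M$ to itself and a ghost $ex \in M$ to $\gm(ex) \in N$. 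In both branches the outcomes coincide, and multiplicativity has already been built into the monoid structures on the quotients, so no separate check of the product formulas is needed beyond observing that the two sides are transmissions agreeing on the generating set $\tTU \cup M$. Finally I would record that $F(\brU,\gm)$ makes sense, i.e. $\mfa_{\brU,\gm} \subset M$: this holds because the tangible elements of $\brU$ form a subset of $\tTU$ and $\gm$ is unchanged on $M = e\brU$.

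The one step needing genuine care — and the main obstacle — is the verification that $\ibt(\EVmfB)$ really is $\EUmfA$ as an equivalence relation, not merely that the two identify the ``same'' tangibles with ghosts. The subtlety is that $\pi_{E(\mfA)}$ also glues together \emph{all} elements of $\sat_U(\mfA)$ with each other (cf.\ Definition~\ref{defn1.12}(b)), so one must confirm that pulling back along $\bt$ does not create or destroy any such identifications; this comes down to the observation that $\bt$ is injective on $\tTU$ and that $x \in \sat_U(\mfA) \iff ex \in e\mfA \iff \gm(ex) \in e\mfB \iff \bt(x) \in \sat_V(\mfB)$, where the middle equivalence uses $\mfa_{U,\gm} \subset M$ together with surjectivity of $\gm$. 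Once this is in hand, parts (i) and (ii) are both routine.
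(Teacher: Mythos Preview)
Your overall approach is the paper's own: for (i) the paper simply observes that for $S\subset\tT(U)$ one has $S\cdot\tT(U)\subset S\cup M$ in $U$ iff $S\cdot\tT(U)\subset S\cup N$ in $V$ (since $\beta^{-1}(N)=M$ and $\tT(U)=\tT(V)$), and for (ii) it says ``insert a given $x\in\tT(U)$ in both sides and compare'' --- exactly your element-wise check.

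There is, however, a genuine definitional confusion in your final ``care'' paragraph. You invoke Definition~\ref{defn1.12}(b), which describes the relation $E(\mfa)=E_U(\mfa)$ collapsing all of $\sat_U(\mfa)$ to a single class. The relation $E(U,\mfA)$ in the proposition is a different object: it is $E(U,\mfA,\id_M)$ from Theorem~\ref{thm1.14} and Corollary~\ref{cor1.16}, where $x_1\sim x_2$ iff $x_1=x_2$ or $x_1,x_2\in\mfA$ with $ex_1=ex_2$. In particular your assertion $\sat_U(\mfA)=\mfA$ is false: since $\mfA\supset M$, every $x\in U$ satisfies $ex\in M\subset\mfA$ with $ex\le e(ex)$, so $\sat_U(\mfA)=U$ and $E(\mfA)$ would be the all-relation. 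Fortunately this confusion is confined to that paragraph; your verification of (ii) uses only Convention~\ref{convs3.2}(a) and is correct as written.

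A related remark: the literal equality $E(U,\mfA)=\beta^{-1}(E(V,\mfB))$ fails on $M$ whenever $\gm$ is not injective (two distinct ghosts $x_1,x_2\in M$ with $\gm(x_1)=\gm(x_2)$ are identified on the right but not on the left). The content that matters, and that you did verify, is the factorization (ii); the ghost-side discrepancy is precisely what $\pi_{F(\brU,\gm)}$ accounts for on the right-hand side. So your ``main obstacle'' paragraph can simply be dropped.
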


\begin{proof} i): The point is that for $S$ a subset of $\tTU$ we
have $S \cdot \tTU \subset M$ in $U$ iff   $S \cdot \tTU \subset
N$ in $V$, since $\ibt(N) = M$. (Recall that we identified $\tT(U)
= \tT(V)$.)

ii): Again just insert a given $x \in \tTU$ in both sides of the
equation and compare.
\end{proof}

\begin{summr}\label{prop3.11}
If $\al:U \to V$ and $\bt: V \to W$ are basic transmissions, $\al$
of type $i$ and $\bt$ of type $j \leq i$, cf. Definition
\ref{defn3.1}, then in case $i = j$ the transmission $\bt \al$  is
again basic of type $i$, and otherwise $\bt \al = \al' \bt'$
with~$\al'$ basic of type $i$ and $\bt'$ basic of type $j$, and
the new basic transmissions can be determined from $\al$ and $\bt$
in an explicit way.
\end{summr}
If $\al:U \to V$ and $\bt: V \to W$ are any transmissions with
known canonical factors, then the canonical factorization of $\bt
\al$ can be determined explicitly in at most $4+3+2+1 = 10$ steps.

\section{The semiring associated to a supertropical monoid; \\ initial transmissions}\label{sec:4}

Let $U$ be a supertropical monoid and $M:= eU$ its ghost ideal. We
start out to convert $U$ into a supertropical semiring in a
somewhat canonical way.

If $S$ is any subset of $U$, then the set $(US) \cup M$ is the
smallest ideal of $U$ containing both~$S$ and $M$. For convenience
we introduce the notation
$$ \EUS \ds{:=} E(U, US \cup M).$$
Corollary \ref{cor1.16} tells us the meaning of this equivalence
relation.
\begin{schol}\label{schol4.1}
A transmission $\al: U \to V$ factors through the ideal
compression $\pi_{\EUS}$ (in a unique way), iff the ghost kernel
$\mfA_\al$ contains the set $S$.
\end{schol}

We now  define a subset $S(U)$ of $\tTU$, for which the relation
$E(U, S(U))$ will play a central role for most of the rest of the
paper.
\begin{defn}\label{defn4.2} $ $
\begin{enumerate} \ealph
    \item We call an element $x$ of $U$ an \textbf{NC-product} (in
    $U$), if there exist elements $y,z$ of $U$ and $y'$ of $M$ with
$$ x = yz, \quad y' < ey, \quad y' z = eyz.$$
Here the label ``NC'' alludes  to the fact that we meet a
non-cancellation situation in the monoid $M$: We have $y' \neq
ey$, but $y' z = eyz$.

    \item We denote the set of all NC-products in $U$ by $D_0(U)$ and
    the set $D_0(U) \cup M$ by $D(U)$. We finally put
    $$ S(U):= D(U) \sm M = D_0(U) \cap \tTU.$$
    This is the set of tangible NC-products in $U$.
\end{enumerate}
\end{defn}

Clearly $D_0(U) \cdot U \subset D_0(U)$. Thus $D(U)$ is an ideal
of $U$ containing $M$. We have
$$ E(U,S(U)) = E(U,D_0(U)) = E(U,D(U)).$$

Theorem~\ref{thm1.2} tell us that $U$ is a semiring iff $S(U) =
\emptyset$, i.e., $D(U) = M$.

We compare the set $S(U)$ with $S(V)$ for $V$ an ideal compression
of $U$.

\begin{lem}\label{lem4.3} Let $\mfA$ be an ideal of $U$ containing
$M$, $\mfA = M \dcup S$ with $S \subset \tTU$. We regard $V:= U/
\EUmfA$ as a subset of $U$, as explained in Convention
\ref{convs3.2}.a. Then
$$ S(V) = S(U) \sm S.$$
\end{lem}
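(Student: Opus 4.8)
The statement compares the set $S(V)$ of tangible NC-products of the ideal compression $V := U/\EUmfA$ against $S(U)\sm S$, where $\mfA = M \dcup S$. The plan is to prove the two inclusions $S(V)\subset S(U)\sm S$ and $S(U)\sm S\subset S(V)$ separately, working throughout with the concrete realization of $V$ as the subset $U\sm S = (\tTU\sm S)\dcup M$ of $U$, with its twisted product $x\odot y$ described in Convention \ref{convs3.2}.a: $x\odot y = xy$ if $xy\notin S$ and $x\odot y = exy$ if $xy\in S$. The first point to record is the trivial observation $\tT(V) = \tTU\sm S$, so that membership of a tangible element of $V$ in $S(V)$ really is a statement about elements of $\tTU\sm S$; also $e_V = e_U$ and the ghost ideal of $V$ is still $M$, with the same ordering, so the ``$y' < ey$'' and ``$y'z = eyz$'' conditions in Definition \ref{defn4.2}.a mean the same thing whether read in $U$ or in $V$ — the only thing that changes between the two monoids is the product of two tangibles, and only when that product happens to land in $S$.

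\textbf{Inclusion $S(V)\subset S(U)\sm S$.} Let $x\in S(V)$, so $x\in\tTU\sm S$ and there are $y,z\in V$ and $y'\in M$ with $x = y\odot z$, $y' < e_V y = ey$, and $y'\odot z = e_V y\odot z$, i.e. (since $y'z, eyz\in M$ and the $\odot$-product agrees with the ordinary product on $M$) $y'z = eyz$ computed in $U$. The one subtlety is the equation $x = y\odot z$: either $yz\notin S$, in which case $x = yz$ in $U$ and the triple $(y,z,y')$ exhibits $x$ as an NC-product in $U$ directly; or $yz\in S$, in which case $y\odot z = eyz\in M$, contradicting $x\in\tTU$. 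So only the first case occurs, giving $x\in D_0(U)\cap\tTU = S(U)$, and since $x\in\tTU\sm S$ we get $x\in S(U)\sm S$.

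\textbf{Inclusion $S(U)\sm S\subset S(V)$.} Let $x\in S(U)\sm S$, so $x\in\tTU\sm S$ and there are $y,z\in U$, $y'\in M$ with $x = yz$, $y' < ey$, $y'z = eyz$. Here one must be careful: $y$ or $z$ may lie in $S$, i.e. may fail to be an element of $V$. However, $x = yz\notin S$ by hypothesis. If $z\in M$ then $x = yz$; since $ex = e y z = y'z$ would force... — more cleanly, note that if either $y\in M$ or $z\in M$ then $x = yz\in M$ (as $M$ is an ideal), contradicting $x\in\tTU$; hence both $y,z\in\tTU$. Now if $y\in S$ then, since $S$ is part of the ideal $\mfA$ and $\mfA\cdot\tTU\subset\mfA$ with the sharper closure $S\cdot\tTU\subset S\cup M$ recorded just after Definition \ref{convs3.2}, we would get $x = yz\in S\cup M$; but $x\notin S$ and $x\notin M$, contradiction. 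So $y\notin S$, and symmetrically $z\notin S$; thus $y,z\in\tT(V)$. Finally $yz = x\notin S$, so $y\odot z = yz = x$ in $V$; the inequality $y' < ey = e_V y$ holds as before; and $y'z = eyz$ is an equation in $M$, equal to $y'\odot z = e_V y\odot z$. Hence $x$ is an NC-product in $V$, and being tangible, $x\in S(V)$.

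\textbf{Main obstacle.} The conceptually delicate point — and the one I would make sure to spell out — is the bookkeeping around which of $x$, $y$, $z$, $yz$ can land in $S$ versus $M$ versus $\tTU\sm S$, since the $\odot$-product and the ordinary product diverge exactly on the set where $xy\in S$. The inclusion $S(U)\sm S\subset S(V)$ is where this bites: one genuinely needs the closure property $S\cdot\tTU\subset S\cup M$ (equivalently, that $\mfA$ is an ideal) to rule out $y\in S$ or $z\in S$, and one needs $x\notin S$ (which is hypothesis) to conclude $y\odot z = yz$. Everything else is routine once the realization of $V$ from Convention \ref{convs3.2}.a and the identification $\tT(V) = \tTU\sm S$ are in hand.
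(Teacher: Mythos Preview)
Your proof is correct and follows the same route as the paper, which simply asserts that $S(V)=S(U)\cap\tT(V)$ is ``obvious from the description of $V$ in Convention~\ref{convs3.2}.a'' together with $\tT(V)=\tTU\sm S$. You have supplied exactly the details behind that assertion; in particular, your use of the closure $S\cdot\tTU\subset S\cup M$ to force the NC-witnesses $y,z$ into $V$ is the substantive step the paper leaves implicit.
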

\begin{proof}
It is obvious from  the description of $V$ in Convention
\ref{convs3.2}.a.  that $S(V) = S(U) \cap \tTV$, and we have $\tTV
= \tTU \sm S$.
\end{proof}

\begin{lem}\label{lem4.4}
If $T$ is a tangible MFCE-relation then
$$ S(U/T) = S(U)/T.$$
\end{lem}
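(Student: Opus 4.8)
The plan is to prove the two inclusions $S(U)/T\subseteq S(\brU)$ and $S(\brU)\subseteq S(U)/T$ directly from Definition \ref{defn4.2}, writing $\brU:=U/T$. The facts I would use about the transmission $\pi_T\colon U\to\brU$ are: it is the identity on $M$ (since $T$ is fiber conserving, $M/T=M$ with its original ordering, cf. Theorem \ref{thm1.7}); it maps $\tT(U)$ onto $\tT(\brU)$ with $\pi_T^{-1}(\tT(\brU))=\tT(U)$ (since $T$ is a \emph{tangible} MFCE-relation, $\brU=\tT(U)/T\dcup M$, cf. Convention \ref{convs3.2}.c); and $e_\brU\,\pi_T(a)=\pi_T(ea)$ for $a\in U$. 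The computational observation I would isolate first is that a product in $\brU$ is trivial to evaluate as soon as a ghost factor is present: if $a\in M$ and $b\in U$ then $ab\in M$, hence $\pi_T(a)\odot\pi_T(b)=\pi_T(ab)=ab$. In particular $e_\brU\,\pi_T(y)=ey$ for any $y\in U$, and $y'\odot\pi_T(z)=y'z$ for $y'\in M$, $z\in U$.

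First I would prove $S(U)/T\subseteq S(\brU)$. Take $x\in S(U)$ and write $x=yz$ with $y,z\in U$, $y'\in M$, $y'<ey$, $y'z=eyz$. Put $\bar x:=\pi_T(x)$, $\bar y:=\pi_T(y)$, $\bar z:=\pi_T(z)$. Then $\bar x=\bar y\,\bar z$ by multiplicativity, $\bar x\in\tT(\brU)$ because $x\in\tT(U)$, and by the observation $y'<ey=e_\brU\bar y$ (the order on $M$ inside $\brU$ is the order of $M$) and $y'\odot\bar z=y'z=eyz=(e_\brU\bar y)\odot\bar z$. Hence $\bar x$ is a tangible NC-product of $\brU$, i.e. $[x]_T\in S(\brU)$.

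Next, $S(\brU)\subseteq S(U)/T$. Take $\xi\in S(\brU)$, so $\xi=\bar y\,\bar z$ in $\brU$ with $\bar y,\bar z\in\brU$, $\bar y'\in M=e_\brU\brU$, $\bar y'<e_\brU\bar y$, $\bar y'\bar z=(e_\brU\bar y)\bar z$, and $\xi\in\tT(\brU)$. Using $\brU=\tT(\brU)\dcup M$, choose lifts $y,z\in U$ with $\pi_T(y)=\bar y$, $\pi_T(z)=\bar z$ (a representative in $\tT(U)$ if the class is tangible, the element itself if it lies in $M$), and set $y':=\bar y'\in M$ and $x:=yz\in U$. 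Then $\pi_T(x)=\bar y\,\bar z=\xi$; since $\xi\in\tT(\brU)$ and $\pi_T^{-1}(\tT(\brU))=\tT(U)$, we get $x\in\tT(U)$. By the observation $e_\brU\bar y=ey$, $(e_\brU\bar y)\odot\bar z=eyz$ and $y'\odot\bar z=y'z$, and since these are all elements of $M$, on which $\pi_T$ is the identity, the defining relations of $\xi$ transport back to $y'<ey$ and $y'z=eyz$ in $U$. Thus $x=yz\in D_0(U)\cap\tT(U)=S(U)$ and $\xi=[x]_T\in S(U)/T$. The two inclusions give $S(\brU)=S(U)/T$.

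I do not expect a real obstacle. The only thing to watch is the bookkeeping when evaluating products such as $(e_\brU\bar y)\odot\bar z$ or $y'\odot\bar z$ according to the cases of Convention \ref{convs3.2}.c; but in every product that occurs a ghost factor is present, so each lands in $M$ and collapses to the single rule ``$=$ the corresponding product in $M$'', and no case distinction survives — matching the paper's remark that the neighbouring facts are verified ``in an easy straightforward way''.
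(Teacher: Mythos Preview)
Your proof is correct and follows exactly the approach the paper intends: the paper's proof is the single line ``Look at the description of $U/T$ in Convention \ref{convs3.2}.c,'' and your argument is precisely the detailed unpacking of that reference, using that $\pi_T$ is the identity on $M$ and that $\brU=\tT(U)/T\dcup M$ to transport the NC-product witnesses back and forth.
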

\begin{proof}
Look at the description of $U/T$ in Convention \ref{convs3.2}.c.
\end{proof}

\begin{thm}\label{thm4.5} $ $
\begin{enumerate} \eroman
    \item The supertropical monoid $\htU:= U/ \EUSU $ is a
    semiring.
    \item The ideal compression
    $$\sig_U := \pi_{\EUSU}: U \ds{\to} \htU $$ is universal among
    all fiber contractions $\al: U \ds{\to} V$ with $V$ a semiring.
    More  precisely, given such a fiber contraction   $\al$, we
    have a (unique) fiber contraction \\ $\bt: \htU \to V$ with $\al = \bt \circ
    \sig_U$. \{N.B. If $\al$ is a fiber contraction over $M$, the same holds for $\bt$.\}
\end{enumerate}

\end{thm}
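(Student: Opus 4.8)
The plan is to prove (i) first and then deduce (ii) from the universal property of ideal compressions recorded in Scholium~\ref{schol4.1}. For (i), I would apply the criterion of Theorem~\ref{thm1.2}: the supertropical monoid $\htU = U/E(U,S(U))$ is a semiring iff $S(\htU) = \emptyset$, i.e.\ $D(\htU) = e\htU$. Realizing $\htU$ as the subset $(\tTU \sm S(U)) \dcup M$ of $U$ via Convention~\ref{convs3.2}.a, Lemma~\ref{lem4.3} gives $S(\htU) = S(U) \sm S(U) = \emptyset$. Hence $\htU$ is a semiring, and it is supertropical by the last sentence of Theorem~\ref{thm1.2}. The one point that needs a careful look is that Lemma~\ref{lem4.3} is stated for $\mfA = M \dcup S$ with $S$ \emph{any} subset of $\tTU$ such that $\mfA$ is an ideal; so I must note that $D_0(U) \cap \tTU = S(U)$ indeed satisfies $S(U)\cdot \tTU \subset S(U) \cup M$ (this is the observation $D_0(U)\cdot U \subset D_0(U)$ made just before the theorem), which makes $\mfA := US(U)\cup M = D(U)$ a legitimate ideal and $\htU = U \sm S(U)$ a legitimate realization of the quotient.

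For (ii), let $\al : U \to V$ be a fiber contraction with $V$ a semiring. Since $V$ is a semiring, $S(V) = \emptyset$, i.e.\ every NC-product in $V$ lies in $eV$. I would check that $\al$ sends each element of $S(U)$ into $eV$: if $x = yz$ with $y' < ey$ and $y'z = eyz$ in $M$, then applying $\al$ and using that $\al^\nu : M \iso eV$ is an isomorphism of bipotent semirings (so it preserves $<$ and $=$), we get $\al(y') < e\al(y)$ and $\al(y')\al(z) = e\al(y)\al(z)$ in $eV$; thus $\al(x) = \al(y)\al(z)$ is an NC-product in $V$, hence lies in $eV$. Therefore $S(U) \subset \mfA_\al$ (the ghost kernel of $\al$), and Scholium~\ref{schol4.1} yields a unique transmission $\bt : \htU \to V$ with $\al = \bt \circ \sig_U$. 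It remains to see $\bt$ is a fiber contraction: since $\sig_U^\nu = \id_M$ and $\al^\nu = \bt^\nu \circ \sig_U^\nu = \bt^\nu$ is an isomorphism, $\bt^\nu$ is an isomorphism, so $\bt$ is a fiber contraction; and if $\al^\nu = \id_M$ then $\bt^\nu = \id_M$, proving the parenthetical remark.

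The main obstacle, such as it is, is purely bookkeeping: making sure the identification of $\htU$ with the subset $U\sm S(U)$ of $U$ (with the twisted product $\odot$ of Convention~\ref{convs3.2}.a) is compatible with the way NC-products are computed, so that Lemma~\ref{lem4.3} really does apply verbatim with $S = S(U)$. Once the realization is in place, both halves are short; the uniqueness in (ii) is immediate from the uniqueness clause in Scholium~\ref{schol4.1} (equivalently Corollary~\ref{cor1.16}), since $\sig_U = \pi_{E(U,S(U))}$ is surjective. No genuinely hard step is expected.
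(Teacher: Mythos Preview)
Your proof of (i) is exactly the paper's: apply Lemma~\ref{lem4.3} with $S = S(U)$ to get $S(\htU) = \emptyset$, then Theorem~\ref{thm1.2}.

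For (ii) your route is correct but differs from the paper's. You verify $S(U) \subset \mfA_\al$ directly: since $\al^\nu$ is an isomorphism of bipotent semirings, the witnesses $y' < ey$, $y'z = eyz$ for an NC-product $x = yz \in S(U)$ transport to witnesses $\al(y') < e\al(y)$, $\al(y')\al(z) = e\al(y)\al(z)$ in $V$; as $V$ is a semiring, condition $\Dis$ forces $\al(x) = \al(y)\al(z) \in eV$. The paper instead reduces to $\al$ a fiber contraction over $M$, invokes the canonical factorization $\al = \pi_T \circ \pi_{E(U,\mfA)}$ from Scholium~\ref{schol2.12}, and combines Lemma~\ref{lem4.4} (which says $S(\brU/T) = S(\brU)/T$) with Lemma~\ref{lem4.3} to conclude $S(U) \subset \mfA$. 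Both routes finish via Scholium~\ref{schol4.1}. Your argument is more self-contained, avoiding the \S\ref{sec:2} machinery and Lemma~\ref{lem4.4}; the paper's approach is more structural and yields as a byproduct the equivalence ``$V$ a semiring $\Leftrightarrow S(U) \subset \mfA$'' for such factorizations, together with the converse direction (any $\bt \circ \sig_U$ lands in a semiring, via Theorem~\ref{thm1.5}.ii). One small omission in your write-up: to call $\bt$ a fiber contraction you also need $\bt$ surjective, which follows at once from the surjectivity of $\al$.
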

\begin{proof} (i): By Lemma \ref{lem4.3}, the set $S(\htU)$ is
empty; hence $\htU$ is a semiring. \pSkip

(ii): We may assume that $\al: U \to V$ is a fiber contraction
over $M$, and then that  $\al = \pi_T \circ \pi_{\EUmfA}$ with an
ideal $\mfA \supset M$ of $U$ and $T$ a tangible equivalence
relation on $\brU := U / \EUmfA$. By Lemma~\ref{lem4.4} the set
$S(V)$ is empty iff $S(\brU)$ is empty, and by Lemma~\ref{lem4.3}
this happens iff $S(U) \subset \mfA$. Then $\pi_{\EUmfA}$, and
hence $\al$, factors through  $\pi_{\EUSU} = \sig_U$  (cf.
Scholium \ref{schol4.1}). Conversely, if $\al = \bt \circ \sig_U$
with $\bt: \htU \to V$ a fiber contraction, then we  know already
by Theorem \ref{thm1.5}.ii that $V$ is a semiring, since $\htU$ is
a semiring.
\end{proof}

We call $\htU$ the \textbf{the semiring associated to the
supertropical monoid} $U$.

\begin{thm}\label{thm4.6} Assume that $U$ is a supertropical
semiring and $\gm$ is a surjective homomorphism from  $M := e U$
to a (bipotent) semiring $N$. Let $V:= U / \FUgm$, which may be
only a supertropical monoid. Then
$$ \al:= \sig_V \circ \pi _\FUgm : U \onto V \onto \htV$$
(with $\htV$ and $\sig_V$  as defined in the preceding theorem) is
the initial transmission from $U$ to a supertropical semiring
covering $\gm$ (cf. \cite[Definition 1.3]{IKR2}). In the Notation
1.7 of \cite{IKR2} this reads
$$ \sig_V \circ \pi_{\FUgm} = \al_{U,\gm}.$$
\end{thm}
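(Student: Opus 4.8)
The plan is to verify directly that $\al := \sig_V \circ \pi_{\FUgm}$ has the universal property defining the initial transmission from $U$ to a supertropical semiring covering $\gm$: that $\htV$ is a supertropical semiring, that $\al^\nu = \gm$, and that every transmission $\bt\colon U \to W$ into a supertropical semiring $W$ with $eW = N$ and $\bt^\nu = \gm$ factors as $\bt = \eta \circ \al$ for a unique transmission $\eta\colon \htV \to W$ (necessarily over $N$, since $\gm$ is surjective). The first two points are immediate from what precedes. Indeed $\htV$ is a supertropical semiring by Theorem~\ref{thm4.5}(i); the reflection $\sig_V = \pi_{E(V,S(V))}$ is an ideal compression, hence a fiber contraction over $N = eV$, so $\sig_V^\nu = \id_N$; and $\pi_{\FUgm}^\nu = \gm$ once $M/\FUgm$ is identified with $N$ as in Theorem~\ref{thm1.10}. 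Thus $\al^\nu = \sig_V^\nu \circ \pi_{\FUgm}^\nu = \gm$.

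For the factorization I would split the passage $U \onto \htV$ into its two given steps. By Theorem~\ref{thm1.10}, $\pi_{\FUgm}$ is a pushout transmission in $\STROP_m$ covering $\gm$, so the given $\bt$ factors uniquely as $\bt = \bt_1 \circ \pi_{\FUgm}$ with $\bt_1\colon V \to W$ a transmission and $\bt_1^\nu = \id_N$. It then remains to factor $\bt_1$ through $\sig_V$, and by Scholium~\ref{schol4.1} this factorization exists and is unique precisely when $S(V) \subseteq \mfA_{\bt_1}$, i.e.\ when $\bt_1$ carries every tangible NC-product of $V$ into the ghost ideal $eW$. Granting this, $\bt_1 = \eta \circ \sig_V$ for a unique transmission $\eta$, whose ghost part is $\bt_1^\nu = \id_N$, and then $\bt = \eta \circ \sig_V \circ \pi_{\FUgm} = \eta \circ \al$.

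The one point that is not pure bookkeeping --- the main obstacle --- is the inclusion $S(V) \subseteq \mfA_{\bt_1}$, and it is here that the identity $\bt_1^\nu = \id_N$ is used. Given $x \in S(V)$, write $x = yz$ with $y,z \in V$ and $y' \in N$ such that $y' < ey$ and $y'z = eyz$; note first $y' \neq 0$, since $y' = 0$ would give $eyz = 0$, hence $yz = 0$, contradicting $x \in S(V) \subseteq \tT(V) \smin \00$. Applying $\bt_1$ and using $\bt_1(y') = y'$, $e\bt_1(y) = \bt_1(ey) = ey$, the strict inequality persists in $W$, namely $0 < \bt_1(y') < e\bt_1(y)$, while $y'z = eyz$ gives $\bt_1(y')\bt_1(z) = e\bt_1(y)\bt_1(z)$. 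Since $W$ is a supertropical semiring, condition $\Dis$ of Theorem~\ref{thm1.2} holds in $W$; applied to the triple $\bt_1(y'),\bt_1(y),\bt_1(z)$ it forces $\bt_1(y)\bt_1(z) \in eW$, that is $\bt_1(x) \in eW$ --- this is exactly the mechanism of the proof of Theorem~\ref{thm1.5}(i), a transmission injective on the nonzero ghosts transporting a non-cancellation witness of $V$ to one of $W$. Finally, uniqueness of $\eta$: if $\eta'\colon \htV \to W$ is any transmission with $\bt = \eta' \circ \al$, then $\eta' \circ \sig_V$ is a transmission $V \to W$ of ghost part $\id_N$ with $(\eta' \circ \sig_V) \circ \pi_{\FUgm} = \bt$, so $\eta' \circ \sig_V = \bt_1 = \eta \circ \sig_V$ by the uniqueness in the pushout property, whence $\eta' = \eta$ by the uniqueness clause of Scholium~\ref{schol4.1}. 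This establishes that $\al$ is the initial transmission covering $\gm$, i.e.\ in the notation of \cite[Notation~1.7]{IKR2} that $\sig_V \circ \pi_{\FUgm} = \al_{U,\gm}$.
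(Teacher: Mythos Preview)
Your proof is correct and follows the same two-step structure as the paper: factor $\bt$ through the $\STROP_m$-pushout $\pi_{\FUgm}$, then factor the resulting map through $\sig_V$. The only difference is that for the second step the paper simply invokes Theorem~\ref{thm4.5}(ii) (the universal property of $\sig_V$ among fiber contractions to semirings), whereas you unpack that application by checking $S(V)\subset\mfA_{\bt_1}$ directly via $\Dis$ in $W$ and Scholium~\ref{schol4.1}; this is a slightly more self-contained route to the same conclusion, and incidentally avoids any implicit surjectivity hypothesis on $\bt$.
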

\begin{proof}
Let $\bt: U  \onto W$ be a transmission to a supertropical
semiring $W$ covering $\gm$ (in particular, $eW = N$). Since
$\pi_\FUgm$ is an initial  transmission in the category $\STROP_m$
covering $\gm$, we have a (unique) transmission $\eta: V \to W$
over~$N$, hence fiber contraction over~$N$, with $\bt= \eta \circ
\pi_\FUgm$.

Theorem \ref{thm4.5} gives us a factorization $\eta = \vrp \circ
\sig_V$ with $\vrp : \htV \to W$ again a fiber contraction over
$N$. Then
$$ \bt = \vrp \circ \sig_V \circ \eta = \vrp \circ \al$$
is the desired factorization of $\bt$ in the category $\STROP$. Of
course, the factor $\vrp$ is unique,  since $\al$ surjective.
\end{proof}

We want to find the canonical factorization of $\al_{U,\gm}$. More
generally we look for the canonical factors of
$$ \al:=  \pi_\EVmfB \circ \pi_{\FUgm}$$
with $V:= U/ \FUgm$ and $\mfB$ an ideal of $V$ containing $N =
eV$. We allow $U$ to be any supertropical monoid.

We write $\mfB = N \cup S$ with $S \subset \tT(V)$. Similarly  to
Convention \ref{convs3.2}.b (which treats a special case) we have
a natural identification
$$ \tTV = \tTU \sm \mfa_{U,\gm}$$
in such a way that for every $x\in U$
$$ \pi_\FUgm(x) = \left\{
\begin{array}{lll}
  x & \text{if } \  x\in \tTU \sm  \mfa_{U, \gm}, \\[1mm]
  \gm(ex) & \text{otherwise} &  .\\
\end{array}
\right.$$ We then obtain the following generalization of
Proposition \ref{prop3.10}, arguing essentially in the same way as
in \S\ref{sec:3}.
\begin{lem}\label{lem4.7}
Let $V:= U / \FUgm$ and $\bt:= \pi_{\FUgm}$.
\begin{enumerate} \eroman
    \item The ideals $\mfB$ of $V$ containing $N = eV$  correspond
    uniquely with the ideals $\mfA$ of $U$ containing $M \cup
    \mfa_{U,\gm}$ via  $\mfA = \ibt(\mfB)$, $\mfB = \bt (\mfA)$. Writing $\mfB = N \dcup S'$, with
    $$ S' \subset \tTV = \tTU \sm \mfa_{U,\gm},$$
    we have $\mfA = M \dcup S$ with
    $$ S := \{ x\in \tTU \ds | \gm(ex) = 0 \} \cup S' \subset \tTU.$$
    \item $\pi_\EVmfB \circ \pi_\FUgm = \pi_{F(\brU, \gm)} \circ \pi_\EUmfA
    $, with $\brU := U / \EUmfA = (\tT(U) \sm S ) \dot \cup M.$
\end{enumerate}
\endbox
\end{lem}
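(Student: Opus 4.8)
The plan is to prove (i) by the usual bijection between ideals of a quotient and the ideals upstairs that are saturated with respect to the quotient map, and to prove (ii) by the recipe of \S\ref{sec:3}: realize all three target monoids as explicit subsets of $U$ via Convention~\ref{convs3.2}, evaluate both composites on an arbitrary element, and compare. The only feature not already handled by Proposition~\ref{prop3.10} is that we drop the hypothesis $\mfa_{U,\gm}\subset M$, so $\mfa_{U,\gm}$ may now contain tangibles; all the additional work is bookkeeping of how these tangibles are distributed among the quotients. Throughout I use the variant of Convention~\ref{convs3.2}.b valid without that hypothesis, namely $V=\tTV\dcup N$ with $\tTV=\tTU\sm\mfa_{U,\gm}$, and $\pi_{\FUgm}(x)=x$ for $x\in\tTU\sm\mfa_{U,\gm}$, $\pi_{\FUgm}(x)=\gm(ex)$ otherwise.

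\emph{Part} (i). Put $\bt:=\pi_{\FUgm}$; it is a surjective transmission which induces a bijection $\tTU\sm\mfa_{U,\gm}\iso\tTV$, and from the description of $F(U,\gm)$ in Theorem~\ref{thm1.10} one reads off $\ibt(N)=M\cup\mfa_{U,\gm}$. Hence if $\mfB\supset N$ is an ideal of $V$, then $\mfA:=\ibt(\mfB)$ is an ideal of $U$ containing $M\cup\mfa_{U,\gm}$, and $\bt(\mfA)=\mfB$ since $\bt$ is surjective. Conversely, let $\mfA\supset M\cup\mfa_{U,\gm}$ be an ideal of $U$ and write $\mfA=M\dcup S$ with $S\subset\tTU$; then $\{x\in\tTU\ds|\gm(ex)=0\}=\mfa_{U,\gm}\cap\tTU\subset S$. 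Under $\bt$ the part $S\cap\mfa_{U,\gm}$ maps into $\{0\}\subset N$, while $S':=S\sm\mfa_{U,\gm}$ maps bijectively onto a subset of $\tTV$; so $\bt(\mfA)=N\dcup S'$ is an ideal of $V$ containing $N$, and $\ibt(\bt(\mfA))=(M\cup\mfa_{U,\gm})\cup S'=\mfA$. This yields the bijection, and reading $S$ off from $\mfB=N\dcup S'$ gives $S=\{x\in\tTU\ds|\gm(ex)=0\}\cup S'$.

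\emph{Part} (ii). First identify the target monoids. By Convention~\ref{convs3.2}.a, $\brU=U/\EUmfA=(\tTU\sm S)\dcup M$; since $S$ contains every tangible with $\gm(ex)=0$, each $x\in\tTU\sm S$ has $\gm(ex)\neq0$, whence $\mfa_{\brU,\gm}\subset M$ and Convention~\ref{convs3.2}.b applies to give $\brU/\FbUgm=(\tTU\sm S)\dcup N$. On the other side $V=(\tTU\sm\mfa_{U,\gm})\dcup N$, and, using $S=(\mfa_{U,\gm}\cap\tTU)\dcup S'$, Convention~\ref{convs3.2}.a gives $V/\EVmfB=\bigl((\tTU\sm\mfa_{U,\gm})\sm S'\bigr)\dcup N=(\tTU\sm S)\dcup N$. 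So both composites have target $(\tTU\sm S)\dcup N$, and it remains to compare them as maps. For $x\in M$ both send $x$ to $\gm(x)\in N$. For $x\in\tTU\sm S$: on the right $\pi_{\EUmfA}(x)=x$ and then $\pi_{\FbUgm}(x)=x$; on the left $x\notin\mfa_{U,\gm}$ gives $\pi_{\FUgm}(x)=x$ and $x\notin S'$ gives $\pi_{\EVmfB}(x)=x$. For $x\in S$: on the right $\pi_{\EUmfA}(x)=ex\in M$, hence $\pi_{\FbUgm}(ex)=\gm(ex)$; on the left, if $x\in\mfa_{U,\gm}$ then $\pi_{\FUgm}(x)=\gm(ex)=0$ and $\pi_{\EVmfB}(0)=0=\gm(ex)$, while if $x\in S'$ then $\pi_{\FUgm}(x)=x\in\tTV$, whose ghost companion in $V$ is $\gm(ex)$, so $\pi_{\EVmfB}(x)=\gm(ex)$. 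In every case the two values agree, so $\pi_{\EVmfB}\circ\pi_{\FUgm}=\pi_{\FbUgm}\circ\pi_{\EUmfA}$. Both sides are transmissions by Proposition~\ref{prop1.9}.i, and by the uniqueness in Theorem~\ref{thm1.7} the induced supertropical monoid structures on the common target then coincide as well.

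The only point demanding care is keeping the three identifications of Convention~\ref{convs3.2} simultaneously straight, in particular the fact that $\mfa_{U,\gm}$ straddles $M$ and $\tTU$, so that $S$, $S'$ and $\mfa_{U,\gm}\cap\tTU$ must be kept apart; once both targets are correctly recognized as $(\tTU\sm S)\dcup N$, the verification is the short case check above, and I anticipate no genuine obstacle.
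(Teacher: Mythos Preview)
Your proof is correct and follows exactly the approach the paper indicates: the paper does not spell out a proof of Lemma~\ref{lem4.7} but merely says it is ``arguing essentially in the same way as in \S\ref{sec:3}'' (i.e., as in Proposition~\ref{prop3.10}), and your argument is precisely that---identify the ideal correspondence via $\ibt(N)=M\cup\mfa_{U,\gm}$, realize both targets as $(\tTU\sm S)\dcup N$, and compare values elementwise. The only additional content over Proposition~\ref{prop3.10} is the bookkeeping for tangibles in $\mfa_{U,\gm}$, which you handle correctly.
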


In the case $\mfB = D(V)$ we have $S' = S(V)$. Thus the elements
of $S'$ are the products  $yz \in \tTV \subset \tTU$ with $\gm(y')
< \gm (ey)$ and $\gm(y'z) = \gm (eyz)$ for some $y' \in M$. Notice
that this forces $\gm(y') \neq 0$.

\begin{defn}\label{defn4.8} Let $U$ be any supertropical monoid. We
call an element $x$ of $U$ a \\ \textbf{$\gm$-NC-product} (in
$U$), if there exist elements $y' \in M$, $y \in U$, $z \in U$
with $x = yz$ and $\gm(y') < \gm (ey)$, $\gm(y'z) = \gm (eyz)$. We
denote the set of these elements $x$ by $D_0(U,\gm)$ and the set
$D_0(U,\gm) \cap \tTU$ of tangible $\gm$-NC-products by
$S(U,\gm)$.

Notice that $D_0(U,\gm)$ is an ideal of $U$. We further define
$$ D(U,\gm) :=  M \cup D_0(U,\gm) = M \cup S(U,\gm),$$
which is an ideal of $U$ containing $M$.
\end{defn}

In this terminology we have $S' = S(U,\gm)$. If $U$ is a semiring,
then we read off from Theorem \ref{thm4.6} and Lemma \ref{lem4.7}
the following fact.

\begin{thm}\label{thm4.9}
Let $U$ be a semiring and $\gm: eU =M \to N$ a surjective
homomorphism from~$M$ to a semiring $N$. Then $\al_{U,\gm}$ has
the canonical factorization $$ \al_{U,\gm} = \pi_{F(\brU, \gm)}
\circ \pi_{\EUS}$$ with $$S= \{ x \in \tTU \ds | \gm(ex) = 0\}
\cup S(U,\gm) $$ and $\brU = U/\EUS$.
\end{thm}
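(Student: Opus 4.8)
The statement of Theorem~\ref{thm4.9} combines Theorem~\ref{thm4.6} with Lemma~\ref{lem4.7} applied to a carefully chosen ideal $\mfB$ of $V := U/\FUgm$. By Theorem~\ref{thm4.6} we know $\al_{U,\gm} = \sig_V \circ \pi_{\FUgm}$, where $\sig_V = \pi_{E(V, S(V))}$ is the ideal compression of $V$ onto its associated semiring $\htV$ (Theorem~\ref{thm4.5}). So the first step is to identify $\sig_V$ with $\pi_{E(V, N \cup S(V))}$, i.e.\ to take $\mfB := D(V) = N \cup S(V)$ in the notation preceding Definition~\ref{defn4.8}; this is legitimate since $E(V,S(V)) = E(V, V\cdot S(V)\cup N)$ and $D(V)$ is exactly the smallest ideal of $V$ containing $S(V)$ and $N$ (the argument mirrors the discussion after Definition~\ref{defn4.2}, applied to $V$ in place of $U$).

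**Main computation: translating $S(V)$ back to $U$.** The crux is to show that under the identification $\tT(V) = \tT(U)\sm\mfa_{U,\gm}$ from Lemma~\ref{lem4.7}, the set $\mfA := \ibt(\mfB)$ with $\bt := \pi_{\FUgm}$ has the form $M\dcup S$ with
\[
 S = \{x\in\tTU \ds| \gm(ex)=0\} \cup S(U,\gm).
\]
By Lemma~\ref{lem4.7}.i this $S$ decomposes as $\{x\in\tTU \ds| \gm(ex)=0\} \cup S'$ where $S' = \mfB\sm N = S(V)$ viewed inside $\tTV\subset\tTU$. So everything reduces to the identity $S(V) = S(U,\gm)$. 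Here one unwinds definitions: $x\in S(V)$ means $x$ is a tangible NC-product \emph{in the monoid} $V$, i.e.\ $x = \bar y\odot\bar z$ with $\bar y' < e\bar y$ in $N = eV$ and $\bar y'\bar z = e\bar y\bar z$ in $N$, all computed in $V = \tTU\dcup N$. Since the multiplication $\odot$ on $V$ is transported from $U$ via $\gm$ on the ghost part (Convention~\ref{convs3.2}.b, adapted), these conditions say precisely: there exist $y,z\in U$, $y'\in M$ with $x = yz$ (tangibly, modulo the $\gm$-identification), $\gm(y') < \gm(ey)$, and $\gm(y'z) = \gm(eyz)$ — which is exactly the definition of a tangible $\gm$-NC-product, i.e.\ $x\in S(U,\gm)$. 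The reverse inclusion is the same read backwards. One small point to check carefully: that $x$ being tangible in $V$ forces the witnesses to be chosen with $x = yz$ genuinely tangible in $U$, but this is immediate since $\tT(V) = \tT(U)\sm\mfa_{U,\gm}\subset\tT(U)$.

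**Assembling the factorization.** Granting $S(V) = S(U,\gm)$, Lemma~\ref{lem4.7}.ii gives
\[
 \sig_V\circ\pi_{\FUgm} = \pi_{E(V,\mfB)}\circ\pi_{\FUgm} = \pi_{F(\brU,\gm)}\circ\pi_{\EUmfA},
\]
with $\brU = U/\EUmfA = (\tTU\sm S)\dcup M$ and $\mfA = M\dcup S$. Since $\mfA = M\dcup S = \pi_{E(U, U\cdot S\cup M)}$-kernel, we have $\EUmfA = \EUS$ in the paper's notation, so this reads $\al_{U,\gm} = \pi_{F(\brU,\gm)}\circ\pi_{\EUS}$. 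Finally one must verify this is the \emph{canonical} factorization in the sense of Definition~\ref{defn2.11}: here $\pi_{\EUS}$ is the ideal-compression factor, $\pi_{F(\brU,\gm)}$ is the (strict) ghost-contraction factor over $\gm$, and the tangible MFCE-factor $\pi_T$ and isomorphism $\rho$ are both the identity. That $\pi_{F(\brU,\gm)}$ is \emph{strict} follows because $\brU$ has no tangibles with ghost companion in $\gm^{-1}(0)$ — precisely those tangibles were thrown into $S$ by the clause $\{x\in\tTU\ds|\gm(ex)=0\}$ — so $\mfa_{\brU,\gm}\subset M$, and the resulting tangible fiber contraction $\mu$ is the identity because, by construction (via Theorem~\ref{thm4.5} applied to $V$), the target $\htV$ already \emph{is} a semiring with no further tangible collapsing.

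**Expected obstacle.** The genuine work is the identity $S(V) = S(U,\gm)$: it requires being scrupulous about the three different multiplications in play (in $U$, in $V = U/\FUgm$, and the induced one on $\tT$-parts), and about the fact that NC-products "in $V$" must be expressed using the \emph{ghost} homomorphism $\gm$ rather than naive equality in $M$. Once that bookkeeping is done, the rest is a direct citation of Theorem~\ref{thm4.6}, Theorem~\ref{thm4.5}, and Lemma~\ref{lem4.7}, together with the routine check that the other two canonical factors degenerate to identities.
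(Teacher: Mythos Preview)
Your proposal is correct and follows essentially the same route as the paper: you invoke Theorem~\ref{thm4.6} to write $\al_{U,\gm} = \sig_V \circ \pi_{\FUgm}$, identify $\sig_V = \pi_{E(V,D(V))}$, translate the ideal $D(V)$ back to $U$ via Lemma~\ref{lem4.7} (using the identification $S(V) = S(U,\gm)$ that the paper makes in the paragraph just before Definition~\ref{defn4.8}), and then check that the two resulting factors are already in canonical form. The paper presents this more tersely, treating the identity $S(V) = S(U,\gm)$ as evident from the definitions and simply reading off Theorem~\ref{thm4.9} from the preceding discussion, but your more careful bookkeeping of that identity is exactly the content being elided.
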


It is now easy to write down the equivalence relation
$E(\al_{U,\gm}) = E(U,\gm)$ (cf. Notation 1.7 in \cite{IKR2}). We
obtain
\begin{cor}\label{cor4.10}
For $U$ and $\gm$ as above, the equivalence relation  $\EUgm$
reads as follows ($x_1,x_2 \in U$):
$$ x_1 \sim_{\EUgm} x_2  \quad \Leftrightarrow \quad \left\{
\begin{array}{lll}
  \text{either} &  &  x_1 = x_2,\\[1mm]
  \text{or} &  &  x_1, x_2 \in D(U,\gm), \  \gm(e x_1) = \gm (ex_2), \\[1mm]
  \text{or} &   &  \gm(e x_1) = \gm (ex_2) = 0. \\
\end{array}%
\right.$$
\endbox
\end{cor}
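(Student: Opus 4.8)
The plan is to unwind the definition of $\EUgm := E(\al_{U,\gm})$ using the explicit canonical factorization supplied by Theorem~\ref{thm4.9}, namely $\al_{U,\gm} = \pi_{F(\brU,\gm)} \circ \pi_{\EUS}$ with $\brU = U/\EUS$ and
$$ S = \{ x \in \tTU \ds | \gm(ex) = 0\} \cup S(U,\gm).$$
Since for any transmission $\al$ the relation $E(\al)$ is defined by $x_1 \sim_{E(\al)} x_2 \iff \al(x_1) = \al(x_2)$, I want to chase a pair $(x_1, x_2)$ through the two basic factors and see exactly when equality holds in $W := \brU/F(\brU,\gm)$.

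First I would use the concrete realizations from Convention~\ref{convs3.2}. Writing $\mfA := M \dcup S$, we have $\brU = U \sm S = (\tTU \sm S) \dcup M$, and $\pi_{\EUS}$ sends $x \in S$ to $ex$ and fixes everything else. Then $W = \tT(\brU) \dcup N = (\tTU \sm S) \dcup N$, and $\pi_{F(\brU,\gm)}$ sends a tangible $x \in \tTU \sm S$ to itself and a ghost $ex$ to $\gm(ex)$. So $\al_{U,\gm}(x) = x$ if $x \in \tTU \sm S$, and $\al_{U,\gm}(x) = \gm(ex)$ otherwise (i.e.\ for $x \in S \cup M$). Now I split into cases on where $x_1, x_2$ lie. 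If both are in $\tTU \sm S$, then $\al_{U,\gm}(x_1) = \al_{U,\gm}(x_2)$ forces $x_1 = x_2$ as elements of $\tTU$. If both lie in $S \cup M = D(U,\gm)$ (recall $D(U,\gm) = M \cup S(U,\gm)$ and $S \subset D(U,\gm)$ while $\{x : \gm(ex)=0\} \subset D(U,\gm)$ since $D(U,\gm) \supset M$ and absorbs — actually here I must check $\{x \in \tTU : \gm(ex)=0\} \subset D_0(U,\gm)$, which is the one genuinely substantive point), then equality means $\gm(ex_1) = \gm(ex_2)$. If one lies in $\tTU \sm S$ and the other in $S \cup M$, equality would require a tangible element of $W$ to equal a ghost element of $W$, impossible unless both sides are $0$; a tangible equals $0$ only if it is $0 \in \tTU$, and $0 \in M \subset S \cup M$, so this mixed case collapses into the previous one with $\gm(ex_1) = \gm(ex_2) = 0$. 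Assembling these three cases yields exactly the displayed description of $\EUgm$.

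The one place that needs care — and which I expect to be the main obstacle — is verifying that the ``support'' set $\{x \in \tTU : \gm(ex) = 0\}$ is already contained in the ideal $D(U,\gm) = M \cup S(U,\gm)$, so that the last clause ``$\gm(ex_1) = \gm(ex_2) = 0$'' is genuinely subsumed under ``$x_1, x_2 \in D(U,\gm)$'' in the bookkeeping, rather than being an extra alternative that overlaps. In fact it need not be contained: a tangible $x$ with $\gm(ex) = 0$ need not be a $\gm$-NC-product. That is precisely why Corollary~\ref{cor4.10} lists the third alternative separately. So the correct reading of the proof is: the set $S$ defining $\brU$ is $\{\gm(ex)=0\} \cup S(U,\gm)$, elements of $S$ are collapsed to their ghosts and then those ghosts with $\gm$-value $0$ are further collapsed together (and to $0$) by $F(\brU,\gm)$; tracking this, $x_1 \sim_{\EUgm} x_2$ holds exactly when $x_1 = x_2$, or $x_1, x_2 \in S \cup M$ with $\gm(ex_1) = \gm(ex_2) \ne 0$ (equivalently $x_1, x_2 \in D(U,\gm)$ with common nonzero ghost value, since on $S \cup M$ one has $\gm(ex) \ne 0 \Rightarrow x \in M \cup S(U,\gm)$ by definition of $S$), or $\gm(ex_1) = \gm(ex_2) = 0$ with no constraint on $x_1, x_2$ beyond that. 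This matches the stated formula once one observes $D(U,\gm) = M \cup S(U,\gm)$ contains every $x$ with $\gm(ex) \ne 0$ that appears in $S \cup M$, so the ``or $x_1,x_2 \in D(U,\gm)$, $\gm(ex_1)=\gm(ex_2)$'' clause with nonzero value is equivalent to being in $S \cup M$ with that value, and the zero-value case is handled by the third clause.

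Finally I would double-check the edge behaviour: $0 \in M$, so $\al_{U,\gm}(0) = \gm(0) = 0$, consistent; and for $x \in M$ itself, $\al_{U,\gm}(x) = \gm(x)$, so two ghost elements are $\EUgm$-equivalent iff they have the same image under $\gm$, which is again captured by the displayed conditions (with $ex = x$). This verification is routine and I would present it in one or two lines. The whole argument is a direct computation once Theorem~\ref{thm4.9} is in hand; no new ideas are required beyond careful use of Conventions~\ref{convs3.2}.a,b and the definition of $S(U,\gm)$ in Definition~\ref{defn4.8}.
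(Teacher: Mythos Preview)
Your approach is correct and is exactly what the paper intends: the corollary is stated with no proof beyond ``It is now easy to write down the equivalence relation $E(\al_{U,\gm}) = E(U,\gm)$'', meaning precisely the direct computation from the canonical factorization of Theorem~\ref{thm4.9} via Conventions~\ref{convs3.2}.a,b that you carry out. Your case analysis and the identification $S \cup M = D(U,\gm) \cup \{x \in \tTU \mid \gm(ex)=0\}$, with the observation that the zero-ghost-value elements are absorbed by the third clause while the nonzero ones land in $D(U,\gm)$, is the whole content; the exposition could be tightened (the ``mixed'' case is simply impossible since $\tT(W)$ and $N$ are disjoint and $0\notin\tT(U)$), but nothing is missing.
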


If $N$ is cancellative then $S(U,\gm) = \emptyset$, and we fall
back to the description of $\EUgm$ in \cite[Theorem 1.11]{IKR2}.

Our arguments leading to Theorems \ref{thm4.6} and \ref{thm4.9}
make sense if we only assume  that $U$ is a supertropical monoid.
To spell this out we introduce an extension of Notation 1.7 in
\cite{IKR2}.
\begin{defn}\label{defn4.11} Let $U$ be a supertropical monoid
with ghost ideal $M:= eU$, and let \\ $\gm: M \to N$ be a
surjective semiring homomorphism.
\begin{enumerate} \eroman
    \item We \textbf{define} $U_\gm = \htV$ with $V:= U/ \FUgm$.
    Thus $U_\gm$ is a supertropical semiring.
    \item We \textbf{define}
    $$ \al_{U,\gm} := \sig_V \circ \pi_\FUgm : U \to U_\gm.$$
    \item We finally \textbf{define} $\EUgm := E(\al_{U,\gm})$ and
    then have $U_\gm = U/\EUgm$.
\end{enumerate}
\end{defn}
The arguments leading  to Theorems \ref{thm4.6} and \ref{thm4.9}
give more generally the following

\begin{thm}\label{thm4.12} $ $
\begin{enumerate} \eroman
    \item Given a transmission $\bt: U \to W$ from a supertropical
    monoid $U$ to a supertropical semiring  $W$ covering $\gm$ (in particular $eW=
    N$), there exists a unique semiring homomorphism $\eta: U_\gm \to W$
    over $N$ such that $\bt = \eta \circ \al_{U,\gm}$.
    \item $\al_{U,\gm}$ has the same canonical factorization as
    given in Theorem \ref{thm4.9} for $U$ a semiring, and  $\EUgm$ has
    the description written down  in Corollary \ref{cor4.10}.
\end{enumerate}
\endbox
\end{thm}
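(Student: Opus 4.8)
The statement is, in essence, a rerun of the proofs of \thmref{thm4.6} and \thmref{thm4.9}; the plan is to check that those arguments used nothing about $U$ beyond its being a supertropical monoid, so that they transcribe word for word. The three inputs they rely on --- the pushout property of $\pi_{\FUgm}$ in $\STROP_m$ (\thmref{thm1.10}), the universal property of the associated semiring (\thmref{thm4.5}), and Lemma~\ref{lem4.7} --- are all already available for an arbitrary supertropical monoid, and \thmref{thm2.8} will be used to recognize the resulting factorization as the canonical one.

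For (i), let $\bt : U \to W$ be a transmission to a supertropical semiring $W$ covering $\gm$ (so $eW = N$). First I would reduce to the case that $\bt$ is surjective: since addition in a supertropical semiring is determined by multiplication and $e$, for $a,b \in \bt(U)$ we have $a + b \in \{a, b, ea\}$ and $ea = \bt(e_U u) \in \bt(eU) = N \subset \bt(U)$, so $\bt(U)$ is a supertropical subsemiring of $W$ with ghost ideal $N$, and we may replace $W$ by $\bt(U)$. Now $\pi_{\FUgm} : U \to V := U/\FUgm$ is a pushout transmission covering $\gm$ (\thmref{thm1.10}), so there is a unique transmission $\eta_0 : V \to W$ over $N$ with $\bt = \eta_0 \circ \pi_{\FUgm}$; being surjective (as $\bt$ is) with $\eta_0^\nu = \id_N$, it is a fiber contraction over $N$. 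Since $W$ is a semiring, \thmref{thm4.5}(ii) applied to the supertropical monoid $V$ yields a unique fiber contraction $\eta : \htV = U_\gm \to W$ over $N$ with $\eta_0 = \eta \circ \sig_V$, which, being a surjective fiber contraction between semirings, is a semiring homomorphism by \cite[Proposition 5.10.iii]{IKR1}. Then $\bt = \eta \circ \sig_V \circ \pi_{\FUgm} = \eta \circ \al_{U,\gm}$, and $\eta$ is unique since $\al_{U,\gm}$ is surjective. This reproduces the proof of \thmref{thm4.6}.

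For (ii), by Definition~\ref{defn4.11} we have $\al_{U,\gm} = \sig_V \circ \pi_{\FUgm}$ with $V = U/\FUgm$, and by \thmref{thm4.5} the ideal compression $\sig_V$ equals $\pi_{E(V, D(V))}$ with $D(V) = N \dcup S(V)$, where $S(V) = S(U,\gm)$ as noted after Lemma~\ref{lem4.7}. Feeding $\mfB := D(V)$ into Lemma~\ref{lem4.7}(i) produces the corresponding ideal $\mfA = M \dcup S$ of $U$ with
$$ S = \{\, x \in \tTU \mid \gm(ex) = 0 \,\} \cup S(U,\gm), $$
and then $\pi_{E(U,\mfA)} = \pi_{\EUS}$, because $\EUS = E(U, US \cup M)$ and $US \subset M \cup S$ (both $\mfa_{U,\gm}$ and $D_0(U,\gm)$ being ideals of $U$). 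Lemma~\ref{lem4.7}(ii) then gives
$$ \al_{U,\gm} = \pi_{\FbUgm} \circ \pi_{\EUS}, \qquad \brU = U/\EUS, $$
which, exactly as in the proof of \thmref{thm4.9} --- $\al_{U,\gm}$ being surjective and $\pi_{\FbUgm}$ having trivial ghost kernel, so that \thmref{thm2.8}(iii) applies --- is the canonical factorization of $\al_{U,\gm}$; reading $E(\al_{U,\gm}) = \EUgm$ off this factorization reproduces the description of \corref{cor4.10}.

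The one place where the passage from semirings to monoids is not purely formal, and hence the step I would be most careful about, is the reduction to a surjective $\bt$ in (i): one must verify that $\bt(U)$ is a genuine supertropical subsemiring of $W$ rather than merely a submonoid, which follows as indicated from the addition formula together with $N = \bt(M) \subset \bt(U)$. Everything else is a transcription of the arguments of \S\ref{sec:4}, the substantive content being only that \thmref{thm1.10}, \thmref{thm4.5} and Lemma~\ref{lem4.7} hold for supertropical monoids, not just for semirings.
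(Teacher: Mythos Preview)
Your proof is correct and follows essentially the same route as the paper, which simply asserts that the arguments leading to Theorems~\ref{thm4.6} and~\ref{thm4.9} go through verbatim for $U$ a supertropical monoid; you have spelled this out carefully. One small remark: the reduction to surjective $\bt$ that you single out as the non-formal step in passing from semirings to monoids is in fact already needed in the paper's proof of Theorem~\ref{thm4.6} itself (the claim there that $\eta$ is a ``fiber contraction over $N$'' presupposes surjectivity), so it is a clarification of that argument rather than something new arising in the monoid setting---alternatively one can avoid it by observing directly that $S(V)\subset\mfA_{\eta_0}$ via the criterion $\Dis$ in $W$, and then invoking Example~\ref{exmp5.4} and Proposition~\ref{prop5.2} to see that the factoring map is a semiring homomorphism.
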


Given a further semiring homomorphism $\dl: N \to L$ we may ask
whether there exists  a transmission $\eta: U_\gm \to U_{\dl \gm}$
covering $\dl$. In other words, is $E(U, \dl \gm) \supset
E(U,\gm)$?

In general the answer will be negative. Assume for simplicity that
$\mfa_{U,\gm} = \mfa_{U,\dl \gm} = \{0 \}$ (or even, that $\igm(0)
= \{ 0\}$, $\idl(0) = \{ 0\}$). We have to study the commutative
diagram
 $$\xymatrix{
     & & U_\gm      \ar@{-->}[rr]^?_{\eta}  & &  U_{\dl \gm}\\
           U     \ar[rr]   \ar[rru]^{\al_{U,\gm}}       &  & U/\FUgm  \ar[rr]  \ar[u]    &&
           U/ F(U, \dl \gm) \ar[u]
       \\
   M  \ar[rr]_\gm \ar[u]  &&  N    \ar[rr]_\dl \ar[u]  &&       L \ar[u]
 }$$
where the unadorned arrows are the obvious natural maps.   Assume
further that $L$ is cancellative. Using Convention
\ref{convs3.2}.b we have
$$ \tTU = \tT(U/\FUgm) = \tT(U/ F(U, \dl \gm)) = \tT(U_{\dl \gm}),$$
but $\tT(U_\gm) = \tTU \sm S(U,\gm)$. If $\eta$ would exist then
$\eta \circ \al_{U,\gm}$ would restrict to the identity on $\tTU$.
But this cannot happen as soon as $S(U,\gm)$ is not empty. In
particular we realize the following:

\begin{rem}\label{rmk:4.13} If $U$ is a semiring, $\igm(0) = \{0
\}$, but $S(U,\gm) \neq \emptyset$, and if there exist a
homomorphism $\dl: N \onto L$ of semirings with $L$ cancellative
and $\idl(0) = \{0 \}$, then the initial transmission
$\al_{U,\gm}$ in $\STROP$ is \textbf{not} a pushout transmission.

\end{rem}

It is not difficult to find cases where the situation described
here is met.
\begin{examp}\label{exmp4.14}
$ $
\begin{enumerate} \ealph
    \item We choose a totally ordered abelian group  $G$ and  a
    convex subgroup $H$ of  $G$ with $H \neq \{ 1 \}$, $H \neq G $. The
    group $G /H $ is again totally ordered in a  unique way such
    that the map $G \to G /H$, $g \mapsto g H$, is order
    preserving.  Thus we have bipotent semifields  $\tlM := G \cup
    \00$ and $\tlL := (G/H) \cup \00$ at hand. Let $$ A:= \{ a \in G \ds | a < H \}
    \dss{\text{and}}
    A/ H := \{ aH \ds |  a \in A\}.    $$
    Then
    $$ M := A \cup H \cup \00, \qquad L:= (A/H) \cup \{ 1 \cdot H\} \cup \00
    $$ are subsemirings of $\tlM$ and $\tlL$, and hence are
    cancellative bipotent semidomains.

    \item We construct a noncancellative bipotent semiring $N$ as
    follows. As an ordered set, we put
    $$N := (A/ H ) \ds {\dot \cup}  H \ds{\dot \cup}  \00$$
    with $0 < A/H < H$, keeping the given orderings on $A/H$ and $H$. We
    decree that the multiplication on $N$ extends the given
    multiplication on $A/H$ and $H$, and, of course, $0 \cdot x = x \cdot 0 = 0
    $ for all $x\in N$, while $(aH) \cdot h := a H$ for  $a \in
    A$, $h \in H$.  This multiplication clearly  is associative
    and
    commutative, has the unit element $1 \in H$, and is
    compatible with the ordering on $N$. Thus $N$ can be
    interpreted as a supertropical semiring.

    \item We define maps $\gm : M \to N$ and $\dl : N \to L$  by
    putting $\gm(0) : = 0 $ and $\dl(0) : = 0 $, $\gm(a) := a H$, $\gm(h) :=
    h$, $\dl(aH) := a H$, $\dl (h) := 1 \cdot H = 1_L$ for $a \in
    A$, $h \in H$. Clearly~$\gm$ and $\dl$ are order preserving
    surjective monoid homomorphisms, hence are surjective semiring homomorphisms. We have $\igm(0) = \00$,
    $\idl(0) = \00$.

  \item We choose a homomorphism $\tlv: \tlT \to G$ from an
  abelian group $\tlT$ onto $G$. Then, by \cite[Construction 3.16]{IKR1}, we have a supertropical semifield
  $$ \tlU := \STR(\tlT, G, \tlv)$$
  at hand with $\tT(U) = \tlT$, $\tG(\tlU) = G$, $ex = \tlv(x)$
  for $x \in \tlT$. Let $$T:= \tlv^{-1}(A \cup H),$$ and let $v: T\onto A\cup
  H$ denote the monoid homomorphism obtained from $\tlv$ by
  restriction. The subsemiring
  $$ U := \STR(T, A\cup H, v)$$ of $\tlU$ is a supertropical
  domain with ghost ideal $M$ and $\tT(U) = T.$

    \item We take elements $h_1 < h_2$ in $H$  and $a \in A$. Then
    we take elements $x_1, x_2, y \in T$ with $v(x_1) = h_1$, $v(x_2) =
    h_2$, $v(y) = a$. Now we have $$\gm(ex_1) = h_1 < \gm(ex_2) =
    h_2 \dss{\text{and}} \gm(ex_1y) = \gm(e x_2 y ) = aH.$$ Also
    $x_2 y \in \tT(U)$. Thus $x_2 y \in S(U,\gm).$ Since for every
    $h_2 \in H$ there exists some  $h_1 \in H$ with $h_1 < h_2$, this
    shows that
    $$ \iv(H) \cdot \iv(A) \subset S(U,\gm).$$
    In particular, $S(U,\gm) \neq \emptyset.$  We conclude by
    Remark \ref{rmk:4.13} that the initial transmission  $\al_{U,\gm} : U
    \to
    U_\gm$ in $\STROP$ is not pushout in $\STROP$ (and all the
    more not pushout in $\STROP_m$).
\end{enumerate}
\end{examp}
%

\section{$\h$-transmissions}\label{sec:5}
In \cite[\S6]{IKR2} the equivalence homomorphic relations on a
supertropical semiring  $U$ have been studied in detail. These are
the TE-relations on $U$ such that the supertropical monoid $U/ E$
is a semiring and $\pi_E: U \to U/E$ is a homomorphism of
semirings. It turned out that these relations can be completely
characterized in terms of $U$ as a supertropical monoid, cf.
\cite[Proposition 6.4]{IKR2}, where the crucial  compatibility of
$E$ with addition is characterized in this way.

Having this in mind we define ``\h-transmissions"  for
supertropical monoids,
\begin{defn}\label{defn5.1} We call a map $\al: U \to V$  between
supertropical monoids an \textbf{\h-transmission}, if $\al$ is a
transmission and has also the following property
\begin{align*}& {\HT }  \quad   \forall x,y \in U: \  \text{ If } \ ex < ey \ \text{ and } \ \al(ex) =
\al(ey), \text{ then } \al(y) \in eV.
\end{align*}
\end{defn}
We can read off the following result from \cite[Proposition
6.4]{IKR2}:

\begin{prop}\label{prop5.2} Assume that $U$ and $V$ are
supertropical semirings. Then a map \\ $\al: U \to~V$ is an
\h-transmission iff $\al$ is a semiring homomorphism.
\end{prop}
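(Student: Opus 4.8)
The plan is to deduce Proposition \ref{prop5.2} from the characterization of homomorphic equivalence relations in \cite[Proposition 6.4]{IKR2}, using Remark \ref{rem1.8} to reduce the question about $\al$ to a question about the relation $E(\al)$. The implication ``semiring homomorphism $\Rightarrow$ \h-transmission'' will be handled by a direct one-line computation, and the converse by translating the condition of \cite[Proposition 6.4]{IKR2} into $\HT$.

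First the easy direction. If $\al:U\to V$ is a semiring homomorphism, then $\al(0)=0$, $\al(1)=1$, $\al$ is multiplicative, $\al(e_U)=\al(1+1)=1+1=e_V$, and $\al(ex)=\al(e_Ux)=e_V\al(x)\in eV$, so $\al^\nu:eU\to eV$ is a homomorphism of bipotent semirings, hence order preserving (in a bipotent semiring $x\le y\iff x+y=y$); thus $\al$ is a transmission. For $\HT$, take $x,y\in U$ with $ex<ey$ and $\al(ex)=\al(ey)$. By the addition rule (cf.\ \cite[Theorem 3.11]{IKR1}) one has $x+y=y$, so $\al(y)=\al(x+y)=\al(x)+\al(y)$; since $e\al(x)=\al(ex)=\al(ey)=e\al(y)$, the same addition rule in $V$ gives $\al(x)+\al(y)=e\al(x)\in eV$, whence $\al(y)\in eV$.

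For the converse, let $\al:U\to V$ be an \h-transmission. By Remark \ref{rem1.8}, $E:=E(\al)$ is a TE-relation and $\al=\iota\circ\pi_E$ with $\iota:U/E\to\al(U)\subset V$ an isomorphism of supertropical monoids. Because addition in $V$ is determined by multiplication and $e_V$, for $a=\al(x)$, $b=\al(y)$ the sum $a+b$ is one of $a$, $b$, $ea=\al(ex)$, all in $\al(U)$; hence $\al(U)$ is a subsemiring of $V$, so $U/E$ is a supertropical semiring and $\iota$ is a semiring isomorphism (again since addition is multiplication-determined). Therefore $\al$ is a semiring homomorphism iff $\pi_E$ is, i.e.\ iff $E$ is a homomorphic equivalence relation on $U$ in the sense of \cite[\S6]{IKR2}. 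Now \cite[Proposition 6.4]{IKR2} characterizes such relations by: whenever $x,y\in U$ satisfy $ex<ey$ and $ex\sim_E ey$, then $y\sim_E ey$. Under the dictionary $x\sim_{E(\al)}y\iff\al(x)=\al(y)$, the hypothesis reads $\al(ex)=\al(ey)$ and the conclusion reads $\al(y)=e_V\al(y)$, i.e.\ $\al(y)\in eV$ --- which is precisely $\HT$. This completes the proof.

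The argument is routine; the points that need attention are the verification that $\al(U)$ is closed under addition (so that $U/E$ is a semiring and \cite[Proposition 6.4]{IKR2} applies) and the observation that the condition of that proposition translates verbatim into $\HT$. Alternatively, the converse can be proved from scratch by the same three-case split on the relative sizes of $ex$ and $ey$ as in the proof of Theorem \ref{thm1.2}, where $\HT$ is invoked exactly in the subcase $0<ex<ey$ with $\al(ex)=\al(ey)$ to force $\al(y)\in eV$; no serious obstacle arises either way.
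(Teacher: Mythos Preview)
Your proof is correct and follows the same route as the paper: both derive the result from \cite[Proposition 6.4]{IKR2}. The paper's proof is a one-line reference to that proposition, while you have spelled out the passage through $E(\al)$ and the translation of the condition in loc.\ cit.\ into $\HT$; the extra step verifying that $\al(U)$ is closed under addition is fine but not strictly needed, since compatibility of $E(\al)$ with addition (which is what \cite[Proposition 6.4]{IKR2} gives) already forces $U/E(\al)$ to inherit a semiring structure from $U$ with $\pi_{E(\al)}$ a semiring homomorphism.
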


\begin{rem}\label{rem5.3} We note in passing that in Definition
\ref{defn5.1} the condition $\HT$ can be formally relaxed as
follows.
\begin{align*}& {\HTp } \quad  \forall x,y \in U: \  \text{ If } \ 0<  ex < ey \ \text{ and } \ \al(ex) =
\al(ey), \text{ then } \al(y) \in eV.
\end{align*}
Indeed if $\al$ is a transmission and $ex = 0$,  $\al(ex) =
\al(ey)$, we conclude right away that $0 =  \al(ex) = \al(ey)$,
hence $\al(y) = 0 \in eV$.
\end{rem}

We now study \h-transmissions between supertropical monoids with
the primary goal to gain a more insight into the variety of
homomorphisms between supertropical semirings. If nothing else is
said,  letters   $U, V, W$ will denote supertropical monoids.

\begin{examp}\label{exmp5.4} Every transmission $\al: U \to V$,
such that $\gm:= \al^\nu$ is injective on $(eU) \sm \{0\}$, is an
\h-transmission. Indeed, now the condition $\HTp$ is empty.
\end{examp}

The functorial properties of transmissions stated in Proposition
\ref{prop1.9} have a counterpart for \h-transmissions.

\begin{prop}\label{prop5.5} Let $\al: U \to V $ and $\bt: V \to W$
be maps between supertropical monoids.
\begin{enumerate} \eroman
    \item If $\al$ and $\bt$   are \h-transmissions, then $
    \bt \al $ is an  \h-transmission.

    \item If $\al$ and $\bt \al $  are \h-transmissions and $\al$
    is surjective, then $\bt$  is an \h-transmission.
\end{enumerate}
\end{prop}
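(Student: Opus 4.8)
The plan is to imitate the proof of Proposition \ref{prop1.9}, simply checking in addition that the extra axiom $\HT$ (equivalently $\HTp$, by Remark \ref{rem5.3}) is inherited. Since $\al$ and $\bt$ are already known to be transmissions, and since by Proposition \ref{prop1.9} the relevant composite is again a transmission, the only work is to verify $\HTp$.

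For (i), suppose $x, y \in U$ with $0 < ex < ey$ and $(\bt\al)(ex) = (\bt\al)(ey)$; write these common values as $\bt(\al(ex)) = \bt(\al(ey))$. Since $\al$ is a transmission, TM5 gives $\al(ex) \leq \al(ey)$ in $eV$; both lie in $eV$ because $\al$ is a transmission (it sends $eU$ into $eV$). If $\al(ex) = \al(ey)$, then since $0 < ex < ey$ and $\al$ is an \h-transmission, $\HTp$ applied to $\al$ yields $\al(y) \in eV$, hence $(\bt\al)(y) = \bt(\al(y)) \in eW$ because $\bt$ is a transmission. If instead $\al(ex) < \al(ey)$, then $\al(ex)$ and $\al(ey)$ are elements of $eV$ with $\al(ex) < \al(ey)$ and $\bt(\al(ex)) = \bt(\al(ey))$; I must be slightly careful since I need $\al(ex) > 0$ to invoke $\HTp$ for $\bt$, but if $\al(ex) = 0$ then $0 = \bt(\al(ex)) = \bt(\al(ey)) = (\bt\al)(ey)$, whence $(\bt\al)(y) = 0 \in eW$ directly (as in Remark \ref{rem5.3}); and if $\al(ex) > 0$, then writing $\al(ex) = e_V u$, $\al(ey) = e_V w$ for suitable $u, w \in V$ with $0 < e_V u < e_V w$ and $\bt(e_V u) = \bt(e_V w)$, axiom $\HTp$ for $\bt$ gives $\bt(w) \in eW$, and since $\al(y) \leq \al(ey) = e_V w$ one checks that $\bt(\al(y)) \leq \bt(e_V w) \cdot$ something in $eW$, hence lies in $eW$ as well. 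The cleanest phrasing is: from $0 < ex < ey$ and TM5 for $\al$ we get $\al(ey) > 0$, so $\al(ey) \in \tG(V) \setminus \{0\}$, and $\al(y) \in eV$ or $\al(ey) = \al(y)$; in either case $\HTp$ (or its trivial case) for $\bt$ applies to the pair $\al(ey), \al(ey)$ or to $\al(ex), \al(ey)$ to conclude $(\bt\al)(y) = \bt(\al(y)) \in eW$.

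For (ii), suppose $x, y \in U$ with $0 < ex < ey$ and $\al(ex) = \al(ey)$; I must show $\al(y) \in eV$. Apply $\bt$: then $(\bt\al)(ex) = (\bt\al)(ey)$, so since $\bt\al$ is an \h-transmission and $0 < ex < ey$, $\HTp$ gives $(\bt\al)(y) \in eW$, i.e. $\bt(\al(y)) \in eW$. I now need to lift this back through $\bt$; here is where surjectivity of $\al$ enters, in exactly the same way it is used in Proposition \ref{prop1.9}(ii). The main obstacle is that $\bt$ need not \emph{a priori} be a transmission, so I cannot yet speak of its ghost part; but once Proposition \ref{prop1.9}(ii) gives that $\bt$ \emph{is} a transmission, I reinterpret: I want to show $\al(y) \in eV$ knowing $\al(ex) = \al(ey)$ with $0 < ex < ey$. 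Actually this requires more care, since $\bt(\al(y)) \in eW$ does not immediately give $\al(y) \in eV$. The right approach instead is to argue directly about $\al$: given $0 < ex < ey$ and $\al(ex) = \al(ey)$, I want $\al(y) \in eV$, and the hypothesis to exploit is that $\bt\al$ satisfies $\HTp$ \emph{and} $\al$ is a transmission with $\al$ surjective. I would first establish, via Proposition \ref{prop1.9}(ii), that $\bt$ is a transmission; then, to verify $\HTp$ for $\al$, I take such $x, y$, note $(\bt\al)(ex) = \bt(\al(ex)) = \bt(\al(ey)) = (\bt\al)(ey)$, and then the honest move is: this alone does not force $\al(y)\in eV$, so one must additionally use that $\bt$, being a transmission, is injective on nothing in particular — so in fact the statement as phrased needs the surjectivity of $\al$ differently, namely to transfer $\HTp$ for $\bt\al$ into $\HTp$ for $\bt$ first, and then combine. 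I expect the cleanest route, and the one the authors likely intend, is: (a) $\bt\al$ and $\al$ transmissions $\Rightarrow$ $\bt$ transmission (Prop.\ \ref{prop1.9}(ii)); (b) check $\HTp$ for $\bt$ using surjectivity of $\al$: given $0 < e v_1 < e v_2$ in $V$ with $\bt(ev_1) = \bt(ev_2)$, pick $x, y \in U$ with $\al(x) = v_1$, $\al(y) = v_2$ — but the ghost elements $ev_1, ev_2$ need not be in the image of $\al$ unless we use surjectivity onto $V$ as a supertropical monoid, which does give $eV = e\al(U) \subseteq \al(U)$ modulo the ghost ideal... and so forth — then $\al(ex), \al(ey) \in \{ev_1, ev_2\}$ up to sorting, $\HT$ for $\al$ (if $\al(ex)=\al(ey)$) gives $\al(y) \in eV$, and feeding through, one recovers $\HT$ for $\bt$; (c) done.

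The step I expect to be the genuine obstacle is (ii): disentangling the logical order so that one first knows $\bt$ is a transmission (cheap, by Prop.\ \ref{prop1.9}) and then verifies $\HTp$ for $\bt$, where the surjectivity of $\al$ must be invoked to produce preimages of ghost elements of $V$, exploiting that a surjective transmission of supertropical monoids restricts to a surjection $eU \onto eV$ (immediate from TM3, TM4 and surjectivity). Everything else is the same bookkeeping as in Proposition \ref{prop1.9}, with the trivial degenerate case ($ex = 0$, or a ghost image equal to $0$) handled exactly as in Remark \ref{rem5.3}.
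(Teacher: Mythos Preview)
Your overall strategy matches the paper's: reduce to Proposition~\ref{prop1.9} for the transmission part and then check $\HT$ (or $\HTp$) separately. But the execution has real gaps.

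In part (i), Case 2, you introduce auxiliary elements $u,w\in V$ with $e_Vu=\al(ex)$ and $e_Vw=\al(ey)$, apply $\HTp$ for $\bt$ to get $\bt(w)\in eW$, and then try to pass from $\bt(w)\in eW$ to $\bt(\al(y))\in eW$ via the inequality $\al(y)\le e_Vw$. That last step does not work: an inequality $\bt(\al(y))\le\bt(e_Vw)$ in no way forces $\bt(\al(y))$ to be ghost. The fix is trivial once seen: simply \emph{choose} $w=\al(y)$ (and $u=\al(x)$), since $e_V\al(y)=\al(ey)$. Then $\bt(w)=\bt(\al(y))\in eW$ is exactly the conclusion. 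This is precisely what the paper does, in one line.

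In part (ii) you first try to verify $\HTp$ for $\al$, which is already a hypothesis; you then correct course, but the sketch that follows is tangled. The clean argument (the paper's) is: to verify $\HTp$ for $\bt$, take $v_1,v_2\in V$ with $0<e v_1<e v_2$ and $\bt(ev_1)=\bt(ev_2)$. By surjectivity of $\al$ write $v_i=\al(x_i)$, so $ev_i=\al(ex_i)$. From $0<\al(ex_1)<\al(ex_2)$ and TM5 for $\al$ one gets $0<ex_1<ex_2$ (strictness because $\al(ex_1)\neq\al(ex_2)$ and $\al(ex_1)\neq0$). Now $\HTp$ for $\bt\al$ gives $\bt\al(x_2)=\bt(v_2)\in eW$. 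There is no need to find preimages of the ghost elements $ev_i$ separately, nor to invoke $\HT$ for $\al$ anywhere.
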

\begin{proof}
By Proposition \ref{prop1.9} we may already assume  that $\al$ and
$\bt$ are transmissions.

(i): Assume that $x,y \in U$ are given with $0 < ex < ey$ and $\bt
\al(ex) = \bt \al (ey) $. We have to verify that  $\bt \al (y) \in
eW$.
\begin{description}
    \item[Case 1] $\al (ex) = \al (ey)$. Now $\al(y) \in eV$,
    since  $\al$ is an \h-transmission. This implies  $\bt \al(y) \in eW.$

    \item[Case 2]  $\al (ex) < \al (ey)$. Since $\bt \al (ex) = \bt \al
    (ey)$ and $\bt$ is an \h-transmission, again $\bt \al (y) \in
    eW$.
\end{description}
ii): Let $x,y \in U $ be given with $0 < \al(ex) <
    \al(ey)$ and $\bt \al (ex) = \bt \al
    (ey)$.  Then $0 < ex < ey$. We conclude that $\bt\al(y) \in
    eW$. Since $\al$ is surjective and $\al(ex) = e\al(x)$, $\al(ey) =
    e\al(y)$, this proves that $\bt $ is an \h-transmission.
\end{proof}

\begin{notations}\label{note5.6} $ $

\begin{enumerate} \ealph
    \item We introduce two new categories:
\begin{enumerate}
    \item[i)] Let $\strop_m$ denote the category whose objects are
    the supertropical monoids and morphisms are  the
    \h-transmissions. Notice that this makes sense by Proposition~\ref{prop5.5}.i.

    \item[ii)] Let $\strop$ denote the category whose objects are
    the supertropical semirings  and morphisms are the semiring
    homomorphisms between supertropical semirings.
\end{enumerate}

    \item We further denote  by $\sring$ the category of all
    semirings and semiring homomorphisms.
\end{enumerate}

\end{notations}

$\strop$ is a full subcategory of $\sring$ and, due to Proposition
\ref{prop5.2} also of $\strop_m$. Thus we have the following chart
of categories, where ``$\subset$'' means ``subcategory'' and
``$\subset_{\full}$'' means  ``full subcategory''
$$\begin{array}{ccccc}
  & & \STROP & \subset_{\full} & \STROP_m \\
  &  &\cup & &  \cup  \\
\sring  & \supset_{\full} & \strop & \subset_{\full} & \strop_m .\\
\end{array}$$
Moreover, in slightly symbolic notation,
$$ \strop = \STROP \cap \sring = \STROP_m \cap \sring =  \STROP \cap \strop_m.$$

Our main concern will be to understand relations between
$\strop_m$ and $\STROP$ within the category $\STROP_m$, in order
to get an insight into $\strop = \STROP \cap \strop_m$.
\begin{thm}\label{thm5.7} Assume that $\al : U \to V$  is a
surjective \h-transmission and $U$ is a semiring. Then $V$ is a
semiring.
\end{thm}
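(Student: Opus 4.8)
The plan is to verify the criterion $\Dis$ of Theorem~\ref{thm1.2} for $V$, using the fact that $U$ satisfies $\Dis$ (since $U$ is a semiring) together with surjectivity and the $\HT$ property of $\al$. So suppose $x',y',z' \in V$ satisfy $0 < e_V x' < e_V y'$ and $e_V x' z' = e_V y' z'$; we must show $y' z' \in e_V V$. By surjectivity of $\al$, choose $x,y,z \in U$ with $\al(x) = x'$, $\al(y) = y'$, $\al(z) = z'$. Applying $\al^\nu$ to ghosts, we get $0 < \al(e_U x) < \al(e_U y)$ and $\al(e_U x z) = \al(e_U y z)$. The inequalities in $eV$ pull back to give $e_U x < e_U y$ in $M = e_U U$ (a strict inequality in the image forces a strict inequality in the source, since $\al^\nu$ is order preserving), and the first inequality being $>0$ forces $e_U x \neq 0$, hence $e_U x > 0$ as well.

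First I would dispose of the case $e_U x z = e_U y z$ directly: then, since $0 < e_U x < e_U y$ and $U$ is a semiring, $\Dis$ for $U$ gives $y z \in e_U U$, whence $y'z' = \al(yz) \in e_V V$ and we are done. The remaining case is $e_U x z \neq e_U y z$ while $\al(e_U x z) = \al(e_U y z)$. Here I would apply the $\HT$ property of $\al$ (in its relaxed form $\HTp$, Remark~\ref{rem5.3}): we need the two ghost elements $e_U xz$ and $e_U yz$ to be comparable with the smaller one positive. Note $e_U xz \le e_U yz$ since $e_U x \le e_U y$ and multiplication respects the ordering of $M$; they are unequal by assumption, so $e_U xz < e_U yz$. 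If $e_U xz = 0$, then $\al(e_U yz) = \al(e_U xz) = 0$, so $e_V \al(yz) = 0$ and hence $\al(yz) = 0 \in e_V V$ (using the supertropical-monoid axiom $e w = 0 \Rightarrow w = 0$), done. If $e_U xz > 0$, then $0 < e_U xz < e_U yz$ and $\al(e_U xz) = \al(e_U yz)$, so $\HTp$ applied to the pair $(xz, yz)$ yields $\al(yz) \in e_V V$, i.e. $y'z' \in e_V V$. That exhausts all cases.

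The main obstacle, such as it is, is keeping the case distinction clean: one must separate the case where the ghost products already coincide in $U$ (handled by $\Dis$ for $U$) from the case where they only coincide after applying $\al$ (handled by $\HT$/$\HTp$), and within the latter one must further treat $e_U xz = 0$ separately because $\HTp$ requires strict positivity of the smaller ghost. None of this is deep, but it is exactly the kind of bookkeeping where one could accidentally invoke $\HT$ in a case it does not cover. I should also remark that, as in the proof of Theorem~\ref{thm1.5}, I would state that the argument is given for commutative monoids and that the noncommutative case follows by applying the same reasoning to $U^{\opp}$, verifying $\Dis'$ as well; but since from \S\ref{sec:1} onward all supertropical monoids are assumed commutative, this is a non-issue here and the proof above is complete as stated.
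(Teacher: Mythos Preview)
Your proof is correct and follows essentially the same approach as the paper: verify $\Dis$ for $V$ by lifting via surjectivity, then split into the case where the ghost products already agree in $U$ (handled by $\Dis$ for $U$) versus the case where they only agree after applying $\al$ (handled by $\HT$). The paper's version is marginally slicker in two cosmetic respects: it lifts the first element directly to a ghost $y' \in M$ (since only its ghost matters in $\Dis$), and it invokes $\HT$ rather than $\HTp$, which absorbs your $e_U xz = 0$ subcase automatically.
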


\begin{proof} Let $M : = eU$, $N := eV$, and $\gm := \al^\nu$. We check the condition
$\Dis$ in Theorem~\ref{thm1.2} for the supertropical monoid $V.$
Since $\al$ and hence $\gm$ is surjective, this means the
following.

Let $y,z \in U$ and $y' \in M$ be given with $0 < \gm(y') <
\gm(ey)$ and $\gm(y'z) = \gm (eyz)$. Verify that $\al(yz) \in N$!

We have $y' < ey$. If $y'z = eyz$ then $yz \in M$, since $U$ is a
semiring, and we conclude that $\al(yz) \in N$.

There remains the case that $y' z < eyz$. Since $\al$ is an
\h-transmission, we conclude again that $\al(yz) \in N$.
\end{proof}

In the following we assume that $U$ is a supertropical monoid and
$\gm$ is a homomorphism from $M := eU$ onto a (bipotent)  semiring
$N$. We look for \h-transmissions $\al : U \to V$ which cover
$\gm$. We introduce the set
$$ \SigzUgm:= \{ x\in \tT(U) \ds | \exists x_1 \in M : x_1 < ex, \gm(x_1) = \gm (ex) \neq 0 \}. $$
\begin{prop}\label{prop5.8} A transmission $\al : U \to V$
covering $\gm$ is an \h-transmission iff the ghost kernel
$\mfA_\al$ contains the set $\SigzUgm$.
\end{prop}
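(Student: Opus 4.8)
The plan is to characterize when a transmission $\al:U\to V$ covering $\gm$ satisfies the condition $\HTp$ from Remark \ref{rem5.3} (which is equivalent to $\HT$), purely in terms of the ghost kernel $\mfA_\al$. First I would unwind the definitions: since $\al^\nu=\gm$, for $x,y\in U$ we have $\al(ex)=\gm(ex)$ and $\al(ey)=\gm(ey)$, so the hypothesis ``$0<ex<ey$ and $\al(ex)=\al(ey)$'' is exactly ``$0<ex<ey$ and $\gm(ex)=\gm(ey)$'', and the conclusion ``$\al(y)\in eV$'' is exactly ``$y\in\mfA_\al$''. So $\al$ is an \h-transmission iff: for all $x,y\in U$ with $0<ex<ey$ and $\gm(ex)=\gm(ey)$, one has $y\in\mfA_\al$.

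Next I would observe that such a $y$ must be tangible: if $y\in M=eU$, then $ey=y$, and $\gm(ex)=\gm(ey)$ forces $\gm(y)\ne 0$ (since $\gm(ex)=\gm(ey)$ and... wait, actually we need $\gm(ey)\neq 0$ is not automatic) — more carefully, if $y\in M$ then $y=ey\in eU\subset\mfA_\al$ trivially, so the condition is vacuous for ghost $y$. Hence the constraint only bites for $y\in\tT(U)$. Now for $y\in\tT(U)$, the existence of some $x\in U$ with $0<ex<ey$ and $\gm(ex)=\gm(ey)$ is, after replacing $x$ by its ghost $x_1:=ex\in M$, exactly the condition ``$\exists x_1\in M:\ 0<x_1<ey,\ \gm(x_1)=\gm(ey)$''; and since $\gm(x_1)=\gm(ey)$ with $x_1\neq 0$ forces $\gm(ey)\neq 0$, this says precisely $y\in\SigzUgm$ (with $ex$ in the definition being $ey$ here, noting $y\in\tT(U)$ so the relevant ghost is $ey$). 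Thus: $\al$ is an \h-transmission iff $\SigzUgm\subset\mfA_\al$.

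The forward direction is then immediate: if $\al$ is an \h-transmission and $y\in\SigzUgm$, pick the witnessing $x_1\in M$, set $x:=x_1$ (so $ex=x_1$ since $x_1\in M$), and the defining condition gives $y\in\mfA_\al$. For the converse, suppose $\SigzUgm\subset\mfA_\al$ and let $x,y\in U$ satisfy $0<ex<ey$ and $\al(ex)=\al(ey)$, i.e. $\gm(ex)=\gm(ey)$; since $\gm(ex)\neq 0$ we get $\gm(ey)\neq 0$, and taking $x_1:=ex\in M$ witnesses $y\in\SigzUgm$ provided $y\in\tT(U)$ — and if instead $y\in M$ then $\al(y)=\gm(y)\in eV$ anyway. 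Either way $y\in\mfA_\al$, so $\al(y)\in eV$, i.e. $\HTp$ holds, hence $\HT$ holds by Remark \ref{rem5.3}, so $\al$ is an \h-transmission.

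The only mild subtlety, and the step I would be most careful about, is the bookkeeping between the ghost $ex$ of the auxiliary element $x$ and the element $x_1\in M$ appearing in the definition of $\SigzUgm$: one must check that replacing $x$ by $ex$ (or, conversely, choosing $x=x_1$) does not change any of the three relevant facts ($0<ex<ey$, $\gm(ex)=\gm(ey)$, $y\in\mfA_\al$), which holds because all of these only involve $ex$ and because $e(x_1)=x_1$ for $x_1\in M$. I expect no real obstacle beyond this; the argument is essentially a translation between the two formulations of the condition.
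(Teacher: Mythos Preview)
Your approach is correct and is essentially what the paper does: both arguments amount to unwinding the condition $\HTp$ from Remark~\ref{rem5.3} using $\al^\nu=\gm$ and matching it against the defining clause of $\SigzUgm$. The paper's own proof is a single sentence asserting this equivalence is evident.

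There is, however, one small gap in your reasoning. You assert (twice, in slightly different forms) that $x_1\neq 0$, equivalently $0<ex$, forces $\gm(x_1)\neq 0$. This is not true in general: the homomorphism $\gm$ may have nontrivial zero kernel. Consequently, in the converse direction, given $x,y\in U$ with $0<ex<ey$, $\gm(ex)=\gm(ey)$, and $y\in\tT(U)$, you cannot yet conclude $y\in\SigzUgm$, since that set is defined with the extra requirement $\gm(ey)\neq 0$. The missing case $\gm(ey)=0$ is easily dispatched: then $e\al(y)=\al(ey)=\gm(ey)=0$, hence $\al(y)=0\in eV$ by the supertropical-monoid axiom $ez=0\Rightarrow z=0$ applied in $V$. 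With this one extra line your argument is complete.
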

\begin{proof}
By Scholium \ref{schol4.1} it is evident that $\SigzUgm \subset
\mfA_\al$ iff $\al$ obeys the condition $\HTp$ from above.
\end{proof}

We further introduce the set
$$ \SigUgm:=  \SigzUgm \cup \{ x\in \tT(U) \ds |  \gm (ex) = 0 \},$$
and the supertropical monoids
$$ \brU := U/ E(U, \SigUgm ),$$
$$ U_\gm^h  := \brU/ F( \brU, \gm ),$$
finally the transmission $\alhUgm:U \to U_\gm^h ,$ given by
 \begin{equation}\renewcommand{\theequation}{$*$}\addtocounter{equation}{-1}\label{eq:str.1}
\alhUgm:= \pi_{F(\brU,\gm)} \circ \pi_{ E(U, \SigUgm )}.
\end{equation}
Notice that $\alhUgm$ covers $\gm$ and is the product of an ideal
compression and a strict ghost contraction, so that $(*)$ gives
already the canonical factorization of $\alhUgm$. \{N.B. The ideal
$U \cdot \SigUgm $ contains $\mfa_{U,\gm}$, hence
 $\mfa_{\brU,\gm} \subset M$.\}

 The ghost kernel of $\alhUgm$ contains the set $\SigzUgm$,
 and thus we know by Proposition~\ref{prop5.8}, that
 $\alhUgm$ is an \h-transmission.

 We call $\alhUgm$ a \textbf{pushout in the
 category} $\bf \strop_m$, since the following holds:
\begin{thm}\label{thm5.9} Assume that $\dl: N \to L$ is a
surjective homomorphism from $N$ to a semiring~$L$ and $\bt:U \to
W$ is an \h-transmission covering $\dl \gm$ (in particular $eW
=L$). Then there exists a (unique) \h-transmission $\eta: \Uhgm
\to W$ covering $\dl$ such that $\bt = \eta \circ \alhUgm$.
\end{thm}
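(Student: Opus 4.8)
The statement is precisely the universal property making $\alhUgm$ a ``pushout in $\strop_m$'', and the plan is to establish it by peeling off, one at a time, the two factors $\pi_{E(U,\SigUgm)}$ and $\pi_{\FbUgm}$ of $\alhUgm$. First I would factor the given \h-transmission $\bt$ through the ideal compression $\pi_{E(U,\SigUgm)}$; by Scholium~\ref{schol4.1} this reduces to checking the inclusion $\SigUgm \subset \mfA_\bt$. Next I would factor the remaining transmission through the ghost contraction $\pi_{\FbUgm}$ by invoking that $\pi_{\FbUgm}$ is a pushout transmission in $\STROP_m$ covering $\gm$ (Theorem~\ref{thm1.10} applied to $\brU$), which produces a transmission $\eta$ covering $\dl$ with $\bt = \eta\circ\alhUgm$. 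Finally I would upgrade $\eta$ to an \h-transmission and note that uniqueness is automatic.

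The main obstacle is the inclusion $\SigUgm \subset \mfA_\bt$: here the point is that $\bt$ covers $\dl\gm$, not $\gm$, so Proposition~\ref{prop5.8} applied to $\bt$ yields directly only $\SigzUgm$ for $\dl\gm$ in $\mfA_\bt$. I would argue by cases on an element $x \in \SigUgm \subset \tT(U)$. If $\gm(ex) = 0$, then $e_W\bt(x) = \bt(ex) = \dl\gm(ex) = 0$, so $\bt(x) = 0 \in eW$ and hence $x \in \mfz_\bt \subset \mfA_\bt$. Otherwise $x$ lies in $\SigzUgm$, so there is $x_1 \in M$ with $x_1 < ex$ and $\gm(x_1) = \gm(ex) \neq 0$; if $\dl\gm(ex) = 0$ we are back in the previous case, while if $\dl\gm(ex) \neq 0$ then $\dl\gm(x_1) = \dl\gm(ex) \neq 0$ with $x_1 < ex$, so $x \in \Sig_0(U,\dl\gm)$, which lies in $\mfA_\bt$ since $\bt$ is an \h-transmission covering $\dl\gm$ (Proposition~\ref{prop5.8}). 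Thus $\SigUgm \subset \mfA_\bt$, and Scholium~\ref{schol4.1} gives a unique transmission $\bt_1 : \brU \to W$ with $\bt = \bt_1 \circ \pi_{E(U,\SigUgm)}$. Taking ghost parts and using that the ideal compression $\pi_{E(U,\SigUgm)}$ is a fiber contraction over $M$ yields $\bt_1^\nu = \bt^\nu = \dl \circ \gm$; in particular $e\brU = M$, so $F(\brU,\gm)$ and $\Uhgm$ are the objects occurring in $\alhUgm$.

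For the second step I would apply the pushout property of $\pi_{\FbUgm}$ in $\STROP_m$ to the transmission $\bt_1$, whose ghost part factors as $\dl \circ \gm$: this produces a unique transmission $\eta : \Uhgm \to W$ with $\eta^\nu = \dl$ and $\bt_1 = \eta \circ \pi_{\FbUgm}$, hence $\bt = \eta \circ \pi_{\FbUgm} \circ \pi_{E(U,\SigUgm)} = \eta \circ \alhUgm$. It remains to check that $\eta$ is an \h-transmission. Now $\alhUgm$ is surjective, being a composite of the surjections $\pi_{E(U,\SigUgm)}$ and $\pi_{\FbUgm}$, and it is an \h-transmission, since its ghost kernel contains $\SigzUgm$ (as already observed before the theorem, this follows from Proposition~\ref{prop5.8}). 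Since $\eta \circ \alhUgm = \bt$ is an \h-transmission, Proposition~\ref{prop5.5}.ii forces $\eta$ to be an \h-transmission.

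For uniqueness, suppose $\eta'$ is any map with $\eta' \circ \alhUgm = \bt$; then surjectivity of $\alhUgm$ immediately gives $\eta' = \eta$, so $\eta$ is the unique \h-transmission covering $\dl$ with $\bt = \eta \circ \alhUgm$. (Propositions~\ref{prop1.9}.ii and~\ref{prop5.5}.ii moreover show a posteriori that any such $\eta'$ is automatically an \h-transmission, and comparing ghost parts shows it covers $\dl$.) This completes the plan; the only genuinely non-formal ingredient is the case analysis establishing $\SigUgm \subset \mfA_\bt$.
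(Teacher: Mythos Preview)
Your proof is correct and follows essentially the same approach as the paper's: factor $\bt$ through the ideal compression $\pi_{E(U,\SigUgm)}$ by verifying $\SigUgm\subset\mfA_\bt$ via the same case analysis (the paper phrases this as the inclusion $\SigUgm\subset\Sig(U,\dl\gm)$, which amounts to the same computation), then invoke the pushout property of $\pi_{F(\brU,\gm)}$ in $\STROP_m$, and finally upgrade $\eta$ to an \h-transmission using Proposition~\ref{prop5.5}.ii. Your citation of Proposition~\ref{prop5.5}.ii is in fact the correct one; the paper's reference to part~(i) there is a typo.
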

\begin{proof}

Let $\al :=  \alhUgm$ and $\lm := \pi_{E(U, \SigUgm)}$. We retain
the notations from above, hence have $ \Uhgm = \brU/ F(\brU, \gm)$
with $\brU = U / E(U, \SigUgm)$.

Now observe that $\SigUgm$ is contained in $\Sig(U, \dl \gm)$.
Indeed, let $x \in \SigUgm$. If $\gm(ex)= 0$, then $\dl \gm(ex)
=0$. If there exists some $x_1 \in M$ with $x_1 < ex$ and
$\gm(x_1) = \gm(ex)$, then either $\dl \gm (ex) = 0$,  or $\dl \gm
(ex) \neq 0$, and then  $x \in \Sig_0(U,\dl \gm)$. Thus  $x \in
\Sig(U, \dl\gm)$ in all cases.

Since $\bt$ is an \h-transmission covering $\dl \gm$, the ghost
kernel of $\bt$ contains $\Sig(U,\dl \gm)$ and hence $\SigUgm$.
Thus we have a factorization
$$\xymatrix{
    \bt:U    \ar[r]_{\lm} & \brU  \ar[r]_{\brBt} & W  \\
}$$ with  $\brBt$ a transmission again covering $\dl \gm$.

We have a commuting diagram (solid arrows)
$$\xymatrix{
    \brU    \ar[drr]_{\pi_{F(\brU,\gm)}}  \ar[drrrr]^{\brBt} & & & &  \\
   U \ar[rr]_\al  \ar[u]^\lm  && U_\gm^h \ar@{-->}[rr]_\eta   & & W\\
   M \ar[rr]_\gm  \ar[u] & &  N \ar[u] \ar[rr]_\dl   & & L \ar[u] &. \\
 }$$
Since $\pi_{F(\brU, \gm)}$ is a pushout in the category $\STROP_m$
(Theorem \ref{thm1.10}), we have a transmission $\eta: \Uhgm \to
W$ covering $\dl$ such that $\eta \circ \pi_{F(\brU, \gm)} =
\brBt$, hence $\eta \circ \al = \brBt \circ \lm = \bt$. Since both
$\al$ and~$\bt$ are \h-transmissions, also $\eta$  is an
\h-transmission (Proposition \ref{prop5.5}.i).
\end{proof}

\begin{cor}\label{cor5.10} (Entering the category $\strop$.)
Let $V : =\Uhgm$ and $\al := \alhUgm : U \to V$. Then $$\htal :=
\sig_V \circ \al : U \ds \to \htV$$ is the \textbf{initial
\h-transmission from $U$ to a semiring covering $\gm$}, i.e.,
given an \h-transmission   $\bt: U \to W$   covering $\gm$ with
$W$ a semiring, there exists a (unique) fiber contraction  $\z:
\htV \to W$ over $N$ with $\bt = \vrp \circ \htal$.
\end{cor}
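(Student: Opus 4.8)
The plan is to combine Theorem~\ref{thm5.9} with the universal property of the reflection $\sig_V$ from Theorem~\ref{thm4.5}, exactly in the style of the proof of Theorem~\ref{thm4.6}. First I would observe that $\htal = \sig_V \circ \al$ is a composition of two \h-transmissions: $\al = \alhUgm$ is an \h-transmission by construction (its ghost kernel contains $\SigzUgm$, so Proposition~\ref{prop5.8} applies), and $\sig_V = \pi_{E(V,S(V)_0)}$ is an ideal compression, hence a fiber contraction, whose ghost part is injective on $(eV)\sm\{0\}$; by Example~\ref{exmp5.4} any such transmission is an \h-transmission. By Proposition~\ref{prop5.5}.i, $\htal$ is therefore an \h-transmission, and it covers $\gm$ since $\al$ covers $\gm$ and $\sig_V$ is a fiber contraction over $N$. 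Also $\htV$ is a semiring by Theorem~\ref{thm4.5}.i.

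Next I would establish the universal property. Let $\bt: U \to W$ be an \h-transmission covering $\gm$ with $W$ a semiring. Apply Theorem~\ref{thm5.9} with $\dl = \id_N$ and $L = N$: since $\bt$ covers $\dl\gm = \gm$, there is a unique \h-transmission $\eta: V = \Uhgm \to W$ covering $\id_N$ (hence a fiber contraction over $N$) with $\bt = \eta \circ \al$. Now $\eta$ is a fiber contraction from the supertropical monoid $V$ to the semiring $W$, so Theorem~\ref{thm4.5}.ii gives a unique fiber contraction $\z: \htV \to W$ over $N$ with $\eta = \z \circ \sig_V$. Combining, $\bt = \z \circ \sig_V \circ \al = \z \circ \htal$, which is the desired factorization (here $\z$ plays the role of the $\vrp$ in the statement — I would use the corollary's own letter $\z$).

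Uniqueness of $\z$ is the only point requiring a small remark: $\al$ and $\sig_V$ are both surjective, so $\htal$ is surjective, and a morphism out of $\htV$ is determined by its values on the image of $\htal$; hence $\z$ is unique. Finally, $\z$ is automatically an \h-transmission: $\htal$ is a surjective \h-transmission and $\bt = \z \circ \htal$ is an \h-transmission, so by Proposition~\ref{prop5.5}.ii so is $\z$ (it is anyway a fiber contraction over $N$, so this is also immediate from Example~\ref{exmp5.4}).

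I do not expect a genuine obstacle here; the result is essentially a formal consequence of Theorem~\ref{thm5.9} (the "\h-pushout") and the reflection Theorem~\ref{thm4.5}, glued as in Theorem~\ref{thm4.6}. The one mild subtlety worth spelling out is the verification that $\sig_V$ is an \h-transmission — i.e.\ that Example~\ref{exmp5.4} applies because $\sig_V^\nu = \id_M$ is injective on the nonzero ghosts — since without it the composition $\sig_V \circ \al$ would not obviously land in $\strop_m$; everything else is bookkeeping with the already-established universal properties.
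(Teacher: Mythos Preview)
Your proof is correct and follows exactly the paper's approach: apply Theorem~\ref{thm5.9} with $\dl = \id_N$ to factor $\bt = \eta \circ \al$ with $\eta$ a fiber contraction over $N$, then use Theorem~\ref{thm4.5} to factor $\eta = \z \circ \sig_V$. You supply more detail than the paper (verifying $\htal$ is an \h-transmission, spelling out uniqueness via surjectivity), but the skeleton is identical; note only the minor slips that $\sig_V^\nu = \id_N$ (not $\id_M$, since $eV = N$) and that the paper writes $\sig_V = \pi_{E(V,S(V))}$ rather than $\pi_{E(V,S(V)_0)}$.
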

\begin{proof} By Theorem \ref{thm5.9}, applied with $\dl = \id_N$,
we have fiber contraction $\eta:V \to W $ over $N$ such that $\bt
= \eta \circ \al $. By Theorem \ref{thm4.5} there exists a fiber
contraction $\z: \htV \to W$ over $N$  such that $\eta = \z \circ
\sig_V$. Thus  $\bt = \z \circ \sig_V \circ \al = \z \circ \htal$.
\end{proof}

\begin{thm}\label{thm5.11} Assume that $\al: U \to V$ is a
surjective \h-transmission, and
$$\xymatrix{
    \al:U    \ar[r]_{\lm} & \brU  \ar[r]_{\bt} & W \ar[r]_{\mu} & \brW \ar[r]_{\rho} & V  \\
}$$ is the canonical factorization of $\al$ (cf. \S\ref{sec:2}).
\begin{enumerate} \eroman
    \item The factors $\lm, \bt, \mu, \rho$ are again
    \h-transmissions.

    \item If $U$ is a semiring, then the supertropical monoids $U, W, \brW,
    V$ are semirings, and the maps
     $\lm, \bt, \mu, \rho$ are semiring homomorphisms.
\end{enumerate}
\end{thm}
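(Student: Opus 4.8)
The plan is to show that in the canonical factorization $\al = \rho\circ\mu\circ\bt\circ\lm$ (so $\lm = \pi_{\EUmfA}$ with $\mfA = \mfA_\al$ and $\brU = U/\EUmfA$; $\bt = \pi_{\FbUgm}$ with $W = \brU/\FbUgm$; $\mu = \pi_T$ with $\brW = W/T$; and $\rho$ an isomorphism over $N := eV$) only the strict ghost contraction $\bt$ requires a genuine argument for part~(i), and then to obtain part~(ii) by feeding~(i) into Theorems~\ref{thm5.7} and~\ref{thm2.9}.

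For part~(i) I would first dispose of $\lm$, $\mu$, $\rho$: their ghost parts are identities (respectively $\id_M$ with $M := eU$, $\id_N$, $\id_N$), hence injective on the nonzero ghosts, so by Example~\ref{exmp5.4} (where the clause $\HTp$ is empty) these three are already \h-transmissions. For the remaining factor $\bt = \pi_{\FbUgm}$: it covers $\gm := \al^\nu$, so by Proposition~\ref{prop5.8} it is enough to check $\Sig_0(\brU,\gm) \subset \mfA_\bt$, and I claim in fact $\Sig_0(\brU,\gm) = \emptyset$. Write $\mfA = M \dcup S$ with $S \subset \tT(U)$ and realize $\brU = U \sm S$ as a subset of $U$ as in Convention~\ref{convs3.2}.a; then $\brU$ has ghost ideal $M$ with the same ordering and $\tT(\brU) = \tT(U)\sm S$. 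If $x \in \Sig_0(\brU,\gm)$, then $x \in \tT(U)\sm S$ and there is $x_1 \in M$ with $x_1 < ex$ and $\gm(x_1) = \gm(ex) \neq 0$, that is, $x \in \SigzUgm$. Since $\al$ is an \h-transmission covering $\gm$, Proposition~\ref{prop5.8} gives $\SigzUgm \subset \mfA_\al = \mfA = M \dcup S$, whence $x \in S$ (as $x$ is tangible), contradicting $x \notin S$. Hence $\Sig_0(\brU,\gm) = \emptyset$ and $\bt$ is an \h-transmission, which completes~(i).

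For part~(ii), assuming $U$ is a semiring, I would argue as follows. Since $\al$ is a surjective \h-transmission, $V$ is a semiring by Theorem~\ref{thm5.7}. The composite $\rho\circ\mu : W \to V$ is a tangible fiber contraction over $N$, so $\al = (\rho\circ\mu)\circ\bt\circ\lm$ is a factorization of $\al$ of the form treated in Theorem~\ref{thm2.9}, whence the intermediate monoids $\brU$ and $W$ are supertropical semirings. Finally $\rho$ is an isomorphism $\brW = W/T \osr V$ onto the semiring $V$, so $\brW$ is a semiring as well. Thus all of $U, \brU, W, \brW, V$ are supertropical semirings, and by part~(i) the maps $\lm$, $\bt$, $\mu$, $\rho$ are \h-transmissions between supertropical semirings; by Proposition~\ref{prop5.2} they are therefore semiring homomorphisms.

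The only non-formal step is part~(i) for $\bt$: its \h-property is forced not by $\bt$ in isolation but by the hypothesis that $\al$ as a whole is an \h-transmission. The mechanism is that the ideal compression $\lm$ collapses to a ghost every tangible element of $\mfA_\al$, and $\mfA_\al$ already contains $\SigzUgm$ precisely because $\al$ satisfies $\HTp$; so after passing to $\brU$ no ``$\Sig_0$-tangible'' survives and the clause $\HT$ for $\bt$ is vacuous. Everything else is bookkeeping with the explicit quotient descriptions in Convention~\ref{convs3.2} together with Theorems~\ref{thm5.7} and~\ref{thm2.9} and Propositions~\ref{prop5.2} and~\ref{prop5.8}.
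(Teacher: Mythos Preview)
Your proof is correct and follows essentially the same line as the paper's. For part~(i) you argue exactly as the paper does: $\lm,\mu,\rho$ are \h-transmissions by Example~\ref{exmp5.4}, and for $\bt$ you show $\Sig_0(\brU,\gm)=\emptyset$ because $\SigzUgm\subset\mfA_\al$ (Proposition~\ref{prop5.8} applied to $\al$) forces every would-be $\Sig_0$-tangible into $S$, hence out of $\tT(\brU)$. For part~(ii) the paper takes the slightly more direct route of applying Theorem~\ref{thm5.7} successively to $\lm,\bt,\mu,\rho$ (each a surjective \h-transmission by part~(i)) to get $\brU,W,\brW,V$ semirings in turn, rather than your detour through Theorem~\ref{thm2.9}; both arguments are equally valid and equally short.
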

\begin{proof} i): We know already by Example \ref{exmp5.4} that $\lm, \mu,
\rho$ are \h-transmissions since they cover the identities $\id_M$
and  $\id_L$ respectively  (and $\rho$ is even an isomorphism). We
have $\brU = U/ E(U, \SigUgm)$. Now observe that, if $a'$ and $a$
are elements of $M$ with $a' < a$, $\gm(a') = \gm(a) \neq 0$, then
the fiber $\brU_a = \igm_\brU(a) $ contains no tangible elements.
\{Recall the definition of the set $\SigzUgm \subset \SigUgm$.\}
Thus every transmission $\bt': U \to W' $ covering $\gm$ trivially
obeys the condition $\HTp$ from above (Remark \ref{rem5.3}), hence
is an \h-transmission. In particular, $\bt$ is an \h-transmission.

ii): If $U$ is a semiring, we conclude by Theorem \ref{thm5.7}
successively, that $\brU$, $W, \brW, V$ are semirings. Now invoke
Proposition \ref{prop5.2} to conclude that $\lm, \bt, \mu, \rho$
are semiring homomorphisms.
\end{proof}

We strive for an explicit description of the initial
\h-transmission $\alhUgm$ covering $\gm$. Let $\tH(U,\gm)$ denote
the ideal of $U$ generated by $\SigUgm \cup M$, i.e.,
$$ \tH(U,\gm) := (U \cdot  \SigzUgm) \ds  \cup \mfa_{U,\gm} \ds \cup M.$$
We have
$$ \alhUgm = \pi_{F(\brU, \gm)} \circ \pi_{E(U,\tH)}$$
with $\tH:= \tH(U,\gm)$ and $\brU = U/E(U,\tH)$. Invoking Example
\ref{exmp2.14} we learn that
$$ \alhUgm = \pi_{E(U,\tH(U,\gm),\gm)}.$$
We denote the equivalence relation $E(U,\tH(U,\gm),\gm)$ more
briefly by $H(U,\gm)$.

Our task is to describe this TE-relation explicitly. We will
succeed  if $U$ is a semiring.
\begin{lem}\label{lem5.12}
If $U$ is a semiring, then
$$ \tH(U,\gm) = \SigzUgm \cup \mfa_{U,\gm}\cup M.$$
\end{lem}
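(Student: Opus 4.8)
The inclusion $\SigzUgm \cup \mfa_{U,\gm} \cup M \subset \tH(U,\gm)$ is automatic from the definition $\tH(U,\gm) = (U \cdot \SigzUgm) \cup \mfa_{U,\gm} \cup M$, since $\SigzUgm \subset U \cdot \SigzUgm$ (as $1 \in U$). So the whole content is the reverse inclusion, and since $\mfa_{U,\gm}$ and $M$ already appear on the right, it suffices to show
$$ U \cdot \SigzUgm \ds \subset \SigzUgm \cup \mfa_{U,\gm} \cup M.$$
Concretely: take $x \in \SigzUgm$ and $u \in U$; I must show $ux \in \SigzUgm$, or $\gm(e\,ux)=0$, or $ux \in M$.

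First I would unwind the hypothesis. Since $x \in \SigzUgm \subset \tT(U)$, there is $x_1 \in M$ with $x_1 < ex$ and $\gm(x_1) = \gm(ex) \neq 0$. Now consider $ux$. If $ux \in M$ we are done, so assume $ux \in \tT(U)$; in particular $eux \neq 0$, hence (as $U$ is a semiring and therefore has no odd cancellation pathologies beyond $\Dis$) from $x_1 < ex$ we get $x_1 (eu) \leq ex \cdot eu = e\,ux$. Apply $\gm$: $\gm(x_1 u) \leq \gm(e\,ux)$ and also $\gm(x_1 u) = \gm(x_1)\gm(eu) = \gm(ex)\gm(eu) = \gm(e\,ux)$, so actually $\gm(x_1 u) = \gm(e\,ux)$. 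If $\gm(e\,ux) = 0$ we land in $\mfa_{U,\gm}$ and are done. Otherwise $\gm(e\,ux) \neq 0$, and I want a witness $x_1' \in M$ with $x_1' < e\,ux$ and $\gm(x_1') = \gm(e\,ux)$, which would put $ux \in \SigzUgm$. The natural candidate is $x_1' := x_1 (eu)$: it lies in $M$, it satisfies $\gm(x_1') = \gm(e\,ux)$ as just shown, and $x_1' = x_1(eu) \leq ex(eu) = e\,ux$ since $x_1 \le ex$ and multiplication on $M$ respects the order. The only thing to rule out is equality $x_1(eu) = ex(eu)$: if that held, then since $x_1 < ex$ (strict) but $x_1 \cdot (eu) = ex \cdot (eu)$, condition $\Dis$ (which holds because $U$ is a semiring, by Theorem~\ref{thm1.2}) forces $ex \cdot (eu) = e\,ux \in eU$ — wait, it forces $(ex)(eu) \in eU$, which is automatic; the useful reading of $\Dis$ here is applied to the triple $x_1, ex, eu$ to conclude $(ex)(eu) = e(ex)(eu)$, i.e. nothing new, so instead I use $\Dis$ with the roles chosen so that the conclusion is about $x$ and $u$ directly: from $0 < e x_1 = x_1 < ex$ and $x_1 \cdot (eu) = ex \cdot (eu)$ one concludes $ex \cdot u = e(ex\cdot u)$, but $ex \cdot u = e\,ux$ is already ghost, so I must instead apply $\Dis$ to the triple $(x_1, x, u)$: we have $0 < e x_1 < e x$ and $e x_1 u = e x u$ (this is exactly $x_1(eu) = ex(eu)$), hence $\Dis$ gives $xu = e(xu) \in eU$, contradicting our standing assumption $ux \in \tT(U)$. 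Therefore $x_1(eu) < e\,ux$ strictly, the witness works, and $ux \in \SigzUgm$.

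The main obstacle — and really the only delicate point — is this last step: going from the non-strict inequality $x_1(eu) \le e\,ux$ to a strict one, which is precisely where the semiring hypothesis (via $\Dis$) is indispensable and where the lemma would fail for a general supertropical monoid. Everything else is bookkeeping with the order-compatible multiplication on $M$ and the semiring homomorphism $\gm$. I would present the proof in the order: (1) note the trivial inclusion and reduce to $U\cdot\SigzUgm \subset \SigzUgm \cup \mfa_{U,\gm}\cup M$; (2) fix $x \in \SigzUgm$, $u \in U$, dispose of the cases $ux \in M$ and $\gm(e\,ux)=0$; (3) in the remaining case produce the candidate witness $x_1(eu)$, verify the $\gm$-equality, and invoke $\Dis$ on the triple $(x_1,x,u)$ to get strictness, concluding $ux \in \SigzUgm$.
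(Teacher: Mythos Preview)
Your argument is correct and follows essentially the same route as the paper: reduce to showing that $ux$ (with $x\in\SigzUgm$, $u\in U$, $ux\in\tT(U)\setminus\mfa_{U,\gm}$) lies in $\SigzUgm$, take the witness $x_1\in M$ for $x$, and use the new witness $x_1 u\in M$ for $ux$, invoking $\Dis$ on the triple $(x_1,x,u)$ to rule out $x_1 u = exu$ and thereby obtain the strict inequality $x_1 u < exu$. The paper's proof is the same computation, only more tersely stated (it writes $x'y$ for your $x_1(eu)=x_1u$ and cites Theorem~\ref{thm1.2} in place of naming $\Dis$).
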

\begin{proof}
$\tH(U,\gm)$ contains the set on the right hand side. We are done,
if we verify that a product $xy$ with $x\in \SigzUgm$, $y\in
\tT(U)$, $xy \in \tT(U) \sm \mfa_{U,\gm}$ lies in $\SigzUgm$.

We have $x\in\tT(U)$. By definition of $\SigzUgm$ there exists
some $x' \in M$ with $x' < ex$ and $\gm(x') = \gm (ex) \neq 0$.
Now $x'y \leq exy$, but equality here would imply that $xy \in M$,
since $U$ is a semiring (cf. Theorem~\ref{thm1.2}). Thus $x'y <
exy$. Further $\gm(x'y) = \gm (exy) \neq 0 $. This shows that
indeed $xy \in \SigzUgm$.
\end{proof}

Starting from this lemma and the general description of the
relations $E(U, \mfA, \gm)$ in \\ Theorem~\ref{thm1.14} it is now
easy to write out the TE-relation $H(U,\gm)$. We obtain a theorem
which runs completely  in the category $\strop$.

\begin{thm}\label{thm5.13} Assume that $U$ is a supertropical
semiring. The initial semiring homomorphism $\alhUgm$ covering
$\gm$ is the map
$$\pi_{H(U,\gm)} : U \ds \to U/ H(U,\gm)$$
corresponding to the following equivalence relation $H(U,\gm)$ on
$U$:

 If $x_1,x_2 \in U$, then $x_1 \sim_{H(U,\gm)} x_2$ \ds{iff}
$\gm(ex_1) = \gm(ex_2)$ and either $x_1 = x_2$, or $x_1, x_2 \in M
\cup \SigzUgm$, or $\gm(ex_1)=0$. \endbox
\end{thm}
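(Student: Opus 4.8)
The plan is to read off the explicit form of $H(U,\gm)$ by substituting the simplified ideal $\tH(U,\gm)$ into the general description of the relations $E(U,\mfA,\gm)$ furnished by \thmref{thm1.14}. Recall (from the paragraph preceding the statement) that $\alhUgm = \pi_{H(U,\gm)}$ with $H(U,\gm) = E(U,\tH(U,\gm),\gm)$, that $\alhUgm$ is a surjective \h-transmission covering $\gm$, and that $\tH(U,\gm)$ is by construction an ideal of $U$ containing $M$ and the saturated ideal $\mfa_{U,\gm} = \{x \in U \mid \gm(ex)=0\}$. When $U$ is a semiring, \thmref{thm5.7} shows the target $\Uhgm$ is again a semiring, so $\alhUgm$ is a semiring homomorphism by \propref{prop5.2}; applying \thmref{thm5.9} with $\dl = \id_N$ (and \propref{prop5.2} once more) then shows $\alhUgm$ is initial among \h-transmissions, hence among semiring homomorphisms, covering $\gm$. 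This settles the ``initial'' assertion, and it remains only to make $H(U,\gm)$ explicit.

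Put $\mfA := \tH(U,\gm)$. Since $\mfA$ is an ideal containing $M$ and $\mfa_{U,\gm}$, \thmref{thm1.14}(i) applies and gives: $x_1 \sim_{H(U,\gm)} x_2$ iff either $x_1 = x_2$, or $x_1, x_2 \in \mfA$ together with $\gm(ex_1) = \gm(ex_2)$. Because $U$ is a semiring, \lemref{lem5.12} simplifies the ideal to $\mfA = (M \cup \SigzUgm) \cup \mfa_{U,\gm}$. Now I would split on the value $\gm(ex_1)$: if $x_1, x_2 \in \mfA$ with $\gm(ex_1) = \gm(ex_2) \neq 0$, then neither $x_i$ lies in $\mfa_{U,\gm}$, forcing $x_i \in M \cup \SigzUgm$; whereas if $\gm(ex_1) = \gm(ex_2) = 0$, then both $x_i$ lie in $\mfa_{U,\gm} \subset \mfA$ automatically, so the membership condition is vacuous and the relation reduces to ``$\gm(ex_1) = 0$''. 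Conversely, each of the clauses ``$x_1 = x_2$'', ``$x_1, x_2 \in M \cup \SigzUgm$'', and ``$\gm(ex_1) = 0$'', taken under the standing condition $\gm(ex_1) = \gm(ex_2)$, places $x_1$ and $x_2$ in $\mfA$ with equal ghost value and hence implies $x_1 \sim_{H(U,\gm)} x_2$. Combining the two directions reproduces exactly the relation asserted in the statement.

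I do not expect a real obstacle: the mathematical content is entirely supplied by \thmref{thm1.14}, \lemref{lem5.12}, \thmref{thm5.7}, and \thmref{thm5.9}, and what remains is the bookkeeping of the case distinction. The only points demanding a little attention are that the displayed relation carries the constraint $\gm(ex_1) = \gm(ex_2)$ in all three cases (automatic when $x_1 = x_2$, but to be carried along by hand otherwise), and that the ideal $\mfa_{U,\gm}$ absorbs precisely the elements of vanishing ghost value; once these are noted, the passage from \thmref{thm1.14} to the stated form is purely mechanical.
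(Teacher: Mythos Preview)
Your proposal is correct and follows essentially the same route as the paper: the paper states just before the theorem that the result is obtained by combining \lemref{lem5.12} with the general description of $E(U,\mfA,\gm)$ in \thmref{thm1.14}, which is precisely what you do. Your treatment is in fact more explicit than the paper's, spelling out both the ``initial'' claim (via \thmref{thm5.7}, \propref{prop5.2}, and \thmref{thm5.9} with $\dl=\id_N$, which amounts to observing that \corref{cor5.10} collapses to $\alhUgm$ itself once $U$ is a semiring) and the case split on $\gm(ex_1)$ that converts the $\mfA$-membership condition into the three displayed clauses.
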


\section{Ordered supertropical monoids}

In the paper \cite{IKR3} the present authors studied
supervaluations with values in a ``totally ordered supertropical
semiring" \cite[Definition 3.1]{IKR3} and obtained  -- as we
believe -- natural and useful examples of such supervaluations.
This motivates us now to define ``ordered supertropical monoids".

\begin{defn}\label{defn61.1} An \textbf{ordered supertropical
monoid},  or \textbf{$\OST$-monoid} for short, is a supertropical
monoid $U$ equipped with a total ordering $\leq$  of the set $U$,
such that the following hold:

\begin{alignat*}{2}
&(\OST1): \quad&& \text{The ordering $\leq$ is compatible with multiplication, i.e., }\\
& && \text{for $x,y,z \in U$, }  \   x\leq y \ds \Rightarrow xz \leq yz;   \\
 &(\OST2): \quad && \text{The ordering $\leq$ extends the natural total order of the bipotent}\\
& & & \text{semiring $M:= eU$, i.e., if $x,y \in M,$ then }  x \leq y \ds \Leftrightarrow x \leq_M y; \\
&(\OST3): \quad&& 0 \leq 1 \leq e.
 \end{alignat*}

\end{defn}

\begin{lem}\label{lem61.2} Let $x \in U.$
\begin{enumerate} \ealph
    \item Then $ 0 \leq x \leq ex.$
    \item If $x \in \tT(U),$ then $x < ex.$
\end{enumerate}
\end{lem}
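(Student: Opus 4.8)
The plan is to use the three axioms $(\OST1)$--$(\OST3)$ together with the fact that $e$ is a central idempotent. For part (a), the inequality $0 \le x$ follows directly: by $(\OST3)$ we have $0 \le 1$, and multiplying by $x$ using $(\OST1)$ gives $0 = 0 \cdot x \le 1 \cdot x = x$ (here I use that $0$ is absorbing). For the inequality $x \le ex$, start again from $(\OST3)$, namely $1 \le e$, and multiply on the right by $x$ via $(\OST1)$ to obtain $x = 1 \cdot x \le e x$. So part (a) is immediate from $(\OST1)$ and $(\OST3)$ alone, with no case distinction needed.

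For part (b), I want to upgrade $x \le ex$ to the strict inequality $x < ex$ under the hypothesis $x \in \tT(U)$. Suppose for contradiction that $x = ex$. Then $x$ equals its own ghost companion, so $x = ex \in eU$, i.e. $x$ is a ghost. But $x$ was assumed tangible, and a nonzero element cannot be both tangible and a ghost (the only element counted as both is $0$, and if $x = 0$ then $x = 0$ is not in $\tT(U)$ in the sense relevant here, or one simply notes $0 \le 0 = e\cdot 0$ is not what part (b) asserts for a genuine tangible). Hence $x \ne ex$, and combined with $x \le ex$ from part (a) this yields $x < ex$.

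The only subtlety — and it is minor — is bookkeeping about the status of $0$: in the conventions of the paper $0$ is regarded as simultaneously tangible and ghost, so "$x \in \tT(U)$" in the statement of (b) should be read as excluding $0$ (or: the set $\tT(U)$ of tangible elements is disjoint from the ghost elements once $0$ is removed, cf. the Terminology in \cite{IKR1}). Once that convention is in force, the contradiction "$x$ tangible and $x = ex \in eU$ a ghost, with $x \ne 0$" is genuine, and the argument closes. I do not expect any real obstacle here; the lemma is purely formal manipulation of the order axioms plus the defining property of $\tT(U)$.
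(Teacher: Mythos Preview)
Your proof is correct and follows exactly the same approach as the paper: for (a), multiply the chain $0 \le 1 \le e$ from $(\OST3)$ by $x$ using $(\OST1)$; for (b), combine $x \le ex$ with $x \neq ex$ (since a tangible element is not a ghost). Your worry about the status of $0$ is unnecessary here, since in the paper's conventions $\tT(U)$ already excludes $0$ (cf.\ the notation $\tT(U)_0 := \tT(U) \cup \{0\}$), so the contradiction in (b) is immediate.
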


\begin{proof} (a): This follows by multiplying the inequality $0 \leq 1 \leq
e$ by $x.$ \pSkip

(b): We have $x \leq ex$ and $x \neq ex$; hence $x < ex.$
\end{proof}

As common,  we call a subset $C$ of a totally ordered set $X$
\textbf{convex}  (in $X$) if for all $x,y\in C$, $z \in X$ with $x
\leq z \leq y$ also $z \in C.$ (This definition still makes sense
if $X$ is only partially ordered, but now we do not need this
generality.)
\begin{prop}\label{prop61.3} $ $

\begin{enumerate} \ealph
    \item For every $c \in M \sm \00$ both the fiber $U_c :=
    \inu_U(c)$ and the tangible fiber $\tT(U)_c := \tT(U) \cap
    U_c$ are convex in $U$.

    \item If $c,d \in M$ and $c <d,$ then $$c < \tT(U)_d < d$$
    (i.e., $c < x < d$ for every $x\in \tT(U)_d$).
\end{enumerate}

\end{prop}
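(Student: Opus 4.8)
The plan is to exploit the compatibility axioms $(\OST1)$--$(\OST3)$ together with Lemma \ref{lem61.2} in a very direct way. For part (a), let $c \in M \sm \00$ and suppose $x, y \in U$ with $x \leq z \leq y$ where $x, y$ both lie in the fiber $U_c$, i.e. $ex = ey = c$. Applying the idempotent $e$ to the chain $x \leq z \leq y$ and using $(\OST1)$ (multiplication by $e$ is order preserving) gives $ex \leq ez \leq ey$, hence $c \leq ez \leq c$, so $ez = c$ and $z \in U_c$. This shows $U_c$ is convex. For the tangible fiber $\tT(U)_c = \tT(U) \cap U_c$: it is the intersection of the convex set $U_c$ with $\tT(U)$, but that alone is not enough, so instead I argue directly. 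If $x, y \in \tT(U)_c$ and $x \leq z \leq y$, then as above $z \in U_c$; it remains to see $z$ is tangible. If $z$ were a ghost, then $z = ez = c = ey$, contradicting $y < ey = c$ from Lemma \ref{lem61.2}(b) (since $z \leq y$ would force $c \leq y < c$). Hence $z \in \tT(U)$, and $\tT(U)_c$ is convex.

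For part (b), let $c, d \in M$ with $c < d$ and let $x \in \tT(U)_d$, so $ex = d$ and $x \in \tT(U)$. The upper bound $x < d$ is immediate from Lemma \ref{lem61.2}(b): $x < ex = d$. For the lower bound $c < x$, suppose toward a contradiction that $x \leq c$. Multiplying by $e$ and using $(\OST1)$ together with $(\OST2)$ (the ordering on $M$ agrees with $\leq_M$, and $ec = c$ since $c \in M$), we get $ex \leq ec = c$, i.e. $d \leq c$, contradicting $c < d$. Therefore $c < x$, and combining, $c < x < d$ for every $x \in \tT(U)_d$, which is exactly $c < \tT(U)_d < d$.

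I do not anticipate a genuine obstacle here; the only point requiring a little care is making sure in part (a) that the element $z$ landing between two tangibles cannot itself be a ghost, which is handled by the strict inequality $y < ey$ from Lemma \ref{lem61.2}(b) rather than by convexity of $U_c$ alone. Everything else is a one-line application of $(\OST1)$ (order-compatibility of multiplication, specialized to multiplication by $e$) and $(\OST2)$. One should also note that the fibers are nonempty or at least that the statements are vacuous otherwise, but since $c, d$ range over actual elements of $M \sm \00$ and the claims are universally quantified over $x \in U_c$ or $x \in \tT(U)_d$, no nonemptiness hypothesis is needed.
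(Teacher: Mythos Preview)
Your proof is correct and follows essentially the same approach as the paper's own proof: both parts use Lemma~\ref{lem61.2} and order-compatibility of multiplication by $e$ in exactly the way you describe, and the contradiction ruling out a ghost $z$ in part~(a) is the same argument (if $z$ is ghost then $z=c=ey>y\geq z$).
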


\begin{proof} (a): Let $x,y \in U_c$, $z\in U,$ and $x \leq z \leq
y.$ We can conclude from $c  =ex \leq ez \leq ey = c$ that $ez
=c;$ hence $z \in U_c.$ Now assume that in addition $x,y \in
\tT(U).$ If $z$ were ghost, hence $ez =c$, it would follow by
Lemma \ref{lem61.2}.b that $y < ey  = z.$ Thus $z \in \tT(U).$
\pSkip

(b): Let $x \in \tT(U)_d.$ Then $x < ex = d$,  by Lemma
\ref{lem61.2}.b. Suppose $x \leq c$. Then it would follow that $ex
= d \leq c,$ which is not true. Thus $c < x.$
\end{proof}

\begin{thm}\label{thm61.4}
If $(U, \leq)$ is an $\OST$-monoid, then $U$  is a semiring.
\end{thm}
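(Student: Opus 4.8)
The plan is to verify the distributivity criterion $\Dis$ from Theorem \ref{thm1.2} for the supertropical monoid underlying $(U,\leq)$; once $\Dis$ holds, Theorem \ref{thm1.2} immediately gives that $U$ is a (supertropical) semiring. So let $x,y,z \in U$ be given with $0 < ex < ey$ but $exz = eyz$; I must show that $yz \in eU$, i.e.\ that $yz$ is a ghost. The key is to exploit the order compatibility (\OST1) together with the ``squeezing'' statement of Proposition \ref{prop61.3}.b, which says that a tangible element lying in the fiber over $ey$ is strictly between $ex$ and $ey$ in the ordering of $U$ whenever $ex < ey$.

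First I would dispose of the trivial configurations: if $yz = 0$ then $yz \in eU$ and we are done, so assume $yz \neq 0$; since $exz = eyz$ and these are elements of $M$, note $exz = eyz \neq 0$ as well (if $eyz = 0$ then $e(yz)=0$ forces $yz = 0$ by the defining axiom of a supertropical monoid). Now suppose, for contradiction, that $yz \in \tT(U)$, i.e.\ $yz$ is tangible. Then $yz \in \tT(U)_{eyz}$, the tangible fiber over $c := eyz = exz$. By Lemma \ref{lem61.2}.b we have $yz < e(yz) = c$. On the other hand, I want to produce an element of $M$ strictly below $c$ that multiplication by $z$ (or a comparison via (\OST1)) pushes to be $\leq yz$, contradicting convexity/squeezing. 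The natural candidate is to compare $xz$ and $yz$: from $0 < ex < ey$ and Lemma \ref{lem61.2} we have $x \leq ex < ey$, and more usefully $ex < ey$ in $M$; multiplying $ex < ey$ would only help if $z$ behaved well, so instead I compare $x$ and $y$ directly. Since $x \leq ex$ (Lemma \ref{lem61.2}.a) and, by Proposition \ref{prop61.3}.b applied with $c = ex < d = ey$, every tangible element of $\tT(U)_{ey}$ is $> ex \geq x$; in particular if $y$ is tangible then $y > ex \geq x$, and if $y$ is ghost then $y = ey > ex \geq x$. Either way $x < y$, hence by (\OST1) $xz \leq yz$.

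Now I combine $xz \leq yz$ with $e(xz) = e(yz) = c$ and the assumption $yz \in \tT(U)$. We have $xz \in U_c$ and $yz \in \tT(U)_c$; convexity of the tangible fiber $\tT(U)_c$ (Proposition \ref{prop61.3}.a) is not quite enough by itself, but Lemma \ref{lem61.2}.b gives $yz < c$. I also need a lower bound forcing a contradiction. Consider $y' := ex \in M$; then $y' < ey$, and I'd like to see that $y' z \leq yz$ while $y'z = c$. Indeed $y' = ex \leq y$? This need not hold if $y$ is tangible, since then $y < ey$ and we might have $ex$ between $y$ and $ey$. Hmm — so the cleaner route is: since $x < y$ we get $xz \leq yz < c = exz$, so $xz < exz = e(xz)$, whence $xz$ is tangible (if $xz$ were ghost, $xz = e(xz)$, contradiction). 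So $xz \in \tT(U)_c$ too, and now $xz \leq yz$ are both tangible in the fiber over $c$. This is still consistent, so I need one more push: apply (\OST1) to $ex < ey$ — wait, $ex, ey \in M$ and multiplication by $z$ gives $exz \leq eyz$, which is equality, giving nothing new.

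The main obstacle, then, is extracting a genuine contradiction from ``$yz$ tangible''; the resolution I expect to work is to multiply the chain $x < ex$ (Lemma \ref{lem61.2}.b, valid since $x \in \tT(U)$ — but $x$ need not be tangible!). Let me instead argue: if $x \in \tT(U)$, then $x < ex$; pick any representative and note $xz \leq (ex)z = exz = c$; combined with $xz \leq yz < c$ this is consistent. The honest fix is to use that $yz < e(yz)$ forces, via $yz = y \cdot z$ and Proposition \ref{prop61.3}.b, the tangible fiber structure: write the contradiction as follows. We have shown $x < y$ hence $xz \le yz$, and symmetrically, were $yz$ tangible, Lemma \ref{lem61.2}.b gives $yz < e yz = exz$. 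But also $x \le ex$ so $xz \le exz$; and crucially $ex < ey$ together with $y \le ey$ and Proposition~\ref{prop61.3}.b (squeezing in $\tT(U)_{ey}$, or $y=ey$) gave $ex < y$ when $y$ is tangible, so $ex \cdot z \le y z$ when... no. Given the delicacy, in the final write-up I will instead invoke Proposition \ref{prop61.3}.b in the fiber over $c=eyz$: from $ex<ey$ one cannot directly conclude, so the correct clean contradiction is that $e x\cdot z = eyz = e\cdot yz$ while $x<y$ would force, after multiplying by $z$ and using that $\tT(U)_c$ is convex and bounded strictly below $c$, the equality $exz = yz$, i.e.\ $yz = e(yz)\in eU$ — contradicting tangibility. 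I expect the careful bookkeeping of these order inequalities (distinguishing $y$ tangible vs.\ ghost, and likewise $x$) to be the only real work; each case reduces to Lemma \ref{lem61.2} and Proposition \ref{prop61.3} plus (\OST1), and in every case one is forced to $yz \in eU$, establishing $\Dis$ and hence, by Theorem \ref{thm1.2}, that $U$ is a semiring.
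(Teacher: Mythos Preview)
Your overall strategy --- verify condition $\Dis$ from Theorem~\ref{thm1.2} --- is exactly right, and matches the paper. But your execution has a genuine gap: the contradiction argument you attempt never closes, and you end by deferring to ``careful bookkeeping'' that you do not carry out. The problem is that from $x<y$ you only get $xz \leq yz \leq eyz = exz$, and since $xz$ need not equal $exz$, there is no squeeze and no contradiction with $yz$ being tangible.

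The fix is the comparison you wrote down and then abandoned: compare $ex$ (not $x$) with $y$. From $ex < ey$ in $M$, Proposition~\ref{prop61.3}.b gives $ex < y$ if $y$ is tangible, and trivially $ex < ey = y$ if $y$ is ghost; either way $ex \leq y$. Combined with $y \leq ey$ (Lemma~\ref{lem61.2}.a) and multiplying by $z$ via $(\OST1)$, you get
\[
exz \;=\; (ex)z \;\leq\; yz \;\leq\; (ey)z \;=\; eyz \;=\; exz,
\]
so $yz = eyz \in eU$. No case analysis on the tangibility of $x$ or $yz$, and no contradiction argument, is needed --- this direct squeeze is the paper's three-line proof. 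You had the inequality $ex \cdot z \leq yz$ in hand and said ``no''; that was the moment to say ``yes''.
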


\begin{proof} We verify condition $\Dis$ in Theorem \ref{thm1.2}.
Let $x,y \in U,$ $x' \in M,$ and assume that $x' < ex,$ but $x'y =
exy.$ From $x' < ex$  we conclude by Proposition \ref{prop61.3}.b
that $x' \leq x$. Furthermore $x \leq ex$ by Lemma
\ref{lem61.2}.a. Multiplying by $y,$ we obtain
$$x'y \leq xy \leq exy \leq x'y, $$
and we conclude that $xy = exy.$
\end{proof}

\begin{thm}\label{thm61.5}
If $(U, \leq)$ is an $\OST$-monoid, then addition\footnote{Recall
the  formulas for $x + y$ in \S\ref{sec:1}, preceding Theorem
\ref{thm1.2}.} in the semiring  $U$ is compatible with the
ordering $\leq,$ i.e., $(x,y,z\in U)$
$$ x \leq y \dss \Rightarrow x+z \leq y + z.$$
\end{thm}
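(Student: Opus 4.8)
The plan is to argue directly from the explicit description of $x+y$ in a supertropical monoid recalled in \S\ref{sec:1} just before Theorem \ref{thm1.2} (recall that $U$ really is a semiring here, by Theorem \ref{thm61.4}). The one ingredient that does the real work is the observation that for $v,w\in U$ one has
$$ ev < ew \ds \Rightarrow v < w ; $$
I would prove this by noting that $v\leq ev$ by Lemma \ref{lem61.2}.a, while $ev<w$ because either $w$ is a ghost, so $w=ew>ev$, or $w$ is tangible, so $w\in\tT(U)_{ew}$ and Proposition \ref{prop61.3}.b, applied with $c:=ev$ and $d:=ew$ (so that $c<d$), gives $ev<w$; hence $v\leq ev<w$.

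Granting this, the argument runs as follows. Let $x,y,z\in U$ with $x\leq y$; multiplying by $e$ via $(\OST1)$ gives $ex\leq ey$. From the addition formula, $x+z$ equals $z$, $x$, or $ex$ according as $ex<ez$, $ex>ez$, or $ex=ez$, so in every case $e(x+z)=\max(ex,ez)$, and likewise $e(y+z)=\max(ey,ez)$. Since $ex\leq ey$,
$$ e(x+z)=\max(ex,ez) \ds \leq \max(ey,ez)=e(y+z). $$
If this inequality is strict, the observation above applied to $v:=x+z$, $w:=y+z$ yields $x+z<y+z$, and we are done.

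It remains to treat the case $e(x+z)=e(y+z)$, i.e.\ $\max(ex,ez)=\max(ey,ez)$, which I would split according to the position of $ez$. If $ez>ey$, then $ez>ey\geq ex$ and the addition formula gives $x+z=z=y+z$. If $ez=ey$, then $y+z=ey$, while $e(x+z)=\max(ex,ey)=ey$ since $ex\leq ey$, so $x+z\leq e(x+z)=ey=y+z$ by Lemma \ref{lem61.2}.a. Finally, if $ez<ey$: from $\max(ex,ez)=\max(ey,ez)=ey$ together with $ex\leq ey$ one must have $ex=ey$, since otherwise $ex<ey$ and $ez<ey$ would force $\max(ex,ez)<ey$; then $ez<ex=ey$, so the formula gives $x+z=x$ and $y+z=y$, and $x\leq y$ by hypothesis. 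In every case $x+z\leq y+z$.

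I do not expect a real obstacle here: the only genuinely non-formal point is the observation $ev<ew\Rightarrow v<w$, which is precisely where the order structure of an $\OST$-monoid (Proposition \ref{prop61.3}, Lemma \ref{lem61.2}) enters; the rest is bookkeeping with the three clauses of the addition formula.
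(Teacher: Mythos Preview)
Your proof is correct and follows essentially the same route as the paper: both argue by case analysis on the relative positions of $ex,ey,ez$ using the explicit addition formula together with Lemma~\ref{lem61.2} and Proposition~\ref{prop61.3}. Your organization is slightly more economical in that you isolate the implication $ev<ew\Rightarrow v<w$ up front and use it to dispose of all cases where $e(x+z)<e(y+z)$ at once, whereas the paper runs through the subcases $ez\leq ex$, $ex<ez<ey$, $ez=ey$, $ey<ez$ (under $ex<ey$) and then $ex=ey$ separately; but the underlying ingredients are identical.
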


\begin{proof} We conclude from  $x \leq y$ that $ex \leq ey.$ We
distinguish the cases $ex < ey$ and $ex = ey$, and go through
various subcases.
\begin{description}
    \item[Case 1] $ex < ey.$
\begin{enumerate} \ealph
    \item If $ez \leq ex,$ then $e(x + z) = ex + ez = ex$ and $y + z =
    y.$ Since $ex < ey,$ we conclude that $x + z \leq e(x+z) < y +
    z$ (cf. Proposition \ref{prop61.3}.b).

    \item If $ex <  ez < ey,$ then $x + z = z$, $y + z = y$, and we
    conclude from $ ez < ey$
     that $x + z  < y +
    z$.

    \item If $ex  = ey,$ then $x + z = z$, $y + z = ey$. Since  $ z
    \leq
    ez$ we obtain
     that $x + z  \leq y +
    z$.

    \item If $ey  < ez,$ then $x + z = z$, $y + z = z$, hence     $x + z  =  y +
    z$.

\end{enumerate}
\pSkip

    \item[Case 2] $ex = ey.$

\begin{enumerate} \ealph
    \item If $z < ex,$ then $x + z = x,$ $y + z = y.$
    \item If $z = ex,$ then $x + z = ex,$ $y + z = ey.$
    \item If $ ex < z,$ then $x + z = y + z = z.$
\end{enumerate}
Thus in all three cases $x + z \leq y + z. $

\end{description}
\end{proof}

Starting from now we denote an $\OST$-monoid $(U, \leq)$ by the
single letter $U$. From Theorems \ref{thm61.4} and \ref{thm61.5}
it  is obvious that  the present $\OST$-monoids are the same
objects as the totally ordered supertropical semirings defined in
\cite[Definition 3.1]{IKR3}. Examples of these structures can be
found in \cite[\S3, \S4, \S6]{IKR3}.

\begin{defn}[{= \cite[Definition 5.1]{IKR3}}]\label{defn61.6}
Assume that $U$ and $V$ are $\OST$-monoids. We call a transmission
$\al: U \to V$ (cf. Definition \ref{defn1.3}) \textbf{monotone},
if $\al $ is compatible with the ordering on $U$ and $V$, i.e.,
$$ \forall x,y \in U: \quad x \leq y \dss \Rightarrow \al(x) \leq \al(y).$$
\end{defn}

\begin{thm}[{cf. \cite[Theorem 5.3]{IKR3}}]\label{thm61.7}
Every monotone transmission $\al:   U \to V$ is a semiring
homomorphism.
\end{thm}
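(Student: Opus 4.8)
The statement to prove is that every monotone transmission $\al : U \to V$ between $\OST$-monoids is a semiring homomorphism. Since $U$ and $V$ are already known to be semirings by Theorem~\ref{thm61.4}, and since a transmission automatically respects multiplication, $0$, $1$, and $e$, the only thing left to verify is that $\al$ respects addition, i.e. $\al(x+y) = \al(x) + \al(y)$ for all $x,y \in U$. Recalling the explicit formula for addition in a supertropical monoid (the three-case formula preceding Theorem~\ref{thm1.2}), the plan is to fix $x,y \in U$, assume without loss of generality $ex \leq ey$, and run through the cases that arise in that formula, checking each one against the corresponding case for $\al(x)$ and $\al(y)$ in $V$.

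First I would dispose of the easy cases. If $ex = 0$, then $x = 0$ (by the defining property of a supertropical monoid), so $x + y = y$ and $\al(x) = 0$, giving $\al(x+y) = \al(y) = \al(x) + \al(y)$. If $ex = ey$, then $x + y = ex$, and applying $\al^\nu$ gives $\al(ex) = e\al(x) = e\al(y) = \al(ey)$, so $\al(x) + \al(y) = e\al(x) = \al(ex) = \al(x+y)$. The substantive case is $0 < ex < ey$, where $x + y = y$, so we must show $\al(x) + \al(y) = \al(y)$. Since $\al^\nu$ is a monoid homomorphism respecting the order on the ghost ideals, we get $e\al(x) = \al(ex) \leq \al(ey) = e\al(y)$. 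If this inequality is strict, then by the addition formula in $V$ we immediately have $\al(x) + \al(y) = \al(y)$, as required.

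The one delicate subcase is therefore $0 < ex < ey$ in $U$ but $\al(ex) = \al(ey)$ in $V$, i.e. $e\al(x) = e\al(y) =: c \neq 0$. Here the addition formula in $V$ gives $\al(x) + \al(y) = c = e\al(y)$, so what must be shown is that $\al(y) = e\al(y)$, i.e. that $\al(y)$ is a ghost in $V$. This is exactly where monotonicity enters — and this is the main obstacle. The idea: from $0 < ex < ey$ Proposition~\ref{prop61.3}.b gives $ex < x \cdot e? $ — more precisely, for any tangible $y$ we have $y < ey$, and more usefully $\al(x), \al(y)$ both lie in the fiber of $V$ over $c$. We have $\al(x) \leq \al(y)$ by monotonicity (since $x \leq y$; indeed $x < ex < ey$ forces $x < ey$, and in fact $x \leq y$ because $ex < ey$ and Proposition~\ref{prop61.3}.b place $x$ below the whole fiber $\tT(U)_{ey}$ and hence below $y$). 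Now $\al(x)$ and $\al(y)$ are both $\leq c = e\al(y)$, and $c$ is the top of its fiber in $V$. If $\al(y)$ were tangible, then $\al(y) < c$ by Lemma~\ref{lem61.2}.b, and also $ex < ey$ gives (via Proposition~\ref{prop61.3}.b applied in $U$) $ex < x$ is false — instead $x < ex$ — so I need a cleaner argument: apply monotonicity to the chain $x \le e x \le y$? That fails since $ex \le y$ need not hold.

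The cleanest route, which I would adopt, is: assume for contradiction $\al(y) \in \tT(V)$, so $\al(y) < e\al(y) = c$. Pick any $x' \in M$ with $ex < x' < ey$ — wait, better to use the element $ex$ itself: we have $ex \in M$, $ex < ey$, and by Proposition~\ref{prop61.3}.b, $ex < \tT(U)_{ey}$, so in particular $ex < y$ (as $y \in \tT(U)_{ey}$ if $y$ is tangible; if $y$ is ghost then $y = ey$ and $\al(y) = \al(ey) = c$ is ghost and we are done). So $ex < y \le ey$, hence applying $\al$ and monotonicity, $\al(ex) \le \al(y) \le \al(ey)$, i.e. $c \le \al(y) \le c$, forcing $\al(y) = c = e\al(y)$, i.e. $\al(y)$ is ghost. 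That completes the hard subcase, and with it the proof. I would present the argument in this order: (1) reduce to showing additivity; (2) dispatch $ex=0$ and $ex=ey$; (3) in the case $0<ex<ey$, dispatch the subcase $e\al(x) < e\al(y)$; (4) in the remaining subcase $e\al(x)=e\al(y)=c$, handle $y$ ghost trivially and $y$ tangible via the chain $ex < y \le ey$ together with monotonicity to conclude $\al(y)=c$.
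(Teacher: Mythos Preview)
Your proof is correct and the core argument is identical to the paper's: in the critical case $0<ex<ey$ with $\al(ex)=\al(ey)$, both you and the paper use Proposition~\ref{prop61.3}.b (together with Lemma~\ref{lem61.2}) to get the chain $ex < y \le ey$, then apply monotonicity to sandwich $\al(y)$ between $\al(ex)$ and $\al(ey)$, forcing $\al(y)=e\al(y)\in eV$.

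The only difference is packaging. The paper does not run through the addition formula directly; instead it observes that it suffices to verify condition $\HT$ from Definition~\ref{defn5.1} and then invoke Proposition~\ref{prop5.2} (an $\h$-transmission between supertropical semirings is a semiring homomorphism). Your direct case analysis on the addition formula is exactly what underlies Proposition~\ref{prop5.2} (whose proof in \cite[Proposition~6.4]{IKR2} is this same case split), so you have simply inlined that lemma. The paper's route is shorter on the page because the case analysis has already been done elsewhere; your route is self-contained and avoids the $\h$-transmission machinery of \S\ref{sec:5}. Either is fine.
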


\begin{proof}
We verify condition $\HT$ in Definition  \ref{defn5.1}, and then
will be done by Proposition \ref{prop5.2}.

Let $x,y \in U$ with $ex < ey$ and $\al(ex) = \al(ey)$. By
Proposition \ref{prop61.3} we have $ex < y < ey.$ Applying $\al$,
we obtain
$$ \al(ex) \ds  \leq \al(y)  \ds \leq \al(ey) \ds  = \al(ex),$$
hence $\al(ex) = \al(ey)$. But $\al(ey) = e\al(y)$ (cf. Definition
\ref{defn3.1}), and we conclude that $\al(y) \in eV,$ as desired.
\end{proof}

Another proof, which relies more on the semiring structure of $U $
and $V$, can be found in \cite[\S5]{IKR3}.

\begin{examp}\label{defn61.8}
Every bipotent semiring can be regarded as an $\OST$-monoid. (This
is the case  $1 = e$.) If $U$ is an $\OST$-monoid, $M = eU$ (our
present overall assumption), then $\nu_U: U \to M $ is a monotone
transmission.
\end{examp}

We indicate a  way how to obtain new $\OST$-monoids from given
ones. First we quote a general fact about total orderings (cf.
e.g. \cite[Remark 4.1]{IKR2}).

\begin{lemdef}\label{deflem61.9} Let $X$ be a totally ordered set
and $f: X \twoheadrightarrow Y$ a map from $X$ onto a set  $Y.$
Then there exists a (unique)  total ordering  on $Y$, such that
$f$ is order preserving, iff all fibers $f^{-1} (y),$ $y\in Y,$
are convex in $X.$ We call this total ordering the
\textbf{ordering on $Y$ induced by $f$}.
\end{lemdef}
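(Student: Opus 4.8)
The plan is to prove the two implications together with the uniqueness assertion, all three of which reduce to one combinatorial observation about fibers. First I would dispatch the easy direction: assume a total ordering on $Y$ is given with $f$ order preserving, and take $x_1, x_2 \in f^{-1}(y)$ and $z \in X$ with $x_1 \le z \le x_2$. Applying $f$ yields $y = f(x_1) \le f(z) \le f(x_2) = y$, so $f(z) = y$ by antisymmetry, i.e.\ $z \in f^{-1}(y)$; hence every fiber is convex.

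For the converse, assume every fiber is convex. The crucial claim is that any two \emph{distinct} fibers $F_1 := f^{-1}(y_1)$ and $F_2 := f^{-1}(y_2)$ are totally separated, meaning that either $u < v$ for all $u \in F_1$, $v \in F_2$, or $u < v$ for all $u \in F_2$, $v \in F_1$. I would prove this by contradiction: if it fails, there are $a \in F_1$, $b \in F_2$ with $b < a$ (strict since $F_1 \cap F_2 = \emptyset$) and $d \in F_1$, $c \in F_2$ with $d < c$. Comparing $b$ and $d$ in the total order of $X$: if $b \le d$, then $b \le d \le c$ with $b, c \in F_2$ forces $d \in F_2$ by convexity of $F_2$, contradicting $d \in F_1$; if $d \le b$, then $d \le b \le a$ with $d, a \in F_1$ forces $b \in F_1$ by convexity of $F_1$, again a contradiction. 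Granting the claim, I define $y_1 \le_Y y_2$ to mean that $y_1 = y_2$ or $u < v$ for all $u \in f^{-1}(y_1)$, $v \in f^{-1}(y_2)$. Totality is exactly the separation claim (the fibers are nonempty since $f$ is surjective), antisymmetry is immediate, and transitivity follows by inserting any element of the middle fiber. Finally $f$ is order preserving: if $x_1 \le x_2$ and $f(x_1) \neq f(x_2)$, then $f(x_2) <_Y f(x_1)$ is impossible, as it would force $x_2 < x_1$; hence $f(x_1) \le_Y f(x_2)$.

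For uniqueness, let $\le_Y$ be any total ordering on $Y$ making $f$ order preserving. For distinct $y_1, y_2$ choose $x_i \in f^{-1}(y_i)$; by the separation claim one of the two fibers lies entirely below the other, say $F_1$ below $F_2$, and then $x_1 < x_2$ gives $y_1 \le_Y y_2$ while the case $x_2 < x_1$ is excluded, so $\le_Y$ agrees with the ordering constructed above. The only step that requires any thought is the separation claim, and the short case split on $b$ versus $d$ is precisely where convexity of \emph{both} fibers enters; everything else is bookkeeping.
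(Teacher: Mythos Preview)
Your proof is correct. The separation claim is the heart of the matter, and your case split on whether $b \le d$ or $d \le b$ cleanly exploits convexity of each fiber in turn; the remaining verifications (totality, antisymmetry, transitivity, monotonicity of $f$, uniqueness) are routine and you handle them properly.

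As for comparison: the paper does not actually prove this statement. It is presented as a quoted general fact about total orderings, with a reference to \cite[Remark~4.1]{IKR2}, followed only by an ``N.B.'' that records the characterization
\[
f(x_1) < f(x_2) \ \Rightarrow\ x_1 < x_2 \ \Rightarrow\ f(x_1) \le f(x_2),
\]
without argument. So your write-up supplies precisely the details the paper chose to omit; there is no alternative approach in the paper to compare against.
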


N.B. This ordering on $Y$ can be characterized as follows: For
$x_1, x_2 \in X$
$$ f(x_1) <  f(x_2) \dss \Rightarrow x_1 < x_2 \dss \Rightarrow f(x_1) \leq f(x_2). $$
Alternatively, we can state:
$$
\begin{array}{ll}
  \text{If } x_1 < x_2,  & \text{then } f(x_1) \leq f(x_2), \\
  \text{If } x_1 > x_2,  & \text{then } f(x_1) \geq f(x_2). \\
\end{array}
$$

\begin{thm}\label{thm61.10}
Assume that $U$ is an $\OST$-monoid, $V$  is a supertropical
monoid, and $\al: U \to V$ is a surjective transmission. Assume
further that for every $p \in V$ the fiber $\ial(p)$ is convex in
$U$. Then $V$, equipped  with the total ordering induced by $\al$,
is again an $\OST$-monoid.
\end{thm}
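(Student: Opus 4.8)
The statement asks us to show that if $U$ is an $\OST$-monoid, $\al:U\to V$ a surjective transmission with all fibers $\ial(p)$ convex, then $V$ with the order induced by $\al$ (via Lemma/Definition \ref{deflem61.9}) is an $\OST$-monoid, i.e.\ it satisfies $\OST1$, $\OST2$, $\OST3$. The main work is $\OST1$; the other two are quick. First I would invoke Lemma/Definition \ref{deflem61.9}: since every fiber $\ial(p)$ is convex, there is a unique total ordering $\leq$ on $V$ making $\al$ order preserving, and it is characterized by: if $x_1<x_2$ in $U$ then $\al(x_1)\leq\al(x_2)$, and if $\al(x_1)<\al(x_2)$ then $x_1<x_2$. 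This characterization is the workhorse for everything that follows.

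\textbf{Checking $\OST2$ and $\OST3$.} For $\OST2$: let $p,q\in N:=eV$ with $p\neq q$, say $p<_N q$. Pick $x,y\in U$ with $\al(x)=p$, $\al(y)=q$; replacing $x,y$ by $ex,ey$ we may take $x,y\in M:=eU$ (since $\al(ex)=e\al(x)=ep=p$ as $p\in N=eV$, and likewise for $q$). Since $\gm:=\al^\nu:M\to N$ is an order-preserving semiring homomorphism and $p\neq q$, we cannot have $x\geq y$ (that would force $p\geq q$), so $x<y$ in $M$, hence in $U$ by $\OST2$ for $U$; then $\al(x)\leq\al(y)$, i.e.\ $p\leq q$ in the induced order. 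Conversely if $p<q$ in the induced order then $\al^{-1}$-preimages force $x<y$ and so $\gm(x)\leq\gm(y)$, giving $p\leq q$ in $N$; combined with $p\neq q$ this yields $p<_N q$. Thus the induced order on $N$ agrees with $\leq_N$. For $\OST3$: $0\leq 1\leq e$ holds in $U$, and applying the "order preserving" half of the characterization gives $\al(0)\leq\al(1)\leq\al(e)$, i.e.\ $0_V\leq 1_V\leq e_V$.

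\textbf{Checking $\OST1$ (the main obstacle).} We must show $x\leq y$ in $V$ implies $xz\leq yz$ in $V$ for all $x,y,z\in V$. Fix such $x,y,z$ and lift: choose $a,b,c\in U$ with $\al(a)=x$, $\al(b)=y$, $\al(c)=z$. The difficulty is that $x\leq y$ in $V$ need \emph{not} imply $a\leq b$ in $U$ — it only implies (by the characterization) that we cannot have $\al(a)>\al(b)$, so either $a\leq b$ or $a,b$ lie in a common nonconvex-looking position; more precisely the characterization gives: $x\leq y$ is equivalent to ``NOT ($b<a'$ for all $a'\in\ial(x)$, $b'\in\ial(y)$)'' — cleaner is to argue that there \emph{exist} representatives $a_0\in\ial(x)$, $b_0\in\ial(y)$ with $a_0\leq b_0$. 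Indeed if for \emph{all} representatives we had $a_0>b_0$, then by the characterization $\al(a_0)>\al(b_0)$... no — better: if $x<y$, pick any $a_0,b_0$; were $a_0>b_0$ we'd get $x=\al(a_0)\geq\al(b_0)=y$, contradiction, so $a_0\leq b_0$ automatically; if $x=y$ then $a_0,b_0$ lie in the same fiber, which is convex but this does not give $a_0\leq b_0$ for a fixed choice — however $x=y$ makes $\OST1$ trivial since $xz=yz$. So we may assume $x<y$, whence \emph{every} lift satisfies $a\leq b$. Then by $\OST1$ in $U$, $ac\leq bc$, and applying the order-preserving property of $\al$ gives $\al(ac)\leq\al(bc)$, i.e.\ $xz=\al(a)\al(c)=\al(ac)\leq\al(bc)=\al(b)\al(c)=yz$, using that $\al$ is a monoid homomorphism (TM3). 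This completes $\OST1$ and hence the proof; the only subtlety worth spelling out is the observation just made that when $x<y$ in $V$ every pair of lifts is already ordered the right way in $U$, which is exactly where convexity of the fibers (through Lemma/Definition \ref{deflem61.9}) is used.
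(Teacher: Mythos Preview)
Your proof is correct and follows essentially the same route as the paper's. The paper handles $\OST1$ in the same way (reducing to the strict case $\al(x)<\al(y)$, which via the characterization of the induced order forces $x<y$ in $U$, then using $\OST1$ in $U$ and pushing down through $\al$), and treats $\OST2$ and $\OST3$ just as you do; your exposition is simply more expansive about where the potential pitfalls lie.
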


\begin{proof} We verify the axioms $\OST1$-$\OST3$ in Definition
\ref{defn61.1} for the induced ordering $\leq_V$ on~$V.$

$(\OST 1):$ Let $x,y,z \in U$ and $\al(x) < \al(y)$. Then $x < y,$
hence $xz \leq yz$, hence
$$ \al(x) \al(z) \ds = \al(xz) \ds {\leq_V} \al(yz) \ds = \al(y)  \al(z).$$
\pSkip

$(\OST 2):$ Let $M := eU,$ $N := eV. $ On $U$ and $V$ we have the
given orderings $\leq_U,$ $\leq_V$, and on $M$ and $N$ we have the
natural orderings $\leq_M,$ $\leq_N$ as bipotent semirings. The
ordering~$\leq_U$ restricts on $M$ to $\leq_M.$ We have to verify
that $\leq_V$ restricts on $N$ to  $\leq_N.$

The map $\al : U \to V$ restricts to a semiring homomorphism $\gm:
M \to N$, which consequently is compatible with
 $\leq_M$ and  $\leq_N$. Let $x,y \in M.$ If $\al(x) <_V \al(y)$
 then $x < _U y,$ hence $x <_ M y$, hence $\gm(x) \leq _N \gm(y).$
 Thus
 $$ \al(x) \leq _V \al(y) \dss \Rightarrow \gm(x) \leq _N \gm(y).$$
Conversely, if $\gm(x) < _N \gm(y)$, then $x < _M y$, hence $x <_U
y$, hence $\al(x) \leq _V \al(y).$ Thus
 $$ \gm(x) \leq _N \gm(y) \dss \Rightarrow \al(x) \leq _V \al(y).$$
This proves that indeed the ordering  $\leq_V$ restricts to
$\leq_N$ on $N.$

\pSkip

 $(\OST 3):$ Applying $\al$ to $0 \leq 1 \leq e$ in $U$, we
obtain $0 \leq 1 \leq e$ in $V.$
\end{proof}

We are ready for the main result of this section, which roughly
states that, given a monotone transmission $\al : U \to V,$ the
canonical factors of $\al$ may be viewed as monotone transmissions
in a unique way. We will relay on three easy lemmas.

\begin{lem}\label{lem61.11}
Assume that $U , V, W$ are  $\OST$-monoids,  and $\al: U \to V,$
$\bt: V \to W$ are  transmissions. Assume further that $\al$  and
$\bt \circ \al$ monotone and $\al$ surjective. Then $\bt$ is
monotone.
\end{lem}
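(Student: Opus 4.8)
The plan is to exploit that the ordering on $U$ is \emph{total}, so that any two preimages under $\al$ of comparable elements of $V$ are themselves comparable in $U$, and then to play the monotonicity of $\al$ against that of $\bt \circ \al$. No serious obstacle is expected here; the only subtlety worth flagging is that we are \emph{not} given that $\al$ reflects the ordering, so we cannot simply lift an inequality in $V$ to one in $U$ — hence the case split below.

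Concretely, I would fix $p, q \in V$ with $p \leq q$ and, using surjectivity of $\al$, choose $x, y \in U$ with $\al(x) = p$ and $\al(y) = q$. Since $\leq$ is total on $U$, either $x \leq y$ or $y \leq x$. In the first case, monotonicity of $\bt \circ \al$ gives $\bt(p) = \bt(\al(x)) \leq \bt(\al(y)) = \bt(q)$, which is exactly what is wanted. In the second case, monotonicity of $\al$ gives $q = \al(y) \leq \al(x) = p$; combining this with the hypothesis $p \leq q$ and the antisymmetry of the total order on $V$ forces $p = q$, and then trivially $\bt(p) = \bt(q) \leq \bt(q)$.

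Since $p \leq q$ in $V$ was arbitrary, this shows that $\bt$ is monotone, completing the proof. Note that the argument uses only the total orderings together with the surjectivity of $\al$ and the definition of a monotone transmission; it does not invoke the semiring or $\OST$-structure beyond that, and it is the order-theoretic analogue of Proposition~\ref{prop1.9}.ii and Proposition~\ref{prop5.5}.ii.
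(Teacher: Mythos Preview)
Your proof is correct and takes essentially the same approach as the paper's. The paper phrases it via the contrapositive---observing that $\al(x) < \al(y)$ forces $x < y$ (since $x \geq y$ would give $\al(x) \geq \al(y)$ by monotonicity of $\al$), and then applying monotonicity of $\bt\al$---while you do an explicit case split on whether $x \leq y$ or $y \leq x$; these are the same idea organized slightly differently.
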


 \begin{proof} If $x,y \in U$ and $\al(x) < \al(y)$, then $x < y$,
 hence $\bt\al(x) \leq \bt\al(y).$ Thus   $\al(x) \leq
 \al(y)$ implies  $\bt(\al(x)) \leq \bt(\al(y))$.
\end{proof}

\begin{lem}\label{lem61.12}
Assume that  $\al: U \to V$ is a monotone transmission (between
$\OST$-monoids). Let $\mfA$ denote the ghost kernel of $\al,$
$\mfA = \mfA_\al$. Then for any $c \in eU$ the fiber $\mfA_c :=
\mfA \cap U_c$ is an upper set of the totally ordered set $U_c.$
\end{lem}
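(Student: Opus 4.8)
The plan is to take $x \in \mfA_c$ and an arbitrary $y \in U_c$ with $x \leq y$, and to show that $\al(y) \in eV$; together with $ey = c$ this gives $y \in \mfA_c$, which is exactly the assertion that $\mfA_c$ is an upper set of $U_c$. The whole argument will be a short squeeze, so the real work is just assembling the right inequalities.

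First I would record the two consequences of monotonicity of $\al$: from $x \leq y$ we obtain $\al(x) \leq \al(y)$, and from $y \leq ey$ — which holds by Lemma \ref{lem61.2}.a — we obtain $\al(y) \leq \al(ey)$. Next I would simplify $\al(ey)$. Since $x$ and $y$ both lie in $U_c$, we have $ey = c = ex$, so $\al(ey) = \al(ex)$; and because $\al$ is a transmission, $\al(ex) = \al(e_U)\al(x) = e_V\al(x)$. Finally, $x \in \mfA$ means $\al(x) \in eV$, and every element of $eV$ is fixed under multiplication by the idempotent $e_V$, so $e_V\al(x) = \al(x)$. Hence $\al(ey) = \al(x)$.

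Combining the chain $\al(x) \leq \al(y) \leq \al(ey) = \al(x)$ forces $\al(y) = \al(x) \in eV$, as desired. The only point that needs a little care — and the place where the hypothesis $y \in U_c$ is genuinely used, rather than merely $y \geq x$ — is the identification $\al(ey) = \al(x)$, which relies on $ey = ex$ and on $\al(x)$ already being a ghost. Beyond that I expect no obstacle.
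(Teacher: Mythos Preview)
Your proof is correct and follows essentially the same route as the paper's: both squeeze $\al(y)$ between $\al(x)$ and $\al(ey)=\al(c)$, then use $x\in\mfA$ to identify these two bounds. The paper writes the upper bound as $\al(c)$ rather than $\al(ey)$, but since $ey=c$ this is the same argument.
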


\begin{proof}
Assume that $x \in \mfA_c,$ $y \in U_c,$ and $x < y.$ Then $x < y
\leq c,$ hence  $\al(x) \leq  \al(y) \leq \al(c).$ Since $x$ lies
in the ghost kernel $\mfA$ of $\al$ we have  $$\al(x) \ds = e
\al(x) \ds =  \al(ex) \ds = \al(c).$$ It follows that $\al(y) =
\al(c) \in eV,$ hence $y \in \mfA,$ hence  $y \in \mfA_c.$
\end{proof}

\begin{lem}\label{lem61.13}

Assume that  $\al: U \to V$ is a monotone transmission with
trivial ghost kernel. Let $\gm = \al^\nu:M\to N$  denote the ghost
part of $\al.$ Then $U_c = \{ c\}$ for any $c \in M$ such that
there exists some $c_1 < c$ in $M$ with $\gm(c_1) = \gm(c).$
\end{lem}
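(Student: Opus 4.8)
The plan is to argue by contradiction, exploiting the fact that the stated hypotheses pin down the location of any tangible element of the fiber $U_c := \{x \in U \ds | ex = c\}$ so tightly inside $U$ that it is forced to be a ghost. First I would record that the only ghost lying in $U_c$ is $c$ itself: if $y \in eU$ and $ey = c$, then $y = ey = c$ since $e$ is idempotent. Hence $U_c = \{c\} \dcup \tT(U)_c$, and it suffices to prove $\tT(U)_c = \emptyset$. I would also note in passing that $c \neq 0$, since $c_1 < c$ with $c_1 \in M$ and $0$ is the least element of $M$; in particular $0 \notin U_c$ and $c$ is a genuine ghost.

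Next I would take an arbitrary $x \in \tT(U)_c$ and invoke the order results of this section. Since $c_1, c \in M$ with $c_1 < c$, Proposition \ref{prop61.3}.b gives $c_1 < x < c$ (the upper inequality $x < c = ex$ is also immediate from Lemma \ref{lem61.2}.b). Because $\al$ is monotone, applying $\al$ yields $\al(c_1) \leq \al(x) \leq \al(c)$. As $c_1$ and $c$ lie in $M = eU$, we have $\al(c_1) = \gm(c_1)$ and $\al(c) = \gm(c)$, and these coincide by hypothesis; therefore $\al(x) = \gm(c) \in N = eV$.

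Finally I would use the assumption that $\al$ has trivial ghost kernel, i.e. $\mfA_\al = M = eU$. From $\al(x) \in eV$ we conclude $x \in eU$, hence $x = ex = c$, contradicting the tangibility of $x$ (recall $c$ is a ghost). Thus $\tT(U)_c = \emptyset$ and $U_c = \{c\}$, as claimed.

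I do not anticipate any real obstacle: the argument is routine once Proposition \ref{prop61.3} and the definition of trivial ghost kernel are available. The one point worth a moment of care is the apparently special case $\gm(c) = 0$, but it requires no separate treatment — there $\al(x) = 0 \in eV$ as well, so the trivial-ghost-kernel step applies verbatim.
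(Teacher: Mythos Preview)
Your proof is correct and is essentially the same argument the paper has in mind: the paper's own proof just says ``Precisely this has been verified in the proof of Theorem~\ref{thm61.7},'' and that proof sandwiches the element between $c_1$ and $c$ via Proposition~\ref{prop61.3}, applies monotonicity of $\al$ to force $\al(x)\in eV$, and then (here) the trivial ghost kernel finishes. You have simply written this out in full.
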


\begin{proof}
Precisely this has  been verified in the proof of Theorem
\ref{thm61.7}.
\end{proof}

\begin{thm}\label{thm61.14}
Assume that $U , V$ are an $\OST$-monoids,  and $\al: U \to V$ is
a  surjective monotone transmission. Assume further that
$$\xymatrix{
    \al:U    \ar[r]_{\lm} & \brU  \ar[r]_{\bt} & W \ar[r]_{\mu} & \brW \ar[r]_{\rho} & V  \\
}$$ is a canonical factorization (cf. \S\ref{sec:2}) of the
transmission $\al$. Then the monoids $\brU, W, \brW$ can be
equipped with total orderings (in a unique way), such that they
become $\OST$-monoids and all factors $\lm, \bt, \mu, \rho$ are
monotone  transmissions.
\end{thm}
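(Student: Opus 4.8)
The plan is to equip each of $\brU$, $W$, $\brW$ with the ordering induced by the corresponding projection from $U$ (via Lemma/Definition \ref{deflem61.9}), using Theorem \ref{thm61.10} to certify that these induced orderings make the monoids $\OST$-monoids; the key point to check is convexity of the relevant fibers. Recall that in the canonical factorization $\lm = \pi_{E(U,\mfA)}$ with $\mfA = \mfA_\al$, then $\bt = \pi_{F(\brU,\gm)}$ (a strict ghost contraction), then $\mu = \pi_T$ with $T$ a tangible MFCE-relation, and $\rho$ is an isomorphism over $N$. Since $\rho$ is an isomorphism, once $\brW$ carries a suitable ordering we transport it along $\rho$; so the real work concerns $\brU$, $W$ and $\brW = W/T$.

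For $\lm: U \onto \brU$: the fibers of $\pi_{E(U,\mfA)}$ are either singletons $\{x\}$ (for $x \notin \satUa$) or the whole saturated ideal $\mfA$ sits over a single ghost point. More precisely, writing $\mfA = M \dcup S$ and identifying $\brU = (\tT(U)\sm S)\dcup M$ as in Convention \ref{convs3.2}.a, the fiber of $\pi_{E(U,\mfA)}$ over a point $c\in M$ is $\{y\in U : ey = c, \ y \in S \text{ or } y = c\}$ when $c\in e\mfA$, and is a singleton otherwise. Here I would invoke Lemma \ref{lem61.12}: since $\al$ is monotone with ghost kernel $\mfA$, for each $c$ the set $\mfA_c = \mfA\cap U_c$ is an \emph{upper} set of $U_c$; combined with the fact that $U_c$ is convex in $U$ (Proposition \ref{prop61.3}.a) and that $c = \sup U_c$, this gives that the fiber $\{y\in U : ey=c,\ y\in\mfA\}\cup\{c\}$ — equivalently $\mfA_c \cup \{c\}$ — is convex in $U$. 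The singleton fibers are trivially convex. So by Lemma/Definition \ref{deflem61.9} there is a unique ordering on $\brU$ making $\lm$ order preserving, and since the fibers of $\lm$ (as a transmission onto the supertropical monoid $\brU$) are exactly these convex sets, Theorem \ref{thm61.10} shows $\brU$ is an $\OST$-monoid and $\lm$ is monotone.

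Next, having made $\brU$ an $\OST$-monoid, the induced monotone transmission $\al$ factors as $\mu\bt\lm$; by Lemma \ref{lem61.11} (or the functoriality remarks) $\bral := \mu\bt : \brU \to V$ is monotone (as $\lm$ is surjective and monotone and $\al$ is monotone), and $\bral$ has trivial ghost kernel. Now I repeat the construction: the fibers of $\bt = \pi_{F(\brU,\gm)}: \brU \onto W$ over a tangible point of $W$ are singletons (identifying $\tT(W) = \tT(\brU)$), and the fiber over a point $d\in N$ is $\{x\in\brU : \gm(ex) = d\}$, i.e. the union of the $\brU$-fibers $\brU_c$ over all $c\in M$ with $\gm(c) = d$, together with $d$ itself. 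Since $\bral$ is monotone with trivial ghost kernel, Lemma \ref{lem61.13} tells us each such $\brU_c$ with $\gm(c) = d$ and $c$ not minimal in its $\gm$-fiber is a singleton $\{c\}$; and the set $\{c\in M : \gm(c) = d\}$ is convex in $M$ (as $\gm$ is order preserving), so the $\brU$-fiber over $d$ collapses to a convex interval. This is the main obstacle — one must check carefully that gluing the (mostly singleton) convex pieces $\brU_c$ over the convex index set $\gm^{-1}(d)$, together with the top point $d$, yields a genuinely convex subset of $\brU$; here Lemma \ref{lem61.13} is exactly what prevents any tangible element outside a minimal fiber from spoiling convexity. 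Granting this, Theorem \ref{thm61.10} makes $W$ an $\OST$-monoid with $\bt$ monotone, and Lemma \ref{lem61.11} then makes the residual $\mu\circ(\text{stuff})$ monotone. Finally for $\mu = \pi_T$ with $T$ a tangible MFCE-relation: the fibers of $\pi_T$ are the $T$-classes, each contained in a single tangible fiber $\tT(W)_c$, which is convex in $W$ (Proposition \ref{prop61.3}.a); since a monotone fiber-conserving surjection has its ghost part the identity and the composite $\mu$-to-$V$ is monotone, one checks $T$-classes are convex (this is automatic once we know $\mu$ is to be monotone, or one argues directly that the $T$-class of $a$ is an interval inside the convex set $\tT(W)_{ea}$), so again Theorem \ref{thm61.10} applies to give $\brW$ its ordering with $\mu$ monotone. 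Uniqueness of all three orderings is immediate: each of $\lm, \bt, \mu$ is surjective, so Lemma/Definition \ref{deflem61.9} forces the ordering on the target, and $\rho$ being an isomorphism forces the ordering on $V$ to match (which it does, since $V$ was already an $\OST$-monoid and $\al = \rho\mu\bt\lm$ is monotone). The one remaining check, that $\rho$ is monotone, follows from Lemma \ref{lem61.11} applied to $\mu\bt\lm$ (monotone, surjective) and $\al = \rho\circ(\mu\bt\lm)$ (monotone).
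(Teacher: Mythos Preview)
Your approach is essentially the paper's: check fiber convexity at each stage using Lemmas \ref{lem61.12} and \ref{lem61.13}, invoke Theorem \ref{thm61.10} to induce the $\OST$-structure, and propagate monotonicity via Lemma \ref{lem61.11}. The paper streamlines the last part by assuming without loss of generality that $\brW = V$ and $\rho = \id_V$ (absorbing the isomorphism); then once $W$ is an $\OST$-monoid with $\bt$ monotone, Lemma \ref{lem61.11} immediately gives $\mu: W \to V$ monotone, and no separate ordering on $\brW$ has to be manufactured.

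Your handling of the $\mu$ step has a small circularity: you say convexity of the $T$-classes ``is automatic once we know $\mu$ is to be monotone,'' but that is precisely what you are trying to arrange, and the alternative ``direct'' claim that a $T$-class is an interval inside $\tT(W)_{ea}$ is not justified. The clean argument (which you nearly state) is: after $W$ is ordered, the composite $\rho\mu: W \to V$ is monotone by Lemma \ref{lem61.11} applied to the surjective monotone $\bt\lm$ and to $\al$; since $\rho$ is a bijection, the fibers of $\rho\mu$ coincide with those of $\mu$, and any monotone map between totally ordered sets has convex fibers. Then Theorem \ref{thm61.10} equips $\brW$, and Lemma \ref{lem61.11} handles $\rho$. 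Also watch a notational slip: you write $\al = \mu\bt\lm$ at one point but $\al = \rho\circ(\mu\bt\lm)$ later; the latter is correct, so your $\bral: \brU \to V$ should be $\rho\mu\bt$, not $\mu\bt$.
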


\begin{proof}
a) Let $\gm := \al^\nu: M \to N$ denote the ghost part of the
transmission $\al$ and $\mfA$ denote the ghost kernel of $\al.$
Without loss of generality we may assume that
$$  \brU = U/ E(U,\mfA), \qquad \lm = \pi_{E(U,\mfA)}, \qquad W=
\brU/{F(\brU,\gm)}, $$ $$   \bt = \pi_{F(\brU,\gm)}, \qquad \brW =
V, \qquad \rho = \id_V.$$

For any $c\in M$ we have $\ilm(c) = \mfA_c,$ which by Lemma
\ref{lem61.12} is an upper set $U_c,$ hence is convex in $U_c.$
Since $U_c$ is convex in $U$, it follows that $\ilm(c)$ is convex
in $U.$

Invoking Lemma \ref{deflem61.9}, we equip the monoid $\brU$ with
the total ordering induced by $\lm,$ and then know by Theorem
\ref{thm61.10} that $\brU$ has become an $\OST$-monoid and $\lm$
has become a monotone transmission. By Lemma \ref{lem61.11} also $
\mu \circ \bt: \brU \to V$ is monotone. \pSkip

b) Replacing $U$ by $\brU,$ we are allowed to assume henceforth
that $\al: U \to V$ has trivial ghost kernel, and may focus on the
canonical factorization $\al = \mu \circ \bt$ with $ \bt =
\pi_{F(U,\gm)}$ and $\mu : W \to V$ a tangible fiber contraction.

We use the identifications in Convention \ref{convs3.2}.b to
handle $W = U/{F(U,\gm)}$ and $\bt = \pi_{F(U,\gm)}$. For any
$d\in N = eW$ the tangible fiber $\tT(W)_d$ is the union of all
fibers $\tT(U)_c$ with $c \in  \igm (d).$ Let $L(\gm)$ denote
those $c \in M$ such that $c \neq 0$ and $c$ is the smallest
element of $\igm(\gm(c))$. Lemma \ref{lem61.13} tells us that
$\tT(U)_c \neq \emptyset$ if $ c \in M\sm L(\gm).$ Thus we have
the following picture: If $d\in N$ then $\tT(W)_d = \tT(U)_c$ if
there exists $c\in L(\gm)$ with $\gm(c) =d,$ and this $c$ is then
unique. Otherwise $\tT(W)_d = \emptyset.$ \pSkip

c) Looking again at Convention \ref{convs3.2}.b we see that $\bt$
has the following fibers:  If $p \in \tT(W)_d$, $d = \gm(c)$ with
$c\in L(\gm)$, then $\ibt(p) = \{ p\}$ (using the identifications
in Convention \ref{convs3.2}.b).  If~$d\in N$, then $\ial(d)=
\igm(d).$ Recalling Proposition \ref{prop61.3}, we see that
$\igm(d)$ is convex in~$U.$ Thus all fibers of $\bt$ are convex in
$U$.

Invoking again Lemma \ref{deflem61.9} and Theorem \ref{thm61.10},
we equip $W$ with that total ordering induced by $\bt$, which
makes $W$ an $\OST$-monoid and $\bt$ a monotone transmission. By
Lemma \ref{lem61.11} we conclude that also $\mu$ is a monotone
transmission.
\end{proof}

\section{$\m$-supervaluations }\label{sec:6}
\begin{defn}\label{defn6.1}

Let $R$ be a semiring. An \textbf{\m-supervaluation on $R$} is a
map $\vrp: R \to U$  to a supertropical monoid which fulfills the
axioms $\SV1$-$\SV4$ required for a  supervaluation in
\cite[Definition 4.1]{IKR1}, there for $U$ a supertropical
semiring. To repeat,
 \begin{alignat*}{2}
&\SV1:\ &&\varphi(0)=0,\\
&\SV2:\ &&\varphi(1)=1,\\
&\SV3:\ &&\forall a,b\in R: \varphi(ab)=\varphi(a)\varphi(b),\\
&\SV4:\ &&\forall a,b\in R: e\varphi(a+b)\le
e(\varphi(a)+\varphi(b))\quad
[=\max(e\varphi(a),e\varphi(b))].\end{alignat*}
We then say that $\vrp$ \textbf{covers} the \m-valuation
$$ e \vrp : R \ds \to eU, \qquad a \mapsto e\vrp(a).$$
\end{defn}

Most notions developed for supervaluations in \cite{IKR1},
\cite[\S2]{IKR2} make sense for \m-supervaluations in the obvious
way and will be used here without further explanation, but we
repeat the definition of dominance.

\begin{defn}\label{defn6.2}
Assume that $\vrp: R \to U$ and $\psi: R \to V$ are
\m-supervaluations. We say that $\vrp$ \textbf{dominates} $\psi$
and write $\vrp \geq \psi$, if for all $a, b \in R$ the following
holds.
\begin{alignat*}{3}
&\D1.\quad && \varphi(a)=\varphi(b)& &\Rightarrow \ \psi(a)=\psi(b),\\
&\D2.\quad && e\varphi(a)\le e\varphi(b)&&\Rightarrow \  e\psi(a)\le e\psi(b),\\
&\D3.\quad && \quad\varphi(a)\in eU && \Rightarrow \  \psi(a)\in
eV.
\end{alignat*}
\end{defn}

If $\vrp: R \to U$ is an \m-supervaluation and $\al: U \to V$ is a
transmission, then clearly $\al \circ \vrp$ is an
\m-supervaluation dominated by $\vrp$.

Conversely, if $\vrp:R \to U$  is an \m-supervaluation which is
surjective (i.e., $U = \vrp(R) \cup e \vrp(R)$, cf.
\cite[Definition 4.3]{IKR1}), and $\psi:R \to V$ is an
\m-supervaluation dominated by~$\vrp$, there exists a (unique)
transmission $\al:  U \to V$ with $\psi = \al \circ \vrp$. This
can be proved in exactly the same way as done in \cite[\S5]{IKR1}
for supervaluations. If $\vrp$ and $\psi$ cover the  same
\m-valuation $v: R \to M$, then $\al$ is a fiber contraction over
$M$ (hence an \h-transmission). \pSkip

Let now $v: R \to M$ be a fixed \m-valuation. We call any
\m-supervaluation $\vrp$ with $e \vrp = v$ an \textbf{\m-cover} of
$v$. We call two \m-covers $\vrp: R \to U$ and $\psi: R \to V$
\textbf{equivalent}, if $\vrp \geq \psi$  and $\psi \geq \vrp$. If
$\vrp$ and $\psi$  are surjective this means that $\psi = \al
\circ \vrp$ with $\al : U \to V$ an isomorphism over $M$.

We further denote the equivalence class of an \m-cover $\vrp$ of
$v$ by $[\vrp]$, and the set of all these classes by
$\Cov_m(\vrp)$. This set is partially ordered by declaring
$$ [\vrp] \geq [\psi] \dss{\text{iff}}  \vrp \geq \psi.$$
\pSkip

We now assume for simplicity and without loss of generality
\emph{that $v$ is surjective}. Then every class $\z \in \Cov_m(v)$
can be represented by a surjective \m-supervaluation.

\begin{prop}\label{prop6.3} If $\vrp: R \to U$ is an \m-cover of
$v$, the subset
$$C(\vrp) := \{ [\psi] \in \Cov_m(v) \ds | \vrp \geq \psi \} $$
of the poset $\Cov_m(v)$ is a complete lattice. It has the top
element $[\vrp]$ and the bottom element~$[v]$.
\end{prop}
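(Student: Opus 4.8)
The plan is to show $C(\vrp)$ is a complete lattice by verifying that every subset has an infimum; since $C(\vrp)$ has a top element $[\vrp]$, the existence of all infima automatically yields all suprema (the supremum of a family $S$ being the infimum of the set of upper bounds of $S$, which is nonempty because it contains $[\vrp]$), and a complete lattice in particular has a top and a bottom. So the two named elements will drop out at the end.

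First I would identify the structure: without loss of generality represent $\vrp$ by a surjective \m-supervaluation $\vrp: R \to U$ covering $v$. By the remark preceding the proposition, every $[\psi] \in C(\vrp)$ is represented by some surjective \m-supervaluation $\psi: R\to V$ with $\psi = \al\circ\vrp$ for a unique transmission $\al: U\to V$, which is a fiber contraction over $M$ since $\psi$ and $\vrp$ both cover $v$. By Remark \ref{rem1.8} such a $\psi$ corresponds (up to the isomorphism identifying $V$ with $U/E(\al)$) to an MFCE-relation $E$ on $U$, and $\vrp\geq\psi$ exactly says this relation is $\supset$-comparable appropriately; moreover dominance $[\psi]\geq[\psi']$ translates into inclusion of the corresponding MFCE-relations (the coarser relation gives the dominated supervaluation). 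Thus $C(\vrp)$ is anti-isomorphic, as a poset, to the poset $\mathrm{MFCE}(U)$ of MFCE-relations on $U$ ordered by inclusion, with $[\vrp]$ corresponding to the diagonal and $[v]$ corresponding to the fiber-collapsing relation $E_{\nu}$ whose classes are the fibers $\nu_U^{-1}(c)$.

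Next I would show $\mathrm{MFCE}(U)$ is a complete lattice. It is a sub-poset of the lattice of all equivalence relations on the set $U$; one checks it is closed under arbitrary intersections. An arbitrary intersection of multiplicative relations is multiplicative (TE1 is preserved since it is a universally quantified implication), and an arbitrary intersection of fiber-conserving relations is fiber-conserving; so the intersection of any family of MFCE-relations is again an MFCE-relation, and hence is their infimum in $\mathrm{MFCE}(U)$. Since the full relation $U\times U$ is \emph{not} in general an MFCE-relation, one cannot take the top to be trivial, but we do not need that: $\mathrm{MFCE}(U)$ has bottom element the diagonal $\Delta_U$ (which is an MFCE-relation), and being closed under arbitrary intersections with a bottom element it is a complete lattice — the supremum of a family is the intersection of all MFCE-relations containing every member, this family being nonempty since it contains $E_\nu$ (the fibers of $\nu_U$ are conserved and multiplicativity of $E_\nu$ is exactly TM3 applied to $\nu_U$). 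Dualizing the anti-isomorphism, $C(\vrp)$ is a complete lattice.

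Finally, for the two named elements: $[\vrp]$ is the top of $C(\vrp)$ by the very definition of $C(\vrp)=\{[\psi]:\vrp\geq\psi\}$, since $\vrp\geq\vrp$ and $\vrp\geq\psi$ for all such $\psi$. For the bottom, the \m-valuation $v:R\to M$ is itself an \m-supervaluation (with $U=M$, the case $e=1$), it covers itself, and every \m-supervaluation dominates $v$ — indeed $e\psi = v$ forces D1, D2, D3 trivially with $\psi$ replaced by $v$ on the right — so $[v]\in C(\vrp)$ and $[v]\leq[\psi]$ for every $[\psi]\in\Cov_m(v)$, in particular for every element of $C(\vrp)$. Hence $[v]$ is the bottom element.

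The step I expect to be the main obstacle is making the poset anti-isomorphism $C(\vrp)\cong \mathrm{MFCE}(U)^{\mathrm{op}}$ fully precise — in particular checking that dominance between two \m-covers below $\vrp$ corresponds \emph{exactly} to inclusion of the associated MFCE-relations, and that equivalence of \m-covers corresponds to equality of relations (so that $[\psi]\mapsto E(\al_\psi)$ is well-defined on equivalence classes). This is the place where one must invoke carefully the characterization of dominance via transmissions and the bijective correspondence of Remark \ref{rem1.8}; once that dictionary is established, the lattice-theoretic content is the routine closure-under-intersections argument above.
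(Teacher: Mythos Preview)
Your proposal is correct and follows essentially the same approach as the paper: reduce to showing that the poset $\mathrm{MFCE}(U)$ of MFCE-relations on $U$ is a complete lattice via the order-reversing bijection $E \mapsto [\pi_E \circ \vrp]$. The paper is terser, citing \cite[\S7]{IKR1} for the lattice structure of $\mathrm{MFCE}(U)$ rather than writing out the closure-under-intersections argument you give, and it leaves the identification of the top and bottom elements implicit.
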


\begin{proof} We may assume that the \m-supervaluation $\vrp: R \to
U$ is surjective. Let $\MFC(U)$ denote the set of all
$\MFC$-relations on $U$. This set is partially ordered by
inclusion,
$$ E_1 \leq E_2  \dss{\text{iff}} E_1 \subset E_2.$$
\{We view the equivalence relations $E_i$ as subsets of $U \times
U$ in the usual way.\} We have a bijection
$$ \MFC(U) \ds{\tilde{\longrightarrow}} C(\vrp), \qquad E \mapsto [\pi_E \circ \vrp], $$
since every fiber contraction $\al$ over $M$ is of the form $\rho
\circ \pi_E$, with $E \in \MFC(U)$ uniquely determined by $\al$
and $\rho$ an isomorphism over $M$. Clearly the bijection reverses
the partial orders on $\MFC(U)$ and $C(\vrp)$. Now it can be
proved exactly as in \cite[\S7]{IKR1} for $U$ a supertropical
semiring, that the poset $\MFC(U)$ is a complete lattice. Thus
$C(\vrp)$ is a complete lattice.
\end{proof}

We  construct  a supertropical monoid $U$ which will be the target
of an \m-cover \\ $\vrp: R \to U$  dominating all other \m-covers.

Let $\mfq := \iv(0) = \supp(v)$. As a set we define  $U$ to be the
disjoint union of $R \sm \mfq$ and  $M$,
$$ U = (R \sm \mfq) \ds{\dot \cup}  M.$$
We introduce on $U$ the following multiplication: For $x,y \in U$
$$
 x \Udot y = y \Udot x = \left\{
\begin{array}{lll}
x \Rdot y  &  \text{if} & x,y, xy \in R \sm \mfq,  \\[1mm]
0  &  \text{if} & x,y \in R \sm \mfq, \ xy \in \mfq,   \\[1mm]
v(x) \Mdot y    &  \text{if} & x \in R \sm \mfq, \ y \in M,   \\[1mm]
x \Mdot y  &  \text{if} & x,y \in M.   \\
\end{array}
 \right.
$$
It is readily checked that $U$ with  this multiplication is a
monoid with unit element $1_U = 1_R$ and absorbing  idempotent
$0_U = 0 _M$. Moreover $e:= 1_M$ is an idempotent of $U$ such that
$M = e \cdot U $ and $M$ in its given multiplication is a
submonoid of $U$. Finally $0_M$ is the only element $x$ of $U$
with $0_M \cdot x = 0_M$. Thus, if we choose the given total
ordering on the submonoid $M$ of $U$, we have established on $U$
the structure of a supertropical monoid  (cf.~
Definition~\ref{defn1.1}). We denote this supertropical monoid now
by $\Uz(v)$.

\begin{thm}\label{thm6.4} $ $
\begin{enumerate} \eroman
    \item The map $\vrpzv:R \to \Uz(v)$ with
$$\vrpzv(a) = \left\{
\begin{array}{ll}
  a & \text{if } \ a \in R \sm \mfq, \\
  0_M & \text{if }\  a \in \mfq, \\
\end{array}
\right.
$$
     is a
    surjective \m-valuation covering $v$.

    \item $\vrpzv$ dominates every other \m-cover of $v$.
\end{enumerate}
\end{thm}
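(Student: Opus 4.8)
The plan is to verify all the assertions directly from the explicit descriptions of $\Uz(v)$ and $\vrpzv$; since $\Uz(v)$ has already been shown to carry the structure of a supertropical monoid, what remains is only the supervaluation axioms $\SV1$--$\SV4$, surjectivity, and the dominance conditions $\D1$--$\D3$. The first thing I would record is the identity $e\vrpzv(a) = v(a)$ for every $a \in R$: for $a \in R\sm\mfq$ this is $e\Udot a = v(a)\Mdot 1_M = v(a)$ by the clause of the multiplication rule for products of an element of $R\sm\mfq$ with an element of $M$, and for $a \in \mfq$ it is $e\Udot 0_M = 0_M = v(a)$. Everything else in part~(i) then follows quickly: $\SV1$ holds since $0\in\mfq$, $\SV2$ since $1\notin\mfq$ (as $v(1)=1\neq 0$), and $\SV4$ reduces, via $e\vrpzv = v$, to the inequality $v(a+b)\le v(a)+v(b)$, which holds because $v$ is an \m-valuation. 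For $\SV3$ I would run a short case distinction according to whether $a$ or $b$ lies in $\mfq = \iv(0)$, using that $\mfq$ is an ideal of $R$ (so $ab\in\mfq$ as soon as $a\in\mfq$ or $b\in\mfq$); in the remaining case $a,b\in R\sm\mfq$ the definition of $\Udot$ on $R\sm\mfq$ makes $\vrpzv$ multiplicative by inspection, distinguishing $ab\in\mfq$ from $ab\notin\mfq$. Finally $\vrpzv$ is surjective because $\vrpzv(R) = (R\sm\mfq)\cup \{ 0_M \}$ while $e\vrpzv(R) = v(R) = M$ (here we use that $v$ is surjective), so their union exhausts $U = (R\sm\mfq)\dcup M$.

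For part~(ii), let $\psi: R\to V$ be any \m-cover of $v$, so $e\psi = v$, and I check $\D1$--$\D3$. $\D2$ is immediate since $e\vrpzv = e\psi = v$. For $\D1$ and $\D3$ the decisive remark is that $\vrpzv$ restricted to $R\sm\mfq$ is literally the inclusion into the disjoint summand $R\sm\mfq$ of $U$; hence $\vrpzv(a) = \vrpzv(b)$ forces either $a=b$, or $a,b\in\mfq$ (the mixed case $a\in R\sm\mfq$, $b\in\mfq$ is excluded because $R\sm\mfq$ and $M$ are disjoint in $U$). In the second case $v(a) = v(b) = 0$, so $e\psi(a) = e\psi(b) = 0$, and the defining axiom of a supertropical monoid, $ex = 0 \Rightarrow x = 0$, yields $\psi(a) = \psi(b) = 0$; this gives $\D1$. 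Likewise $\vrpzv(a)\in eU$ occurs only for $a\in\mfq$, i.e. $v(a) = 0$, whence $\psi(a) = 0\in eV$, which is $\D3$.

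The only genuinely non-mechanical point is the disjoint-union bookkeeping: throughout the argument one must keep in mind that an element of $R\sm\mfq$ and an element of $M$ are never equal as elements of $U$, even if they might "coincide" inside $R$, and that the implication $ex = 0 \Rightarrow x = 0$ valid in every supertropical monoid is precisely what pins down the values of $\psi$ on $\mfq$. Beyond the case analysis in $\SV3$ I do not anticipate any real obstacle.
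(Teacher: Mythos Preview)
Your proof is correct. For part~(i) the paper simply says ``an easy verification,'' and your detailed case analysis is exactly what such a verification amounts to; the key identity $e\vrpzv(a)=v(a)$ that you single out is precisely the right organizing observation.

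For part~(ii) you take a slightly different route than the paper. You verify the dominance axioms $\D1$--$\D3$ directly from the definition, using the disjointness of $R\sm\mfq$ and $M$ inside $\Uz(v)$ together with the axiom $ex=0\Rightarrow x=0$. The paper instead constructs the transmission explicitly: it defines $\al:\Uz(v)\to V$ by $\al(a)=\psi(a)$ for $a\in R\sm\mfq$ and $\al(x)=x$ for $x\in M$, and checks that $\al$ is a transmission with $\al\circ\vrpzv=\psi$; dominance then follows from the general fact (noted just after Definition~\ref{defn6.2}) that $\vrp$ dominates $\al\circ\vrp$ for any transmission $\al$. Both arguments are short and elementary. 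The paper's approach yields the transmission $\al_{\psi,\vrpzv}$ explicitly, which fits the later use of such maps; your approach is self-contained and avoids having to verify the transmission axioms for $\al$.
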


\begin{proof} (i): An easy verification.

(ii): Let $\psi: R \to V $ be an \m-cover of $v$. We define a map
$\al: \Uz(v) \to V$ by $\al(a) = \psi(a)$ for  $a \in R\sm \mfq$
and $\al(x) = x$ for $x \in M$. It then can be verified in a
straightforward way  that $\al$ is a transmission and $\al \circ
\vrpzv = \psi$.
\end{proof}

\begin{cor}\label{cor6.5}
The poset $\Cov_m(v)$ is a complete lattice with top element
$[\vrpzv]$.
\end{cor}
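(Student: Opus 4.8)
The plan is to derive the statement immediately from \thmref{thm6.4} and \propref{prop6.3}, which together already contain everything needed. The approach is: identify the poset $\Cov_m(v)$ with the lattice $C(\vrpzv)$ of \propref{prop6.3}, and then simply quote that proposition.

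In detail, I would first record that, by \thmref{thm6.4}.i, $\vrpzv: R \to \Uz(v)$ is a surjective \m-cover of $v$, so that $[\vrpzv]$ is a genuine element of the poset $\Cov_m(v)$. Next, by \thmref{thm6.4}.ii, $\vrpzv$ dominates every \m-cover of $v$, i.e. $[\vrpzv] \geq \z$ for all $\z \in \Cov_m(v)$; hence $[\vrpzv]$ is the (necessarily unique) top element of $\Cov_m(v)$, and, in the notation of \propref{prop6.3}, we have the equality of posets
$$ \Cov_m(v) = C(\vrpzv) = \{ \z \in \Cov_m(v) \ds | \vrpzv \geq \z \}. $$
Finally I would apply \propref{prop6.3} with $\vrp := \vrpzv$: it tells us that $C(\vrpzv)$ is a complete lattice with top element $[\vrpzv]$ and bottom element $[v]$. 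Combining this with the identification $\Cov_m(v) = C(\vrpzv)$ finishes the proof.

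There is essentially no obstacle here: the corollary is a purely formal consequence of the two preceding results, and the only point deserving a sentence is the identification $\Cov_m(v) = C(\vrpzv)$, which is precisely the dominance assertion of \thmref{thm6.4}.ii together with the fact that $[\vrpzv] \in \Cov_m(v)$. All the substance --- the lattice structure, obtained via the order-reversing bijection between $\MFC(\Uz(v))$ and $C(\vrpzv)$ and the completeness of the lattice $\MFC(\Uz(v))$ --- has already been carried out in the proof of \propref{prop6.3}, so nothing new needs to be verified.
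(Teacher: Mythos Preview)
Your proof is correct and follows essentially the same approach as the paper: both identify $\Cov_m(v) = C(\vrpzv)$ via \thmref{thm6.4} and then invoke \propref{prop6.3}. The paper's proof is simply a two-line version of what you have spelled out in detail.
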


\begin{proof} By Theorem \ref{thm6.4} we have $\Cov_m(v) =
C(\vrpzv)$, and this is a complete lattice by Proposition
\ref{prop6.3}.
\end{proof}

$v: R \to M $ itself may be regarded as an \m-supervaluation (in
fact a supervaluation) covering $v$, and thus $[v]$ is the bottom
element of $\Cov_m(v).$ \pSkip

In \cite{IKR1} we had introduced the poset $\Cov(v)$ consisting of
the equivalence classes $[\vrp]$ of supervaluations $\vrp : R \to
U$ with $e \vrp = v,$ and we called these supervaluations $\vrp$
the \textbf{covers} of the \m-valuation $v$. Thus, a surjective
\m-cover $\vrp: R \to U $ of $v$ is a cover of $v$ iff $U$ is a
semiring. The set $\Cov(v)$  is a subposet of $\Cov_m(v)$.

\begin{prop}\label{prop6.6} If $\z \in \Cov(v)$, $\eta \in
\Cov_m(v)$ and $\z \geq \eta$, then $\eta \in \Cov(v)$.
\end{prop}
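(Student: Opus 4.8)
The plan is to reduce the statement to a direct application of Theorem~\ref{thm1.5}.ii. First I would choose surjective representatives: since $\z \in \Cov(v)$, pick a surjective supervaluation $\vrp : R \to U$ representing $\z$, so that $U$ is a supertropical semiring and $eU = M$; since $\eta \in \Cov_m(v)$, pick a surjective $\m$-supervaluation $\psi : R \to V$ representing $\eta$, so $eV = M$ as well. The hypothesis $\z \geq \eta$ says precisely $\vrp \geq \psi$, and as $\vrp$ is surjective, the discussion preceding Proposition~\ref{prop6.3} produces a (unique) transmission $\al : U \to V$ with $\psi = \al \circ \vrp$; since $\vrp$ and $\psi$ cover the same $\m$-valuation $v : R \to M$, this $\al$ is a fiber contraction over $M$, so in particular $\al^\nu = \id_M$.

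Next I would check that $\al$ is surjective. Since $\psi$ is surjective we have $V = \psi(R) \cup e\psi(R)$, and $\psi(R) = \al(\vrp(R)) \subset \al(U)$; as $\al(U)$ is a supertropical submonoid of $V$ and hence closed under multiplication by $e$, it contains $e\psi(R)$ too, so $\al(U) = V$.

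The conclusion is then immediate. The map $\al : U \to V$ is a surjective transmission between supertropical monoids which is the identity on $M = eU$, hence in particular injective on $(eU)\setminus\{0\}$; and $U$ is a semiring. By Theorem~\ref{thm1.5}.ii it follows that $V$ is a semiring. Therefore the surjective $\m$-cover $\psi : R \to V$ of $v$ is actually a supervaluation covering $v$, i.e. $\eta = [\psi] \in \Cov(v)$, as desired.

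I do not expect a genuine obstacle here: the only points needing a moment's attention are that the comparison map $\al$ exists as a transmission and that it is surjective, both of which follow from the surjectivity conventions adopted just before Proposition~\ref{prop6.3} together with the fact, recorded there, that dominance between $\m$-covers of the same $\m$-valuation is implemented by a fiber contraction over $M$. Everything else is a formal invocation of Theorem~\ref{thm1.5}.ii.
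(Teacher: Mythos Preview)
Your proposal is correct and follows essentially the same route as the paper: choose surjective representatives, obtain the fiber contraction $\al:U\to V$ over $M$, verify $\al$ is surjective, and conclude via Theorem~\ref{thm1.5}.ii that $V$ is a semiring. The paper's surjectivity check is phrased as the direct computation $\al(U)=\al\vrp(R)\cup e\al\vrp(R)=\psi(R)\cup e\psi(R)=V$, but this is the same argument as yours.
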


\begin{proof}
We choose surjective \m-valuations $\vrp: R \to U$, $\psi: R \to
V$ with $U$ a semiring and $\z = [\vrp]$, $\eta = [\psi]$. There
exists a fiber contraction $\al: U \onto V$ over $M$ with $\al
\circ \vrp = \psi$.  Since~$\vrp$ and $\psi$ are surjective, $U =
\vrp(R) \cup e \vrp(R)$ and $V = \psi(R) \cup e \psi (R)$. We
conclude that
$$ \al (U) = \al \vrp(R) \cup e \al \vrp(R) = \psi(R) \cup e \psi (R) = V.$$
Since $U$ is a semiring, it follows by Theorem \ref{thm5.7}, or
already by Theorem \ref{thm1.5}.ii, that $V$ is a semiring, hence
$\eta \in \Cov(v).$
\end{proof}

Recall from \S\ref{sec:4} that every supertropical monoid $U$
gives us a supertropical semiring \\ $\htU = U / E(U,S(U))$
together with an ideal compression $\sig_U := \pi_{E(U,S(U))}: U
\onto \htU$. Here $S(U)$ is the set of tangible NC-elements in $U$
(cf. Definition \ref{defn4.2}).

\begin{defn}\label{defn6.7} For every \m-supervaluation $\vrp: R \to
U$ we define a supervaluation $$\htvrp := \sig_U \circ \vrp : R
\to \htU.$$
\end{defn}

\begin{prop}\label{prop6.8}
Let $\vrp$ be an \m-cover of $v$.
\begin{enumerate} \eroman
    \item $\htvrp$ is a cover of $v$ and $\vrp \geq \htvrp$.
    \item If $\psi$ is a cover of $v$ with $\vrp \geq \psi$, then
    $\htvrp \geq \psi.$
    \item If $\psi$ is an \m-cover of $v$ with $\vrp \geq \psi$,
    then $\htvrp \geq \htpsi$.
\end{enumerate}
\end{prop}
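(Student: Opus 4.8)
The plan is to prove the three parts in order, using the universal property of $\sig_U$ from Theorem \ref{thm4.5}, the functoriality of transmissions, and the fact (established just before Proposition \ref{prop6.3}) that an \m-cover dominated by a surjective \m-cover covering the same \m-valuation is obtained by composing with a fiber contraction over $M$. For part (i), since $\htU = U/E(U,S(U))$ is a supertropical \emph{semiring} by Theorem \ref{thm4.5}.i, and $\sig_U$ is a transmission (in fact an ideal compression) over $M := eU$, the composite $\htvrp = \sig_U \circ \vrp$ is an \m-supervaluation whose target is a semiring; hence it is a genuine supervaluation, and it covers $v$ because $\sig_U$ is a fiber contraction over $M$ (so $e \htvrp = e\sig_U \circ \vrp = e\vrp = v$). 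Dominance $\vrp \geq \htvrp$ is automatic since $\htvrp = \sig_U \circ \vrp$ with $\sig_U$ a transmission (the remark after Definition \ref{defn6.2}).

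For part (ii), let $\psi: R \to V$ be a cover of $v$ with $\vrp \geq \psi$. Without loss of generality $\vrp: R \to U$ is surjective, so by the domination criterion there is a (unique) fiber contraction $\al: U \onto V$ over $M$ with $\al \circ \vrp = \psi$. Since $V$ is a supertropical semiring (as $\psi$ is a cover), Theorem \ref{thm4.5}.ii applied to the fiber contraction $\al$ over $M$ yields a fiber contraction $\bt: \htU \to V$ over $M$ with $\al = \bt \circ \sig_U$. Then $\bt \circ \htvrp = \bt \circ \sig_U \circ \vrp = \al \circ \vrp = \psi$, so $\htvrp \geq \psi$ (again by the remark after Definition \ref{defn6.2}).

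For part (iii), let $\psi: R \to V$ be an \m-cover of $v$ with $\vrp \geq \psi$. We may take $\vrp$ surjective, so there is a fiber contraction $\al: U \onto V$ over $M$ with $\al \circ \vrp = \psi$. Now $\htpsi = \sig_V \circ \psi = \sig_V \circ \al \circ \vrp$, and $\sig_V \circ \al : U \to \htV$ is a fiber contraction over $M$ (a composite of surjective transmissions over $M$) whose target $\htV$ is a supertropical semiring by Theorem \ref{thm4.5}.i. Hence Theorem \ref{thm4.5}.ii applied to $\sig_V \circ \al$ gives a fiber contraction $\bt: \htU \to \htV$ over $M$ with $\sig_V \circ \al = \bt \circ \sig_U$; therefore $\bt \circ \htvrp = \bt \circ \sig_U \circ \vrp = \sig_V \circ \al \circ \vrp = \sig_V \circ \psi = \htpsi$, proving $\htvrp \geq \htpsi$. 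The main point throughout is simply the recognition that every map arising is a fiber contraction over $M$ whose target is a semiring exactly when one needs to invoke the universal property of $\sig_U$; once that is observed, each step is a one-line diagram chase, and I do not anticipate a genuine obstacle — only the bookkeeping of reducing to the surjective case so that the transmission $\al$ exists.
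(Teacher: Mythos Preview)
Your proof is correct and, for parts (i) and (ii), matches the paper's argument exactly (the paper simply declares (i) ``obvious'' where you spell out the details). For part (iii) you take a slightly different route: you repeat the factorization argument of (ii) with $\sig_V \circ \al$ in place of $\al$, whereas the paper bootstraps directly from the first two parts, observing that $\vrp \geq \psi \geq \htpsi$ by (i) and that $\htpsi$ is a cover, so (ii) applied to $\htpsi$ gives $\htvrp \geq \htpsi$. Both arguments are equally short and rest on the same universal property of $\sig_U$; the paper's version just avoids unpacking the transmission again.
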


\begin{proof} i): This is obvious.

ii): We may assume that $\vrp$ is a surjective \m-supervaluation.
Then we have a fiber contraction $\al : U \to V$ over $M$ with
$\psi = \al \circ \vrp$. By Theorem \ref{thm4.5} we have a
factorization $\al = \bt \circ \sig_U$ with $\bt$ another fiber
contraction over $M$. We conclude that $\psi = \bt \sig_U \vrp =
\bt \htvrp$.

iii): $\vrp \geq \psi \geq \htpsi$ by i), hence $\htvrp \geq
\htpsi$ by ii).
\end{proof}

\begin{thm}\label{thm6.9}
As before assume that $v: R \to M$ is a surjective \m-valuation.
Let $$U(v) := (\Uz(v))^\wedge$$ and
$$ \vrp_v := (\vrpzv)^\wedge : R \to U(v).$$
The supervaluation $\vrp_v$ is an \textbf{initial cover of} $v$;
i.e., given any supervaluation $\psi: R \to V$ covering $v$, there
exists a (unique)
 semiring homomorphism $\al : U \to V$ over $M$ with $\psi = \al \circ \vrp_v.$\end{thm}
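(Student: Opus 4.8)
The plan is to deduce the theorem from the universal property of $\vrpzv$ in Theorem~\ref{thm6.4} together with the universal property of the reflection $\sig$ in Theorem~\ref{thm4.5}, using that by Definition~\ref{defn6.7} we have $\vrp_v = (\vrpzv)^\wedge = \sig_{\Uz(v)}\circ\vrpzv$ and $U(v) = (\Uz(v))^\wedge$.

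First I would record that $\vrp_v$ is genuinely a \emph{cover} of $v$, i.e. a supervaluation with $e\vrp_v = v$ whose target is a semiring: the supertropical monoid $U(v) = (\Uz(v))^\wedge$ is a semiring by Theorem~\ref{thm4.5}.i, and $(\vrpzv)^\wedge = \vrp_v$ is a cover of $v$ by Proposition~\ref{prop6.8}.i (applied to $\vrpzv$). Moreover $\vrp_v$ is \emph{surjective}: $\vrpzv$ is surjective by Theorem~\ref{thm6.4}.i and $\sig_{\Uz(v)}$ is a surjective transmission, so
$$U(v) = \sig_{\Uz(v)}\bigl(\vrpzv(R) \cup e\vrpzv(R)\bigr) = \vrp_v(R) \cup e\vrp_v(R).$$

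Next, given an arbitrary supervaluation $\psi: R \to V$ covering $v$ (so $V$ is a supertropical semiring), I would apply Theorem~\ref{thm6.4}.ii, by which $\vrpzv$ dominates every \m-cover of $v$, to get $\vrpzv \geq \psi$; since $\psi$ is a cover of $v$, Proposition~\ref{prop6.8}.ii then yields $\vrp_v = (\vrpzv)^\wedge \geq \psi$. As $\vrp_v$ is a surjective \m-supervaluation dominating $\psi$, and $\vrp_v$ and $\psi$ cover the same \m-valuation $v$, the dominance--transmission correspondence recalled after Definition~\ref{defn6.2} (the analogue for \m-supervaluations of \cite[\S5]{IKR1}) produces a unique transmission $\al: U(v) \to V$ with $\psi = \al\circ\vrp_v$, and this $\al$ is a fiber contraction over $M$.

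Finally I would upgrade $\al$ to a semiring homomorphism. Both $U(v)$ and $V$ are supertropical semirings, the first by Theorem~\ref{thm4.5}.i and the second since $\psi$ is a supervaluation. Because $\al$ is a fiber contraction, $\al^\nu = \id_M$ is injective on $M\sm\00$, so $\al$ is an \h-transmission by Example~\ref{exmp5.4}, hence a semiring homomorphism by Proposition~\ref{prop5.2}; it is over $M$ as already noted, and its uniqueness is immediate from the surjectivity of $\vrp_v$. There is no real computation here: the argument is an assembly of Theorems~\ref{thm6.4} and \ref{thm4.5}, Proposition~\ref{prop6.8} and Proposition~\ref{prop5.2}. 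The only points needing care are checking that $\vrp_v$ is surjective (so that dominance yields an actual transmission rather than merely the relation $\vrp_v \geq \psi$) and that the transmission it produces restricts to $\id_M$ on $M$; both hold because $\sig_{\Uz(v)}$ is a surjective fiber contraction over $M$.
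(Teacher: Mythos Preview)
Your proof is correct and follows essentially the same route as the paper: invoke Theorem~\ref{thm6.4} to get $\vrpzv \geq \psi$, use Proposition~\ref{prop6.8}.ii to pass to $\vrp_v \geq \psi$, extract a fiber contraction over $M$ from dominance, and then upgrade it to a semiring homomorphism via Proposition~\ref{prop5.2}. You are simply more explicit than the paper about two points it leaves implicit: the surjectivity of $\vrp_v$ (needed to turn dominance into an actual transmission) and the appeal to Example~\ref{exmp5.4} to see that a fiber contraction over $M$ is an \h-transmission before invoking Proposition~\ref{prop5.2}.
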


\begin{proof}
We may assume that $\psi$ is a surjective supervaluation covering
$v$. We know by Theorem~\ref{thm6.4} that $\vrpzv \geq \psi$, and
conclude by Proposition~\ref{prop6.8}.ii that  $\vrp_v =
(\vrpzv)^\wedge \geq \psi.$ Thus, there exists a fiber contraction
$\al: U(v) \to V$ over $M$ with $\psi = \al \circ \vrp$. Since
$U(v)$ and $V$ are semirings, $\al$  is a semiring homomorphism
over $M$ (cf. Proposition \ref{prop5.2}).
\end{proof}

\begin{cor}\label{cor6.10} The poset $\Cov(v)$  is a complete
lattice with top element $[\vrp_v]$.
\end{cor}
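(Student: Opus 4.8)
The plan is to recognize $\Cov(v)$ as the piece of the complete lattice $\Cov_m(v)$ cut out by the condition ``dominated by $\vrp_v$'', i.e. to show $\Cov(v) = C(\vrp_v)$ in the notation of Proposition~\ref{prop6.3}, and then simply quote that proposition.

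First I would record that $\vrp_v : R \to U(v)$ is indeed a cover of $v$: it covers $v$ since $\sig_{\Uz(v)}$ is a fiber contraction over $M$, so $e\vrp_v = e\sig_{\Uz(v)}\circ\vrpzv = e\vrpzv = v$, and $U(v) = (\Uz(v))^\wedge$ is a semiring by Theorem~\ref{thm4.5}.i. Hence $[\vrp_v] \in \Cov(v)$. Next, by Theorem~\ref{thm6.9} the class $[\vrp_v]$ dominates $[\psi]$ for every supervaluation $\psi$ covering $v$; thus $[\vrp_v]$ is the largest element of $\Cov(v)$, and in particular $\Cov(v) \subseteq C(\vrp_v)$.

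For the reverse inclusion, suppose $[\psi] \in C(\vrp_v)$, i.e. $\vrp_v \geq \psi$. Since $[\vrp_v] \in \Cov(v)$, Proposition~\ref{prop6.6} gives $[\psi] \in \Cov(v)$. Therefore $\Cov(v) = C(\vrp_v)$ as subposets of $\Cov_m(v)$, and by Proposition~\ref{prop6.3}, applied to the \m-cover $\vrp_v$, this poset is a complete lattice with top element $[\vrp_v]$ (and bottom element $[v]$), which is the assertion.

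I do not expect a genuine obstacle here; the one point deserving a sentence of care is that the complete-lattice operations furnished by Proposition~\ref{prop6.3} agree with arbitrary meets and joins formed inside the complete lattice $\Cov_m(v)$ of Corollary~\ref{cor6.5}. This is automatic: $C(\vrp_v)$ is a lower set in $\Cov_m(v)$ (Proposition~\ref{prop6.6}) with a greatest element, so any family in $C(\vrp_v)$ has its $\Cov_m(v)$-join bounded above by $[\vrp_v]$ and hence, by Proposition~\ref{prop6.6}, still in $C(\vrp_v)$, while its $\Cov_m(v)$-meet is trivially $\leq [\vrp_v]$ and so also lies in $C(\vrp_v)$. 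Consequently the completeness of $\Cov(v)$ could equally be deduced directly from Corollary~\ref{cor6.5} without invoking Proposition~\ref{prop6.3}.
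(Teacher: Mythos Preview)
Your argument is correct. The identification $\Cov(v)=C(\vrp_v)$ via Theorem~\ref{thm6.9} and Proposition~\ref{prop6.6}, followed by an appeal to Proposition~\ref{prop6.3}, yields exactly the statement.

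The paper itself does not spell out a proof for this corollary; its implicit justification is Theorem~\ref{thm6.9} for the top element together with the remark immediately afterwards that completeness of $\Cov(v)$ was already established in \cite[\S7]{IKR1}. Your route is slightly different in that it stays entirely within the present paper: you recover completeness from Proposition~\ref{prop6.3} (or, as you note in your final paragraph, from Corollary~\ref{cor6.5}) rather than citing the earlier paper. This has the small advantage of being self-contained. One minor comment: in your last paragraph the parenthetical ``(Proposition~\ref{prop6.6})'' is misplaced --- that $C(\vrp_v)$ is a lower set of $\Cov_m(v)$ is immediate from its definition as a principal down-set, not from Proposition~\ref{prop6.6}; the latter is what you used earlier to prove $C(\vrp_v)\subset\Cov(v)$.
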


We proved in \cite[\S7]{IKR1} that $\Cov(v)$ is a complete
lattice, but -- except  in the case that~$v$ is a valuation --
there we have only proved that the a top element $[\vrp_v]$ exists
(loc. cit, Proposition~7.5), without giving an explicit
description of $\vrp_v$. Starting from the formula $\vrp_v =
(\vrpzv)^\wedge$, this is now possible.

Let $U := \Uz(v)$. Then $\tT(U) = R \sm \mfq$  and $eU= M$. We
have
$$ \htU = U/ E (U, \tD(U)) = U / E(U, S(U))$$
with $S(U)$ the set of tangible NC-products in $U$ (cf.
Definition~\ref{defn4.2}) and $\tD (U) = S(U) \cup M$, which is an
ideal of $U$ (cf. \S\ref{sec:4}). We view $\htU$ as a subset of
$U$, as indicated in Convention~\ref{convs3.2}.a. Thus $e \htU =
M$ and $\tT (\htU) = \tT(U) \sm S(U)$. Recalling the description
of the supertropical monoid $\Uz(v)$ from above, we see that
$S(U)$ is the following subset $Y(v)$ of $R \sm \mfq$:
$$ Y(v) := \{ ab \ds |  a,b \in R, \exists a' \in R \text{ with } v(a)  < v(b) , v(a'b) = v(ab) \neq 0 \}.$$
Thus $\tT(\htU) = R \sm \mfq'$ with $\mfq' := \mfq \cup Y(v)$.
Clearly $R \cdot Y(v) \subset \mfq \cup Y(v)$, and thus $\mfq'$ is
an ideal of $R$.

Looking again at Convention \ref{convs3.2}.a we obtain a
completely explicit description of $U(v)$ and~$\vrp_v$ as follows.
\begin{schol}\label{schl6.11}
 Assume that $v: R \to M$ is a surjective \m-valuation
with  support $\mfq = \iv(0)$. Let $\mfq' := \mfq \cup Y(v)$ with
the set $Y(v) \subset R \sm \mfq$ given above. Then $ U(v)$ is the
subset $(R \sm \mfq') \cup M$ of  $\Uz(v) = (R \sm \mfq) \cup M$,
with the following new product $\odot$: If $x,y \in U(v)$, then
$$ x \odot y = \left\{
\begin{array}{ll}
  xy & \text{if }  x,y, xy \in R \sm \mfq',\\
exy & \text{otherwise},\\
\end{array}
\right. $$ the products on the right hand side taken in $\Uz(v)$.
\{Here $e = e_{\Uz(v)} = e_{U(v)}$.\} The initial covering
$\vrp_v: R \to U(v)$ is given by
$$ \vrp_v(a) = \left\{
\begin{array}{ll}
  a & \text{if }  a \in R \sm \mfq',\\
v(a) & \text{if }  a\in \mfq'.\\
\end{array}
\right. $$
\end{schol}\label{schol6.11}
\begin{thm}\label{thm6.12}
As before assume that $v: R \to M$ is a surjective \m-valuation.
\begin{enumerate} \eroman
    \item If $\vrp$ is any supervaluation covering $v$, then
    $\vrp(a)$ is ghost for every $a \in Y(v)$.
    \item If $Y(v) = \emptyset$, then every surjective
    \m-supervaluation covering $v$ is a supervaluation. In short
    $\Cov_m (v) = \Cov(v)$.

\end{enumerate}
\end{thm}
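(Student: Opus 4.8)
The plan is to obtain both statements as quick consequences of the explicit description of the initial cover $\vrp_v$ in Scholium \ref{schl6.11}, together with the results of \S\ref{sec:4} and \S\ref{sec:6} proved above; no direct manipulation of the condition $\Dis$ is needed.

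For part (i), the point is that the initial cover $\vrp_v$ already sends $Y(v)$ into the ghost ideal, and this propagates to every cover via the universal property of $\vrp_v$. In detail: by Scholium \ref{schl6.11}, for $a \in Y(v) \subseteq \mfq'$ we have $\vrp_v(a) = v(a) \in M = eU(v)$, so $\vrp_v(a)$ is ghost. Given any supervaluation $\vrp : R \to V$ covering $v$, Theorem \ref{thm6.9} yields a semiring homomorphism $\al : U(v) \to V$ over $M$ with $\vrp = \al \circ \vrp_v$; since $\al$ is in particular a transmission it maps the ghost ideal $eU(v)$ into $eV$, whence $\vrp(a) = \al(\vrp_v(a)) \in eV$ is ghost for every $a \in Y(v)$. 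One could instead try to run this directly inside $V$ using the condition $\Dis$ from Theorem \ref{thm1.2}, but routing through $\vrp_v$ is cleaner and sidesteps any fuss about the precise shape of $Y(v)$.

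For part (ii), the plan is: once $Y(v) = \emptyset$, the supertropical monoid $\Uz(v)$ is itself a semiring, so $\vrpzv$ is already a supervaluation sitting at the top of $\Cov_m(v)$, and Proposition \ref{prop6.6} then forces every class of $\Cov_m(v)$ to lie in $\Cov(v)$. Concretely, by the description of $S(\Uz(v))$ preceding Scholium \ref{schl6.11} we have $S(\Uz(v)) = Y(v) = \emptyset$, and by Theorem \ref{thm1.2} (cf. the remark following Definition \ref{defn4.2}) this says that $\Uz(v)$ is a supertropical semiring. Hence the surjective \m-supervaluation $\vrpzv : R \to \Uz(v)$ covering $v$ (Theorem \ref{thm6.4}) is in fact a supervaluation, so $[\vrpzv] \in \Cov(v)$. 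By Corollary \ref{cor6.5}, $[\vrpzv]$ is the top element of $\Cov_m(v)$, so every $\eta \in \Cov_m(v)$ satisfies $\eta \leq [\vrpzv]$, and Proposition \ref{prop6.6} gives $\eta \in \Cov(v)$. Thus $\Cov_m(v) = \Cov(v)$, which is precisely the claim that every surjective \m-supervaluation covering $v$ is a supervaluation. Alternatively one can factor a given such $\psi$ as $\psi = \al \circ \vrpzv$ with $\al$ a surjective fiber contraction over $M$ and apply Theorem \ref{thm1.5}.ii to see that the target of $\psi$ is a semiring.

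Neither part poses a real obstacle once \S\ref{sec:4}--\S\ref{sec:6} are available; the only care needed is the bookkeeping — for part (ii), the chain of identifications showing that $Y(v) = \emptyset$ is equivalent to $S(\Uz(v)) = \emptyset$ and hence to $\Uz(v)$ being a semiring; and for part (i), the observation that $\vrp_v$ sends the ideal $\mfq' \supseteq Y(v)$ into the ghost ideal, so that the universal property carries this over to an arbitrary cover.
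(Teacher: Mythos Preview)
Your proof is correct and follows essentially the same approach as the paper. For part (i) the paper invokes dominance of $\vrp_v$ over $\vrp$ directly (via axiom $\D3$) rather than explicitly naming the transmission $\al$ from Theorem~\ref{thm6.9}, and for part (ii) the paper reads off $\vrpzv = \vrp_v$ from Scholium~\ref{schl6.11} rather than passing through $S(\Uz(v)) = Y(v) = \emptyset$; these are the same arguments in slightly different packaging.
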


\begin{proof} i): We read off from Scholium \ref{schl6.11} that
$\vrp_v(a) = v(a) \in M$ for any $a \in Y(v)$. Since~$\vrp_v$
dominates $\vrp$, also $\vrp(a)= v(a) \in M$. \pSkip

ii): If $Y(v)= \emptyset$, it follows from Scholium \ref{schl6.11}
that $\vrpzv = \vrp_v$. Thus $[\vrpzv] \in \Cov(v)$. If $\vrp$ is
any \m-supervaluation covering $v$, then $\vrpzv \geq \vrp$, hence
$[\vrp] \in \Cov(v)$ by Proposition \ref{prop6.6}.
\end{proof}

In particular $Y(v)$ is empty if $v$ is a valuation, since this
means that the bipotent semiring~$M$ is cancellative. Then we fall
back on the explicit description of the initial cover  $\vrp_v$ in
\cite{IKR1} which in present notation says that $\vrp_v  = \vrpzv$
(loc. cit, Example 4.5). \pSkip

In large parts of \cite{IKR1} and whole \cite[\S2]{IKR2}, where we
studied  coverings of valuations, it was important that we have
tangible supervaluations at our disposal. There remains the
difficult task to develop a similar theory for coverings of
\m-valuations, which are not valuations. We leave this to the
future. But we mention that there exist many natural and beautiful
\m-valuations which are not valuations, as is already clear from
\cite{HV} and \cite{Z}. More on this can be found in a recent
paper \cite{IKR3}.

\section{Lifting ghosts to tangibles}\label{sec:7}

\begin{defn}\label{defn7.1}
We call a supertropical monoid $U$ \textbf{unfolded}, if the set
$\tT(U)_0 := \tT(U) \cup \{0 \}$ is closed under multiplication.
\end{defn}

If $U$ is unfolded, then $N:= \tT(U)_0$ is a monoid under
multiplication with absorbing  element~$0$. Further $M := eU$ is a
totally ordered monoid with absorbing  element $0$, and the
restriction
$$ \rho := \nu_U |U : N \ds \to M$$
is a monoid homomorphism with $\irho (0) = \{ 0 \}$. Observing
also that $e_U = 1_M = \rho(1_N)$, we see that the supertropical
monoid $U$ is  completely determined by the triple $(N,M, \rho)$.
This leads to a way to construct all unfolded supertropical
monoids up to isomorphism.
\begin{construction} \label{constr7.2} Assume that we are given a
totally ordered monoid $M$ with absorbing element $0_M \leq x$ for
all $x \in M$, i.e., a bipotent semiring $M$, further an (always
commutative) monoid $N$ with  absorbing element  $0_N$, and a
multiplicative map $ \rho : N \to M$ with $\rho (1_N) =
\rho(1_M)$, $\irho(0_M) = \{ 0 _N\} $. Then we define an unfolded
supertropical monoid $U$ as follows: As a set $U$ is the disjoint
union of $M \sm \{ 0_M\}$, $N \sm \{ 0_N\}$, and a new element
$0$. We identify $0_M = 0_N = 0$, and then write
$$U = M \cup N, \quad \text{with } M \cap N = \{ 0 \}.$$

The multiplication on $U$ is given by the rules, in obvious
notation,
$$  x \cdot y = \left\{
\begin{array}{lll}
  x \Ndot y & \text{if} &  x,y \in N, \\[1mm]
  \rho(x) \Mdot y & \text{if} &  x \in N, y \in M, \\[1mm]
  x \Mdot \rho(y) & \text{if} &  x \in M, y \in N, \\[1mm]
  x \Mdot y & \text{if} &  x,y \in M.  \\
\end{array}
\right.
$$
It is easy to verify that $(U, \cdot \; )$ is a (commutative)
monoid with $1_U = 1_N$ and absorbing element $0$. Let $e := 1_M$.
Then $eU = M$ and $\rho(x) = ex$ for $x \in M $,  further   $ex =
0$ iff $x=0$ for any $x \in U$, since $\irho(0) = \{ 0\}$. Thus
$(U, \cdot \; , e)$ ,together with the given ordering on $M = eU$,
is a supertropical monoid. It clearly is unfolded. We denote this
supertropical monoid $U$ by $\STR(N,M, \rho)$.
\end{construction}

This construction generalizes the construction of supertropical
domains \cite{IKR1} (loc. cit. Construction 3.16). There we
assumed that $N \sm \{ 0 \}$ and $M \sm \{ 0 \}$ are closed under
multiplication, and that the monoid $M \sm\{ 0 \}$ is
cancellative, and we obtained all supertropical predomains up to
isomorphism. Dropping here just the cancellation hypothesis would
give us a class of supertropical monoids not broad enough for our
work below.

The present notation $\STR(N,M, \rho)$ differs slightly from the
notation $\STR(\tT, \tG, v)$ in \cite[Construction 3.16]{IKR1}.
Regarding the ambient context this should not cause confusion.

\pSkip

We add a description of the transmissions  between two unfolded
supertropical monoids.

\begin{prop}\label{prop7.3} Assume that $U' = \STR(N', M', \rho')$
and $U = \STR(N, M, \rho)$ are unfolded supertropical monoids.
\begin{enumerate} \eroman
    \item If $\lm: N' \to N$ is a monoid homomorphism with $\lm(0) =
    0$, and $\mu: M' \to M$ is a semiring homomorphism, and if $\rho' \lm = \mu
    \rho$, then the well-defined map
    $$ \STR(\lm,\mu) : U' = N' \cup M' \dss \to  U = N \cup M,$$
    which sends $x' \in N'$ to $\lm(x')$ and $y' \in M'$ to
    $\mu(y')$,
    is a tangible transmission (cf. Definition~\ref{defn2.3}).

    \item In this way we obtain all tangible transmissions from
    $U'$  to $U$.
\end{enumerate}
\end{prop}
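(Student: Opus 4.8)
The plan is to verify both parts by elementary checking against the definitions, using only the explicit multiplication rule of Construction~\ref{constr7.2} together with the fact that a homomorphism of bipotent semirings is automatically order preserving.

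For part~(i) I would set $\al := \STR(\lm,\mu)$ and run through TM1--TM5 of Definition~\ref{defn1.3} plus the tangibility condition of Definition~\ref{defn2.3}. Axioms TM1, TM2, TM4 are immediate: $\lm(0_{N'}) = 0_N$ and $\mu(0_{M'}) = 0_M$ (a semiring homomorphism sends $0$ to $0$), so $\al(0)=0$; $\al(1_{U'}) = \lm(1_{N'}) = 1_N = 1_U$; and $\al(e_{U'}) = \mu(1_{M'}) = 1_M = e_U$. For TM5, if $x,y \in M' = e_{U'}U'$ with $x\le y$, then $x+y = y$ in the bipotent semiring $M'$, hence $\mu(x)+\mu(y)=\mu(y)$, i.e. $\mu(x)\le\mu(y)$. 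The substantive step is TM3, which I would split into the four cases of Construction~\ref{constr7.2}. If $x',y'\in N'$ then $x'\cdot y' = x'\Ndot y'\in N'$ and $\al(x'y') = \lm(x'\Ndot y') = \lm(x')\Ndot\lm(y') = \al(x')\al(y')$, using that $\lm$ is a monoid homomorphism and that the product in $U$ of two elements of $N$ is their $\Ndot$-product; if $x',y'\in M'$ the same works with $\mu$ in place of $\lm$. In the mixed case $x'\in N'$, $y'\in M'$ (and symmetrically) one computes $\al(x'y') = \mu(\rho'(x')\Mdot y') = \mu(\rho'(x'))\Mdot\mu(y')$ while $\al(x')\al(y') = \lm(x')\cdot\mu(y') = \rho(\lm(x'))\Mdot\mu(y')$, and these agree by the compatibility hypothesis $\rho\lm = \mu\rho'$ evaluated at $x'$. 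Finally, for $x'\in\tT(U') = N'\sm\00$ we have $\al(x') = \lm(x')\in N = \tT(U)\cup\00$, which is exactly tangibility.

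For part~(ii), let $\al:U'\to U$ be any tangible transmission. I would put $\mu := \al^\nu = \al|_{M'}:M'\to M$, which is a semiring homomorphism by the definition of transmission, and define $\lm:N'\to N$ by $\lm(x') := \al(x')$. This is well defined with values in $N = \tT(U)_0$ precisely because $\al$ is tangible (and $\al(0)=0$). Then $\lm(1_{N'}) = \al(1_{U'}) = 1_N$, and since $N'$ is closed under multiplication and the product of two elements of $N$ in $U$ is again their $\Ndot$-product, multiplicativity of $\al$ gives $\lm(x'\Ndot y') = \al(x'y') = \al(x')\al(y') = \lm(x')\Ndot\lm(y')$, so $\lm$ is a monoid homomorphism with $\lm(0)=0$. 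The identity $\al^\nu\circ\nu_{U'} = \nu_U\circ\al$ yields the compatibility: for $x'\in N'$, $\rho(\lm(x')) = \nu_U(\al(x')) = \al^\nu(\nu_{U'}(x')) = \al(\rho'(x')) = \mu(\rho'(x'))$, where we used $\nu_{U'}(x') = e_{U'}x' = \rho'(x')$ for $x'\in N'$. Thus $(\lm,\mu)$ satisfies the hypotheses of part~(i), and by construction $\STR(\lm,\mu)$ agrees with $\al$ on $N'$ and on $M'$; since $U' = N'\cup M'$ we conclude $\al = \STR(\lm,\mu)$.

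I expect no genuine obstacle beyond routine bookkeeping against the definitions. The two points requiring care are that $0$ lies in both $M'$ and $N'$, so the two clauses defining $\al$ (resp.\ $\lm$) must be checked to be consistent there, and that the map $\lm$ constructed in part~(ii) really takes values in $N$ --- this is the one place where tangibility of $\al$ is indispensable, and without that hypothesis the statement would be false.
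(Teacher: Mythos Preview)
Your proof is correct and takes the same approach as the paper, which records only ``(i): A straightforward check. (ii): Obvious.'' You have supplied exactly the routine verification the authors omit, including the correct interpretation of the compatibility hypothesis as $\rho\circ\lm = \mu\circ\rho'$ (the printed $\rho'\lm = \mu\rho$ does not type-check and is evidently a slip).
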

\begin{proof} (i): A straightforward check.

(ii): Obvious.
\end{proof}

We mention in passing, that given an \m-valuation $v: R \to M$
with support $\iv(0) = \mfq$, the supertropical semiring $U^0(v)$
occurring in Theorem \ref{thm6.4} may be viewed as an instance of
Construction \ref{constr7.2}, as follows.

\begin{example}\label{exm7.4} Let $E$ denote the equivalence
relation on $R$ with equivalence classes $[0]_E = \mfq$ and $[x]_E
= \{ x \} $ for $x \in R\sm \mfq$. It is multiplicative, hence
gives us a monoid $R / E$ with absorbing element $[0]_E = 0$,
which we identify with the subset $(R \sm \mfq) \cup \{ 0 \}$ in
the obvious way.  The map $v: R\to M$ induces a monoid
homomorphism $\brv: R/E \to M$ with values $\brv(x) = v(x)$ for $x
\in R \sm \mfq$, $\brv(0) = 0 $. We have $\brv^{-1} (0) = \{ 0 \}$
and
$$U^0(v) = \STR(R / E, M , \brv).$$ \endbox
\end{example}

We now look for ways to ``unfold'' an arbitrary supertropical
monoid $U$. By this we roughly mean a fiber contraction $\tau:
\tlU \to U$ with $\tlU$ an unfolded supertropical monoid and
fibers $\itau(x)$, $x\in U$ as small as possible. More precisely
we decree

\begin{defn}\label{defn7.4}
Let $M := eU$, and let $N$ be a submonoid of $(U, \cdot \;)$ which
contains the set $\tT(U)_0$. An \textbf{unfolding of $U$ along
$N$} is a fiber contraction $\tau: \tlU \to U$ over $M$ (in
particular $e \tlU = M$), such that
$$  \itau(x)  = \left\{
\begin{array}{lll}
  \{x, \tlx \} & \text{if} &  x \in M \cap N, \\[1mm]
  \{ \tlx \} & \text{if} &  x \in N \sm M, \\[1mm]
    \{x \} & \text{if} &  x \in M \sm N, \\
\end{array}
\right.
$$
with $\tlx \in \tT(U)_0$. For any $x \in N$  we call $\tlx$ the
\textbf{tangible lift} of $x$ (with respect to $N$).
\end{defn}

Notice that this forces $\tau(\tT(\tlU)_0)= N$, and that moreover
for any $x \in N$ the tangible fiber~$\tlx$ is the unique element
of $\tT(\tlU)_0$ with $\tau(\tlx) = x$, hence $\tT(\tlU)_0 = \{
\tlx \ds | x\in N \}$.

Thus, if $\tau:\tlU \to U$ is an unfolding along  $N$, then the
map $\tlx \mapsto x$ from $\tT(\tlU)_0$ to $N$ obtained from
$\tau$ by restriction is a monoid isomorphism, and $\tau$ itself
is an ideal compression  with ghost kernel
 ${(M \cap N)}^{\sim} \cup M$, where ${(M
\cap N) }^{\sim}:= \{ \tlx \ds | x \in M \cap N \}$.

\begin{thm}\label{thm7.5} $ $
\begin{enumerate} \eroman
    \item Given a pair $(U,N)$ consisting of a supertropical
    monoid $U$ and a multiplicative submonoid $N \supset
    \tT(U)_0$, there exists an unfolding $\tau: \tlU \to U$ of $U$
    along $N$.

    \item  If $\tau': \tlU' \to U$ is a second unfolding of $U$
    along $N$, then there exists a unique isomorphism of
    supertropical monoids $\al : \tlU \ds \iso \tlU'$ with $\tau' \circ \al =
    \tau$.
\end{enumerate}

\end{thm}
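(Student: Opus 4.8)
The plan is to realize the unfolding explicitly via Construction \ref{constr7.2} and then check that the resulting map is an unfolding, which handles existence; uniqueness will then follow from a direct bookkeeping argument using the prescribed fibers. First I would set $M := eU$ and, given the multiplicative submonoid $N \supset \tT(U)_0$, restrict the ghost map $\nu_U$ to $N$ to obtain a multiplicative map $\rho := \nu_U|_N : N \to M$. Since $ex = 0 \Rightarrow x = 0$ in a supertropical monoid (Definition \ref{defn1.1}) and $0 \in N$, we have $\irho(0_M) = \{0_N\}$, and $\rho(1_N) = e = 1_M$ because $1_N = 1_U$. Thus the triple $(N, M, \rho)$ satisfies the hypotheses of Construction \ref{constr7.2}, and we may set $\tlU := \STR(N, M, \rho)$, an unfolded supertropical monoid with $e\tlU = M$. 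As a set $\tlU = N \cup M$ with $N \cap M = \{0\}$; write $\tlx$ for the copy of $x \in N$ sitting in the $N$-component of $\tlU$.

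Next I would define $\tau : \tlU \to U$ by $\tau(\tlx) = x$ for $x \in N$ and $\tau(y) = y$ for $y \in M$. The multiplication rules in Construction \ref{constr7.2} are designed so that this is a monoid homomorphism: a product of two $N$-elements is sent to the corresponding product in $U$ (using that $N$ is a submonoid of $U$), a product $\tlx \cdot y$ with $x \in N$, $y \in M$ equals $\rho(x)\Mdot y = (ex)y = xy$ in $U$ since $ey = y$, and so on. One checks $\tau(0) = 0$, $\tau(1_{\tlU}) = \tau(1_N) = 1_U$, $\tau(e_{\tlU}) = \tau(1_M) = e_U$, and $\tau$ restricts to the identity on $M$, so $\tau$ is a transmission which is a fiber contraction over $M$ (indeed $\tau^\nu = \id_M$). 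The fiber description in Definition \ref{defn7.4} is then read off directly: for $x \in M \cap N$ both the $N$-copy $\tlx$ and the $M$-copy $x$ map to $x$, so $\itau(x) = \{x, \tlx\}$; for $x \in N \sm M$ only $\tlx$ maps to $x$; for $x \in M \sm N$ only $x$ itself maps to $x$. Since $\tT(\tlU)_0 = \{\tlx \mid x \in N\}$ by construction of $\STR$, this is exactly an unfolding of $U$ along $N$, proving (i).

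For (ii), suppose $\tau' : \tlU' \to U$ is another unfolding along $N$. The defining property gives, for each $x \in N$, a unique element of $\tT(\tlU')_0$ over $x$, which I denote $\tlx'$; and $\tau'$ maps $M \subset \tlU'$ identically to $M \subset U$, so the remaining (non-tangible, non-$0$) elements of $\tlU'$ are exactly the elements $y \in M$ with the ghost-copy interpretation. Define $\al : \tlU \to \tlU'$ by $\al(\tlx) = \tlx'$ for $x \in N$ and $\al(y) = y$ for $y \in M$; this is forced by the requirement $\tau' \circ \al = \tau$ together with the fact that $\al$ must respect tangibility (a fiber contraction between unfoldings over $M$ sends $\tT(\tlU)_0$ onto $\tT(\tlU')_0$, since both equal the preimages of $N$ of tangible-or-zero type). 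One then verifies that $\al$ is a homomorphism of monoids, preserves $0$ and $e$, and restricts to $\id_M$ on the ghost ideal, hence is a transmission; its inverse $\tlx' \mapsto \tlx$, $y \mapsto y$ is constructed the same way, so $\al$ is an isomorphism of supertropical monoids. Uniqueness is immediate since the values of $\al$ on $\tT(\tlU)_0$ and on $M$ — which together exhaust $\tlU$ — are forced by $\tau' \circ \al = \tau$ and the uniqueness of tangible lifts in $\tlU'$.

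The routine calculations — verifying associativity and the transmission axioms for $\tau$ and $\al$ — are straightforward case checks over the four-way split of the multiplication in Construction \ref{constr7.2}; I would not write them out in full. The only point demanding a little care, and thus the main (minor) obstacle, is confirming that in a second unfolding $\tau'$ the map $\al$ is well-defined and multiplicative precisely because $x \mapsto \tlx'$ is a monoid isomorphism $N \iso \tT(\tlU')_0$ and $\rho$ is the same for both unfoldings (both recover $\rho = \nu_U|_N$ via $\tau'^\nu = \id_M$); this pins down the multiplication on $\tlU'$ to agree with that of $\STR(N,M,\rho)$ under $\al$, so there is genuinely only one unfolding up to canonical isomorphism.
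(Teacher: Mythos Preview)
Your proposal is correct and follows essentially the same approach as the paper: build $\tlU$ as $\STR(N,M,\rho)$ with $\rho=\nu_U|_N$, read off the fiber structure to verify Definition~\ref{defn7.4}, and for uniqueness observe that any isomorphism $\al$ over $U$ is forced on tangibles by the uniqueness of tangible lifts and is the identity on $M$. The only cosmetic difference is that the paper invokes Proposition~\ref{prop7.3} to package $\al$ as $\STR(\lm,\id_M)$ for the monoid isomorphism $\lm:\tlN\to\tlN'$, whereas you check the transmission axioms directly.
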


\begin{proof} i) \emph{Existence:} Since $M$ is an ideal of $U$,
the set $M \cap N$ is a monoid ideal of $N$. We have $U = N \cup
M$, since  $N \supset
    \tT(U)_0$.  Let $\rho: N \to M$ denote the restriction of
    $\nu_U$ to $N$. It is a monoid homomorphism with $\irho(0) = \{ 0
    \}$.

    Let $\tlN$ denote a copy of the monoid $N$ with copying
    isomorphism $x \mapsto \tlx$ ($x \in N$), and let $\tlrho: \tlN \to
    M$ denote the monoid homomorphism from $\tlN$ to $M$
    corresponding to $\rho: N \to M$. Thus $\tlrho (\tlx) = \rho(x) =
    ex$ for $x\in N$. Now define the unfolded supertropical monoid
    $$ \tlU := \STR(\tlN, M, \tlrho) = \tlN \cup M.$$
In $\tlU$ we have $\tl0_U = 0 $ and $\tlN \cap M = \{ 0 \}$.
Further $\tT(\tlU)_0 = \tlN$ and $e \tlU = eU = M$.

We obtain a well-defined surjective map $\tau: \tlU \to U$ by
putting $\tau(\tlx) := x$ for $x\in N$, $\tau(y) := y$ for $y\in
M$. This map $\tau$ is multiplicative, as checked easily, sends
$0$ to $0$, $1 \in \tT(\tlU)$ to $1 \in N$, and restricts to the
identity on $M$. Thus $\tau$ is a fiber contraction (cf.
Definition~\ref{defn2.1}). The fibers of $\tau $ are as indicated
in Definition~\ref{defn7.4}; hence $\tau$ is an unfolding of $U$
along $N$. \pSkip

ii) \emph{Uniqueness}: Let $\tltau: \tlU \to U$ and $\tltau':
\tlU' \to U$ be unfoldings of  $U $ along $N$ with tangible lifts
$x \mapsto \tlx$ and $x \mapsto \tlx'$ respectively. Without loss
of generality we assume that $\tlU  = \STR(\tlN,M, \tlrho)$ and
$\tlU' = \STR(\tlN',M, \tlrho')$ with tangible lifts $x \mapsto
\tlx$ and $x \mapsto \tlx'$ ($x \in N$). Then $\tlrho(\tlx) =
\tlrho'(\tlx') = ex$ for every $x \in N$.  The map $\lm: \tlN \to
\tlN'$, given by $\lm (\tlx) = \tlx'$ for $x \in N$, is a monoid
isomorphism with $\tlrho' \circ \lm = \id_M \circ \tlrho$. Thus we
have a well defined transmission
$$ \al := \STR(\lm, \id_M): \tlU \ds \to \tlU'$$ at hand (cf. Proposition
\ref{prop7.3}). $\al$ is an isomorphism over $U$, i.e., an
isomorphism with $\tau' \circ \al = \tau$, clearly the only one.
\end{proof}

\begin{notation}\label{notation7.6}
We call the map $\tau: \tlU \to U$ constructed in part i) of the
proof of Theorem~\ref{thm7.5} \textbf{``the'' unfolding of $U$
along $N$} and write this map more precisely as
$$ \tau_{U,N}: \tlU(N) \to U$$ if necessary.   But sometimes we
abusively will denote any unfolding of $U$ along $N$ in this way,
justified by part ii) of Theorem \ref{thm7.5}.
\end{notation}

\begin{example}\label{exam7.7} We consider the very special case
that $U = eU =M$. Then $N$ can be any submonoid of $M$ containing
$0$. Now $\tlM(N) = \tlN \cup M$ with $\tlN \cap M = \{0\}$, and
$$\tlM(N) \cong \STR(N,M, i)$$ with $i : N \into M$ the inclusion
mapping. For every $x \in N$ there exists a unique tangible
element $\tlx$ of $\tlM(N)$ with $e \tlx =x$, while for $x \in M
\sm N$ there exists no such element.
\end{example}

\begin{thm}\label{thm7.8} Assume that $\al : U' \to U$ is a
transmission between supertropical monoids, and that $N' \supset
\tT(U')_0$, $N \supset \tT(U)_0$ are submonoids of $U'$ and $U$
with $\al(N') \subset N$. Then there exists  a unique tangible
transmission
$$ \tlal:= \tlal_{N',N} : \tlU' (N') \ds \to \tlU(N),$$
called the \textbf{tangible unfolding of $\al$ along $N'$ and
$N$}, such that the diagram
$$
\xymatrix{   \tlU'(N')   \ar @{>}[d]^{\tau_{U',N'}}  \ar
@{>}[rr]^{\tlal } &   & \tlU(N)
\ar @{>}[d]^{\tau_{U,N}}   \\
U'  \ar @{>}[rr]_{\al}  &  & U  }
$$
commutes.
\end{thm}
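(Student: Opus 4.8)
The plan is to construct $\tlal$ directly by means of Construction~\ref{constr7.2} and Proposition~\ref{prop7.3}, and then to read off commutativity of the square and uniqueness from the explicit descriptions there. Write $M' := eU'$ and $M := eU$. By Theorem~\ref{thm7.5}.ii we lose nothing by taking $\tlU'(N') = \STR(\tlN', M', \tlrho')$ and $\tlU(N) = \STR(\tlN, M, \tlrho)$, where $\tlN'$, $\tlN$ are copies of the monoids $N'$, $N$ with copying isomorphisms $x' \mapsto \tlx'$, $x \mapsto \tlx$, and $\tlrho'(\tlx') = ex'$, $\tlrho(\tlx) = ex$. The transmission $\al$ supplies exactly two pieces of data: its ghost part $\gm := \al^\nu : M' \to M$, a semiring homomorphism, and --- since $\al(N') \subset N$ together with $\al(0)=0$, $\al(1)=1$ --- the monoid homomorphism $\al|_{N'} : N' \to N$ with $\al|_{N'}(0)=0$.

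First I would set $\mu := \gm$ and let $\lm : \tlN' \to \tlN$ be the monoid homomorphism $\tlx' \mapsto \widetilde{\al(x')}$ (that is, $\al|_{N'}$ transported through the copying isomorphisms); then $\lm(0)=0$. The only hypothesis of Proposition~\ref{prop7.3}.i still to be verified is the compatibility $\tlrho \circ \lm = \mu \circ \tlrho'$: for $x'\in N'$ one computes $\tlrho(\lm(\tlx')) = \tlrho(\widetilde{\al(x')}) = e\,\al(x') = \al(ex')$, using $\al(e_{U'}) = e_U$ (axiom TM4) together with TM3, while $\mu(\tlrho'(\tlx')) = \gm(ex') = \al(ex')$, since $ex'\in M'$ and $\gm$ is the restriction of $\al$ to $M'$. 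Hence $\tlal := \STR(\lm,\mu)$ is a well-defined tangible transmission $\tlU'(N') \to \tlU(N)$.

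Next I would check that the square commutes by evaluating $\tau_{U,N}\circ\tlal$ and $\al\circ\tau_{U',N'}$ separately on the tangible part $\tlN'$ and the ghost part $M'$ of $\tlU'(N')$. On a tangible lift $\tlx'$ with $x'\in N'$: $\tau_{U,N}(\tlal(\tlx')) = \tau_{U,N}(\widetilde{\al(x')}) = \al(x') = \al(\tau_{U',N'}(\tlx'))$. On $y'\in M'$: $\tlal(y') = \mu(y') = \al(y') \in M$, so $\tau_{U,N}(\tlal(y')) = \al(y') = \al(\tau_{U',N'}(y'))$, since $\tau_{U,N}$ and $\tau_{U',N'}$ restrict to the identity on $M$ and $M'$ respectively. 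Thus the diagram commutes.

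Finally, for uniqueness suppose $\bt$ is any tangible transmission with $\tau_{U,N}\circ\bt = \al\circ\tau_{U',N'}$. On $y'\in M'$, commutativity and $\tau_{U,N}|_M = \id$ force $\bt(y') = \al(y') = \gm(y')$, so $\bt^\nu = \gm$; and on a tangible lift $\tlx'$, tangibility of $\bt$ gives $\bt(\tlx') \in \tT(\tlU(N))_0 = \tlN$, say $\bt(\tlx') = \tlz$, whereas commutativity gives $z = \tau_{U,N}(\tlz) = \al(x')$, hence $\bt(\tlx') = \widetilde{\al(x')}$. Therefore $\bt = \tlal$. I do not anticipate a genuine obstacle: the computation is essentially forced. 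The two points that merit a word of care are that the compatibility $\tlrho\circ\lm = \mu\circ\tlrho'$ is precisely where axiom TM4 enters, and that the hypothesis that $\bt$ be \emph{tangible} is what pins $\bt$ down on those lifts $\tlx'$ for which $\al(x')\in M\cap N$, where the fiber $\tau_{U,N}^{-1}(\al(x'))$ has two elements.
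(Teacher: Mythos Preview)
Your proof is correct and follows essentially the same route as the paper's: construct $\tlal$ as $\STR(\lm,\gm)$ via Proposition~\ref{prop7.3}, verify the compatibility $\tlrho\circ\lm = \gm\circ\tlrho'$ from $\gm\circ\nu_{U'} = \nu_U\circ\al$, and deduce uniqueness from the fact that a tangible transmission must send $\tlN'$ into $\tlN$ and $M'$ into $M$. You spell out the commutativity check and the uniqueness argument in more detail than the paper does, and your closing remark about where tangibility is genuinely needed (namely over $M\cap N$, where $\tau_{U,N}$ has two-point fibers) is a nice observation the paper leaves implicit.
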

\begin{proof} Let $M' := eU'$, $M := eU$, and let $\rho': N' \to
M$, $\rho: N \to M$ denote the monoid homomorphism obtained from
$\nu_{U'}$ and $\nu_U$ by restriction to $N'$ and $N$. Then
$$\tlU'(N') = \STR(N',M', \rho'), \qquad \tlU(N) = \STR(N,M,\rho). $$
The map $\al$ restricts to monoid homomorphisms $\lm: N' \ds \to
N$ and $\gm: M' \to M$ with $\lm(0) = 0$, $\gm(0) = 0 $, and $\gm$
order preserving. Now $\gm \circ \nu_{U'} = \nu_U \circ \al$,
hence $\gm \rho' = \rho \lm$. Thus we have the tangible
transmission
$$ \tlal:= \STR(\lm, \gm): \tlU'(N') \ds \to \tlU(N)$$
at hand. Clearly  $\tau_{U,N} \circ \tlal = \al \circ
\tau_{U',N'}$. Since any tangible transmission from $\tlU'(N')$ to
$\tlU(N)$ maps $\tlN'$ to $\tlN$ and $M'$ to $M$, it is evident
that $\tlal$ is the only such map.
\end{proof}

\begin{cor}\label{cor7.9} Assume that $\al: U' \to U$ is a
transmission between supertropical monoids which is tangibly
surjective, i.e., $\tT(U) \subset \al(\tT(U'))$. Assume further
that $U'$ is unfolded. Let $N:= \al (\tT(U')_0)$, which is a
submonoid of $U$ containing $\tT(U)_0$.
\begin{enumerate} \eroman
    \item There exists a unique tangible transmission
    $$\tlal: U' \ds \to \tlU(N), $$
    called the \textbf{tangible lift} of $\al$, such that $\tau_{U,N} \circ \tlal = \al.$
    \item If $x' \in U'$, then
    $$  \tlal(x')  = \left\{
\begin{array}{lll}
  \widetilde{\al(x')} & \text{if} &  x' \in \tT(U')_0, \\[1mm]
  \al(x') & \text{if} &  x' \in eU'. \\
\end{array}
\right.
$$
\end{enumerate}
\end{cor}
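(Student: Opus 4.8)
The plan is to obtain Corollary~\ref{cor7.9} as a direct specialization of Theorem~\ref{thm7.8}. Set $N' := \tT(U')_0$. Since $U'$ is unfolded, $N'$ is a multiplicative submonoid of $U'$ containing $\tT(U')_0$ (trivially), and, as recorded in the discussion preceding Construction~\ref{constr7.2}, $U'$ is then completely described by the triple $(\tT(U')_0,\, eU',\, \nu_{U'}|\tT(U')_0)$. Checking the fiber condition of Definition~\ref{defn7.4} for $N' = \tT(U')_0$ — here $eU' \cap N' = \{0\}$, $N' \sm eU' = \tT(U')$, $eU' \sm N' = eU' \sm \{0\}$, and one may take every tangible lift $\tlx = x$ — one sees that $\id_{U'}\colon U' \to U'$ is an unfolding of $U'$ along $N'$. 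By the uniqueness clause of Theorem~\ref{thm7.5}.ii we may therefore identify $\tlU'(N') = U'$ and $\tau_{U',N'} = \id_{U'}$.

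Next I would verify the hypotheses of Theorem~\ref{thm7.8} for the submonoids $N' \subset U'$ and $N \subset U$. The set $N = \al(\tT(U')_0)$ is a submonoid of $U$, being the image of the submonoid $\tT(U')_0$ under the multiplicative, unital map $\al$. It contains $\tT(U)_0$: indeed $0 = \al(0) \in N$, and tangible surjectivity gives $\tT(U) \subset \al(\tT(U')) \subset \al(\tT(U')_0) = N$. Finally $\al(N') = \al(\tT(U')_0) = N \subset N$. Hence Theorem~\ref{thm7.8} applies and yields a unique tangible transmission $\tlal := \tlal_{N',N}\colon \tlU'(N') \to \tlU(N)$ with $\tau_{U,N} \circ \tlal = \al \circ \tau_{U',N'}$. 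Under the identifications above this reads $\tau_{U,N} \circ \tlal = \al$ with $\tlal\colon U' \to \tlU(N)$ tangible, and its uniqueness among such maps is exactly the uniqueness asserted by Theorem~\ref{thm7.8} (since $\tau_{U',N'} = \id_{U'}$). This proves part (i).

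For part (ii) I would simply unwind the construction of $\tlal$ from the proof of Theorem~\ref{thm7.8}. Writing $\tlU'(N') = \STR(N', eU', \rho')$ and $\tlU(N) = \STR(N, eU, \rho)$, the map $\al$ restricts to a monoid homomorphism $\lm\colon N' \to N$ (namely $\al|\tT(U')_0$) and to the ghost part $\gm = \al^\nu\colon eU' \to eU$, and $\tlal = \STR(\lm, \gm)$. Consequently $\tlal(x') = \al(x')$ for $x' \in eU'$ (recording that $\gm$ is the restriction of $\al$, and $eU$ sits inside $\tlU(N)$), while for $x' \in \tT(U')_0$ the map sends $x'$ to the tangible lift $\widetilde{\al(x')}$ of $\al(x') \in N$ with respect to $N$. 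The two clauses agree at $x' = 0$ because $\tl0 = 0$ and $\al(0) = 0$.

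I do not expect a genuine obstacle. The only point that requires a moment's care is the role of tangible surjectivity: it is used precisely to guarantee $\tT(U)_0 \subset N$, which is what makes $N$ a legitimate target submonoid for an unfolding (so that $\tlU(N)$ and $\tau_{U,N}$ are defined at all) and makes Theorem~\ref{thm7.8} applicable.
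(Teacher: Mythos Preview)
Your proof is correct and follows essentially the same approach as the paper: apply Theorem~\ref{thm7.8} with $N' := \tT(U')_0$, noting that $\tlU'(N') = U'$ since $U'$ is unfolded. For part (ii) the paper argues slightly more directly---from $\tau_{U,N}\circ\tlal=\al$ together with the tangibility of $\tlal$ (so $\tlal(x')\in\tT(\tlU)_0$ iff $x'\in\tT(U')_0$)---rather than unwinding the $\STR(\lm,\gm)$ construction, but this is a cosmetic difference.
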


\begin{proof} (i): applying Theorem~\ref{thm7.8} with $N' :=
\tT(U')_0$, and observe that $\tlU'(N') = U'$, since ~$U'$ is
unfolded. \pSkip

(ii): Now obvious, since $\tau_{U,N} (\tlal(x')) = \al (x')$ and
$\tlal(x') \in \tT(\tlU)_0$ iff $x' \in \tT(U')_0$.
\end{proof}
\pSkip  We are   ready to construct ``tangible lifts'' of
\m-supervaluations.

\begin{thm}\label{thm7.10}
Assume that $\vrp: R \to U$ is an \m-supervaluation which is
tangibly surjective, i.e., $\tT(U) \subset \vrp(R)$ \{e.g. $\vrp$
is surjective; $U = \vrp(R) \cup e \vrp(R)$\}. Let $N := \vrp(R)$,
which is a submonoid of $U$ containing $\tT(U)$.
\begin{enumerate} \eroman
    \item The map
    $$ \tlvrp : R \to \tlU(N), \qquad a \mapsto \widetilde{\vrp(a)},$$
    with $\widetilde{\vrp(a)}$ denoting the tangible lift of
    $\vrp(a)$ w.r.t. $N$, is a tangible \m-supervaluation of $\vrp$,
    called the \textbf{tangible lift} of $\vrp$.

    \item If $\vrp' : R \to U'$ is a tangible \m-supervaluation
    dominating $\vrp$, then $\vrp'$ dominates $\tlvrp$.
\end{enumerate}
\end{thm}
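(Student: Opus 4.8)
The plan is to realize $\tlvrp$ as the composite $s\circ\vrp$, where $s$ is the tangible-lift map into $\tlU(N)$, and then to read off the supervaluation axioms $\SV1$--$\SV4$ and the dominance conditions $\D1$--$\D3$ directly from the corresponding facts about $\vrp$. First I would record the setup. Since $\vrp(0)=0$, $\vrp(1)=1$ and $\vrp$ is multiplicative, $N=\vrp(R)$ is a submonoid of $(U,\cdot\,)$, and tangible surjectivity gives $N\supset\tT(U)_0$; hence Theorem~\ref{thm7.5} together with Notation~\ref{notation7.6} applies, and we have the unfolding $\tau_{U,N}\colon\tlU(N)\to U$ with $\tlU(N)=\STR(\tlN,M,\tlrho)$, $M=eU$, and $\tlrho$ the copy of $\rho:=\nu_U|_N$. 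By Construction~\ref{constr7.2} the assignment $s\colon N\to\tT(\tlU(N))_0$, $x\mapsto\tlx$, is a monoid isomorphism onto $\tT(\tlU(N))_0$ with $s(0)=0$, $s(1)=1_{\tlU}$, and --- the one identity everything hinges on --- $e_{\tlU}\,s(x)=\tlrho(\tlx)=\rho(x)=e_U x$ for all $x\in N$. By definition $\tlvrp=s\circ\vrp$, and $\tau_{U,N}\circ\tlvrp=\vrp$ (this mirrors Corollary~\ref{cor7.9}).

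For part (i): $\SV1$, $\SV2$, $\SV3$ for $\tlvrp$ follow at once from $s$ being a monoid homomorphism with $s(0)=0$, $s(1)=1$. For $\SV4$, the identity above gives $e_{\tlU}\tlvrp(c)=e_U\vrp(c)$ for every $c\in R$; in particular $\tlvrp$ covers the same $\m$-valuation $v:=e\vrp$ as $\vrp$, and the inequality $\SV4$ for $\tlvrp$, being an inequality inside the bipotent semiring $M$, is verbatim the inequality $\SV4$ for $\vrp$. Tangibility is immediate, since $\tlvrp(R)=s(N)\subset\tT(\tlU(N))_0$. Finally, $\tau_{U,N}$ is a transmission (an ideal compression) and $\tau_{U,N}\circ\tlvrp=\vrp$, so the remark preceding Definition~\ref{defn6.2} shows $\tlvrp\ge\vrp$; thus $\tlvrp$ is genuinely a tangible lift of $\vrp$.

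For part (ii): let $\vrp'\colon R\to U'$ be a tangible $\m$-supervaluation with $\vrp'\ge\vrp$; I verify $\D1$, $\D2$, $\D3$ for $\vrp'\ge\tlvrp$. Condition $\D2$ is automatic because $e\tlvrp=v=e\vrp$, so the ghost data of $\tlvrp$ and $\vrp$ coincide and $\D2$ for $\vrp'\ge\vrp$ is word-for-word $\D2$ for $\vrp'\ge\tlvrp$. For $\D1$: $\vrp'(a)=\vrp'(b)$ gives $\vrp(a)=\vrp(b)$ by $\D1$ for $\vrp'\ge\vrp$, whence $\tlvrp(a)=s(\vrp(a))=s(\vrp(b))=\tlvrp(b)$ (using injectivity of $s$, which is even an equivalence here). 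For $\D3$: suppose $\vrp'(a)\in eU'$; since $\vrp'$ is tangible, $\vrp'(a)\in\tT(U')_0\cap eU'=\00$, so $\vrp'(a)=0=\vrp'(0)$, and $\D1$ for $\vrp'\ge\vrp$ yields $\vrp(a)=\vrp(0)=0$, hence $\tlvrp(a)=s(0)=0\in M=e\tlU(N)$. This establishes $\vrp'\ge\tlvrp$.

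I do not expect a serious obstacle: the whole argument is a translation of the $\SV$- and $\D$-conditions across the monoid isomorphism $s$. The only point requiring genuine care is the identity $e_{\tlU}\,s(x)=e_U x$ --- i.e.\ that the tangible-lift map intertwines the ghost maps of $\tlU(N)$ and $U$ --- which is built into the definition of $\STR(\tlN,M,\tlrho)$ in Construction~\ref{constr7.2}, together with the harmless bookkeeping that $0$ is the only element of $\tT(U')_0$ lying in the ghost ideal $eU'$, which drives the verification of $\D3$.
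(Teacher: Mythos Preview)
Your proof of part~(i) is essentially the same as the paper's: realize $\tlvrp=s\circ\vrp$, check $\SV1$--$\SV3$ from the fact that $s$ is a monoid isomorphism preserving $0$ and $1$, and get $\SV4$ from the identity $e_{\tlU}\,s(x)=e_Ux$, which shows $e\tlvrp=e\vrp$.

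For part~(ii) you take a genuinely different, more elementary route. The paper reduces to the surjective case, observes that then $U'$ must be unfolded (because $\vrp'(R)=\tT(U')_0$), invokes Corollary~\ref{cor7.9} to produce the tangible lift $\tlal\colon U'\to\tlU(N)$ of the transmission $\al=\al_{\vrp,\vrp'}$, and checks $\tlvrp=\tlal\circ\vrp'$; dominance then follows from the existence of this transmission. You instead verify $\D1$--$\D3$ by hand, exploiting that $e\tlvrp=e\vrp$ for $\D2$, injectivity of $s$ together with $\D1$ for $\vrp'\ge\vrp$ for $\D1$, and the key observation $\tT(U')_0\cap eU'=\00$ (forced by tangibility of $\vrp'$) combined with $\D1$ for $\vrp'\ge\vrp$ applied to $a$ and $0$ for $\D3$. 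Your argument is correct and avoids the surjectivity reduction entirely. What the paper's approach buys is the structural content recorded in Addendum~\ref{adden7.11}: one actually obtains $\al_{\tlvrp,\vrp'}$ explicitly as the tangible lift of $\al_{\vrp,\vrp'}$, information your bare verification of $\D1$--$\D3$ does not supply.
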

\begin{proof} (i): $\tlvrp$ is multiplicative, $\tlvrp(0) = 0$,  $\tlvrp(1) =
1$, and $e \tlvrp = e \vrp$ is an \m-valuation. Thus $\tlvrp$ is
an \m-supervaluation. By construction $\tlvrp$ is tangible. \pSkip

(ii): We may assume that the \m-supervaluation $\vrp': R \to U'$
is surjective, and hence   $\vrp'(R) \supset \tT'(U)_0$. Since
$\vrp'$ is tangible, this forces $\vrp'(R) =\tT'(U)_0$. Thus
$\tT'(U)_0$ is a submonoid of $U'$, i.e., $U'$ is unfolded. Since
$\vrp'$ dominates $\vrp$, there exists a transmission $\al: U' \to
U$ with $\vrp = \al \circ \vrp'$. We have  $$\al(\tT'(U)_0) = \al
(\vrp'(R)) = \vrp (R) = N.$$ Thus we have the  tangible lift
$$\tlal: U' \ds \to \tlU(N)$$ of $\al$ at hand. For any $a\in R,$
$$ \tlal(\vrp'(a)) = [\al (\vrp'(a))]^\sim  = \widetilde{\vrp(a)} = \tlvrp(a).$$
Thus $\tlvrp = \tlal \circ \vrp',$ which proves that $\vrp'$
dominates $\tlvrp.$
\end{proof}

\begin{addendum}\label{adden7.11} As the proof  has shown, if the
\m-valuation $\vrp'$ is surjective, then $U'$ is unfolded, and the
transmission $$\al_{\tlvrp,\vrp'}: U' \ds \to \tlU(N)$$ (cf.
\cite[Definition~5.3]{IKR1}) is the  tangible lift of $\al_{\vrp,
\vrp'}: U' \to U$.
\end{addendum}

\begin{cor}\label{cor7.12} If $\vrp$, $\psi$ are
\m-supervaluations covering $v$ and $\vrp \leq \psi$, then $\tlvrp
\leq \tlpsi$.
\end{cor}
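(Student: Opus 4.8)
The plan is to obtain the corollary as a purely formal consequence of Theorem \ref{thm7.10}, using only transitivity of the dominance relation. By our standing convention both $\vrp$ and $\psi$ are surjective, so their tangible lifts $\tlvrp$ and $\tlpsi$ are defined (Theorem \ref{thm7.10}). First I would record that $\vrp \leq \tlvrp$ and $\psi \leq \tlpsi$: indeed, by the very construction of the unfolding one has $\vrp = \tau_{U,\vrp(R)} \circ \tlvrp$ and $\psi = \tau_{V,\psi(R)} \circ \tlpsi$ (where $U$, $V$ are the targets of $\vrp$, $\psi$), and since the unfoldings $\tau_{U,\vrp(R)}$, $\tau_{V,\psi(R)}$ are transmissions, an \m-supervaluation of the form $\al \circ \eta$ is always dominated by $\eta$ (the remark following Definition \ref{defn6.2}).

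Next I would chain the inequalities. From the hypothesis $\vrp \leq \psi$ and the just-established $\psi \leq \tlpsi$, transitivity of dominance — immediate from the shape of axioms D1--D3 in Definition \ref{defn6.2}, which compose — gives $\vrp \leq \tlpsi$. Now $\tlpsi$ is a \emph{tangible} \m-supervaluation (Theorem \ref{thm7.10}.i) that dominates $\vrp$, so Theorem \ref{thm7.10}.ii, applied to $\vrp$ with $\vrp' := \tlpsi$, yields $\tlvrp \leq \tlpsi$, which is exactly the assertion.

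I do not expect a genuine obstacle here: the statement is precisely the monotonicity that is already built into the minimality clause Theorem \ref{thm7.10}.ii. The one point deserving a sentence of care is that $\tlvrp$ and $\tlpsi$ have a priori different targets, namely the unfolded supertropical monoids $\tlU(\vrp(R))$ and $\tlU(\psi(R))$; but the whole argument takes place in the dominance preorder on \m-supervaluations and never compares these two monoids directly, so no compatibility between them is needed. (The hypothesis that $\vrp$ and $\psi$ cover the same $v$ is likewise not used in the argument beyond guaranteeing that the tangible lifts exist.)
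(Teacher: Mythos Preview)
Your argument is correct and is essentially the paper's own proof: chain $\vrp \leq \psi \leq \tlpsi$, observe that $\tlpsi$ is tangible, and invoke Theorem~\ref{thm7.10}.ii. The only difference is cosmetic---you record $\vrp \leq \tlvrp$ as well, which is not actually used, and you spell out why $\psi \leq \tlpsi$, which the paper leaves implicit.
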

\begin{proof} We have  $\vrp \leq \psi \leq \tlpsi$. It follows by
Theorem~\ref{thm7.10}.ii that $\tlvrp \leq \tlpsi$.
\end{proof}

\section{The partial tangible lifts of an
$\m$-supervaluation}\label{sec:8}

In all the following $v: R \to M$ is a fixed \m-valuation and
$\vrp: R \to U$ is a tangible surjective \m-supervaluation
covering $v$. (Most often $v$ and $\vrp$ will both be surjective.)
In \S\ref{sec:7} we introduced the tangible  lift $\tlvrp: R \to
\tlU$ (cf. Theorem 7.10). We now strive for an explicit
description of the \m-supervaluations $\psi$ covering $v$ with
$\vrp \leq \psi \leq \tlvrp$.

We warm up with two general observations.

\begin{defn}\label{defn8.1}
If $\psi: R \to V$ is an \m-supervaluation covering $v: R \to M$,
we call
$$ G(\psi) := \psi(R) \cap M = \{ \psi(a) \ds | a\in R, \ \psi(a)= v(a) \}$$
the \textbf{ghost value set} of $\psi$. \{Notice that $eV = M$.\}
\end{defn}

\begin{lem}\label{lem8.2}
Let $\psi_1, \psi_2$ be \m-supervaluations covering $v$. If
$\psi_1 \geq \psi_2$, then $G(\psi_1) \subset G(\psi_2)$. If
$\psi_1 \sim \psi_2$, then $G(\psi_1) = G( \psi_2)$.
\end{lem}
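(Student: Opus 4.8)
The plan is to unwind the definitions of dominance (Definition \ref{defn6.2}) and of the ghost value set (Definition \ref{defn8.1}), and observe that the whole statement is essentially immediate from D3 together with D1. First I would take $x \in G(\psi_1)$; by definition this means $x = \psi_1(a)$ for some $a \in R$ with $\psi_1(a) = v(a) \in M$, i.e. $\psi_1(a) \in eV_1$ where $V_1$ is the target of $\psi_1$. Since $\psi_1 \geq \psi_2$, axiom D3 gives $\psi_2(a) \in eV_2 = M$. But $\psi_1$ and $\psi_2$ both cover $v$, so $e\psi_2(a) = v(a)$, and since $\psi_2(a)$ is already ghost we get $\psi_2(a) = e\psi_2(a) = v(a) = x$. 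Hence $x = \psi_2(a) \in \psi_2(R) \cap M = G(\psi_2)$, proving $G(\psi_1) \subset G(\psi_2)$.

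For the second assertion, $\psi_1 \sim \psi_2$ means $\psi_1 \geq \psi_2$ and $\psi_2 \geq \psi_1$, so applying the inclusion just proved in both directions yields $G(\psi_1) \subset G(\psi_2)$ and $G(\psi_2) \subset G(\psi_1)$, hence $G(\psi_1) = G(\psi_2)$.

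I expect there to be essentially no obstacle: the only point requiring a moment's care is the identification $e\psi_i(a) = v(a)$, which holds because an \m-supervaluation covering $v$ satisfies $e\psi_i = v$ by definition, so that once an element is known to lie in the ghost ideal $M$ its value is pinned down to $v(a)$. Everything else is a direct substitution of definitions, and the argument is symmetric in the two halves of the equivalence relation for the second claim. No computation beyond this is needed.
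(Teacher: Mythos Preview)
Your proof is correct and takes essentially the same approach as the paper: both arguments hinge on axiom~D3 to pass from $\psi_1(a)\in M$ to $\psi_2(a)\in M$, and the second claim follows by symmetry. Your version is in fact slightly more explicit than the paper's, since you spell out why the \emph{same} element $x$ lies in $G(\psi_2)$ (namely $\psi_2(a)=e\psi_2(a)=v(a)=x$ once $\psi_2(a)$ is ghost); the paper leaves this implicit. One minor remark: your opening sentence mentions D1, but your actual argument never uses it and does not need to --- the identification $\psi_i(a)=v(a)$ for ghost values comes from the covering hypothesis $e\psi_i=v$, not from D1.
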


\begin{proof} Let $a\in R$. If $\psi_1 \geq \psi_2$, then $\psi_1(a) \in
M$ implies that $\psi_2(a) \in M$, due to condition $\D 3$ in the
definition of dominance (cf. Definition \ref{defn6.2}). Thus, for
$\psi_1 \sim \psi_2$ we have  $\psi_1(a) \in M$ iff $\psi_2(a) \in
M$.
\end{proof}

\begin{lem}\label{lem8.3} Assume that the \m-valuation $v: R \to
M$ is surjective. Then the ghost value set $G(\psi)$ of any
\m-supervaluation $\psi$ covering $v$ is an ideal of the semiring
$M$.
\end{lem}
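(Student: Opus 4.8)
The plan is to verify directly that $G(\psi)$ is a monoid ideal of $M$ containing $0$. First I would observe that, since $M$ is bipotent, its addition is $\max$, so every subset of $M$ containing $0$ is automatically closed under addition; thus for $M$ an "ideal of the semiring $M$" amounts to a "monoid ideal of $M$", and it suffices to check that $0 \in G(\psi)$ and that $M \cdot G(\psi) \subseteq G(\psi)$.

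Writing $V$ for the target of $\psi$ (so that $eV = M$ and $e\psi = v$), the next step is to rewrite $G(\psi)$ in a convenient form: for $a \in R$ one has $\psi(a) \in M$ if and only if $\psi(a) = e\psi(a) = v(a)$, because every element of $eV$ is fixed under multiplication by $e$. Hence $G(\psi) = \{\, v(a) \ | \ a \in R, \ \psi(a) \in M \,\}$, and in particular $0 = v(0) = \psi(0) \in G(\psi)$, so $G(\psi)$ is nonempty and contains $0$.

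For the multiplicative absorption, I would take $g \in G(\psi)$ and $m \in M$ arbitrary, choose $a \in R$ with $\psi(a) \in M$ and $g = v(a)$, and -- this is the only place where surjectivity of $v$ is used -- choose $b \in R$ with $m = v(b)$. Then $gm = v(a)v(b) = v(ab)$ by multiplicativity of $v$, while multiplicativity of $\psi$ gives $\psi(ab) = \psi(a)\psi(b)$; since $\psi(a) = e\psi(a) \in eV$, this product lies in $eV = M$ (the ghost ideal $eV$ absorbs multiplication). Therefore $gm = v(ab) = e\psi(ab) = \psi(ab) \in G(\psi)$, which gives $M \cdot G(\psi) \subseteq G(\psi)$ and completes the verification.

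The argument is essentially routine, so there is no real obstacle; the only points worth flagging are the (mild) observation that in a bipotent semiring additive closure is automatic, so that being an ideal reduces to being a multiplicatively absorbing subset containing $0$, and the need to invoke surjectivity of $v$ in order to represent an arbitrary $m \in M$ as a value $v(b)$ of some $b \in R$.
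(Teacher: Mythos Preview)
Your proof is correct and follows essentially the same route as the paper's: both use surjectivity of $v$ to write an arbitrary $m\in M$ as $e\psi(b)=v(b)$, then compute $gm=\psi(a)\psi(b)=\psi(ab)\in\psi(R)\cap M$, and both note that bipotency of $M$ makes additive closure automatic. The paper's argument is just a more compressed version of yours.
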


\begin{proof}
If $x \in G(\psi)$ and $y \in M$, there exist $a,b \in R$ with
$\psi(a) =x$, $e \psi(b) = y$. It follows that
$$ xy = e \psi(a) \psi(b) = \psi(a) \psi(b) = \psi(a b).$$
Thus $xy \in \psi(R) \cap M = G(\psi)$. This proves that $G(\psi)
\cdot M \subset G(\psi)$. Since $M$ is bipotent, $G(\psi)$ is also
closed under addition.
\end{proof}

\begin{thm}\label{thm8.4} Assume that $\vrp: R \to U$ is an
\m-supervaluation  covering $v: R \to M$, and that $\psi_1,
\psi_2$ are \m-supervaluations covering $v$ with $$ \vrp \leq
\psi_1 \leq \tlvrp, \quad  \vrp \leq \psi_2 \leq \tlvrp.$$

\begin{enumerate} \eroman
    \item $\psi_1 \geq \psi_2 \dss \Leftrightarrow G(\psi_1) \subset
    G(\psi_2)$. \pSkip
    \item $\psi_1 \sim \psi_2 \dss \Leftrightarrow G(\psi_1) =
    G(\psi_2)$.
\end{enumerate}
\end{thm}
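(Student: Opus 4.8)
The plan is to deduce (ii) formally from (i), to observe that one implication of (i) is already Lemma~\ref{lem8.2}, and to prove the remaining implication of (i) by realizing $\psi_1$ and $\psi_2$ as ideal compressions of the tangible lift $\tlvrp$ that are governed by their ghost value sets. For the reductions: part (ii) follows at once from (i), since $\psi_1\sim\psi_2$ means $\psi_1\ge\psi_2$ \emph{and} $\psi_2\ge\psi_1$, which by (i) amounts to $G(\psi_1)\subset G(\psi_2)$ and $G(\psi_2)\subset G(\psi_1)$, i.e.\ $G(\psi_1)=G(\psi_2)$; and in (i) the implication $\psi_1\ge\psi_2\Rightarrow G(\psi_1)\subset G(\psi_2)$ is exactly Lemma~\ref{lem8.2}. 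So it suffices to show: if $G(\psi_1)\subset G(\psi_2)$, then $\psi_1\ge\psi_2$.

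\emph{Setup.} Put $N:=\vrp(R)$, a submonoid of $U$ with $N\supset\tT(U)_0$, so that $\tlvrp\colon R\to\tlU(N)$ is the tangible lift of $\vrp$ and $\tau_{U,N}\circ\tlvrp=\vrp$ (Theorem~\ref{thm7.10}). Since $\vrp$, hence $v$, is surjective, $e\tlvrp(R)=\nu_U(N)=M=e\tlU(N)$, so $\tlvrp$ is surjective. Fix $i\in\{1,2\}$. As $\tlvrp\ge\psi_i$ and $\tlvrp$ is surjective, there is a unique transmission $\al_i\colon\tlU(N)\to V_i$ with $\psi_i=\al_i\circ\tlvrp$; since $\psi_i$ and $\tlvrp$ both cover $v$, $\al_i$ is a fiber contraction over $M$, and $\psi_i=\al_i\circ\tlvrp$ is surjective. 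As $\psi_i\ge\vrp$, pick a transmission $\bt_i\colon V_i\to U$ with $\vrp=\bt_i\circ\psi_i$; then $\bt_i\circ\al_i\circ\tlvrp=\vrp=\tau_{U,N}\circ\tlvrp$, and surjectivity of $\tlvrp$ forces $\bt_i\circ\al_i=\tau_{U,N}$. In particular $E(\al_i)\subset E(\tau_{U,N})$.

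\emph{Key step.} Recall from \S\ref{sec:7} that $\tau_{U,N}$ is an ideal compression (Definition~\ref{defn2.2}) with ghost kernel ${(M\cap N)}^{\sim}\cup M$; by the fiber description in Definition~\ref{defn7.4} its only non-trivial $E(\tau_{U,N})$-classes are the pairs $\{\tlx,x\}$ with $x\in M\cap N=G(\vrp)$. Hence every non-trivial $E(\al_i)$-class is such a pair, and multiplicativity of $E(\al_i)$ forces $S_i:=\{x\in G(\vrp)\mid\tlx\sim_{E(\al_i)}x\}$ to be an ideal of $M$ with $\mfB_i:=M\cup\{\tlx\mid x\in S_i\}$ an ideal of $\tlU(N)$, and $E(\al_i)=E(\tlU(N),\mfB_i)$. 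Thus $\al_i=\rho_i\circ\pi_{E(\tlU(N),\mfB_i)}$ with $\rho_i$ an isomorphism over $M$, so $\al_i$ is an ideal compression. Reading off $\pi_{E(\tlU(N),\mfB_i)}$ from Convention~\ref{convs3.2}.a, one sees that for $a\in R$ one has $\psi_i(a)\in M$ iff $\vrp(a)\in S_i$, and then $\psi_i(a)=v(a)=\vrp(a)$; since every element of $S_i\subset N$ is a value of $\vrp$, this yields $G(\psi_i)=S_i$.

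\emph{Conclusion.} By Corollary~\ref{cor1.16} the assignment $\mfA\mapsto\pi_{E(\tlU(N),\mfA)}$ on ideals $\mfA\supset M$ of $\tlU(N)$ is order preserving with ghost kernel $\mfA$, so $G(\psi_1)\subset G(\psi_2)\iff S_1\subset S_2\iff\mfB_1\subset\mfB_2\iff\pi_{E(\tlU(N),\mfB_2)}=\dl\circ\pi_{E(\tlU(N),\mfB_1)}$ for some transmission $\dl$. In the last situation $\psi_2=\rho_2\circ\pi_{E(\tlU(N),\mfB_2)}\circ\tlvrp=(\rho_2\circ\dl\circ\rho_1^{-1})\circ\al_1\circ\tlvrp=(\rho_2\circ\dl\circ\rho_1^{-1})\circ\psi_1$, and since $\psi_1$ is surjective this means $\psi_1\ge\psi_2$. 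With the reductions above, this proves (i) and (ii). (Alternatively, once $G(\psi_i)=S_i$ and the explicit form of $\pi_{E(\tlU(N),\mfB_i)}$ are in hand, one may instead verify the dominance conditions D1--D3 of Definition~\ref{defn6.2} for $\psi_1\ge\psi_2$ directly, D2 being automatic from $e\psi_i(a)=v(a)$.) The main obstacle is the key step: showing that squeezing $\psi$ between $\vrp$ and $\tlvrp$ forces the connecting fiber contraction $\al$ to collapse precisely the tangible lifts of the ghosts in $G(\psi)$; everything afterwards is bookkeeping with Convention~\ref{convs3.2}.a and Corollary~\ref{cor1.16}.
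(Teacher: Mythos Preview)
Your argument is essentially the same as the paper's: both realize each $\psi_i$ as $\tlvrp$ modulo an MFCE-relation on $\tlU(N)$ whose only non-trivial classes are the pairs $\{\tlx,x\}$ with $x$ running through $G(\psi_i)$, and then read off the dominance from the inclusion of these relations. The paper does this more directly by writing $\psi_i=\pi_{E_i}\circ\tlvrp$ and describing $E_i$ explicitly (invoking Corollary~\ref{cor7.12} to note $\tlpsi_i\sim\tlvrp$), whereas you package the same content via ideal compressions and Corollary~\ref{cor1.16}; one small point is that you should state the reduction to $\vrp$ surjective as a ``without loss of generality'' step rather than assert it outright, as the paper does.
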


\begin{proof} We assume without loss of generality that $\vrp$ is
surjective. Then also the \m-super-valuations $\psi_1,$ $\psi_2,$
$ \tlvrp$ are surjective. By Corollary \ref{cor7.12} the tangible
lifts $\tlpsi_1$ and $\tlpsi_2$ are both equivalent to~$\tlvrp$.

Again without loss of generality we moreover assume that $\vrp =
\tlvrp / E := \pi_E \circ \tlvrp$ with $E$ an \MFCE-relation on
$\tlU$, and also $\psi_i = \tlvrp / E_i$ with an \MFCE-relation
$E_i$ ($i =1,2$). Let us describe these relations $E, E_1, E_2$
explicitly. We have $\tlU = \tlN \cup M$, with
$$ \tlN := \tT(\tlU)_0 = \tlvrp(R), \qquad \tlN \cap M = \{ 0\},$$
further $U = N \cup M$ with
$$ N:= \vrp (R), \qquad N\cap M = G(\vrp),$$
and we have a copying isomorphism $$s: N \ds \iso \tlN$$ of
monoids (new notation!), which sends each $x \in N$ to its
tangible lift $\tlx$, as explained in \S\ref{sec:7} (Definition
\ref{defn7.4}, Proof of Theorem \ref{thm7.5}.i). Notice that
$es(x)=x $ for $x \in N \cap M = G(\vrp).$

The relation $E$ has the 2-point equivalence classes $\{x, s(x)
\}$ with $x$ running through $G(\vrp)$, while all other
$E$-equivalence classes are one-point sets. Analogously, $E_i$ has
the 2-point set equivalence classes $\{x, s(x) \}$ with $x$
running through $G(\psi_i) \subset G(\vrp)$, while again all other
$E$-equivalence classes are one-point sets. Thus it is obvious
that\footnote{As in \cite{IKR1} we view an equivalence relation on
a set $X$ as a subset of $X \times X$ in the usual way.} $E_1
\subset E_2$ iff $G(\psi_1) \subset G(\psi_2)$. But $E_1 \subset
E_2$ means that $\psi_1 \geq \psi_2$. This gives  claim (i), and
claim (ii) follows.
\end{proof}

\begin{defn}\label{defn8.5} We call the monoid isomorphism
$$ s: \vrp(R) \ds \to \tT(\tlU)_0 = \tlvrp(R),$$
i.e., the copying isomorphism $s: N \iso \tlN$ occurring in the
proof of Theorem \ref{thm8.4}, the \textbf{tangible lifting map
for} $\vrp$.
\end{defn}

Notice that for $x\in \vrp(R)$, $y\in \tT(\tlU)_0$ we have $s(x)y
= s(xy)$. \pSkip

\emph{We assume henceforth that the \m-valuation $v: R \to M$ is
surjective, and that $\vrp: R \to U$ is a surjective
\m-supervaluation with $e \vrp = v$.} The question arises whether
every ideal $\mfa$ of~$M$ with $\mfa \subset G(\vrp)$ occurs as
the ghost value set $G(\psi)$ of some \m-supervaluation $\psi$
covering $v$ with $\vrp \leq \psi \leq \tlvrp$. This is indeed
true.

\begin{construction}\label{const8.6}
We employ the tangible lifting map $ s: \vrp(R) \ds \to \tlvrp(R)
= \tT(\tlU)_0$ defined above. Assume that $\mfa$ is an ideal of
$M$ contained in $G(\vrp)$. We have $$s(\mfa) \cdot \tlU \ds
\subset s(\mfa) \cup M,$$ since $s(x)y = s(xy) \in s(\mfa)$ for $x
\in \mfa$ and $y \in \tT(U)$. We conclude that $s(\mfa) \cup M$ is
an ideal of $U$. Let
$$ E_\mfa : = E(\tlU, s(\mfa)) = E(\tlU, s(\mfa)\cup M)$$
and $\tlU_\mfa := \tlU/ E_\mfa$. We regard $\tlU_\mfa$  as a
subset of $\tlU$, as indicated in Convention \ref{convs3.2}.a. The
map $\pi_{E_\mfa} : \tlU \onto \tlU_\mfa$ is the ideal compression
with ghost kernel $s(\mfa) \cup M$, and
$$ \tlvrp_\mfa  := \tlvrp/ E_\mfa = \pi_{E_\mfa} \circ \tlvrp : R \dss \to \tlU_\mfa$$
is an \m-supervaluation. For any $a\in R$
    $$  \tlvrp_\mfa (a)  = \left\{
\begin{array}{lll}
 \vrp(a) = v(a) & \text{if} \ \vrp(a) \in \mfa , &\\[1mm]
  \tlvrp(a) & \text{else}   \ . &  \\
\end{array}
 \right.
$$
Clearly $\vrp \leq \tlvrp_\mfa \leq \tlvrp$ and $G(\tlvrp_\mfa) =
\mfa$. We call $\tlvrp_\mfa$ \textbf{the tangible lift of $\vrp$
outside $\mfa$}, and we call any such map $\tlvrp_\mfa$ a
\textbf{partial tangible lift of $\vrp$.}
\end{construction}

Let $[\vrp, \tlvrp]$ denote the ``interval'' of the poset $
\Cov_m(v)$ containing all classes $[\psi]$ with $\vrp \leq \psi
\leq \tlvrp$, and let $[0,G(\vrp)]$ be the set of  ideals $\mfa$
of $M$ with $\mfa \subset G(\vrp)$, ordered by inclusion. By the
Lemmas  \ref{lem8.2} and \ref{lem8.3} we have a well defined order
preserving map
$$ [\vrp, \tlvrp] \ds \to [0, G(\vrp)],$$ which sends each class
$[\psi] \in [\vrp, \tlvrp]$ to the ideal $G(\psi)$. By Theorem
\ref{thm8.4} this map is injective, and by Construction
\ref{const8.6} we know that it is also surjective. Thus we we have
proved

\begin{thm}\label{thm8.7}
The map
$$ [\vrp, \tlvrp] \ds \to [0, G(\vrp)], \qquad [\psi] \mapsto G(\psi),$$
is a well defined order preserving bijection. The inverse of this
map sends an ideal $\mfa \subset G(\vrp)$  to the class
$[\tlvrp_\mfa]$ of the tangible lift of $\vrp$ outside $\mfa$.
\end{thm}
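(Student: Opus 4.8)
The plan is simply to assemble the pieces that have already been put in place: by this point every ingredient of the statement is available, and what remains is to check that $[\psi]\mapsto G(\psi)$ is a well-defined order preserving map $[\vrp,\tlvrp]\to[0,G(\vrp)]$, that it is injective and surjective, and that its inverse is the assignment $\mfa\mapsto[\tlvrp_\mfa]$.

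First I would verify that the map is well defined. Since $v$ is surjective (the standing hypothesis in the part of the section preceding this theorem), Lemma \ref{lem8.3} says that $G(\psi)$ is an ideal of $M$ for every \m-supervaluation $\psi$ covering $v$. For a class $[\psi]\in[\vrp,\tlvrp]$ we have $\vrp\le\psi$, so Lemma \ref{lem8.2} gives $G(\psi)\subset G(\vrp)$, i.e.\ $G(\psi)\in[0,G(\vrp)]$; and the same lemma shows that $G(\psi)$ depends only on the class $[\psi]$ (if $\psi_1\sim\psi_2$ then $\psi_1\ge\psi_2$ and $\psi_2\ge\psi_1$, whence $G(\psi_1)=G(\psi_2)$) and that $[\psi_1]\ge[\psi_2]$ implies $G(\psi_1)\subset G(\psi_2)$, which is the asserted monotonicity.

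Next, injectivity is precisely Theorem \ref{thm8.4}(ii): within the interval $[\vrp,\tlvrp]$, equality of ghost value sets forces equality of classes. Surjectivity comes from Construction \ref{const8.6}: given any ideal $\mfa$ of $M$ with $\mfa\subset G(\vrp)$, the partial tangible lift $\tlvrp_\mfa$ satisfies $\vrp\le\tlvrp_\mfa\le\tlvrp$ and $G(\tlvrp_\mfa)=\mfa$, so $[\tlvrp_\mfa]$ lies in $[\vrp,\tlvrp]$ and is sent to $\mfa$. Feeding injectivity back in, every $[\psi]\in[\vrp,\tlvrp]$ must equal $[\tlvrp_{G(\psi)}]$, so the inverse map is indeed $\mfa\mapsto[\tlvrp_\mfa]$; and this inverse is order preserving by the implication ``$G(\psi_1)\subset G(\psi_2)\Rightarrow\psi_1\ge\psi_2$'' of Theorem \ref{thm8.4}(i).

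I do not expect any genuine obstacle in this final theorem: all the real work is upstream — constructing the tangible lift $\tlvrp$ and proving its minimality (Theorem \ref{thm7.10}), analysing the \MFCE-relations on $\tlU$ that encode $\vrp$ and the $\psi_i$ in the proof of Theorem \ref{thm8.4}, and carrying out the ideal compression $E(\tlU,s(\mfa))$ in Construction \ref{const8.6}. The only mildly delicate point here is keeping track of the tangible lifting map $s\colon\vrp(R)\iso\tlvrp(R)$, which is what makes the identification $G(\tlvrp_\mfa)=\mfa$ transparent; but that map has already been isolated, so one merely invokes it.
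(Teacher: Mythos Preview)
Your proposal is correct and follows essentially the same route as the paper: the paragraph preceding the theorem assembles exactly the same ingredients---Lemmas \ref{lem8.2} and \ref{lem8.3} for well-definedness and monotonicity, Theorem \ref{thm8.4} for injectivity, and Construction \ref{const8.6} for surjectivity---so that the theorem is stated as already proved. Your additional remark that the inverse is order preserving via Theorem \ref{thm8.4}(i) is a slight elaboration beyond what the paper makes explicit, but entirely in its spirit.
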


The  poset $\Cov_m(v)$ is a complete lattice (cf. Corollary
\ref{cor6.5}). The poset $I(M)$ consisting of the ideals of $M$
and ordered by inclusion,  is a complete lattice as well.  Indeed,
the infimum of a family $(\mfa_i \ds | i\in I)$ in $I(M)$ is the
ideal $\bigcap_i \mfa_i$,   while the supremum is the ideal
$\sum_i \mfa_i = \bigcup_i \mfa_i.$    \{Recall once more that
every subset of $M$ is closed under addition.\} The intervals
$[\vrp, \tlvrp]$ and $[0,G(\vrp)]$ are again complete lattices,
and thus the map $ [\vrp, \tlvrp] \ds \to [0, G(\vrp)]$ in Theorem
\ref{thm8.7} is an anti-isomorphism of complete lattices. This
implies the following

\begin{cor}\label{cor8.8}
Assume that $(\psi_i \ds | i \in I)$ is a family  of
supervaluations covering $v$ with $\vrp \leq \psi_i \leq \tlvrp$
for each $i \in I$. Let $\bigvee_i \psi_i$ and $\bigwedge_i
\psi_i$ denote respectively representatives of the classes
$\bigvee_i [\psi_i]$ and $\bigwedge_i [\psi_i]$ (as described in
\cite[\S7]{IKR1}). Then
$$G\bigg(\bigvee_i \psi_i \bigg) = \bigcap_i G(\psi_i),  \qquad  G\bigg(\bigwedge_i \psi_i\bigg) = \bigcup_iG(\psi_i).$$
\end{cor}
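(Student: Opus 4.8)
The plan is to deduce the corollary directly from Theorem~\ref{thm8.7} together with the lattice-theoretic remarks made just before it, so the argument is purely order-theoretic. First I would pin down meets and joins in the two lattices involved. Since $M$ is bipotent, every subset of $M$ is closed under addition, so the sum of a family of ideals of $M$ equals its union; hence in the complete lattice $I(M)$ of ideals of $M$ one has supremum $=$ union and infimum $=$ intersection. Moreover a union of ideals contained in $G(\vrp)$ is again contained in $G(\vrp)$, so the interval $[0,G(\vrp)]$ is a complete sublattice of $I(M)$, with joins $\bigcup$ and meets $\bigcap$ inherited unchanged.

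Next I would note that $[\vrp,\tlvrp]$ is an interval in the complete lattice $\Cov_m(v)$ (Corollary~\ref{cor6.5}), hence itself a complete lattice, and that for the given family $(\psi_i)$ — all lying in this interval — the join $\bigvee_i[\psi_i]$ and the meet $\bigwedge_i[\psi_i]$ formed inside $[\vrp,\tlvrp]$ coincide with those formed in $\Cov_m(v)$: the ambient join of the $[\psi_i]$ is $\le[\tlvrp]$ (as $[\tlvrp]$ is an upper bound of the $[\psi_i]$) and $\ge[\vrp]$, and dually for the meet, so neither leaves the interval. In particular the representatives $\bigvee_i\psi_i$ and $\bigwedge_i\psi_i$ of \cite[\S7]{IKR1} are $\m$-supervaluations lying in $[\vrp,\tlvrp]$, so $G$ is defined on them (and well defined on classes by Lemma~\ref{lem8.2}/Theorem~\ref{thm8.4}.ii) and their images are the meet, resp.\ join, of the $G(\psi_i)$ — once the order is interpreted correctly.

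Finally, by Theorem~\ref{thm8.4} the bijection $G\colon [\vrp,\tlvrp]\to[0,G(\vrp)]$, $[\psi]\mapsto G(\psi)$, \emph{reverses} the partial orders, i.e.\ is an anti-isomorphism of complete lattices (this is exactly the remark preceding the corollary), so it interchanges arbitrary joins and meets. Feeding in the family $([\psi_i])$ and using the two reductions above gives
$$ G\Bigl(\bigvee_i \psi_i\Bigr) = \bigwedge_i G(\psi_i) = \bigcap_i G(\psi_i), \qquad G\Bigl(\bigwedge_i \psi_i\Bigr) = \bigvee_i G(\psi_i) = \bigcup_i G(\psi_i), $$
which is the assertion. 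I do not expect a genuine obstacle here: the statement is a formal consequence of Theorem~\ref{thm8.7}. The only points worth stating carefully are the order convention — because $G$ is order-reversing, $\bigvee$ on the left corresponds to $\bigcap$, not $\bigcup$, on the right — and the (standard) fact that passing to the sub-intervals $[\vrp,\tlvrp]$ and $[0,G(\vrp)]$ does not change the relevant suprema and infima.
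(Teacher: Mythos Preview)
Your proposal is correct and follows essentially the same route as the paper: the paper's ``proof'' is precisely the paragraph preceding the corollary, observing that the bijection of Theorem~\ref{thm8.7} is an anti-isomorphism of complete lattices (with joins and meets in $[0,G(\vrp)]$ given by unions and intersections because $M$ is bipotent), from which the formulas follow immediately. Your write-up is slightly more detailed in checking that the interval lattice operations agree with the ambient ones, but the argument is the same.
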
 \pSkip

We switch to the case where $\vrp: R \to U$ is a supervaluation,
i.e., the supertropical monoid~ $U$ is a semiring. We want to
characterize the partial tangible lifts $\psi$ of $\vrp$ which are
again supervaluations; in other terms, we want to determine the
subset $[\vrp, \tlvrp] \cap \Cov (v)$ of the interval $[\vrp,
\tlvrp]$ of $\Cov_m(v)$.

The set $Y(v)$ introduced near the end of \S\ref{sec:6} will play
a decisive role. It consists of the products $a b \in R$ of
elements $a,b \in R$  for which there exists some $a' \in R$ with
$$v(a') < v(a), \quad v(a'b) = v(ab) \neq 0.$$
Henceforth we call these products $ab$ the
\textbf{$v$-NC-products} (in $R$). Let  further
$$ \mfq' := \mfq \cup Y(v)$$
with $\mfq$ the support of $v$, $\mfq = v^{-1}(0)$. As observed in
\S\ref{sec:6}, $\mfq'$ is an ideal of the monoid $(R, \cdot \;)$,
while $\mfq$ is an ideal of the semiring $R$.
\begin{examp}\label{exmp8.9}

Let $R$ be a supertropical semiring and $\gm: eR \to M$ a semiring
homomorphism to a bipotent semiring $M$. Then $$v:= \gm \circ
\nu_R: R \ds \to M$$ is a strict \m-valuation. The $v$-NC-products
are the products $yz$ with $y,z \in U$ such that there exists some
$y' \in R$ with $$\gm(ey') < \gm (ey), \quad \gm(ey'z) = \gm
(eyz).$$ Thus $Y(v)$ is the ideal $D_0(R,\gm)$ of the
supertropical semiring $R$ introduced in Definition~\ref{defn4.8}.

\end{examp}
\begin{prop}\label{prop8.10} If $\vrp$ is a supervaluation then
$\vrp(\mfq')$ is contained in the ghost value set~ $G(\vrp)$.
\end{prop}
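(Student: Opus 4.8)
The plan is to prove the stronger statement $\vrp(\mfq') \subset M$; since $\vrp(\mfq') \subset \vrp(R)$ trivially, this immediately gives $\vrp(\mfq') \subset \vrp(R) \cap M = G(\vrp)$. Recall that $\mfq' = \mfq \cup Y(v)$, so I would split the argument into the two cases $c \in \mfq$ and $c \in Y(v)$, and show in each case that $\vrp(c) \in M$.

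For $c \in \mfq = \supp(v)$ the matter is immediate: $e\vrp(c) = v(c) = 0$, and since $U$ is a supertropical monoid the implication $ex = 0 \Rightarrow x = 0$ from Definition~\ref{defn1.1} forces $\vrp(c) = 0 \in M$. For $c \in Y(v)$, write $c = ab$ with $a,b \in R$ such that there exists $a' \in R$ with $v(a') < v(a)$ and $v(a'b) = v(ab) \neq 0$. Put $x := \vrp(a')$, $y := \vrp(a)$, $z := \vrp(b)$ in $U$. Using $e\vrp = v$ and the multiplicativity of $\vrp$ (axioms $\SV3$), these conditions translate into $ex < ey$ and $exz = eyz$, with $exz = v(ab) \neq 0$; since $0$ is absorbing, $exz \neq 0$ forces $ex \neq 0$, so in fact $0 < ex < ey$ while $exz = eyz$. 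Now, $\vrp$ being a supervaluation means precisely that $U$ is a (commutative) supertropical semiring, so by Theorem~\ref{thm1.2} it obeys condition $\Dis$, which applied to this triple yields $yz \in eU = M$, i.e.\ $\vrp(c) = \vrp(a)\vrp(b) = yz \in M$. Combining the two cases proves the proposition.

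I do not anticipate a genuine obstacle: the whole content is the observation that the defining relations of a $v$-NC-product, which are stated in $M$ via $v$, pull back along $\vrp$ to exactly the hypothesis $0 < ex < ey$, $exz = eyz$ of $\Dis$ in the semiring $U$. The only point deserving a line of care is verifying $ex \neq 0$ (so that we are strictly in the situation governed by $\Dis$ rather than a degenerate one), which is where the hypothesis $v(ab) \neq 0$ in the definition of $Y(v)$ gets used.
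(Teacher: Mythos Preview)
Your proof is correct. The paper's own one-line proof of the proposition simply quotes the fact $\vrp(Y(v)) \subset M$ from \S\ref{sec:6} (Theorem~\ref{thm6.12}.i, proved there via the explicit description of the initial cover $\vrp_v$), together with $\vrp(\mfq) = \{0\}$. Your argument instead proves $\vrp(Y(v)) \subset M$ directly by pulling back the defining inequalities of a $v$-NC-product along $e\vrp = v$ and invoking condition $\Dis$ from Theorem~\ref{thm1.2}; this is exactly the ``more direct argument'' that the paper itself records immediately afterward in Remark~\ref{rem8.11}. So you have rediscovered the alternative route the authors point out, avoiding the detour through the structure of $U(v)$ and $\vrp_v$.
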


\begin{proof} We have seen in \S\ref{sec:6} that $\vrp(Y(v)) \subset M$. Since $\vrp(\mfq) = \{
0\}$, this implies that $\vrp(\mfq') \subset M \cap \vrp(R) =
G(\vrp)$.
\end{proof}

\begin{rem}\label{rem8.11} Here is a more direct argument that $\vrp(Y(v))
\subset M$, than given in the proof of Theorem \ref{thm6.12}.i. If
$x \in Y(v)$, then we have $a',a,b \in R$ with $x =ab$, $v(a') <
v(a)$, $v(a'b) = v(ab) \neq 0$. Clearly $\vrp(x) = \vrp (a) \vrp
(b)$ is an NC-product in the supertropical semiring~ $U$ (recall
Definition \ref{defn4.2}), and thus $\vrp(x)$ is ghost, as
observed already in Theorem \ref{thm1.2}.
\end{rem}

\begin{lem}\label{lem8.12}
Assume that $\vrp: R \to U$ is a surjective tangible
\m-supervaluation covering $v$. Then $\vrp(R \sm \mfq) = \tT(U)$,
$v(R) = M$, and $\vrp(Y(v)) = S(U)$.\footnote{Recall that $S(U)$
denotes the set of tangible NC-products in $U$
(Definition~\ref{defn4.2}).}
\end{lem}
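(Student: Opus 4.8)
The plan is to prove the three assertions in sequence, drawing only on surjectivity of $\vrp$ (which gives $U = \vrp(R) \cup e\vrp(R)$), tangibility of $\vrp$ (which gives $\vrp(R) \subseteq \tT(U) \cup \{0\}$), the identity $v = e\vrp$, and the supertropical-monoid axiom $ex = 0 \Rightarrow x = 0$. First I would dispatch the two easy claims: since $M = eU = e\vrp(R)$ and $e\vrp(R) = v(R)$, we get $v(R) = M$ immediately; and since $\vrp(a) = 0$ is equivalent to $v(a) = e\vrp(a) = 0$, hence to $a \in \mfq$, tangibility forces $\vrp(R \sm \mfq) \subseteq \tT(U)$, while conversely any element of $\tT(U) \subseteq U = \vrp(R) \cup M$ lies outside the ghost ideal $M$ and so equals $\vrp(a)$ for a (necessarily nonzero, hence non-$\mfq$) $a$. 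This yields $\vrp(R \sm \mfq) = \tT(U)$.

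The substance is the identity $\vrp(Y(v)) = S(U)$, which I would prove by two inclusions, using that $Y(v)$ is the set of $v$-NC-products --- products $ab$ for which some $a' \in R$ satisfies $v(a') < v(a)$ and $v(a'b) = v(ab) \neq 0$ --- and that $S(U) = D_0(U) \cap \tT(U)$ (Definition~\ref{defn4.2}). For $\vrp(Y(v)) \subseteq S(U)$: given such an $ab$ with witness $a'$, put $y := \vrp(a)$, $z := \vrp(b)$, $y' := v(a') = e\vrp(a') \in eU$, and check, purely from multiplicativity of $\vrp$ and $v = e\vrp$, that $\vrp(ab) = yz$, that $y' < ey$, and that $y'z = e\vrp(a'b) = v(a'b) = v(ab) = eyz$; since $v(ab) \neq 0$ puts $\vrp(ab)$ in $\tT(U)$ by the first step, $\vrp(ab)$ is a tangible NC-product. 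For the reverse inclusion, take $w \in S(U)$ together with an NC-factorization $w = yz$, $y' \in eU$, $y' < ey$, $y'z = eyz$. Here $w \neq 0$ forces $ew = eyz \neq 0$, so $y, z \neq 0$; then the key point is that if either $y$ or $z$ belonged to the ghost ideal $M = eU$ the product $yz$ would be a ghost, contradicting $w \in \tT(U)$, so $y = \vrp(a)$, $z = \vrp(b)$ with $a, b \notin \mfq$, and $y' = v(a')$ for some $a' \in R$ because $eU = v(R)$. Reading the relations $y' < ey$ and $y'z = eyz$ back through $v = e\vrp$ exhibits $ab$ as a $v$-NC-product with $w = \vrp(ab)$, completing the proof.

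I expect the only real friction to be this reverse inclusion, and within it two small things: the case analysis showing that both factors of an NC-factorization of a tangible element must come from $\vrp(R)$ rather than from the ghost ideal, and the slightly fussy bookkeeping needed to pass between the ``internal'' conditions $y' < ey$, $y'z = eyz$ in $U$ and the corresponding inequality and equality for $v$ in $M$. Once the factors are located inside $\vrp(R)$, everything collapses to $v = e\vrp$ and the fact that $\vrp$ is a monoid homomorphism, so no input beyond the definitions is required.
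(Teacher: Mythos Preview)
Your proof is correct and follows essentially the same approach as the paper's own proof: the easy claims are dispatched via surjectivity and tangibility, and the equality $\vrp(Y(v)) = S(U)$ is proved by the same two inclusions, translating NC-witnesses back and forth through $v = e\vrp$ and using $\vrp(R\sm\mfq) = \tT(U)$, $v(R)=M$ to find preimages. Your argument is slightly more explicit where the paper just writes ``clearly $y,z\in\tT(U)$'', but the content is identical.
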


\begin{proof} a) We have $U = \vrp(R) \cup v(R)$, $\vrp(R \sm \mfq) \subset
\tT(U)$, and $v(R) \subset M$. Since $U = \tT(U) \dot \cup M$,
this forces $\vrp(R \sm \mfq) = \tT(U)$ and $v(R) = M.$ \pSkip

b) Let $c \in Y(v)$. There exist $a,b,a' \in R$ with $c =ab$,
$v(a') < v(a)$, $v(a'b) = v(ab) \neq 0.$ It follows that $\vrp(c)
= xy \neq 0$ with $x:= \vrp(a)$, $y:= \vrp(b)$, $v(a') < ex$,
$v(a') y = exy.$ Thus $\vrp(c)$ is an NC-product in $U$. Moreover
$\vrp(c)$ is tangible, hence $\vrp(c) \in S(U).$  Thus $\vrp(Y(v))
\subset S(U).$ \pSkip

c) Let $x \in S(U)$  be given. Then $x = yz \in \tT(U)$ with $y,z
\in U$ and $y' < ey$, $y'z = eyz \neq 0$ for some $y' \in M$.
Clearly $y,z\in \tT(U)$. We choose $a, b, a' \in R$ with $\vrp(a)
= y$, $\vrp(b) = z$, $v(a') = y'.$ Then $ey = v(a)$, $ez = v(b)$,
and it follows that $v(a') < v(a)$, $v(a'b) = v(ab) \neq 0$. Thus
$ab \in Y(v)$ and $x = \vrp(ab)$. This proves that $S(U) \subset
\vrp (Y(v))$.
\end{proof}

\begin{thm}\label{thm8.13} Assume that $\vrp: R \to U$ is a
supervaluation, i.e., $U$ is a semiring. Let $\chvrp$ denote the
tangible lift of $\vrp$ outside the ideal $v(\mfq') = \{ 0 \} \cup
v(Y(v))$ of $M$,
$$ \chvrp := (\tlvrp)_{v(\mfq')} : R \ds \to \chU := \tlU/E_{v(\mfq')}$$
(cf. Construction \ref{const8.6}).
\begin{enumerate} \eroman
    \item $\chvrp$ is again a supervaluation. More precisely,
    $\chvrp$ coincides with the supervaluation $(\tlvrp)^\wedge$
    associated to the tangible lift $\tlvrp: R \to \tlU$ of $\vrp$
    (cf. Definition \ref{defn6.7}). \pSkip

    \item If $\psi$ is an \m-supervaluation covering $v$ with $\vrp \leq \psi \leq
    \tlvrp$, then $\psi$ is a supervaluation iff $\psi \leq
    \chvrp$. Thus
    $$ [\vrp, \tlvrp] \cap \Cov(v) = [\vrp, \chvrp].$$
\end{enumerate}
\end{thm}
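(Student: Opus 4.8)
The plan is to prove part (i) first and then deduce part (ii) from it together with the lattice anti-isomorphism of Theorem~\ref{thm8.7}. For part (i), I would start by identifying the supertropical semiring $(\tlvrp)^\wedge$: by Definition~\ref{defn6.7} it is $\sig_{\tlU}\circ\tlvrp$, and by \S\ref{sec:4} the reflection $\sig_{\tlU}$ is the ideal compression $\pi_{E(\tlU,S(\tlU))}$. So it suffices to show that the equivalence relation $E_{v(\mfq')}=E(\tlU, s(v(\mfq'))\cup M)$ used in Construction~\ref{const8.6} coincides with $E(\tlU, S(\tlU))$; equivalently, by Proposition~\ref{prop1.13}.iv, that the two defining ideals have the same saturum in $\tlU$, i.e. that $s(v(\mfq'))\cup M$ and $S(\tlU)\cup M$ generate the same ideal with the same saturated closure. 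Since $\tlU$ is unfolded with $\tT(\tlU)_0 = \tlN = \tlvrp(R)$ and ghost ideal $M$, the key is a clean description of $S(\tlU)$, the set of tangible NC-products in $\tlU$. I would use Lemma~\ref{lem8.12}: applied to the \emph{tangible} surjective \m-supervaluation $\tlvrp:R\to\tlU$ it gives $\tlvrp(R\sm\mfq)=\tT(\tlU)$, $v(R)=M$, and $\tlvrp(Y(v))=S(\tlU)$. Hence $S(\tlU)=\tlvrp(Y(v))=s(v(Y(v)))$ (using that on $Y(v)\subset\mfq'$ we have $\tlvrp = s\circ v$, since $\vrp(Y(v))\subset M$ by Proposition~\ref{prop8.10} and $s$ restricts to the identity-matching on $N\cap M=G(\vrp)$). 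Therefore $S(\tlU)\cup M = s(v(Y(v)))\cup M = s(v(\mfq'))\cup M$ (the extra point $v(\mfq)=\{0\}$ is absorbed into $M$), so $E_{v(\mfq')}=E(\tlU,S(\tlU))$ on the nose. This gives $\chU=\tlU/E_{v(\mfq')}=\tlU/E(\tlU,S(\tlU))=(\tlU)^\wedge$ and $\chvrp = \pi_{E(\tlU,S(\tlU))}\circ\tlvrp = \sig_{\tlU}\circ\tlvrp = (\tlvrp)^\wedge$, which in particular is a supervaluation by Theorem~\ref{thm4.5}.i.

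For part (ii), let $\psi$ be an \m-supervaluation covering $v$ with $\vrp\le\psi\le\tlvrp$. If $\psi\le\chvrp$, then since $\chvrp$ is a supervaluation (its target $\chU$ is a semiring), Proposition~\ref{prop6.6} gives that $\psi$ is a supervaluation too. Conversely, suppose $\psi$ is a supervaluation, with (surjective) target $V$ a semiring. By Theorem~\ref{thm8.7} the class $[\psi]$ is determined by its ghost value set $G(\psi)$, and $[\psi]\le[\chvrp]$ is equivalent to $G(\psi)\supset G(\chvrp)=v(\mfq')$. So I must show $v(\mfq')\subset G(\psi)$, i.e. $\psi(a)\in M$ for every $a\in\mfq'=\mfq\cup Y(v)$. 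For $a\in\mfq$ this is immediate since $\psi(a)=v(a)=0$. For $a\in Y(v)$: write $a=bc$ with $v(b')<v(b)$, $v(b'c)=v(bc)\neq 0$ for some $b'\in R$. Then in the semiring $V$ we have $e\psi(b')=v(b')<v(b)=e\psi(b)$, $0<e\psi(b')$, and $e\psi(b')\psi(c)=e\psi(b'c)=e\psi(bc)=e\psi(b)\psi(c)$, so condition $\Dis$ of Theorem~\ref{thm1.2} (valid because $V$ is a semiring) forces $\psi(b)\psi(c)=\psi(a)\in eV=M$. Hence $Y(v)\subset G(\psi)$, so $v(\mfq')\subset G(\psi)$ and $\psi\le\chvrp$. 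Combining both directions yields $[\vrp,\tlvrp]\cap\Cov(v)=[\vrp,\chvrp]$.

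The main obstacle I anticipate is making the identification $S(\tlU)\cup M = s(v(\mfq'))\cup M$ fully rigorous, which requires carefully tracking how the tangible lifting map $s$ interacts with $v$ on the non-cancellation locus $Y(v)$ — in particular verifying that $\tlvrp$ restricted to $Y(v)$ really factors as $s\circ v$ (this uses Proposition~\ref{prop8.10} so that the relevant values already lie in $G(\vrp)$, where $s$ and the inclusion agree), and invoking Lemma~\ref{lem8.12} with $\tlvrp$ rather than $\vrp$ in the tangible-surjective role. Everything else — the reflection computation in \S\ref{sec:4}, the $\Dis$-argument, and the lattice bijection of Theorem~\ref{thm8.7} — is then routine bookkeeping.
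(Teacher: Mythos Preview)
Your proposal is correct and follows essentially the same route as the paper. For part (i) the paper also identifies $(\tlvrp)^\wedge$ with $\pi_{E(\tlU,S(\tlU))}\circ\tlvrp$ and uses Lemma~\ref{lem8.12} together with Proposition~\ref{prop8.10} to obtain $S(\tlU)\cup\{0\}=\tlvrp(\mfq')=s\vrp(\mfq')=s(v(\mfq'))$, hence $E(\tlU,S(\tlU))=E_{v(\mfq')}$; your care in noting that Lemma~\ref{lem8.12} is applied to the tangible $\tlvrp$ is well placed. For part (ii) the paper argues the forward direction by invoking Proposition~\ref{prop8.10} for $\psi$ (giving $v(\mfq')\subset G(\psi)$ directly) and then Theorem~\ref{thm8.4}, whereas you unpack this via the $\Dis$ criterion of Theorem~\ref{thm1.2}; this is exactly the content of Remark~\ref{rem8.11}, so the two arguments coincide.
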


\begin{proof} (i): $(\tlvrp)^\wedge$ is the map $\tlvrp/ E(\tlU, S(\tlU)) = \pi_{E(\tlU, S(\tlU)} \circ
\tlvrp$ from $R$ to $\chU:= \tlU/ E(\tlU, S(\tlU).$ By Lemma
\ref{lem8.12}, applied to $\vrp$, we have
$$ S(\tlU) \cup \{ 0 \} = \tlvrp (\mfq') = s\vrp(\mfq')$$
with $s: \vrp(R) \to \tT(\tlU)_0$ denoting the tangible lifting
map for $\vrp$. Moreover $\vrp(\mfq') = v(\mfq')$ by Proposition
\ref{prop8.10}. Thus $ \chU = \tlU/ E_{v(\mfq')}$ and
$(\tlvrp)^\wedge = \tlvrp/ E_{v(\mfq')} = \chvrp.$ \pSkip

(ii): If $\psi$ is a supervaluation, then we know by Proposition
\ref{prop8.10} that $G(\psi) \supset v(\mfq') = G(\chvrp)$, and
hence by Theorem \ref{thm8.4} that $\psi \leq \chvrp$. Conversely,
if $\psi \leq \chvrp$, then $\psi$ is a supervaluation since
$\chvrp$ is a supervaluation (cf. Proposition \ref{prop6.6}).
\end{proof}
\begin{defn}\label{defn8.14}$ $ \begin{enumerate} \eroman
    \item Given a supervaluation $\vrp:= R \to U$ covering $v$ we
    call
    $$ \chvrp := (\tlvrp)^\wedge: R \ds \to \chU = (\tlU)^\wedge$$
the \textbf{almost tangible lift of $\vrp$} (to a supervaluation)
and we call $[\chvrp] \in \Cov(v)$ the almost tangible  lift (in
$\Cov(v)$) of the class $[\vrp] \in Cov(v)$. \pSkip

    \item If $\chvrp = \vrp$, we say that $\vrp$  itself  is \textbf{almost tangible.}
\end{enumerate}

\end{defn}

\begin{rems}\label{rem8.15}
$ $ \begin{enumerate} \ealph
    \item Clearly $\vrp$ is almost tangible iff $G(\vrp) =
    v(\mfq')$. A subtle point here is that then there may
    nevertheless exist elements $a \in R \sm \mfq'$  with
    $\vrp(a)$ ghost.

    \item If $\vrp$ is any supervaluation, then
    $\chvrp$ is almost tangible.

    \item If $v$ happens to be a valuation, i.e., $M$ is
    cancellative, then $\chvrp = \tlvrp$.
\end{enumerate}
\end{rems}

\begin{prop}\label{prop8.16}
If $\psi$ is any almost tangible supervaluation dominating the
supervaluation~$\vrp$ (but not necessarily covering $v$), then
$\psi$ dominates $\chvrp$.
\end{prop}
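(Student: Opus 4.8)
The plan is to pass to tangible lifts and exploit the semiring reflection of \S\ref{sec:4}. Without loss of generality assume $\psi$ and $\vrp$ are surjective; write $\psi:R\to V$, $\vrp:R\to U$, $N:=eV$, $M:=eU$, $w:=e\psi$. Since $\psi\geq\vrp$ there is a transmission $\al:V\to U$ with $\vrp=\al\circ\psi$, and its ghost part $\gm:=\al^\nu:N\onto M$ is a surjective order preserving homomorphism with $v=\gm\circ w$.

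Next I would form the tangible lift $\tlpsi:R\to\tlV$ of $\psi$ (Theorem \ref{thm7.10}.i). It is a tangible \m-supervaluation with $\tlpsi\geq\psi\geq\vrp$, so by Theorem \ref{thm7.10}.ii it dominates $\tlvrp$; in fact the tangible unfolding $\bt:=\tlal:\tlV\to\tlU$ of $\al$ (Theorem \ref{thm7.8}, Corollary \ref{cor7.9}) is a tangible transmission with $\bt^\nu=\gm$ and $\tlvrp=\bt\circ\tlpsi$. Because $\psi$ is almost tangible, $\psi\sim(\tlpsi)^\wedge=\sig_{\tlV}\circ\tlpsi$ (Definition \ref{defn8.14}, together with Proposition \ref{prop6.8}.ii applied to $\tlpsi\geq\psi$), whereas $\chvrp=(\tlvrp)^\wedge=\sig_{\tlU}\circ\tlvrp$ by definition. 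Hence it suffices to construct a transmission $\eta:(\tlV)^\wedge\to(\tlU)^\wedge$ with $\eta\circ\sig_{\tlV}=\sig_{\tlU}\circ\bt$; composing with $\tlpsi$ then gives $\eta\circ\psi=\chvrp$, i.e.\ $\psi\geq\chvrp$.

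By Scholium \ref{schol4.1} (adapted to supertropical monoids) such an $\eta$ exists precisely when the ideal compression $\sig_{\tlV}=\pi_{E(\tlV,S(\tlV))}$ absorbs $\sig_{\tlU}\circ\bt$, i.e.\ when $\bt\bigl(S(\tlV)\bigr)\subseteq D(\tlU)$, the ghost kernel of $\sig_{\tlU}$. Applying Lemma \ref{lem8.12} to the surjective tangible \m-supervaluations $\tlpsi$ and $\tlvrp$ yields $S(\tlV)=\tlpsi(Y(w))$ and $S(\tlU)=\tlvrp(Y(v))$, so $D(\tlU)\cap\tT(\tlU)_0=\tlvrp(Y(v))\cup\{0\}=\tlvrp(\mfq')$ with $\mfq'=\mfq\cup Y(v)$. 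Since $\bt$ is tangible and $\bt\circ\tlpsi=\tlvrp$, the inclusion $\bt(S(\tlV))\subseteq D(\tlU)$ is equivalent to $\tlvrp(Y(w))\subseteq\tlvrp(\mfq')$, and, the tangible lifting map being injective on $\vrp(R)$, to $\vrp(Y(w))\subseteq\vrp(\mfq')$.

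This containment is the heart of the matter, and the step I expect to be the main obstacle. Almost tangibility supplies one half: for $c\in Y(w)$ Lemma \ref{lem8.12} gives $\tlpsi(c)\in S(\tlV)$, which $\sig_{\tlV}$ carries into the ghost ideal, so $\psi(c)$, and hence $\vrp(c)=\al(\psi(c))=\gm(w(c))=v(c)$, is ghost; thus $\vrp(c)\in G(\vrp)$. It remains to locate this value in $v(\mfq')$: unwinding the definition of $Y(w)$ with $v=\gm\circ w$, a $w$-NC-product $c=ab$ comes with $w(a')<w(a)$ and $w(a')w(b)=w(a)w(b)\neq0$ in $N$, and one must show $\gm\bigl(w(a)w(b)\bigr)\in v(\mfq')$. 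This is immediate when $\gm(w(a'))<\gm(w(a))$ — then $v(c)\in v(Y(v))$ — but the delicate case is $\gm(w(a'))=\gm(w(a))$, where $\gm$ collapses the witnessing pair and the non-cancellation witness must be produced inside $M$ itself; here I would use surjectivity and order preservation of $\gm$ together with the explicit description of $\chvrp$ as the tangible lift of $\vrp$ outside $v(\mfq')$ (Theorem \ref{thm8.13}, so that $G(\chvrp)=v(\mfq')$). Once $\vrp(Y(w))\subseteq\vrp(\mfq')$ is established the transmission $\eta$ exists, and $\psi\geq\chvrp$ follows.
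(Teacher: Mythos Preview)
The paper dispatches this in one line: from $\psi\geq\vrp$ it obtains $\tlpsi\geq\tlvrp$ (Corollary~\ref{cor7.12}, via Theorem~\ref{thm7.10}.ii), and then simply asserts $\psi=(\tlpsi)^\wedge\geq(\tlvrp)^\wedge=\chvrp$. Your lengthy argument is an unpacking of this second step --- the functoriality of $(\cdot)^\wedge$ under domination --- by verifying directly that $\bt(S(\tlV))\subseteq D(\tlU)$, which is exactly the condition for $\sig_{\tlU}\circ\bt$ to factor through $\sig_{\tlV}$.

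Your ``delicate case'' $\gm(w(a'))=\gm(w(a))$ is a genuine gap, and your proposed fix (surjectivity and order preservation of $\gm$, or the description in Theorem~\ref{thm8.13}) does not close it. The underlying issue is that Proposition~\ref{prop6.8}, on which the paper's one-liner implicitly rests, is proved only for \m-covers of the \emph{same} $v$: there the connecting transmission is a fiber contraction over $M$, so Theorem~\ref{thm4.5} applies. When $\psi$ covers a different $w$, the connecting map $\bt:\tlV\to\tlU$ is not a fiber contraction, and a $w$-NC-witness $w(a')<w(a)$ may collapse under $\gm$ to $v(a')=v(a)$ with no replacement witness available in $M$. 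Concretely: take $R=N=\{0,a,1\}$ bipotent with $a^2=a$, $M=\{0,1\}$, and $\gm:N\to M$ sending $a,1\mapsto 1$; set $w=\id_N$, $v=\gm$, $\vrp=v$, and let $\psi$ be the almost tangible cover of $w$ (so $G(\psi)=\{0,a\}=w(\{0\}\cup Y(w))$, since $a=1\cdot a$ with $a<1$ and $a\cdot a=1\cdot a$). Here $Y(v)=\emptyset$, hence $\chvrp=\tlvrp$ sends $a$ to a tangible element, while $\psi(a)=a$ is ghost; condition $\D3$ fails and $\psi\not\geq\chvrp$. Thus neither your argument nor the paper's short proof goes through in the stated generality --- the step you flagged as the main obstacle is in fact the place where the proposition, as formulated, breaks down.
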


\begin{proof} $\tlpsi \geq \tlvrp$, and hence $\psi= (\tlpsi)^\wedge \geq (\tlvrp)^\wedge = \chvrp.$
\end{proof}


\begin{thebibliography}{IMS}

 \bibitem[B]{B} N. Bourbaki, \textit{Alg. Comm. VI}, \S3, No.1.


 \bibitem[F]{F} P. Freyd, \textit{Abelian Categories, an Introduction to the Theory
of Functors}, Harper \&  Row, New York, 1964.


 \bibitem[FS]{FS} P. Freyd and A. Scedrov, \textit{Categories, Allegories}. North-Holland,
 1979.

  \bibitem[HK]{HK} R. Huber and M. Knebusch, \textit{On valuation spectra},
   Contemp. Math. \textbf{155} (1994), 167--206.

 \bibitem[HV]{HV} D.K.
Harrison and M.A. Vitulli, \textit{$V$-valuations of a commutative
ring I}, J. Algebra \textbf{126} (1989), 264-292.


 \bibitem[IMS] {IMS} I. Itenberg, G. Mikhalkin, and E.
Shustin, \textit{Tropical Algebraic Geometry}, Oberwolfach
Seminars, 35, Birkh\"auser Verlag, Basel, 2007.


 \bibitem[I]{I} Z. Izhakian, \textit{Tropical arithmetic and  matrix algebra},
  Commun. Algebra  \textbf{37}:4 (2009), {1445--1468}. (Preprint at arXiv:
  math/0505458v2.)



\bibitem[IKR1]{IKR1}  Z. Izhakian, M. Knebusch, and L. Rowen, \textit{Supertropical semirings and
supervaluations}, J. Pure and Applied Alg., {\bf 215}:10 (2011),
2431-2463. (Preprint at arXiv:1003.1101.)


\bibitem[IKR2]{IKR2}  Z. Izhakian, M. Knebusch, and L. Rowen,
\textit{Dominance and transmissions in supertropical valuation
theory}, preprint at arXiv:1102.1520, 2011.

\bibitem[IKR3]{IKR3}  Z. Izhakian, M. Knebusch, and L. Rowen,
 \textit{Monoid valuations and value ordered supervaluations}, preprint at arXiv:1104.2753, 2011.


\bibitem[IR1]{IR1}
Z.~Izhakian and L.~Rowen.
\newblock \textit{Supertropical algebra}, Advances in Math., \textbf{225}:4 (2010), 2222–--2286.
\newblock (Preprint at arXiv:0806.1175.)

\bibitem[IR2]{IR2}
Z.~Izhakian and L.~Rowen,
\newblock \textit{Supertropical matrix algebra}.
\newblock  Israel J. Math., \textbf{182}:1 (2011), 383--424.
\newblock (Preprint at arXiv:0806.1178.)



\bibitem[IR3]{IzhakianRowen2009Equations}
Z.~Izhakian and L.~Rowen,
\newblock \textit{Supertropical matrix algebra II: Solving tropical
equations},
\newblock  Israel J. Math., to appear.
\newblock (Preprint at arXiv:0902.2159, 2009.)

\bibitem[IR4]{IzhakianRowen2010MatricesIII}
Z.~Izhakian and L.~Rowen,
\newblock \textit{Supertropical matrix algebra III: Powers of matrices and generalized
eigenspaces.},  {J. of Algebra}, to appear (2011).
\newblock Preprint at arXiv:1008.0023.


\bibitem[IR5]{IzhakianRowen2009Resulatants}
Z.~Izhakian and L.~Rowen,
\newblock {\em Supertropical polynomials and resultants.}
\newblock { J. Alg.}, \textbf{324}:8 (2010), 1860--1886. (Preprint at arXiv:0902.2155.)

  \bibitem[KZ1]{KZ1} M. Knebusch and D. Zhang, \textit{Manis Valuations and Pr\"ufer Extensions.
   I.
  A New Chapter in Commutative Algebra}, Lecture Notes in Mathematics, 1791, Springer-Verlag,
   Berlin, 2002.

 \bibitem[KZ2] {KZ2} M. Knebusch and D. Zhang, \textit{Convexity, valuations, and
Pr\"ufer extensions in real algebra}, Doc. Math. \textbf{10}
(2005), 1--109.


\bibitem[ML] {ML} S. Maclane, \textit{Categories for the working mathemtician}, 4th
ed.,
Springer-Verlag, 1998.


\bibitem[RS]{qtheory}
J.~Rhodes and B.~Steinbergy.
\newblock {\em The q-theory of Finite Semigroups}.
\newblock Springer-Verlag, 2008.


 \bibitem[Z]{Z} D. Zhang, \textit{The $M$-valuation spectrum of a commutative
ring}, Commun. Algebra \textbf{30} (2002), 2883--2896.


\end{thebibliography}
\end{document}